\documentclass[10pt]{amsart} 
\usepackage[english]{babel}
\usepackage{graphicx,epsfig}
\usepackage{bbm}
\usepackage[noadjust]{cite}
\usepackage{amsmath,amssymb,latexsym, amsfonts, amscd, amsthm}
\usepackage{enumerate}
\usepackage{mathrsfs}
\usepackage[T1]{fontenc}
\usepackage{lmodern}

\usepackage{mathtools}
\usepackage[margin=1.2in]{geometry}

\usepackage[dvipsnames]{xcolor}

\usepackage{xspace} 

\usepackage{cellspace}
\cellspacetoplimit 0.25cm
\cellspacebottomlimit 0.25cm

\usepackage{tikz-cd}
\usetikzlibrary{cd}

\usepackage[mathscr]{euscript} 

\usepackage{caption}

\usepackage{titlesec}
\usepackage{titletoc}

\titleformat{\section}
{\Large\bfseries\boldmath}{\thesection.}{0.5em}{}
\titlespacing{\section}{0em}{0.75cm}{0.4cm}

\titlecontents{section}
[1.5em] 
{}
{\contentslabel{1.5em}}
{\hspace*{-1.5em}}
{\titlerule*[1pc]{ }\contentspage}

\titlecontents{subsection}
[2.3em] 
{}
{\contentslabel{2.3em}}
{\hspace*{-2.3em}}
{\titlerule*[1pc]{ }\contentspage}

\titleformat{\subsection}
{\bfseries\boldmath}{\thesubsection.}{0.5em}{}
\titlespacing{\subsection}{0em}{0.5cm}{0.2cm}

\usepackage{relsize}
\usepackage[bbgreekl]{mathbbol}
\DeclareSymbolFontAlphabet{\mathbb}{AMSb} 
\DeclareSymbolFontAlphabet{\mathbbl}{bbold}
\newcommand{\Prism}{{\mathlarger{\mathbbl{\Delta}}}}

\makeindex \setcounter{tocdepth}{2}

\definecolor{ggreen}{rgb}{0.07, 0.55, 0.0}
\definecolor{rred}{rgb}{0.86, 0.18, 0.10}
\definecolor{bblue}{RGB}{15,70,150}

\usepackage[linktocpage=true]{hyperref}
\hypersetup{pdftoolbar=true, pdftitle={CrsEtaleRamification}, pdfauthor={Pavel Coupek}, pdffitwindow=true, colorlinks=true, citecolor=ggreen, filecolor=ggreen, linkcolor=rred, urlcolor=ggreen, hypertexnames=false}

\usepackage{tablefootnote}

\newtheoremstyle{vetalike}
{8pt}
{6pt}
{\slshape}
{}
{\bfseries}
{.}
{1em}
{}

\newtheoremstyle{poznslike}
{8pt}
{6pt}
{}
{}
{\bfseries}
{.}
{\newline}
{}

\newtheoremstyle{otherlike}
{8pt}
{6pt}
{}
{}
{\bfseries}
{.}
{1em}
{}

\newtheoremstyle{deflike}
{8pt}
{6pt}
{}
{}
{\bfseries}
{.}
{1em}
{}

\theoremstyle{vetalike}
\newtheorem{thm}{Theorem}[section]
\newtheorem{prop}[thm]{Proposition}
\newtheorem{lem}[thm]{Lemma}
\newtheorem{cor}[thm]{Corollary}

\theoremstyle{poznslike}
\newtheorem{rems}[thm]{Remarks}

\theoremstyle{deflike}
\newtheorem{deff}[thm]{Definition}
\newtheorem{nott}[thm]{Notation}
\newtheorem{constr}[thm]{Construction}
\newtheorem{rem}[thm]{Remark}

\theoremstyle{otherlike}

\newtheorem{pr}[thm]{Example}

\newcommand{\ainf}{A_{\mathrm{inf}}}
\newcommand{\Ainf}[1]{\mathbb{A}_{\mathrm{inf}}(#1)}
\newcommand{\BK}{\mathrm{BK}}
\newcommand{\et}{\text{\'{e}t}}

\newcommand{\spf}{\mathrm{Spf}}
\newcommand{\spec}{\mathrm{Spec}}
\newcommand{\shv}{\mathrm{Shv}}

\newcommand{\Crs}{\textnormal{($\mathrm{Cr}_s$)}\xspace}
\newcommand{\Cr}[1]{\textnormal{($\mathrm{Cr}_{#1}$)}\xspace}

\newcommand{\Crrs}{\textnormal{($\mathrm{Cr}'_s$)}\xspace}
\newcommand{\Crr}[1]{\textnormal{($\mathrm{Cr}'_{#1}$)}\xspace}

\newcommand{\oh}{\mathcal{O}}

\renewcommand{\H}{\mathrm{H}}
\newcommand{\R}{\mathsf{R}}

\newcommand{\cornerdown}{\rotatebox{45}{$\llcorner$}}

\newcommand{\lin}{\mathrm{lin}}

\newcommand{\Es}{\mathfrak{S}}

\begin{document}

\author{Pavel \v{C}oupek}
\address[Pavel \v{C}oupek]{Department of Mathematics, Michigan State University}
\email{coupekpa@msu.edu}
\date{}
\title{Crystalline condition for $\boldsymbol{\ainf}$--cohomology and ramification bounds}
\begin{abstract}
\noindent For a prime $p>2$ and a smooth proper $p$--adic formal scheme $\mathscr{X}$ over $\oh_K$ where $K$ is a $p$--adic field, we study a series of conditions  \Crs, $s\geq 0$ that partially control the $G_K$--action on the image of the associated Breuil--Kisin prismatic cohomology $\R \Gamma_{\Prism}(\mathscr{X}/ \Es)$ inside the $\ainf$--prismatic cohomology $\R \Gamma_{\Prism}(\mathscr{X}_{\ainf}/ \ainf)$. The condition \Cr{0} is a criterion for a Breuil--Kisin--Fargues $G_K$--module to induce a crystalline representation used by Gee and Liu in \cite[Appendix F]{EmertonGee2}, and thus leads to a proof of crystallinity of $\H^i_{\et}(\mathscr{X}_{\overline{\eta}}, \mathbb{Q}_p)$ that avoids the crystalline comparison. The higher conditions $\Crs$ are used to adapt the strategy of Caruso and Liu from \cite{CarusoLiu} to establish ramification bounds for the mod $p$ representations $\H^{i}_{\et}(\mathscr{X}_{\overline{\eta}}, \mathbb{Z}/p\mathbb{Z}),$ for arbitrary $e$ and $i$, which extend or improve existing bounds in various situations.
\end{abstract}
\maketitle

\tableofcontents

\section{Introduction}

Let $k$ be a perfect field of characteristic $p>2$ and $K'=W(k)[1/p]$ the associated absolutely unramified field. Let $K/K'$ be a totally ramified finite extension with ramification index $e$, and denote by $G_K$ its absolute Galois group. The goal of the present paper is to provide new bounds for ramification of the mod $p$ representations of $G_K$ that arise as the \'{e}tale cohomology groups $\H^i_{\et}(\mathscr{X}_{\overline{\eta}}, \mathbb{Z}/p\mathbb{Z})$ in terms of $p, i$ and $e$, where $\mathscr{X}$ is a smooth proper $p$--adic formal scheme over $\oh_K$ (and $\mathscr{X}_{\overline{\eta}}$ is the geometric adic generic fiber). Concretely, let us denote by  $G_K^{\mu}$ the $\mu$--th ramification group of $G_K$ in the upper numbering (in the standard convention, e.g. \cite{SerreLocalFields}) and $G_K^{(\mu)}=G_K^{\mu-1}$. The main result is as follows.

\begin{thm}[Theorem~\ref{thm:FinalRamification}]\label{thm:IntroMain}
Set $$\alpha=\left\lfloor\mathrm{log}_p\left( \mathrm{max} \left\{\frac{ip}{p-1}, \frac{(i-1)e}{p-1}\right\}\right)\right\rfloor+1, \;\;\; \beta=\frac{1}{p^\alpha}\left(\frac{iep}{p-1}-1\right).$$
Then:
\begin{enumerate}[(1)]
\item{The group $G_K^{(\mu)}$ acts trivially on $\H^i_{\et}(\mathscr{X}_{\overline{\eta}}, \mathbb{Z}/p\mathbb{Z})$ when $\mu>1+e\alpha+\mathrm{max}\left\{\beta, \frac{e}{p-1}\right\}.$}
\item{Denote by $L$ the field $\overline{K}^H$ where $H$ is the kernel of the $G_K$--representation $\rho$ given by $\H^i_{\et}(\mathscr{X}_{\overline{\eta}}, \mathbb{Z}/p\mathbb{Z})$. Then
$$v_K(\mathcal{D}_{L/K})<1+e\alpha+\beta,$$
where $\mathcal{D}_{L/K}$ denotes the different of the extension $L/K$ and $v_K$ denotes the additive valuation on $K$ normalized so that $v_K(K^{\times})=\mathbb{Z}.$}
\end{enumerate}
\end{thm}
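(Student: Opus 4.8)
The plan is to realise $\H^i_{\et}(\mathscr{X}_{\overline{\eta}},\mathbb{Z}/p\mathbb{Z})$ from the Breuil--Kisin prismatic cohomology, reduce the two assertions to quantitative control of the semilinear $G_K$-action on the corresponding lattice inside the $\ainf$-cohomology, and then obtain that control by iterating the conditions \Crs in the spirit of Caruso--Liu \cite{CarusoLiu}.

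\emph{Step 1: reduction to a lattice estimate.} By the étale comparison for prismatic cohomology, $\H^i_{\et}(\mathscr{X}_{\overline{\eta}},\mathbb{Z}/p\mathbb{Z})$ is recovered from $\R\Gamma_{\Prism}(\mathscr{X}/\Es)$ (through its mod-$p$ reductions in cohomological degrees $i$ and $i+1$), and the $G_K$-action on it is induced --- after base change to $W(\mathbb{C}_p^{\flat})$ and passage to Frobenius-fixed points --- from the semilinear $G_K$-action on the image $M$ of $\R\Gamma_{\Prism}(\mathscr{X}/\Es)$ inside $\R\Gamma_{\Prism}(\mathscr{X}_{\ainf}/\ainf)$. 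Writing $T$ for the resulting representation, an element $g\in G_K$ acts trivially on $T$ as soon as $(g-1)$ carries a chosen generating set of $M$ into an ideal of $W(\mathbb{C}_p^{\flat})\otimes_{\ainf}\R\Gamma_{\Prism}(\mathscr{X}_{\ainf}/\ainf)$ whose $v_K$-radius exceeds the denominators that appear when recovering $T$ from $M$; so it is enough to produce an ideal $\mathfrak{a}$ with $(g-1)M\subseteq\mathfrak{a}\cdot\big(W(\mathbb{C}_p^{\flat})\otimes_{\ainf}\R\Gamma_{\Prism}(\mathscr{X}_{\ainf}/\ainf)\big)$ and $v_K(\mathfrak{a})$ above the threshold of part (1).

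\emph{Step 2: the Frobenius iteration.} The input bounding $(g-1)M$ is exactly the family \Crs: for each $s\ge 0$, \Cr{s} --- which holds for $\mathscr{X}$ by the results established earlier --- places $(g-1)M$ inside the ideal generated by the element that records how $g$ displaces $[\pi^{\flat}]$ (essentially $[\varepsilon]^{c(g)}-1$, for $c$ the Kummer cocycle) together with the Frobenius twists of $E(u)$ that are relevant up to level $s$, the degree-$i$ data (Hodge--Tate weights in $[0,i]$) entering through a $\mu$-adic term of size $i/(p-1)$ and an $E(u)$-adic term of size about $(i-1)e/(p-1)$. One combines this with the ramification estimate: for $g\in G_K^{(\mu)}$ one has $v_p(c(g))\ge\alpha$ (with the additive shift dictated by the normalisation), and more generally $g-1$ shrinks the maximal ideal of $\oh_{\mathbb{C}_p^{\flat}}$ by $\mu-1$ in the $v_K$-normalisation, once $\mu>1+e\alpha$. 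Descending along $\varphi^{-1}$ and re-applying \Cr{s} step by step, each step divides by $p$ the weight that still has to be absorbed, at the cost of an additive term comparable to $e$ (the $v_K$-valuation of a generator of $\ker\theta$ and of its Frobenius twists); after $\alpha$ steps the accumulated cost is $e\alpha$ and the residual valuation is precisely $\beta=p^{-\alpha}\!\left(\tfrac{iep}{p-1}-1\right)$. One stops at $s=\alpha$ because the defining inequality $p^{\alpha}>\max\{\tfrac{ip}{p-1},\tfrac{(i-1)e}{p-1}\}$ is exactly the point past which a further descent no longer improves the bound. Together with Step 1 this produces $\mathfrak{a}$ with $v_K(\mathfrak{a})\ge 1+e\alpha+\max\{\beta,\tfrac{e}{p-1}\}$, so $g$ acts trivially on $T$ whenever $\mu$ exceeds that quantity, which is (1).

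\emph{Step 3: the different, and the main obstacle.} For (2), triviality of $G_K^{(\mu)}$ on $\rho$ forces the upper-numbering ramification of $L/K$ to be bounded, and the standard comparison between that filtration and $v_K(\mathcal{D}_{L/K})$ (as in \cite{CarusoLiu}, following Fontaine) gives $v_K(\mathcal{D}_{L/K})<1+e\alpha+\beta$; the sharper constant $\beta$ in place of $\max\{\beta,\tfrac{e}{p-1}\}$ arises because the different can be bounded directly by the $v_K$-radius produced in Step 2, the extra $\tfrac{e}{p-1}$ being only the margin needed to pass from that radius to exact triviality of the action. The step I expect to be the main obstacle is the bookkeeping in Step 2: tracking, descent by descent, exactly which ideal \Cr{s} yields after $s=\alpha$ applications of $\varphi^{-1}$ and matching it against the ideal that annihilates $T$, so as to extract the precise value of $\beta$ rather than a cruder estimate; a secondary, but still essential, technical point is the ramification estimate ($g\in G_K^{(\mu)}\Rightarrow v_p(c(g))\ge\alpha$, with the shift by $1$ in accordance with the $v_K$-normalisation used throughout the paper).
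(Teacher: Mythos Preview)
Your proposal has a genuine gap: the mechanism you describe is not the one that makes the argument work, and the replacement you sketch is not justified.

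Your Step~1 asserts that $g$ acts trivially on $T$ as soon as $(g-1)M$ lands in an ideal whose ``$v_K$-radius exceeds the denominators that appear when recovering $T$ from $M$''. This is the crux, and it is not obvious: $T$ is obtained from $M_{\inf}$ by tensoring up to $W(\mathbb{C}_K^{\flat})$ (in particular inverting $u$) and taking $\varphi$-invariants, so there is no evident finite ``denominator bound'' that converts smallness of $(g-1)M$ into triviality of the action on $T$. The paper does \emph{not} proceed this way. Instead it follows the Caruso--Liu strategy: one introduces the approximation functors $J_c(M_{\BK})=\mathrm{Hom}_{\Es,\varphi}(M_{\BK},\oh_{\mathbb{C}_K^\flat}/\mathfrak{a}^{>c})$ and their finite-level variants $J_c^{(s),E}$, identifies $T$ with the image $\rho_{a,b}(J_a(M_{\BK}))$ for $a=iep/(p-1)$, $b=ie/(p-1)$ (Proposition~\ref{prop:ActionProlongationJc}), and then proves a criterion (Theorem~\ref{thm:Fuel}) characterising when $L_s\subseteq E$ in terms of these functors. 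From this one deduces Fontaine's property $(P_m^{L_s/K_s})$ with $m=a/p^s$ (Proposition~\ref{prop:FontainePropertyForLs}), and only then does Proposition~\ref{prop:RamificationEngine} convert this into bounds on $\mu_{L_s/K_s}$ and $v_K(\mathcal{D}_{L_s/K_s})$. None of this machinery appears in your outline.

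Your Step~2 also misidentifies how \Crs enters. There is no ``Frobenius iteration'' descending along $\varphi^{-1}$ step by step. One uses \Crs for the \emph{single} value $s=\alpha$: via Lemma~\ref{lem:IsModPn} and Proposition~\ref{prop:CrystallineCohMopPN} this gives $(g-1)M_{\BK}\subseteq \varphi^{-1}(v)u^{p^\alpha}M_{\inf}$ for $g\in G_\alpha=G_{K_\alpha}$, and this is used exactly once, to establish $G_\alpha$-equivariance of $\theta_c:J_c(M_{\BK})\simeq J_c^{\inf}(M_{\inf})$ (Proposition~\ref{prop:GsEquivarianceOfJc}). The term $e\alpha$ in the final bound does not come from ``$\alpha$ descents each costing $e$'', but from the different $v_K(\mathcal{D}_{K_\alpha/K})$ in the tower $L_\alpha/K_\alpha/K$ and from the explicit value $\mu_{N_\alpha/K}=1+e\alpha+e/(p-1)$; the passage to $G_K^{(\mu)}$ goes through the transitivity formula $\mu_{M_s/K}=\max\{\mu_{N_s/K},\phi_{N_s/K}(\mu_{M_s/N_s})\}$ and an explicit linear upper bound for $\phi_{N_s/K}$, not through a direct estimate of a Kummer cocycle $c(g)$ for $g\in G_K^{(\mu)}$.
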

\noindent In particular, unlike in previous results of this type (discussed below), there are no restrictions on the size of $e$ and $i$ with respect to $p$. 

\begin{rem}
As the constants $\alpha, \beta$ appearing in Theorem~\ref{thm:IntroMain} are quite complicated, let us draw some non--optimal, but more tractable consequences. The group $G_K^{(\mu)}$ acts trivially on $\H^i_{\et}(\mathscr{X}_{\overline{\eta}}, \mathbb{Z}/p\mathbb{Z})$ when one of the following occurs:
\begin{enumerate}[(1)]
\item{$e \leq p$ and $\mu>1+e\left(\left\lfloor \mathrm{log}_p\left(\frac{ip}{p-1}\right)\right\rfloor+1\right)+e,$}
\item{$e>p$ and $\mu>1+e\left(\left\lfloor \mathrm{log}_p\left(\frac{ie}{p-1}\right)\right\rfloor+1\right)+p,$\footnote{Strictly speaking, to obtain this precise form one has to replace $(i-1)e$ in $\alpha$ from Theorem~\ref{thm:IntroMain} by $ie,$ and modify $\beta$ appropriately; one can show that such form of Theorem~\ref{thm:IntroMain} is still valid.}}
\item{$i=1$ ($e, p$ are arbitrary) and $\mu>1+e\left(1+\frac{1}{p-1}\right).$} 
\end{enumerate}
\end{rem}

\vspace{1em}

Let us briefly summarize the history of related results. Questions of this type originate in Fontaine's paper \cite{Fontaine}, where he proved that for a finite flat group scheme $\Gamma$ over $\oh_K$ that is annihilated by $p^n$, $G_K^{(\mu)}$ acts trivially on $\Gamma(\overline{K})$ when $\mu>e(n+1/(p-1))$; this is a key step in his proof that there are no non--trivial abelian schemes over $\mathbb{Z}$. In the same paper, Fontaine conjectured that general $p^n$--torsion cohomology would follow the same pattern: given a proper smooth variety $X$ over $K$ with good reduction, $G_K^{(\mu)}$ should act trivially on $\H^i_{\et}(X_{\overline{K}}, \mathbb{Z}/p^n\mathbb{Z})$ when $\mu>e(n+i/(p-1))$. 

This conjecture has been subsequently proved by Fontaine himself (\cite{Fontaine2}) in the case when $e=n=1, i<p-1$ and by Abrashkin (\cite{Abrashkin}; see also \cite{Abrashkin2}) when $e=1, i<p-1$ and $n$ is arbitrary. This is achieved by using Fontaine--Laffaille modules (introduced in \cite{FontaineLaffaille}), which parametrize quotients of pairs of $G_K$--stable lattices in crystalline representations with Hodge--Tate weights in $[0, i]$ (such as $\H^i_{\et}({X}_{\overline{K}}, \mathbb{Q}_p)^{\vee}$). The (duals of the) representations $\H^i_{\et}({X}_{\overline{K}}, \mathbb{Z}/p^n\mathbb{Z})$ are included among these thanks to a comparison theorem of Fontaine--Messing (\cite{FontaineMessing}). Similarly to the orginal application, these ramification bounds lead to a scarcity result for existence of smooth proper $\mathbb{Z}$--schemes.

Various extensions to the semistable case subsequently followed. Under the asumption $i<p-1$ (and arbitrary $e$), Hattori proved in \cite{Hattori} a ramification bound for $p^n$--torsion quotients of lattices in semistable representations with Hodge--Tate weights in the range $[0, i],$ using (a variant of) Breuil's filtered $(\phi_r, N)$--modules. Thanks to a comparison result between log--crystalline and \'{e}tale cohomology by Caruso (\cite{CarusoLogCris}), this results in a ramification bound for $\H^i_{\et}({X}_{\overline{K}}, \mathbb{Z}/p^n\mathbb{Z})$ when ${X}$ is proper with semistable reduction, assuming $ie<p-1$ when $n=1$ and $(i+1)e<p-1$ when $n \geq 2$ \footnote{Recently, in \cite{LiLiu} Li and Liu extended Caruso's result to the range $ie<p-1$ regardless of $n$, for $\mathscr{X}/\oh_K$ proper and smooth (formal) scheme. In view of this, results of \cite{Hattori} should apply in these situations as well.}.

These results were further extended by Caruso and Liu in \cite{CarusoLiu} for all $p^n$--torsion quotients of pairs of semistable lattices with Hodge--Tate weights in $[0, i]$, without any restriction on $i$ or $e$. The proof uses the theory of $(\varphi , \widehat{G})$--modules, which are objects suitable for description of lattices in semistable representations. Roughly speaking, a $(\varphi , \widehat{G})$--module consists of a Breuil--Kisin module $M$ and the datum of an action of $\widehat{G}=\mathrm{Gal}(K(\mu_{p^{\infty}}, \pi^{1/p^\infty})/K)$ on $\widehat{M}=M \otimes_{\Es, \varphi} \widehat{\mathcal{R}}$ where $\widehat{\mathcal{R}}$ is a suitable subring of Fontaine's period ring $\ainf=W(\oh_{\mathbb{C}_{K}^\flat})$ (and $\pi\in K$ is a fixed choice of a uniformizer). An obstacle to applying the results of \cite{CarusoLiu} to the torsion \'{e}tale cohomology groups $\H^i_{\et}(X_{\overline{K}}, \mathbb{Z}/p\mathbb{Z})$ is that it is not quite clear when (duals of) such representations come as a quotient of two semistable lattices with Hodge--Tate weights in $[0, i].$ This is indeed the case in the situation when $e=1$, $i<p-1$ and $X$ has good reduction by the aforementioned Fontaine--Messing theorem, and it was also shown in the case $i=1$ (no restriction on $e, p$) for $X$ with semistable reduction by Emerton and Gee in \cite{EmertonGee1}, but in general the question seems open.

\vspace{1em}

Nevertheless, the idea of the proof of Theorem~\ref{thm:IntroMain} is to follow the general strategy of Caruso and Liu. While one does not necessarily have semistable lattices and the associated $(\varphi, \widehat{G})$--modules to work with, a suitable replacement comes from the recently developed cohomology theories of Bhatt--Morrow--Scholze and Bhatt--Scholze (\cite{BMS1, BMS2, BhattScholze}). Concretely, to a smooth $p$--adic formal scheme $\mathscr{X}$ one can associate the ``$p^n$-torsion prismatic cohomologies'' $$\R\Gamma_{\Prism, n}(\mathscr{X}/ \Es)=\R\Gamma_{\Prism}(\mathscr{X}/ \Es)\stackrel{{\mathsf{L}}}{\otimes}\mathbb{Z}/p^n\mathbb{Z}, \;\;\;\;\;\;\R\Gamma_{\Prism, n}(\mathscr{X}_{\ainf}/ \ainf)=\R\Gamma_{\Prism}(\mathscr{X}_{\ainf}/ \ainf)\stackrel{{\mathsf{L}}}{\otimes}\mathbb{Z}/p^n\mathbb{Z}$$
where $\R\Gamma_{\Prism}(\mathscr{X}_{\ainf}/ \ainf), \R\Gamma_{\Prism}(\mathscr{X}/ \Es)$ are the prismatic avatars of the $\ainf$-- and Breuil--Kisin cohomologies from \cite{BMS1} and \cite{BMS2}, resp.
Taking $M_{\BK}=\H^i_{\Prism, 1}(\mathscr{X}/ \Es)$ and $M_{\inf}=\H^i_{\Prism, 1}(\mathscr{X}/ \ainf),$  Li and Liu showed in \cite{LiLiu} that $M_{\BK}$ is a $p$--torsion Breuil--Kisin module, $M_{\inf}$ is a $p$--torsion Breuil--Kisin--Fargues $G_K$--module, and that these modules recover the \'{e}tale cohomology group $\H^i_{\et}(\mathscr{X}_{\overline{\eta}}, \mathbb{Z}/p\mathbb{Z})$ essentially due to the \'{e}tale comparison theorem for prismatic cohomology from \cite{BhattScholze}. The pair $(M_{\BK}, M_{\inf})$ then serves as a suitable replacement of a $(\varphi, \widehat{G})$--module in our context.

The most significant deviation from the strategy of \cite{CarusoLiu} then stems from the fact that the pair $(M_{\BK}, M_{\inf})$ obtained this way is ``inherently $p$--torsion'', that is, it does not come equipped with any apparent lift to analogous objects in characteristic $0$. This is not the case in \cite{CarusoLiu}, where all torsion modules ultimately originate from a free $(\varphi, \widehat{G})$--module $(M, \widehat{M})$. A key technical input in \textit{loc. cit.} is to establish a partial control on the Galois action on $M$ inside $\widehat{M},$ namely, a condition of the form 
\begin{equation}\label{IntroPhiGiHatCondition}
\forall g \in G_{K(\pi^{1/p^s})},\;\;\forall x \in M:\;\;g(x)-x \in (J_{n,s}+p^n\ainf) (\widehat{M}\otimes_{\widehat{\mathcal{R}}}\ainf).
\end{equation} 
Here $J_{n, s} \subseteq \ainf$ are certain ideals (that are shrinking with growing $s$). This is a ``rational'' fact, in the sense that this claim is a consequence of the description of the Galois action in terms of the monodromy operator on the associated Breuil module $\mathcal{D}(\widehat{M})$ (cf. \cite{Breuil}, \cite[\S 3.2]{LiuLatticesNew}), a vector space over the characteristic $0$ field $K'$. 

As a starting point for replacing (\ref{IntroPhiGiHatCondition}) in our context, we turn to a result by Gee and Liu in \cite[Appendix F]{EmertonGee2} (see also \cite[Theorem~3.8]{Ozeki}). Given a finite free Breuil--Kisin module $M_{\BK}$ (of finite height) and a compatible structure of Breuil--Kisin--Fargues $G_K$--module on $M_{\inf}=M_{\BK}\otimes_{\Es}\ainf,$ such that the image of $M_{\BK}$ under the natural map lands in $(M_{\inf})^{G_{K(\pi^{1/p^{\infty}})}}$, the \'{e}tale realization of $M_{\inf}$ is crystalline if and only if
\begin{equation}\tag{$\mathrm{Cr}_0$}\label{IntroCrysCondition}
\forall g \in G_K,\;\; \forall x \in M_{\BK}: g(x)-x \in \varphi^{-1}([\underline{\varepsilon}]-1)[\underline{\pi}]M_{\inf}.
\end{equation}
Here $[-]$ denotes the Teichm\"{u}ller lift and $\underline{\varepsilon}, \underline{\pi}$ are the elements of $\oh_{\mathbb{C}_K^\flat}$ given by a collection $(\zeta_{p^n})_n$ of (compatible) $p^n$--th roots of unity and a collection $(\pi^{1/p^n})_n$ of $p^n$--th roots of the chosen uniformizer $\pi$, resp. We call condition~(\ref{IntroCrysCondition}) the \emph{crystalline condition}. As the considered formal scheme $\mathscr{X}$ is assumed to be smooth over $\oh_K$, it is reasonable to expect that the same condition applies to the pair $M_{\BK}=\H^i_{\Prism}(\mathscr{X}/ \Es)$ and $M_{\inf}=\H^i_{\Prism}(\mathscr{X}_{\ainf}/ \ainf)$, despite the fact that the Breuil--Kisin and Breuil--Kisin--Fargues modules coming from prismatic cohomology are not necessarily free. 

This is indeed the case and, moreover, it can be shown that the crystalline condition even applies to the embedding of the chain complexes $\R\Gamma_{\Prism}(\mathscr{X}/ \Es)\rightarrow \R\Gamma_{\Prism}(\mathscr{X}_{\ainf}/ \ainf)$: to make sense of this claim, we model the cohomology theories by their associated \v{C}ech--Alexander complexes. These were introduced in \cite{BhattScholze} in the case that $\mathscr{X}$ is affine, but can be extended to (at least) arbitrary separated smooth $p$--adic formal schemes. We are then able to verify the condition termwise for this pair of complexes. More generally, we introduce a decreasing series of ideals $I_s$, $s \geq 0$  where $I_0=\varphi^{-1}([\underline{\varepsilon}]-1)[\underline{\pi}]\ainf,$ and then formulate and prove the analogue of (\ref{IntroCrysCondition}) for $I_s$ and the action of $G_{K(\pi^{1/p^s})}.$ As a consequence, we obtain:

\begin{thm}[Theorem~\ref{thm:CrsForCechComplex}, Corollary~\ref{cor:CrystallineForCohomologyGrps}, Proposition~\ref{prop:CrystallineCohMopPN}]\label{thm:IntroCrysCondition}
Let $\mathscr{X}$ be a smooth separated $p$--adic formal scheme over $\oh_K$.
\begin{enumerate}[(1)]
\item{Fix a compatible choice of \v{C}ech--Alexander complexes $\check{C}_{\BK}^{\bullet}\subseteq \check{C}_{\inf}^{\bullet}$ that compute $\R\Gamma_{\Prism}(\mathscr{X}/ \Es)$ and $\R\Gamma_{\Prism}(\mathscr{X}_{\ainf}/ \ainf)$, resp. Then for all $s \geq 0$, the pair $(\check{C}_{\BK}^{\bullet},\check{C}_{\inf}^{\bullet})$ satisfies (termwise) the condition
\begin{equation}\tag{$\mathrm{Cr}_s$}
\forall g \in G_{K(\pi^{1/p^s})}, \;\; \forall x \in \check{C}_{\BK}^{\bullet}: g(x)-x \in I_s  \check{C}_{\inf}^{\bullet}.
\end{equation}}
\item{The associated prismatic cohomology groups satisfy the crystalline condition, that is, the condition
$$
\forall g \in G_{K}, \;\; \forall x \in \H^i_{\Prism}(\mathscr{X}/ \Es): \;\; g(x)-x \in \varphi^{-1}([\underline{\varepsilon}]-1)[\underline{\pi}] \H^i_{\Prism}(\mathscr{X}_{\ainf}/ \ainf).
$$}
\item{For all pairs of integers $s, n$ with $s+1\geq n \geq 1$, the $p^n$--torsion prismatic cohomology groups satisfy the condition
$$
\forall g \in G_{K(\pi^{1/p^s})}, \;\; \forall x \in \H^i_{\Prism, n}(\mathscr{X}/ \Es): \;\; g(x)-x \in \varphi^{-1}([\underline{\varepsilon}]-1)[\underline{\pi}]^{p^{s+1-n}} \H^i_{\Prism, n}(\mathscr{X}_{\ainf}/ \ainf).
$$}
\end{enumerate}
\end{thm}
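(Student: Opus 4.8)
The plan is to prove Theorem~\ref{thm:IntroCrysCondition} in three stages, corresponding to its three parts, with part (1) doing the bulk of the work and parts (2)--(3) following by passing to cohomology.

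\textbf{Part (1): the termwise condition \Crs.} First I would recall the construction of \v{C}ech--Alexander complexes: for $\mathscr{X}$ affine and smooth over $\oh_K$, one picks a surjection from a (free) prism onto a prismatic cover of $\mathscr{X}$, takes its Čech nerve in the prismatic site, and the resulting cosimplicial ring computes $\R\Gamma_{\Prism}$. The key point is \emph{compatibility}: one can choose the Breuil--Kisin-side cover and the $\ainf$-side cover so that the former maps into the latter, so that $\check{C}^{\bullet}_{\BK} \subseteq \check{C}^{\bullet}_{\inf}$ levelwise as $G_K$-equivariant maps of $\delta$-rings (the $G_K$-action on the $\ainf$-side being the natural one, trivial on the $\Es$-side). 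Then the condition \Crs\ must be checked on each cosimplicial degree. The strategy I would use is to reduce to a \emph{universal} computation: the term $\check{C}^n_{\inf}$ is (a completion of) a free $\delta$-$\ainf$-algebra, and the term $\check{C}^n_{\BK}$ is the analogous free $\delta$-$\Es$-algebra, and the inclusion is induced by $\Es \hookrightarrow \ainf$. So it suffices to prove: for a polynomial $\delta$-ring variable $x$ (and its $\delta$-iterates), and for $g \in G_{K(\pi^{1/p^s})}$, one has $g(x) - x \in I_s \check{C}^n_{\inf}$ — and then extend to all elements by the Leibniz-type rules governing how $g$ interacts with addition, multiplication, and $\delta$. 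The base case is the generator itself: here one uses that the prismatic structure map sends the variable to something congruent mod the relevant ideal to an element of $\oh_{\mathscr{X}}$, on which $G_{K(\pi^{1/p^s})}$ acts through its action on $\oh_{\mathbb{C}_K^\flat}$, and the displacement of such elements is controlled by the definition of $I_s$ (which is engineered precisely so that $g(\underline{\pi}^{1/p^s}) - \underline{\pi}^{1/p^s}$ and $g(\underline{\varepsilon}) - \underline{\varepsilon}$ type quantities land in $I_s$). The inductive step — propagating the congruence through $\delta$ — is where I expect the main technical friction: one needs that $I_s$ behaves well under the maps $x \mapsto \delta(x)$, i.e. a statement like "if $a \equiv b \bmod I_s$ then $\delta(a) \equiv \delta(b) \bmod I_s$" (or mod a controlled modification of $I_s$), which forces $I_s$ to be chosen with enough divisibility/Frobenius-stability built in. This is presumably exactly why the ideals $I_s$ are defined the way they are in the body of the paper, and verifying the relevant closure properties of $I_s$ is the crux.

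\textbf{Part (2): the crystalline condition on cohomology.} Taking $s = 0$ in part (1), the pair of complexes $(\check{C}^{\bullet}_{\BK}, \check{C}^{\bullet}_{\inf})$ satisfies $g(x) - x \in I_0 \check{C}^{\bullet}_{\inf} = \varphi^{-1}([\underline{\varepsilon}]-1)[\underline{\pi}]\,\check{C}^{\bullet}_{\inf}$ for all $g \in G_K$. To pass to cohomology I would note that the inclusion of complexes and the $G_K$-action are compatible with the differentials, so for a cocycle $x \in \check{C}^i_{\BK}$ representing a class in $\H^i_{\Prism}(\mathscr{X}/\Es)$, the element $g(x) - x$ is a cocycle in $I_0\check{C}^i_{\inf}$. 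The only subtlety is that $I_0\check{C}^{\bullet}_{\inf}$ is not obviously a subcomplex computing $I_0 \cdot \H^i_{\Prism}(\mathscr{X}_{\ainf}/\ainf)$ — one needs that the image of $\H^i(I_0\check{C}^{\bullet}_{\inf}) \to \H^i(\check{C}^{\bullet}_{\inf})$ lands in $I_0\H^i$. Since $I_0$ is a principal ideal generated by a nonzerodivisor $\xi := \varphi^{-1}([\underline{\varepsilon}]-1)[\underline{\pi}]$ (both factors being nonzerodivisors in $\ainf$, hence in the flat complex terms), multiplication by $\xi$ identifies $\check{C}^{\bullet}_{\inf}$ with $\xi\check{C}^{\bullet}_{\inf}$, so $\H^i(\xi\check{C}^{\bullet}_{\inf}) = \xi\cdot\H^i(\check{C}^{\bullet}_{\inf})$ and the displacement class $[g(x)-x]$ lies in $\xi\H^i_{\Prism}(\mathscr{X}_{\ainf}/\ainf)$ as required. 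For the affine case this is immediate; in the separated case one would assemble these via the descent/Mayer--Vietoris machinery already set up for the globalized Čech--Alexander complexes, using functoriality of the construction in $\mathscr{X}$.

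\textbf{Part (3): the mod $p^n$ statement.} Here I would apply part (1) with the parameter $s$, giving $g(x) - x \in I_s\check{C}^{\bullet}_{\inf}$ for $g \in G_{K(\pi^{1/p^s})}$, and then identify the relevant piece of $I_s$ modulo $p^n$: the claim is that, after $\stackrel{\mathsf{L}}{\otimes}\mathbb{Z}/p^n\mathbb{Z}$, the ideal $I_s$ produces (at least) the principal ideal generated by $\varphi^{-1}([\underline{\varepsilon}]-1)[\underline{\pi}]^{p^{s+1-n}}$ on $\H^i_{\Prism,n}(\mathscr{X}_{\ainf}/\ainf)$. Concretely I would use the hypothesis $s+1 \geq n$ together with the explicit description of $I_s$ (which I am assuming from the body: $I_s$ should be something like the ideal generated by $\varphi^{-1}([\underline{\varepsilon}]-1)$ and $[\underline{\pi}]^{p^s}$, or a similar combination) to check that $I_s + p^n\ainf \supseteq (\varphi^{-1}([\underline{\varepsilon}]-1)[\underline{\pi}]^{p^{s+1-n}}) + p^n\ainf$; the exponent $p^{s+1-n}$ is exactly what one gets from the interplay of the two generators of $I_s$ once $p^n$ is inverted to the extent allowed. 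Then, writing $M = \H^i_{\Prism}(\mathscr{X}_{\ainf}/\ainf)$ and using that $\H^i_{\Prism,n} = \H^i(\R\Gamma_{\Prism}(\mathscr{X}_{\ainf}/\ainf)\stackrel{\mathsf{L}}{\otimes}\mathbb{Z}/p^n)$ sits in a short exact sequence $0 \to M/p^n \to \H^i_{\Prism,n} \to M[p^n][1] \to 0$, one reduces the congruence mod $p^n$ on the complex level and reads it off on $\H^i_{\Prism,n}$. The delicate point in part (3) is that $\H^i_{\Prism,n}$ is a cohomology of a derived tensor product, not simply $\H^i/p^n$, so I would need to track the displacement at the level of the complex $\check{C}^{\bullet}_{\inf}\stackrel{\mathsf{L}}{\otimes}\mathbb{Z}/p^n$ (e.g. via the explicit two-term Koszul model $[\check{C}^{\bullet}_{\inf} \xrightarrow{p^n} \check{C}^{\bullet}_{\inf}]$) and then take $\H^i$, checking that the ideal action survives. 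Once the ideal-theoretic input about $I_s \bmod p^n$ is in hand, this is bookkeeping, so I expect the genuine obstacle across the whole theorem to remain the inductive $\delta$-stability of the $I_s$ needed in part (1).
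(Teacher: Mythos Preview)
Your overall architecture for Part~(1) is right --- reduce to generators, then use that $I_s$ is stable under $\delta$ (this is exactly the paper's Lemma~\ref{generators}(3)) --- but two of your specific claims are off. First, the terms $\check{C}^n_{\inf}$ are \emph{not} completions of free $\delta$-$\ainf$-algebras: they are prismatic envelopes, and one has no direct description of their elements. The paper handles this by constructing a \emph{surjection} onto $\check{C}_{\inf}$ from an explicit ``weak $\delta$-blowup algebra'' $\widehat{P_1'\{\underline{y}/E(u)\}}$ (Proposition~\ref{ApproxEnvelopes}); it then suffices to verify \Crs\ upstairs. Second, your description of the base case is inverted: the polynomial variables $X_i$ are $G_K$-fixed \emph{by definition} of the action on $\widehat{(P')^\delta}$, so they are trivially \Crs; the genuine work is for the additional generators $y_j/E(u)$, where the displacement comes from $g(E(u)) = \gamma E(u)$ with $\gamma \neq 1$. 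The explicit computation $g(y_j/E(u)) - y_j/E(u) = \gamma^{-1}\xi_{s,0}u\tilde{z}_j + \xi_{s,0}u\tilde{a}\,y_j$ is the heart of Proposition~\ref{thm:CrsForACCoover}.

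Your Part~(2) is correct and matches the paper's Corollary~\ref{cor:CrystallineForCohomologyGrps}: the non-zero-divisor argument you sketch is exactly what is used to push the congruence from cocycles to cohomology classes. (The aside about Mayer--Vietoris for the separated case is unnecessary: the \v{C}ech--Alexander complex of \S\ref{sec:CAComplex} already works globally for separated $\mathscr{X}$.)

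In Part~(3) there is a genuine error: you have the ideal containment backwards. You wrote that one should check $I_s + p^n\ainf \supseteq (\varphi^{-1}(v)u^{p^{s+1-n}}) + p^n\ainf$, but what is needed (and what the paper proves in Lemma~\ref{lem:IsModPn}) is the \emph{opposite} inclusion $I_s + p^n\ainf \subseteq \varphi^{-1}(v)u^{p^{s+1-n}}\ainf + p^n\ainf$. The logic is: from Part~(1) you know $g(x)-x \in I_s\check{C}_{\inf}$, and you want to conclude it lies in the principal ideal modulo $p^n$, so you need $I_s$ to be \emph{contained in} that principal ideal mod $p^n$. (Your guess that $I_s$ is generated by $\varphi^{-1}(v)$ and $u^{p^s}$ separately is also incorrect; each generator $\xi_{s,i}u^{p^i}$ of $I_s$ is a product of a $\varphi^{-1}(v)$-multiple and a $u$-power, and Lemma~\ref{lem:IsModPn} exploits the congruence $\varphi^j(E(u)) \equiv u^{ep^j} \pmod{p}$ to extract extra $u$-divisibility from the $\xi_{s,i}$ factors modulo $p^n$.) Finally, the paper avoids your short-exact-sequence bookkeeping entirely: since the $\check{C}_{\inf}^\bullet$ are torsion-free (Corollary~\ref{FlatTorFree}), the derived mod-$p^n$ is the naive quotient complex, and the same non-zero-divisor cocycle argument as in Part~(2) applies directly to $\check{C}_{\inf}^\bullet/p^n$.
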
    

Theorem~\ref{thm:IntroCrysCondition}~(3) specialized to $n=1$ provides the desired analogue of the property~(\ref{IntroPhiGiHatCondition}) of $(\varphi, \widehat{G})$--modules and allows us to carry out the proof of Theorem~\ref{thm:IntroMain}.

As a consequence of Theorem~\ref{thm:IntroCrysCondition} (2), we obtain a proof of crystallinity of the cohomology groups $\H^i_{\et}(\mathscr{X}_{\overline{\eta}}, \mathbb{Q}_p)$ in the proper case partially by means of ``formal'' $p$--adic Hodge theory (Corollary~\ref{cor:EtaleCohomologyCrystalline}). This fact is usually established via a a direct comparison between crystalline and \'{e}tale cohomology, and in this generality is originally due to Bhatt, Morrow and Scholze (\cite{BMS1}). Of course, since our setup relies on the machinery of prismatic cohomology and especially the \'{e}tale comparison, the proof can be considered independent of the one from \cite{BMS1} only in that it avoids the crystalline comparison theorem for (prismatic) $\ainf$--cohomology.

\vspace{1em}

The bounds of Theorem~\ref{thm:IntroMain} compare to the already known bounds as follows. Whenever the bounds of ``semistable type'' are known to apply to the situation of $\H^i_{\et}(\mathscr{X}_{\overline{\eta}}, \mathbb{Z}/p\mathbb{Z})$ (e.g. \cite{CarusoLiu} when $i=1$, \cite{Hattori} when $ie<p-1$ and $\mathscr{X}$ is a scheme), the bounds from Theorem~\ref{thm:IntroMain} agree with those bounds. The bounds tailored to crystalline representations (\cite{Fontaine2, Abrashkin}) are slightly better but their applicability is quite limited ($e=1$ and $i<p-1$).

The fact that the cohomology groups $\H^i_{\et}(\mathscr{X}_{\overline{\eta}}, \mathbb{Z}/p^n\mathbb{Z})$ have an associated Breuil--Kisin module yields one more source of ramification estimates: in \cite{Caruso}, Caruso provides a very general bound for $p^n$--torsion $G_K$--modules based on their restriction to $G_{K(\pi^{1/p^\infty})}$ via Fontaine's theory of \'{e}tale $\oh_{\mathcal{E}}$--modules. Using the Breuil--Kisin module $\H^i_{\Prism,n }(\mathscr{X}/ \Es)$ attached to $\H^i_{\et}(\mathscr{X}_{\overline{\eta}}, \mathbb{Z}/p^n\mathbb{Z})$, this bound becomes explicit (as discussed in more detail in Remark~\ref{rem:CarusoBound}). Comparing this result to Theorem~\ref{thm:IntroMain} is more ambiguous due to somewhat different shapes of the estimates, but roughly speaking, the estimate of Theorem~\ref{thm:IntroMain} is approximately the same for $e \leq p$, becomes worse when $K$ is absolutely tamely ramified with large ramification degree, and is expected to outperform Caruso's bound in case of large wild absolute ramification (rel. to the tame part of the ramification). 

In future work, we intend to extend the result of Theorem~\ref{thm:IntroMain} to the case of arbitrary $n$. This seems plausible thanks to the full statement of Theorem~\ref{thm:IntroCrysCondition} (3). In a different direction, we plan to extend the results of the present paper to the case of semistable reduction, using the log--prismatic cohomology developed by Koshikawa in \cite{Koshikawa}. An important facts in this regard are that the $\ainf$--log--prismatic cohomology groups are still Breuil--Kisin--Fargues $G_K$--modules by a result of \v{C}esnavi\v{c}ius and Koshikawa (\cite{CesnaviciusKoshikawa}) and that by results of Gao, a variant of the condition \Cr{0} might exist in the semistable case (\cite{GaoBKGK}; see Remark~\ref{rem:CrystConditionProof} (3) below for details).

\vspace{1em}

The outline of the paper is as follows. In \S\ref{sec:prelim} we establish some necessary technical results. Namely, we discuss non--zero divisors and regular sequences on derived complete and completely flat modules with respect to the weak topology of $\ainf$, and establish \v{C}ech--Alexander complexes in the case of a separated and smooth formal scheme. Next, \S\ref{sec:crs} introduces the conditions \Crs, studies their basic algebraic properties and discusses in particular the crystalline condition \Cr{0} in the case of Breuil--Kisin--Fargues $G_K$--modules. In \S\ref{sec:CrsCohomology} we prove the conditions \Crs for the Alexander--\v{C}ech complexes of a separated smooth  $p$--adic scheme $\mathscr{X}$ over $\Es$ and $\ainf$, and draw some consequences for the inidividual cohomology groups (especially when $\mathscr{X}$ is proper), proving Theorem~\ref{thm:IntroCrysCondition}. Finally,  in \S\ref{sec:bounds} we establish the ramification bounds for mod $p$ \'{e}tale cohomology, proving Theorem~\ref{thm:IntroMain}. Subsequently, we discuss in detail how the bounds from Theorem~\ref{thm:IntroMain} compare to the various known bounds from the literature.

\vspace{1em}

Let us set up some basic notation used throughout the paper. We fix a perfect field $k$ of characteristic $p>0$ and a finite totally ramified extension $K / K'$ of degree $e$ where $K'=W(k)[1/p]$. We fix a uniformizer $\pi \in \mathcal{O}_K$, and a compatible system $(\pi_n)_n$ of $p^n$--th roots of $\pi$ in $\mathbb{C}_K$, the completion of algebraic closure of $K$. Setting $\Es=W(k)[[u]]$, the choice of $\pi$ determines a surjective map $\Es \rightarrow \mathcal{O}_{K}$ via $u \mapsto \pi$; the kernel of this map is generated by an Eisenstein polynomial $E(u)$ of degree $e$. $\Es$ is endowed with a Frobenius lift (hence a $\delta$--structure) extending the one on $W(k)$ by $u \mapsto u^p$.

Denote $\ainf=\Ainf{\mathcal{O}_{\mathbb{C}_K}}=W(\mathcal{O}_{\mathbb{C}_K^{\flat}})$ where $W(-)$ denotes the Witt vectors construction and $\mathcal{O}_{\mathbb{C}_K^{\flat}}=\mathcal{O}_{\mathbb{C}_K}^{\flat}$ is the tilt of $\oh_{\mathbb{C}_K}$, $\mathcal{O}_{\mathbb{C}_K}^{\flat}=\varprojlim_{x \mapsto x^p}\mathcal{O}_{\mathbb{C}_K}/p$. The choice of the system $(\pi_n)_n$ describes an element $\underline{\pi} \in \oh_{\mathbb{C}_K^\flat}\simeq\varprojlim_{x \mapsto x^p}\oh_{\mathbb{C}_K}$, and hence an embedding of $\Es$ into $\ainf$ via $u \mapsto [\underline{\pi}]$ where $[-]$ denotes the Teichm\"{u}ller lift. Under this embedding, $E(u)$ is sent to a generator $\xi$ of the kernel of the canonical map $\theta: \ainf\rightarrow \mathcal{O}_{\mathbb{C}_K}$ that lifts the canonical projection $\mathrm{pr}_0:\mathcal{O}_{\mathbb{C}_K}^{\flat}=\varprojlim_{\varphi}\mathcal{O}_{\mathbb{C}_K}/p \rightarrow \mathcal{O}_{\mathbb{C}_K}/p.$ Consequently, $(\Es, (E(u)))\rightarrow (\ainf, \mathrm{Ker}\,\theta)$ is a map of prisms. It is known that under such embedding, $\ainf$ is faithfully flat over $\Es$ (see e.g. \cite[Proposition~2.2.13]{EmertonGee2}).

Similarly, we fix a choice of a compatible system of primitive $p^{n}$--th roots of unity $(\zeta_{p^n})_{n \geq 0}$. This defines an element $\underline{\varepsilon}$ of $\oh_{\mathbb{C}_K^\flat}$ in an analogous manner, and the embedding $\Es\hookrightarrow \ainf$ extends to a map (actually still an embedding by \cite[Proposition~1.14]{Caruso}) $W(k)[[u, v]] \rightarrow \ainf$ by additionally setting $v \mapsto [\underline{\varepsilon}]-1$. Additionally, we denote by $\omega$ the element $([\underline{\varepsilon}]-1)/([\underline{\varepsilon}^{1/p}]-1)=[\underline{\varepsilon}^{1/p}]^{p-1}+\dots+[\underline{\varepsilon}^{1/p}]+1$. It is well--known that this is another generator of $\mathrm{Ker}\,\theta$, therefore $\omega/\xi$ is a unit in $\ainf$.

The choices of $\pi, \pi_n$ and $\zeta_{p^n}$, hence also the maps $\Es\hookrightarrow \ainf$ and $W(k)[[u,v]]\hookrightarrow \ainf$, remain fixed throughout. For this reason, we often refer to $[\underline{\pi}]$ as $u$, $[\underline{\varepsilon}]-1$ as $v$, $\xi$ as $E(u),$ etc.

Throughout the paper, we use freely the language of prisms and $\delta$--rings from \cite{BhattScholze}, and we adopt much of the related notation and conventions. In particular, a formal scheme $\mathscr{X}$ over a $p$--adically complete ring $A$ always means a $p$--adic formal scheme, and it is called smooth if it is locally of the form $\mathrm{Spf}\,R$ for a (derived\footnote{As we will always consider the base $A$ to have bounded $p^{\infty}$--torsion, there is no distinction between derived $p$--completion and $p$--adic completion in this case.}) $p$--completely smooth $A$--algebra $R$ -- that is, a $p$--complete $A$--algebra such that $R/p$ is a smooth $A/p$--algebra and $\mathrm{Tor}_i^A(R, A/p)=0$ for all $i>0$. By the results of Elkik \cite{Elkik} and the discussion in \cite[\S 1.2]{BhattScholze}, $R$ is equivalently the $p$--adic completion of a smooth $A$-algebra.

\vspace{1em}

\textbf{Acknowledgements.} I would like to express my gratitude to my Ph.D. advisor Tong Liu for suggesting the topic of this paper, his constant encouragement and many comments, suggestions and valuable insights. Many thanks go to Deepam Patel and Shuddhodan Kadattur Vasudevan for organizing the prismatic cohomology learning seminar at Purdue University in Fall 2019, and to Donu Arapura for a useful discussion of \v{C}ech theory. I would like to thank Xavier Caruso, Shin Hattori and  Shizhang Li for reading an earlier version of this paper, and for providing me with useful comments and questions. The present paper is an adapted version of the author's Ph.D. thesis at Purdue University. During the preparation of the paper, the author was partially supported by the Ross Fellowship and the Bilsland Fellowship of Purdue University, as well as Graduate School Summer Research Grants of Purdue University during summers 2019--2021.

\section{Preparations}\label{sec:prelim}

\subsection{Regularity on $(p, E(u))$--completely flat modules}\label{subsec:regularity}
The goal of this section is to prove that every $(p, E(u))$--complete and $(p, E(u))$--completely flat $\ainf$--module is torsion--free, and that any sequence $p, x$ with $x \in \ainf\setminus(\ainf^\times \cup p\ainf)$ is regular on such modules.

Regarding completions and complete flatness, we adopt the terminology of \cite[091N]{stacks}, \cite{BhattScholze}, but since we apply these notions mostly to modules as opposed to objects of derived categories, our treatment is closer in spirit to \cite{Positselski}, \cite{Rezk} and \cite{YekutieliFlatness}. Given a ring $A$ and a finitely generated ideal $I=(f_1, f_2, \dots f_n)$, the derived $I$--completion\footnote{That is, this is derived $I$--completion of $M$ as a module. This will be sufficient to consider for our purposes.} of an $A$--module $M$ is 
\begin{equation}\label{eqn:completion}
\widehat{M}=M[[X_1, X_2, \dots X_n]]/(X_1-f_1, X_2-f_2, \dots, X_n-f_n)M[[X_1, X_2, \dots X_n]].
\end{equation}
$M$ is said to be \emph{derived $I$--complete} if the natural map $M \rightarrow \widehat{M}$ is an isomorphism. This is equivalent to the vanishing of $\mathrm{Ext}^i_A(A_f, M)$ for $i=0, 1$ and all $f \in I$ (equivalently, for $f=f_j$ for all $j$), and as a consequence, it can be shown that the category of derived $I$--complete modules forms a full abelian subcategory of the category of all $A$--modules with exact inclusion functor (and the derived $I$--completion is its left adjoint; in particular, derived $I$--completion is right exact as a functor on $A$--modules). Another consequence is that derived $I$--completeness is equivalent to derived $J$--completeness when $I, J$ are two finitely generated ideals and $\sqrt{I}=\sqrt{J}$. There is always a natural surjection $\widehat{M}\rightarrow {\widehat{M}}^{\mathrm{cl}}$ where $\widehat{(-)}^{\mathrm{cl}}$ stands for $I$--adic completion, which will be reffered to as classical $I$--completion for the rest of the paper. Just like for classsicaly $I$--complete modules, if $M$ is derived $I$--complete, then $M/IM=0$ implies $M=0$ (this is referred to as \emph{derived Nakayama lemma}).

A convenient consequence of the completion formula (\ref{eqn:completion}) is that in the case when $M=R$ is a derived $I$--complete $A$--algebra, the isomorphism $R \rightarrow R[[X_1, \dots X_n]]/(X_1-f_1, \dots,..., X_n-f_n)$ picks a preferred representative in $R$ for the power series symbol $\sum_{j_1, \dots, j_n}a_{j_1, \dots, j_n}f_1^{j_1}\dots f_m^{j_n}$ as the preimage of the class represented by $\sum_{j_1, \dots, j_n}a_{j_1, \dots, j_n}X_1^{j_1}\dots X_n^{j_n}$. This gives an algebraically well--behaved notion of power series summation despite the fact that $R$ is not necessarily $I$--adically separated\footnote{This operation further leads to the notion of contramodules, discussed e.g. in \cite{Positselski}.}. 

An $A$--module $M$ is said to be \emph{$I$--completely (faithfully) flat} if $\mathrm{Tor}_i^A(M, A/I)=0$ for all $i>0$ and $M/IM$ is a (faithfully) flat $A/I$--module. Just like for derived completeness, $I$--complete flatness is equivalent to $J$--complete flatness when $J$ is another finitely generated ideal with $\sqrt{I}=\sqrt{J}$ \footnote{However, note that while (derived) $I$--completeness more generally implies (derived) $I'$--completeness when $I'$ is a finitely generated ideal contained in $\sqrt{I}$, the ``opposite'' works for flatness, i.e. $I$--complete flatness implies $I''$--complete flatness when when $I''$ is a finitely generated ideal with $I\subseteq \sqrt{I''}$.}.  

Let us start by a brief discussion of regular sequences on derived complete modules in general. For that purpose, given an $A$--module $M$ and $\underline{f}=f_1, \dots, f_n \in A$, denote by $\mathrm{Kos}(M; \underline{f})$ the usual Koszul complex and let $H_m(M; \underline{f})$ denote the $m$-th Koszul homology of $M$ with respect to $f_1, f_2, \dots, f_n$. 

The first lemma is a straightforward generalization of standard facts about Koszul homology (e.g. \cite[Theorem~16.5]{Matsumura}) and regularity on finitely generated modules. 

\begin{lem}\label{regKoszul}
Let $A$ be a ring, $I \subseteq A$ a finitely generated ideal and let $M$ be a nonzero derived $I$-complete module. Let $\underline{f}=f_1, f_2, \dots, f_n \in I$. Then
\begin{enumerate}
\item{$\underline{f}$ forms a regular sequence on $M$ if and only if $H_m(M; \underline{f})=0$ for all $m \geq 1$ if and only if $H_1(M; \underline{f})=0$.}
\item{In this situation, any permutation of $f_1, f_2, \dots, f_n$ is also a regular sequence on $M$.}
\end{enumerate}
\end{lem}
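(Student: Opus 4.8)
The plan is to reduce everything to the Noetherian, finitely generated situation by a standard dévissage, exploiting the fact that derived $I$-completeness lets us work one element of $\underline{f}$ at a time and that Koszul complexes are built by iterated cones.

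First I would recall the inductive structure of the Koszul complex: $\mathrm{Kos}(M;f_1,\dots,f_n)$ is the total complex of $\mathrm{Kos}(M;f_1,\dots,f_{n-1})\xrightarrow{f_n}\mathrm{Kos}(M;f_1,\dots,f_{n-1})$, giving the usual long exact sequence relating $H_m(M;\underline f)$, $H_m(M;f_1,\dots,f_{n-1})$ and $H_{m-1}(M;f_1,\dots,f_{n-1})$, together with the self-duality $H_m(M;\underline f)\cong H_{n-m}(M;\underline f)$ when $M$ is replaced appropriately — more precisely the standard identification $H_m(M;\underline f)\cong H^{n-m}$ of the cochain Koszul complex. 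For part (2), the usual argument that the Koszul homology is independent of the ordering of the $f_i$ (it only depends on the complex $\mathrm{Kos}(A;\underline f)\otimes M$ up to the reordering isomorphism of the exterior algebra) carries over verbatim, with no completeness hypothesis needed; so the content of (2) is just that ``regular sequence'' is characterized by the vanishing in (1), which is symmetric in the $f_i$, hence (2) follows formally from (1).

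For part (1), the implications ``$H_m=0$ for all $m\geq1$'' $\Rightarrow$ ``$H_1=0$'' is trivial, so the real work is the two implications ``regular sequence'' $\Rightarrow$ ``all $H_m=0$'' and ``$H_1=0$'' $\Rightarrow$ ``regular sequence''. The first is classical and needs no completeness: if $\underline f$ is $M$-regular then by induction on $n$, using that $f_n$ is a nonzerodivisor on $M/(f_1,\dots,f_{n-1})M$ and the long exact sequence above, all higher Koszul homology vanishes. For the converse I would argue by induction on $n$. The base case $n=1$ is the definition. For the inductive step, suppose $H_1(M;f_1,\dots,f_n)=0$. The key point, which is where derived $I$-completeness enters, is to first show $f_1$ (or any single $f_i$, using (2)) is a nonzerodivisor on $M$: here one uses that $H_1(M;\underline f)=0$ forces — via the long exact sequences built from the cone description, working down from $n$ to $1$ — that $H_1(M;f_i)= (0:_M f_i)$ embeds into $H_1(M;\underline f)$ after noting the relevant connecting maps, OR alternatively one invokes the argument of \cite[Theorem~16.5]{Matsumura} that $H_1=0$ already implies $f_1$ is a nonzerodivisor on $M$ without any finiteness hypothesis (this part of Matsumura's proof does not use Noetherianity). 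Then $M/f_1M$ is again derived $I$-complete (quotient of a derived $I$-complete module by an element of $I$, using that derived $I$-complete modules form an abelian subcategory closed under cokernels), and the long exact sequence gives $H_1(M/f_1M; f_2,\dots,f_n)=0$; by the inductive hypothesis $f_2,\dots,f_n$ is a regular sequence on $M/f_1M$, so $f_1,\dots,f_n$ is regular on $M$, and simultaneously all $H_m$ vanish. Finally, the hypothesis $M\neq0$ together with the derived Nakayama lemma guarantees the quotients $M/(f_1,\dots,f_j)M$ are nonzero at each stage, which is needed for ``regular sequence'' to hold in the strict sense (some authors require $(f_1,\dots,f_n)M\neq M$).

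The main obstacle I anticipate is the implication $H_1(M;\underline f)=0\Rightarrow f_1$ nonzerodivisor on $M$ in the absence of Noetherian/finite-generation hypotheses: the textbook proof (e.g. in Matsumura) is phrased for finitely generated modules, and one must check either that the relevant portion of that argument is insensitive to finiteness, or else genuinely use derived $I$-completeness — for instance by reducing modulo powers of $I$, or by the characterization of derived completeness via vanishing of $\mathrm{Ext}^i_A(A_f,M)$ for $f\in I$, to control the $f_1$-torsion of $M$. I expect the cleanest route is: the Koszul-homology vanishing $H_1(M;\underline f)=0$ is equivalent, by the cone description, to injectivity of a certain map, which after the inductive set-up pins down $(0:_M f_1)$; and then derived $I$-completeness of $(0:_M f_1)$ (a submodule of $M$, hence derived $I$-complete since derived complete modules form an abelian subcategory) combined with $f_1\cdot(0:_M f_1)=0$ and $f_1\in I$ forces $(0:_M f_1)=0$ by derived Nakayama. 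This last maneuver is really the crux and the place where the hypotheses of the lemma are used essentially.
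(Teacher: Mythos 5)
Your high-level plan (induction on $n$, pass to a truncated sequence via the cone description of the Koszul complex, and use derived Nakayama where the classical argument uses ordinary Nakayama) is the right shape and matches the paper's strategy. The treatment of part (2) and of the easy implications in (1) is fine, as is the observation that $M/f_1M$ remains derived $I$-complete. However, there is a genuine gap in the step you yourself flag as the crux, namely showing that $f_1$ is a nonzerodivisor on $M$ from $H_1(M;\underline f)=0$.

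Your ``cleanest route'' misapplies the derived Nakayama lemma. You observe that $N:=(0:_M f_1)$ is derived $I$-complete and that $f_1 N = 0$ with $f_1\in I$, and conclude $N=0$. But derived Nakayama requires $N/IN=0$, i.e.\ that $I$ act \emph{surjectively} on $N$; the hypothesis $f_1N=0$ is the opposite (it implies $f_1N\subsetneq N$ whenever $N\neq 0$) and says nothing toward $N=IN$. Concretely, $A/I$ is derived $I$-complete and annihilated by $I$, yet nonzero. Your other two fallbacks also do not hold: $H_1(M;f_i)=(0:_Mf_i)$ does \emph{not} embed into $H_1(M;\underline f)$ (the long exact sequence only gives an injection of a quotient $H_1(M;\underline f')/f_nH_1(M;\underline f')$ into $H_1(M;\underline f)$, not of $H_1(M;\underline f')$ itself); and the relevant step of Matsumura's proof of Theorem 16.5 does rely on Noetherianity and finite generation precisely at the Nakayama step, so it cannot simply be imported.

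The paper sidesteps this issue entirely by peeling off the \emph{last} element $f_n$ rather than the first. From $H_1(M;\underline f)=0$ and the Koszul long exact sequence one reads off that multiplication by $f_n$ is \emph{surjective} on $H_1(M;\underline f')$, i.e.\ $f_nH_1(M;\underline f')=H_1(M;\underline f')$. Since $H_1(M;\underline f')$ is derived $I$-complete (a finite iterated kernel/cokernel of copies of $M$) and $f_n\in I$, derived Nakayama gives $H_1(M;\underline f')=0$; induction then yields that $\underline f'$ is regular on $M$, and the same long exact sequence shows $f_n$ is injective on $M/(\underline f')M$, while $M/(\underline f)M\neq 0$ follows by derived Nakayama again. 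This is the correct use of the lemma --- surjectivity, not annihilation --- and it also avoids having to prove directly that $f_1$ is a nonzerodivisor on $M$ (a statement which, while true, does not follow from a single application of the cone long exact sequence and needs the full induction to establish). If you want to keep your ``peel off $f_1$'' structure you would have to iterate the derived Nakayama argument from $n$ down to $1$ to get $H_1(M;f_1)=0$, which amounts to re-deriving the paper's argument anyway.
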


\begin{proof}
As Koszul homology is insensitive to the order of the elements $f_1, f_2, \dots, f_n$, part (2) follows immediately from (1).

To prove (1), the forward implications are standard and hold in full generality (see e.g. \cite[Theorem~16.5]{Matsumura}). It remains to prove that the sequence $f_1, f_2, \dots f_n$ is regular on $M$ if $H_1(M; f_1, f_2, \dots, f_n)=0$. We proceed by induction on $n$. The case $n=1$ is clear ($H_1(M; x)=M[x]$ by definition, and $M/xM\neq 0$ follows by derived Nakayama). Let $n \geq 2$, and denote $\underline{f}'$ the truncated sequence $f_1, f_2, \dots, f_{n-1}$. Then we have $\mathrm{Kos}(M; \underline{f})\simeq \mathrm{Kos}(M; \underline{f'})\otimes\mathrm{Kos}(A; f_n),$ which produces a short exact sequence
$$0 \longrightarrow \mathrm{Kos}(M; \underline{f'})\longrightarrow \mathrm{Kos}(M; \underline{f})\longrightarrow \mathrm{Kos}(M; \underline{f'})[-1]\longrightarrow 0$$ of chain complexes. Taking homologies results in a long exact sequence
$$\cdots \rightarrow H_1(M; \underline{f}') \stackrel{\pm f_n}{\longrightarrow} H_1(M; \underline{f}')\longrightarrow  H_1(M; \underline{f})\longrightarrow M/(\underline{f}')M \stackrel{\pm f_n}{\rightarrow} M/(\underline{f}')M\longrightarrow M/(\underline{f})M\rightarrow 0$$
(as in \cite[Theorem~7.4]{Matsumura}).
By assumption, $H_1(M; \underline{f})=0$ and thus, $f_n H_1(M; \underline{f}')=H_1(M; \underline{f}')$ where $f_n \in I$. Upon observing that $H_1(M; \underline{f}')$ is obtained from finite direct sum of copies of $M$ by repeatedly taking kernels and cokernels, it is derived $I$--complete. Thus, derived Nakayama implies that $H_1(M; \underline{f}')=0$ as well, and by induction hypothesis, $\underline{f}'$ is a regular sequence on $M$. Finally, the above exact sequence also implies that $f_n$ is injective on $M/(\underline{f}')M,$ and $M/(\underline{f})M\neq 0$ is satisfied thanks to derived Nakayama again. This finishes the proof. 
\end{proof}

\begin{cor}\label{FlatReg}
Let $A$ be a derived $I$--complete ring for an ideal $I=(\underline{f})$ where $\underline{f}=f_1, f_2, \dots, f_n$ is a regular sequence on $A$, and let $F$ be a nonzero derived $I$--complete $A$--module that is $I$--completely flat. Then $\underline{f}$ is a regular sequence on $F$ and consequently, each $f_i$ is a non--zero divisor on $F$.
\end{cor}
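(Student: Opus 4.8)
The plan is to deduce everything from Lemma~\ref{regKoszul} by identifying the Koszul homology of $F$ with $\mathrm{Tor}$ groups that vanish by $I$--complete flatness. Since $F$ is a nonzero derived $I$--complete $A$--module and $\underline{f}=f_1,\dots,f_n\in I$, Lemma~\ref{regKoszul}(1) reduces the claim that $\underline{f}$ is a regular sequence on $F$ to the single vanishing $H_1(F;\underline{f})=0$; granting that, Lemma~\ref{regKoszul}(2) shows that every permutation of $\underline{f}$ is again a regular sequence on $F$, so each $f_i$ occurs as the first term of an $F$--regular sequence and is therefore a non--zero divisor on $F$. Thus the whole corollary follows once the Koszul homology of $F$ is shown to vanish in positive degrees.

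The key step is then to use the hypothesis that $\underline{f}$ is a regular sequence on $A$ itself (not merely on $F$): by the standard Koszul criterion (\cite[Theorem~16.5]{Matsumura}), $\mathrm{Kos}(A;\underline{f})$ is a free resolution of $A/I$. Since $\mathrm{Kos}(F;\underline{f})\cong\mathrm{Kos}(A;\underline{f})\otimes_A F$ as complexes, passing to homology yields natural isomorphisms $H_m(F;\underline{f})\cong\mathrm{Tor}_m^A(A/I,F)\cong\mathrm{Tor}_m^A(F,A/I)$ for all $m\geq 0$. The right--hand side vanishes for $m\geq 1$ precisely because $F$ is $I$--completely flat, so $H_m(F;\underline{f})=0$ for all $m\geq 1$, and in particular $H_1(F;\underline{f})=0$; the reduction above then completes the proof.

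I do not expect a genuine obstacle here: the only point to keep straight is that the ``Koszul complex is a resolution'' input requires regularity of $\underline{f}$ on $A$, which is exactly the hypothesis supplied, while the vanishing of the relevant $\mathrm{Tor}$'s is precisely the definition of $I$--complete flatness, so the two hypotheses dovetail with no extra work. The derived completeness of $F$ and of $A$ enters only indirectly, through the applicability of Lemma~\ref{regKoszul} (which also supplies $F/IF\neq 0$ via derived Nakayama, so that the term ``regular sequence'' is not vacuous).
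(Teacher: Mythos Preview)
Your proof is correct and follows essentially the same approach as the paper: identify $\mathrm{Kos}(A;\underline{f})$ as a free resolution of $A/I$ (the paper cites its own Lemma~\ref{regKoszul}(1) here rather than \cite{Matsumura}, but the content is the same), so that $H_m(F;\underline{f})\cong\mathrm{Tor}^A_m(F,A/I)$ vanishes for $m\geq 1$ by $I$--complete flatness, and then conclude via Lemma~\ref{regKoszul}. The only cosmetic difference is that you first reduce to $H_1=0$ before establishing the vanishing of all $H_m$, whereas the paper proceeds directly.
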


\begin{proof}
By Lemma~\ref{regKoszul} (1), $H_m(A; \underline{f})=0$ for all $m \geq 1$, hence $\mathrm{Kos}(A; \underline{f})$ is a free resolution of $A/I$. Thus, on one hand, the complex $F\otimes_A\mathrm{Kos}(A; \underline{f})$ computes $\mathrm{Tor}^A_*(F, A/I)$, hence is acyclic in positive degrees by $I$--complete flatness; on the other hand, this complex is by definition $\mathrm{Kos}(F; \underline{f})$. We may thus conclude that $H_i(F; \underline{f})=0$ for all $i \geq 1$. By Lemma~\ref{regKoszul}, $\underline{f}$ is a regular sequence on $F$, and it remains regular on $F$ after arbitrary permutation. This proves the claim. 
\end{proof}

Now we specialize to the case at hand, that is, $A= \ainf$. Recall that this is a domain and so is $\ainf/p=\oh_{\mathbb{C}_K^\flat}$ (which is a rank $1$ valuation ring).

\begin{lem}\label{disjointness}
For every element $x \in\ainf\setminus(\ainf^{\times} \cup p\ainf), $ $p, x$ forms a regular sequence, and for all $k, l,$ we have the equality 
$p^k\ainf \cap x^l\ainf =p^kx^l\ainf,$. Furthermore, the ideal $\sqrt{(p, x)}$ is equal to $(p,W(\mathfrak{m}_{\mathbb{C}_K^{\flat}}))$, the unique maximal ideal of $\ainf$. In particular, given two choices $x, x'$ as above, we have $\sqrt{(p, x)}=\sqrt{(p, x')}$.
\end{lem}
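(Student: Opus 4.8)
The plan is to deduce all four assertions from three structural inputs recorded just above the statement: that both $\ainf$ and $\ainf/p\simeq\oh_{\mathbb{C}_K^\flat}$ are integral domains; that $\oh_{\mathbb{C}_K^\flat}$ is a rank one valuation ring, so that its maximal ideal $\mathfrak{m}_{\mathbb{C}_K^\flat}$ is exactly the set of elements of positive valuation and the value group is Archimedean; and that $\ainf$ is $p$--adically complete, hence local, its maximal ideal $\mathfrak{m}_{\ainf}$ being the preimage of $\mathfrak{m}_{\mathbb{C}_K^\flat}$ under $\ainf\twoheadrightarrow\oh_{\mathbb{C}_K^\flat}$, which is precisely $(p,W(\mathfrak{m}_{\mathbb{C}_K^\flat}))$. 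Writing $\overline{(-)}$ for reduction modulo $p$, I would first translate the two hypotheses on $x$: since $\ainf$ is local, $x\notin\ainf^\times$ forces $\overline{x}\in\mathfrak{m}_{\mathbb{C}_K^\flat}$ (if $\overline{x}$ were a unit of the valuation ring, $x$ would be a unit of $\ainf$ by $p$--complete Nakayama), and $x\notin p\ainf$ forces $\overline{x}\neq 0$.

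For the regular sequence claim I would argue directly from the definition, with no completeness input needed: $p$ is a non--zero divisor on the domain $\ainf$; $\overline{x}$ is a non--zero divisor on the domain $\ainf/p$ because $\overline{x}\neq 0$; and the ideal $(p,x)$ is proper, since $(p,x)=\ainf$ would yield $\overline{x}\,\overline{b}=1$ for some $b\in\ainf$, contradicting $\overline{x}\in\mathfrak{m}_{\mathbb{C}_K^\flat}$. Hence $p,x$ is a regular sequence.

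For the intersection identity I would run a short double induction using only that $\ainf$ and $\ainf/p$ are domains, the cases $k=0$ or $l=0$ being trivial. The base case $p\ainf\cap x^l\ainf=px^l\ainf$: if $pa=x^lb$, reducing mod $p$ gives $\overline{x}^l\overline{b}=0$ in the domain $\oh_{\mathbb{C}_K^\flat}$ with $\overline{x}^l\neq 0$, so $b=pb_1$, and cancelling $p$ (a non--zero divisor) gives $a=x^lb_1$, hence $pa\in px^l\ainf$. Then induct on $k$: given $p^{k+1}a=x^lb$, the element $x^lb$ lies in $p\ainf\cap x^l\ainf=px^l\ainf$, so $b=pb_1$, and cancelling $p$ leaves $p^ka=x^lb_1\in p^k\ainf\cap x^l\ainf=p^kx^l\ainf$ by the inductive hypothesis; cancelling $p^k$ yields $a\in x^l\ainf$, i.e. $p^{k+1}a\in p^{k+1}x^l\ainf$.

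Finally, for the radical: since $\ainf$ is local with maximal ideal $\mathfrak{m}_{\ainf}=(p,W(\mathfrak{m}_{\mathbb{C}_K^\flat}))$ and $(p,x)$ is proper, $\sqrt{(p,x)}\subseteq\mathfrak{m}_{\ainf}$. For the reverse inclusion it is enough to show that $p$ and every element $a\in W(\mathfrak{m}_{\mathbb{C}_K^\flat})$ is nilpotent modulo $(p,x)$; the case of $p$ is clear, and for $a$ one writes $a=[\overline{a}]+pa'$ via the Teichm\"{u}ller decomposition with $\overline{a}\in\mathfrak{m}_{\mathbb{C}_K^\flat}$, so modulo $(p,x)$ it suffices that $\overline{a}$ be nilpotent in $\oh_{\mathbb{C}_K^\flat}/(\overline{x})$. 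But $\overline{a}$ and $\overline{x}$ both have positive valuation (using $\overline{x}\neq 0$), and since the value group is Archimedean there is $n\geq 1$ with $v(\overline{a}^{\,n})\geq v(\overline{x})$, i.e. $\overline{a}^{\,n}\in\overline{x}\,\oh_{\mathbb{C}_K^\flat}$. This gives $\mathfrak{m}_{\ainf}\subseteq\sqrt{(p,x)}$, hence equality, and the independence of the choice of $x$ is then immediate because $\mathfrak{m}_{\ainf}$ does not involve $x$. I expect the only genuinely delicate point to be this last step: pinning down $\mathfrak{m}_{\ainf}$ concretely as $(p,W(\mathfrak{m}_{\mathbb{C}_K^\flat}))$ and using the rank one (Archimedean) structure of the valuation to promote "positive valuation" to honest nilpotence modulo $(p,x)$; the rest is formal bookkeeping with non--zero divisors in domains.
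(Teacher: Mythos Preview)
Your proof is correct and follows essentially the same route as the paper's: both translate the hypotheses into $\overline{x}$ being a nonzero nonunit in the domain $\oh_{\mathbb{C}_K^\flat}$, verify the regular sequence directly, obtain the intersection identity by the base case $p\ainf\cap x^l\ainf=px^l\ainf$ plus induction on $k$, and deduce the radical statement from $\sqrt{(\overline{x})}=\mathfrak{m}_{\mathbb{C}_K^\flat}$ in the rank one valuation ring. The only organizational difference is that you take the locality of $\ainf$ and the description of its maximal ideal as inputs, whereas the paper derives these inside the proof from $p\in\mathrm{rad}(\ainf)$ and the locality of $\ainf/p$; your justification (``$p$--adically complete, hence local'') is a bit compressed but the underlying reasoning is the same.
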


In particular, the equalities ``$\sqrt{(p, x)}=\sqrt{(p, x')}$'' imply that all the $(p, x)$--adic topologies (for $x$ as above) are equivalent to each other; this is the so--called weak topology on $\ainf$ (usually defined as $(p, u)$--adic topology in our notation), and it is standard that $\ainf$ is complete with respect to this topology.

\begin{proof}
By assumption, the image $\overline{x}$ of $x$ in $\ainf/p=\mathcal{O}_{\mathbb{C}_K^{\flat}}$ is non--zero and non--unit in $\ainf/p$ (non--unit since $x \notin \ainf^{\times}$ and $p \in \mathrm{rad}(\ainf)$). Thus, $x^l$ is a non--zero divisor both on $\ainf$ and on $\ainf/p$, hence the claim that $p\ainf \cap x^l\ainf=px^l\ainf$ follows for every $l$. The element  $p$ is itself non--zero divisor on $\ainf$ and thus, $p, x$ is a regular sequence. 

To obtain $p^k\ainf \cap x^l\ainf=p^kx^l\ainf$ for general $k$, one can e.g. use induction on $k$ using the fact that $p$ is a non-zero divisor on $\ainf$ (or simply note that one may replace elements in regular sequences by arbitrary positive powers).

To prove the second assertion, note that $\sqrt{(\overline{x})}=\mathfrak{m}_{\mathbb{C}_K^{\flat}}$ since $\ainf/p=\mathcal{O}_{\mathbb{C}_K^{\flat}}$ is a rank $1$ valuation ring. It follows that $(p,W(\mathfrak{m}_{\mathbb{C}_K^{\flat}}))$ is the unique maximal ideal of $\ainf$ above $(p)$, hence the unique maximal ideal since $p \in \mathrm{rad}(\ainf)$, and that $\sqrt{(p, x)}$ is equal to this ideal.
\end{proof} 
 
We are ready to prove the claim mentioned at the beginning of the section.

\begin{cor}\label{FlatTorFree}
Let $F$ be a derived $(p, E(u))$--complete and $(p, E(u))$--completely flat $\ainf$--module, and let $x \in \ainf \setminus (\ainf^\times \cup p\ainf)$. Then $p, x$ is a regular sequence on $F$. In particular, for each $k, l >0$, we have $p^kF \cap x^l F=p^kx^l F$. Consequently, $F$ is a torsion--free $\ainf$--module. 
\end{cor}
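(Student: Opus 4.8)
The plan is to reduce the statement to the results already in hand—principally Corollary~\ref{FlatReg} and Lemma~\ref{disjointness}—by adjusting the completion ideal appropriately. First I would observe that the hypothesis involves derived $(p,E(u))$-completeness and $(p,E(u))$-complete flatness, but I want to apply Corollary~\ref{FlatReg} with the ideal $I=(p,x)$ for the given $x$. By Lemma~\ref{disjointness}, $\sqrt{(p,E(u))}=\sqrt{(p,x)}=(p,W(\mathfrak{m}_{\mathbb{C}_K^\flat}))$, and by the remarks recorded in \S\ref{subsec:regularity} both derived completeness and complete flatness depend only on the radical of the (finitely generated) ideal. Hence $F$ is automatically derived $(p,x)$-complete and $(p,x)$-completely flat. (I should double-check the footnote subtlety: complete flatness with respect to $I$ passes to $I''$ when $I\subseteq\sqrt{I''}$, and here the radicals coincide, so there is no issue in either direction.)

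Next I would note that $\ainf$ is itself derived $(p,x)$-complete—this is the completeness with respect to the weak topology noted right after Lemma~\ref{disjointness}—and that $p,x$ is a regular sequence on $\ainf$ by Lemma~\ref{disjointness}. Assuming $F\neq 0$ (the zero module case is trivial, and in any event a zero module is vacuously torsion-free), Corollary~\ref{FlatReg} applied with $A=\ainf$, $I=(p,x)$, $\underline{f}=p,x$ then yields directly that $p,x$ is a regular sequence on $F$, and—by the ``remains regular after permutation'' clause—that each of $p$ and $x$ individually is a non-zero divisor on $F$. For the intersection statement $p^kF\cap x^lF=p^kx^lF$, I would argue exactly as in the proof of Lemma~\ref{disjointness}: since $x$ (hence $x^l$) is a non-zero divisor on $F$ and also on $F/pF$ (the latter because $p,x$ regular gives $x$ a non-zero divisor on $F/pF$), one gets $pF\cap x^lF=px^lF$, and then an induction on $k$ using that $p$ is a non-zero divisor on $F$ upgrades this to general $k$; alternatively one simply invokes that positive powers of elements of a regular sequence still form a regular sequence.

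Finally, for torsion-freeness, I would argue that any nonzero $x'\in\ainf$ is a non-zero divisor on $F$. Such an $x'$ can be written as $x'=p^m y$ with $y\notin p\ainf$ (using that $\ainf$ is $p$-torsion-free and $p$-adically separated, or rather that $\bigcap_m p^m\ainf=0$, which holds since $\ainf$ is $p$-adically complete and $p$-torsion-free as a Witt ring of a domain). If $y\in\ainf^\times$ then $x'$ differs from $p^m$ by a unit and we are done by the non-zero-divisor property of $p$; otherwise $y\in\ainf\setminus(\ainf^\times\cup p\ainf)$ and the case just proved shows $y$—hence $p^m y=x'$—is a non-zero divisor on $F$. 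The one point deserving care, and the main (modest) obstacle, is the passage between the two completion ideals: I must make sure the cited equivalences (derived $I$-completeness and $I$-complete flatness depending only on $\sqrt{I}$ for finitely generated $I$) are being applied in the correct direction, since the footnotes in \S\ref{subsec:regularity} warn that completeness and flatness behave oppositely under enlarging versus shrinking the ideal; here equality of radicals makes both directions available, so the reduction is clean.
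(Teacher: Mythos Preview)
Your proposal is correct and follows essentially the same route as the paper's proof: transfer derived completeness and complete flatness from the ideal $(p,E(u))$ to $(p,x)$ via equality of radicals (Lemma~\ref{disjointness}), apply Corollary~\ref{FlatReg} to obtain the regular sequence, deduce the intersection statement from regularity of $p^k,x^l$, and handle torsion-freeness by factoring an arbitrary nonzero element as $p^m y$ with $y\notin p\ainf$. The paper's argument is slightly more terse but the logical structure is identical.
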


\begin{proof}
By Lemma~\ref{disjointness}, $\ainf$ and $F$ are derived $(p, x)$--complete and $F$ is $(p, x)$--completely flat over $\ainf$, and $p, x$ is a regular sequence on $\ainf$. Corollary~\ref{FlatReg} then proves the claim about regular sequence. The sequence $p^k, x^l$ is then also regular on $F$, and the claim $p^kF \cap x^lF=p^kx^lF$ follows. 
To prove the ``consequently'' part, let $y$ be a non--zero and non--unit element of $\ainf$. Since $\ainf$ is classically $p$--complete, we have $\bigcap_n p^n \ainf = 0$, and so there exist $n$ such that $y=p^nx$ with $x \notin p\ainf$. If $x$ is a unit, then $y$ is a non--zero divisor on $F$ since so is $p^n$. Otherwise $x \in \ainf \setminus (\ainf^\times \cup p\ainf)$, so $p, x$ is a regular sequence on $F$, and so is $x, p$ (e.g. by Lemma~\ref{regKoszul}). In particular $p, x$ are both non--zero divisors on $F$, and hence so is $y=p^nx$.
\end{proof}

Finally, we record the following consequence on flatness of $(p, E(u))$--completely flat modules modulo powers of $p$ that seems interesting on its own.

\begin{cor}\label{cor:FlatModp}
Let $x \in \ainf \setminus (\ainf^{\times} \cup p\ainf)$, and let $F$ be a derived $(p, x)$--complete and $(p, x)$--completely (faithfully) flat $\ainf$--module. Then $F$ is derived $p$--complete and $p$--completely (faithfully) flat. In particular, $F/p^nF$ is a flat $\ainf/p^n$--module for every $n>0$.
\end{cor}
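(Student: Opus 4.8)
The plan is to reduce all three claims to Corollary~\ref{FlatTorFree}, exploiting that the two hypotheses on $F$ are insensitive to the particular choice of $x$. Indeed, by Lemma~\ref{disjointness} one has $\sqrt{(p,x)}=\sqrt{(p,x')}$ for any $x,x'\in\ainf\setminus(\ainf^{\times}\cup p\ainf)$ --- in particular $\sqrt{(p,x)}=\sqrt{(p,E(u))}$ --- so by the radical-invariance of derived completeness and of complete flatness recorded in \S\ref{subsec:regularity}, $F$ is equivalently derived $(p,E(u))$-complete and $(p,E(u))$-completely flat, and hence satisfies the hypotheses of Corollary~\ref{FlatTorFree}. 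Derived $p$-completeness of $F$ is then immediate, since $p$ is one of the generators of $(p,x)$ (equivalently $(p)\subseteq\sqrt{(p,x)}$).

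For $p$-complete flatness I must check that $\mathrm{Tor}^{\ainf}_i(F,\ainf/p)=0$ for $i>0$ and that $F/pF$ is flat over $\ainf/p=\oh_{\mathbb{C}_K^{\flat}}$. The first is immediate: Corollary~\ref{FlatTorFree} gives that $p$ is a non-zero divisor on $F$, so $F\otimes^{\mathsf{L}}_{\ainf}\ainf/p$ is computed by the two-term complex $[F\xrightarrow{p}F]$ and is concentrated in degree $0$. For the flatness of $F/pF$ over the valuation ring $\oh_{\mathbb{C}_K^{\flat}}$ it suffices to show $F/pF$ is torsion-free. But any nonzero non-unit of $\oh_{\mathbb{C}_K^{\flat}}$ lifts to some $y\in\ainf\setminus(\ainf^{\times}\cup p\ainf)$, and Corollary~\ref{FlatTorFree}, applied to this $y$, asserts that $p,y$ is a regular sequence on $F$; in particular $y$, hence its image, acts injectively on $F/pF$. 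Thus $F/pF$ is torsion-free, hence flat over $\ainf/p$, and $F$ is $p$-completely flat. For the faithful variant: if $F$ is $(p,x)$-completely faithfully flat then $F/(p,x)F\neq 0$, and tensoring over $\ainf/(p,x)$ with the residue field $\kappa$ of the local ring $\ainf$ yields $F\otimes_{\ainf}\kappa=(F/(p,x)F)\otimes_{\ainf/(p,x)}\kappa\neq 0$; since $F/pF$ is flat over the local ring $\ainf/p$ and $(F/pF)\otimes_{\ainf/p}\kappa=F\otimes_{\ainf}\kappa\neq 0$, it is faithfully flat over $\ainf/p$.

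For the final assertion I would first upgrade the Tor-vanishing: using that $p$ is a non-zero divisor on $\ainf$, the short exact sequences $0\to\ainf/p^{n-1}\xrightarrow{p}\ainf/p^{n}\to\ainf/p\to 0$ together with $\mathrm{Tor}^{\ainf}_i(F,\ainf/p)=0$ $(i>0)$ give, by induction on $n$ via the long exact Tor-sequences, that $\mathrm{Tor}^{\ainf}_i(F,\ainf/p^n)=0$ for all $i>0$; hence $F\otimes^{\mathsf{L}}_{\ainf}\ainf/p^n=F/p^nF$ is concentrated in degree $0$. Flatness of $F/p^nF$ over $\ainf/p^n$ then amounts to $(F/p^nF)\otimes^{\mathsf{L}}_{\ainf/p^n}N$ being concentrated in degree $0$ for every $\ainf/p^n$-module $N$; by base change this complex equals $F\otimes^{\mathsf{L}}_{\ainf}N$, and the finite filtration $N\supseteq pN\supseteq\cdots\supseteq p^nN=0$ has successive quotients that are $\ainf/p$-modules, on which $F\otimes^{\mathsf{L}}_{\ainf}(-)=(F/pF)\otimes^{\mathsf{L}}_{\ainf/p}(-)$ is concentrated in degree $0$ by the $p$-complete flatness just established; the long exact sequences then force $F\otimes^{\mathsf{L}}_{\ainf}N$ into degree $0$.

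The only genuinely non-formal step is the flatness of $F/pF$ over $\oh_{\mathbb{C}_K^{\flat}}$ in the second paragraph: this is exactly where one must use that the hypotheses on $F$ do not constrain $x$, so that Corollary~\ref{FlatTorFree} can be invoked for every lift of a valuation-ring element rather than only for the given $x$; the rest is bookkeeping with Koszul/Tor vanishing and d\'{e}vissage along the nilpotent ideal $p\ainf/p^n$. A minor point to flag is that ``flat and nonzero over a local ring'' does not by itself imply faithful flatness, so the faithful case really does use the hypothesis on $F/(p,x)F$ and not merely $F\neq 0$.
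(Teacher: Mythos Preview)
Your proof is correct and follows essentially the same route as the paper's: derived $p$--completeness is immediate, the Tor--vanishing comes from $p$ being a non--zero divisor on $F$, flatness of $F/pF$ over the valuation ring $\oh_{\mathbb{C}_K^{\flat}}$ reduces to torsion--freeness via Corollary~\ref{FlatTorFree}, and the faithful case is the observation that both faithful--flatness statements amount to $F/\mathfrak{m}F\neq 0$ for the unique maximal ideal $\mathfrak{m}$. Your d\'evissage for the ``in particular'' claim about $F/p^nF$ is more detailed than what the paper writes (the paper leaves this as an implicit consequence of $p$--complete flatness), but the arguments are otherwise aligned.
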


\begin{proof}
The fact that $F$ is derived $p$--complete is clear since it is derived $(p, x)$--complete. We need to show that $F/pF$ is a flat $\ainf/p$--module and that $\mathrm{Tor}_i^{\ainf}(F, \ainf/p)=0$ for all $i>0$. The second claim is a consequence of the fact that $p$ is a non--zero divisor on both $\ainf$ and $F$ by Corollary~\ref{FlatTorFree}. For the first claim, note that $\ainf/p=\oh_{\mathbb{C}_K^\flat}$ is a valuation ring and therefore it is enough to show that $F/pF$ is a torsion--free $\oh_{\mathbb{C}_K^\flat}$--module. This follows again by Corollary~\ref{FlatTorFree}.

For the `faithful' version, note that both the statements that $F/pF$ is faithfully flat over $\ainf/p$ and that $F/(p, x)F$ is faithfully flat over $\ainf/(p, x)$ are now equivalent to the statement $F/\mathfrak{m}F \neq 0$ where $\mathfrak{m}=(p, W(\mathfrak{m}_{\mathbb{C}_K^\flat}))$ is the unique maximal ideal of $\ainf$.  
\end{proof}

\subsection{\v{C}ech--Alexander complex}\label{sec:CAComplex}

Next, we discuss the construction of \v{C}ech--Alexander complexes for computing prismatic cohomology, introduced in \cite{BhattScholze} in the affine case, in a global situation. Throughout this section, let $(A, I)$ be a fixed bounded base prism, and let $\mathscr{X}$ be a smooth separated $p$--adic formal scheme over $A/I.$ Recall that $(\mathscr{X}/A)_{\Prism}$ denotes the site whose underlying category is the opposite of the category of bounded prisms $(B, IB)$ over $(A, I)$ together with a map of formal schemes $\spf(B/IB)\rightarrow \mathscr{X}$ over $A/I$. Covers in $(\mathscr{X}/A)_{\Prism}$ are given by the opposites of faithfully flat maps $(B, IB)\rightarrow (C, IC)$ of prisms, meaning that $C$ is $(p, I)$--completely flat over $(B, IB)$. The prismatic cohomology $\R\Gamma_{\Prism}(\mathscr{X}, A)$ is then defined as the sheaf cohomology $\R\Gamma((\mathscr{X}/A)_{\Prism}, \mathcal{O})$($=\R\Gamma((*, \mathcal{O})$ where $*$ is the terminal sheaf) for the sheaf $\mathcal{O}=\mathcal{O}_{\Prism}$ on $(\mathscr{X}/A)_{\Prism}$ defined by $(B, IB)\mapsto B$.

Additionally, let us denote by $\Prism$ the site of all bounded prisms, i.e the opposite of the category of all bounded prisms and their maps, with topology given by faithfully flat maps of prisms.

In order to discuss the  \v{C}ech--Alexander complex in a non-affine situation, a slight modification of the topology on $ (\mathscr{X}/A)_{\Prism}$ is convenient. The following proposition motivates the change.

\begin{prop}\label{prop:DisjointUnions}
Let $(A, I)$ be a bounded prism. 
\begin{enumerate}
\item{Given a collection of maps of (bounded) prisms $(A, I)\rightarrow (B_i, IB_i),$  $i=1, 2, \dots, n,$ the canonical map $(A, I)\rightarrow (C, IC)=\left(\prod_iB_i, I\prod_iB_i\right)$
is a map of (bounded) prisms.}
\item{$(C, IC)$ is flat over $(A, I)$ if and only if each $(B_i, IB_i)$ is flat over $(A, I)$. In that situation, $(C,IC)$ is faithfully flat prism over $(A, I)$ if and only if the family of maps of formal spectra $\spf(B_i/IB_i)\rightarrow \spf(A/I)$ is jointly surjective.}
\item{Let $f \in A$ be an element. Then $(\widehat{A_{f}}, I\widehat{A_f})$, where $\widehat{(-)}$ stands for the derived (equivalently, classical) $(p, I)$--completion, is a bounded prism\footnote{We do consider the zero ring with its zero ideal a prism, hence allow the possibility of $\widehat{A_f}=0$, which occurs e.g. when $f \in (p, I).$ Whether the zero ring satisfies Definition~3.2 of \cite{BhattScholze} depends on whether the inclusion of the empty scheme to itself is considered an effective Cartier divisor; following the usual definitions pedantically, it indeed seems to be the case. Also some related claims, such as \cite[Lemma~3.7 (3)]{BhattScholze} or \cite[Lecture 5, Corollary 5.2]{BhattNotes}, suggest that the zero ring is allowed as a prism.}, and the map $(A, I) \rightarrow (\widehat{A_{f}}, I\widehat{A_f})$ is a flat map of prisms.}
\item{Let $f_1, \dots, f_n \in A$ be a collection of elements generating the unit ideal. Then the canonical map $(A, I)\rightarrow \left(\prod_i\widehat{A_{f_i}}, I\prod_i\widehat{A_{f_i}}\right)$
is a faithfully flat map of (bounded) prisms.}
\end{enumerate}
\end{prop}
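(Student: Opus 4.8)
The plan is to reduce all four parts to two inputs. The first is the formal observation that a finite product of $\delta$--rings carries a componentwise $\delta$--structure and that derived $(p,I)$--completeness, $(p,I)$--complete (faithful) flatness over $A$, invertibility of the ideal, the prism axiom $p\in I+\varphi(I)$, and bounded $p^\infty$--torsion of the reduction mod $I$ are all detected factorwise on finite products. The second is the standard fact from \cite{BhattScholze} that over a prism $(A,I)$, every derived $(p,I)$--complete $\delta$--$A$--algebra $B$ that is $(p,I)$--completely flat over $A$ forms a prism $(B,IB)$, with $(A,I)\to(B,IB)$ a flat map of prisms. Granting these, (1) and (2) are essentially bookkeeping, (3) is the only part requiring a genuine argument, and (4) follows formally from (1)--(3).

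For (1), put the componentwise $\delta$--structure on $C=\prod_i B_i$, so that $A\to C$ is a $\delta$--map because each $A\to B_i$ is. Then $IC=\prod_i IB_i$ is invertible (each $IB_i$ is), $C$ is derived $(p,I)$--complete since derived completion commutes with finite products, the prism axiom holds in $C$ because it holds in each $B_i$, and $C/IC=\prod_i B_i/IB_i$ has bounded $p^\infty$--torsion as a finite product of such rings; hence $(C,IC)$ is a bounded prism and $(A,I)\to(C,IC)$ a map of prisms. For (2), use $\mathrm{Tor}^A_j(C,A/I)=\prod_i\mathrm{Tor}^A_j(B_i,A/I)$ together with the fact that a finite product of $A/I$--modules is flat iff each factor is; for faithful flatness, note that $\mathrm{Spec}$ (equivalently, the underlying space of $\spf$) turns a finite product into a disjoint union, so $C/(p,I)C$ is faithfully flat over $A/(p,I)$ iff each $B_i/(p,I)B_i$ is flat over $A/(p,I)$ and the family $\spf(B_i/IB_i)\to\spf(A/I)$ is jointly surjective.

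The content lies in (3). The Frobenius lift on $A$ propagates to $\widehat{A_f}$: in $A_f$ we have $\varphi(f)=f^p+p\,\delta(f)=f^p\bigl(1+pf^{-p}\delta(f)\bigr)$, where $f^p$ is a unit in $A_f$ and $1+p(\cdots)$ becomes a unit after $p$--completion, so $\varphi(f)$ is invertible in $\widehat{A_f}$ and $\varphi$, hence $\delta$, extends uniquely, making $A\to\widehat{A_f}$ a $\delta$--map. As $A_f$ is $A$--flat, its derived $(p,I)$--completion $\widehat{A_f}$ is $(p,I)$--completely flat over $A$ (derived completion does not alter the reduction modulo $(p,I)$), so by the cited result $(\widehat{A_f},I\widehat{A_f})$ is a prism and $(A,I)\to(\widehat{A_f},I\widehat{A_f})$ is flat; it is bounded because $\widehat{A_f}/I\widehat{A_f}$ is the derived $p$--completion of $(A/I)_f$ and bounded $p^\infty$--torsion passes to localizations and then to $p$--completions. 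In the degenerate case $f\in(p,I)$ one has $(A/(p,I))_f=0$, hence $\widehat{A_f}=0$ by derived Nakayama, the zero prism allowed in the statement. Finally, for (4): when $(f_1,\dots,f_n)=(1)$ the opens $D(f_i)$ cover $\mathrm{Spec}(A/(p,I))$, so the maps $\spf(\widehat{A_{f_i}}/I\widehat{A_{f_i}})=\spf((A/I)_{f_i})\to\spf(A/I)$ are jointly surjective; combining (3) (each such map is flat) with (1) and (2) (finite product of flat prism maps, made faithfully flat by joint surjectivity) yields the claim.

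I expect the main obstacle to be the $\delta$--theoretic part of (3) --- confirming that $\varphi$ really does descend to the derived $(p,I)$--completion of $A_f$ via the ``unit after $p$--completion'' argument, and cleanly invoking the input from \cite{BhattScholze} that $(p,I)$--completely flat $\delta$--algebras over a prism are again prisms. Everything else reduces to routine manipulations of how finite products and localizations interact with completeness, flatness, and bounded $p^\infty$--torsion.
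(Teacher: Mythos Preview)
Your proposal is correct and follows essentially the same approach as the paper. The only cosmetic difference is in part (3): the paper phrases the extension of the $\delta$--structure by observing that $\widehat{A_f}=\widehat{S^{-1}A}$ for the $\varphi$--stable multiplicative set $S=\{\varphi^n(f^k)\}$ and then invoking \cite[Lemmas~2.15, 2.17]{BhattScholze} directly (which also covers the case where $A$ might have $p$--torsion, making your ``$\varphi$, hence $\delta$, extends'' step rigorous), while you unpack the same idea by hand; otherwise the structure of the argument, including the deduction of (4) from (1)--(3) and the appeal to \cite[Lemma~3.7~(3)]{BhattScholze} for $(p,I)$--completely flat $\delta$--algebras over a bounded prism being bounded prisms, is identical.
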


\begin{proof}
The proof of (1) is more or less formal. The ring $C=\prod_i B_i$ has a unique $A$--$\delta$--algebra structure since the forgetful functor from $\delta$--rings to rings preserves limits, and $C$ is as product of $(p, I)$--complete rings $(p, I)$--complete. Clearly $IC=\prod_i (IB_i)$ is an invertible ideal since each $IB_i$ is. In particular, $C[I]=0$, hence a prism by \cite[Lemma~3.5]{BhattScholze}. Assuming that all $(B_i, IB_i)$ are bounded, from $C/IC = \prod_i B_i/IB_i$ we have $C/IC[p^\infty]=C/IC[p^k]$ for $k$ big enough so that $B_i/IB_i[p^{\infty}]=B_i/IB_i[p^{k}]$ for all $i$, showing that $(C, IC)$ is bounded.

The ($(p, I)$--complete) flatness part of (2) is clear. For the faithful flatness statement, note that $C/(p, I)C=\prod_i B_i/(p,I)B_i$, hence $A/(p,I)\rightarrow C/(p, I)C$ is faithfully flat if and only if the map of spectra $\coprod_i \spec({B_i/(p, I)B_i})=\spec({C/(p, I)C})\rightarrow \spec({A/(p,I)})$ is surjective.

Let us prove (3). Since $\widehat{A_f}$ has $p\in \mathrm{rad}(\widehat{A_f}),$ the equality $\varphi^n(f^k)=f^{kp^n}+p(\dots)$ shows that $\varphi^n(f^k)$ for each $n, k \geq 0$ is a unit in $\widehat{A_f}$. Consequently, as in \cite[Remark~2.16]{BhattScholze}, $\widehat{A_f}=\widehat{S^{-1}A}$ for $S=\{\varphi^n(f^k)\;|\; n, k \geq 0 \}$, and the latter has a unique $\delta$--structure extending that of $A$ by \cite[Lemmas~2.15 and 2.17]{BhattScholze}. In particular, $\widehat{A_f}$ is a $(p, I)$--completely flat $A$--$\delta$--algebra, hence $(\widehat{A_f}, I\widehat{A_f})$ is flat prism over $(A, I)$ by \cite[Lemma~3.7 (3)]{BhattScholze}.

Part (4) follows formally from parts (1)--(3).
\end{proof}

\begin{constr} Denote by $(\mathscr{X}/A)_{\Prism}^\amalg$ the site whose underlying category is $(\mathscr{X}/A)_{\Prism}$. 
The covers on $(\mathscr{X}/A)_{\Prism}^\amalg$ are given by the opposites of finite families $\{(B, IB) \rightarrow (C_i, IC_i)\}_{i}$ of flat maps of prisms such that the associated maps $\{\spf(C_i/IC_i)\rightarrow \spf(B/IB)\}$ are jointly surjective. Let us call these ``faithfully flat families'' for short. The covers of the initial object $\varnothing$ \footnote{That is, $\varnothing$ corresponds to the zero ring, which we consider to be a prism as per the previous footnote.} are the empty cover and the identity. We similarly extend $\Prism$ to $\Prism^{\amalg}$, that is, we proclaim the identity cover and the empty cover to be covers of $\varnothing$, and generally proclaim (finite) faithfully flat families to be covers.

Clearly isomorphisms as well as composition of covers are covers in both cases. To check that $(\mathscr{X}/A)_{\Prism}^\amalg$ and $\Prism^\amalg$ are sites, it thus remains to check the base change axiom. This is trivial for situations involving $\varnothing,$ so it remains to check that given a faithfully flat family $\{(B, IB)\rightarrow (C_i, IC_i)\}_i$ and a map of prisms $(B, IB) \rightarrow (D, ID)$, the fibre products\footnote{Here we mean fibre products in the variance of the site, i.e. ``pushouts of prisms''. We use the symbol $\boxtimes$ to denote this operation.} $(C_i, IC_i)\boxtimes_{(B, IB)}(D, ID)$ in $\Prism^{\amalg}$ exist and the collection $\{(D, ID)\rightarrow (C_i, IC_i)\boxtimes_{(B, IB)}(D, ID)\}_i$ is a faithfully flat family; the existence and $(p, I)$--complete flatness follows by the same proof as in \cite[Corollary~3.12]{BhattScholze}, only with ``$(p, I)$--completely faithfully flat'' replaced by ``$(p, I)$--completely flat'' throughout, and the fact that the family is faithfully flat follows as well, since $\left(\prod_i(C_i, IC_i)\right)\boxtimes_{(B, IB)}(D, ID)=\prod_i \left( (C_i, IC_i)\boxtimes_{(B, IB)}(D, ID)\right)$ (and using Remark~\ref{rem:CompareSites} (1) below).
\end{constr}
\begin{rem}\label{rem:CompareSites}
\begin{enumerate}[(1)]
\item{Note that for a finite family of objects $(C_i, IC_i)$ in $(\mathscr{X}/A)_{\Prism},$  the structure map of the product $(A, I)\rightarrow \prod_i(C_i, IC_i)$ together with the map of formal spectra (induced from the maps for individual $i$'s) $$\spf(\prod_i C_i/IC_i)=\coprod_i \spf(C_i/IC_i)\rightarrow \mathscr{X}$$ makes $(\prod_i C_i, I\prod_i C_i)$ into an object of $(\mathscr{X}/A)_{\Prism}$ that is easily seen to be the coproduct of $(C_i, IC_i)$'s. In view of Proposition~\ref{prop:DisjointUnions} (2), one thus arrives at the equivalent formulation
$$
\{Y_i \rightarrow Z\}_{i}\text{ is a }(\mathscr{X}/A)_{\Prism}^\amalg\text{--cover }\Leftrightarrow 
\coprod_iY_i \rightarrow Z\text{ is a }(\mathscr{X}/A)_{\Prism}\text{--cover.}
$$
That is, $(\mathscr{X}/A)_{\Prism}^{\amalg}$ is the (finitely) superextensive site having covers of $(\mathscr{X}/A)_{\Prism}$ as singleton covers. (Similar considerations apply to $\Prism$ and $\Prism^{\amalg}$.)}
\item{The two sites are honestly different in that they define different categories of sheaves. Namely, for every finite coproduct $Y=\coprod_i Y_i$, the collection of canonical maps $\{Y_i \rightarrow \coprod_i Y_i\}_i$ forms a $(\mathscr{X}/A)_{\Prism}^\amalg$--cover, and the sheaf axiom forces upon $\mathcal{F}\in \shv((\mathscr{X}/A)_{\Prism}^\amalg)$ the identity $\mathcal{F}\left(\coprod_i Y_i\right)=\prod_i \mathcal{F}(Y_i),$ which is not automatic\footnote{For example, every constant presheaf is a sheaf for a topology given by singleton covers only, which is not the case for $(\mathscr{X}/A)_{\Prism}^{\amalg}.$}. In fact, $ \shv((\mathscr{X}/A)_{\Prism}^\amalg)$ can be identified with the full category of  $\shv((\mathscr{X}/A)_{\Prism})$ consisting of all sheaves compatible with finite disjoint unions in the sense above. In particular, the structure sheaf $\mathcal{O}=\mathcal{O}_{\Prism}: (B, IB)\mapsto B$ is a sheaf for the $(\mathscr{X}/A)_{\Prism}^\amalg$--topology. (Again, the same is true for $\Prism$ and $\Prism^{\amalg}$, including the fact that $\mathcal{O}: (B, IB)\mapsto B$ is a sheaf.)}
\end{enumerate}\end{rem}

 Despite the above fine distinction, for the purposes of prismatic cohomology, the two topologies are interchangeable. This is a consequence of the following lemma.

\begin{lem}\label{VanishingObjects}
Given an object $(B, IB) \in (\mathscr{X}/A)_{\Prism}^\amalg,$ one has $\H^i((B, IB), \mathcal{O})=0$ for $i>0$.
\end{lem}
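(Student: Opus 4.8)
The plan is to reduce, by localizing the site, to the computation of the prismatic cohomology of the special fibre of a bounded prism relative to that same prism, which is known to be the prism concentrated in degree $0$. First I would recall the standard fact that for an object $U$ of a site $\mathcal S$ and a sheaf $\mathcal F$ one has $\H^i(U,\mathcal F)\simeq\H^i(\mathcal S/U,\mathcal F|_U)$, where $\mathcal S/U$ is the localized site with its induced topology and $(U,\mathrm{id}_U)$ is its terminal object. Apply this with $\mathcal S=(\mathscr{X}/A)_{\Prism}^{\amalg}$, $U=(B,IB)$ and $\mathcal F=\mathcal O$. The crucial observation is that there is a canonical isomorphism of sites
\[
(\mathscr{X}/A)_{\Prism}^{\amalg}\big/(B,IB)\;\simeq\;\big(\spf(B/IB)\big/(B,IB)\big)_{\Prism}^{\amalg},
\]
compatible with the structure sheaves, where on the right $(B,IB)$ is taken as the base prism and $\spf(B/IB)$ — which is an \emph{affine} formal scheme, being the formal spectrum of a ring — as the formal scheme over it (via the identity structure map). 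Indeed, an object of the left‑hand slice is a prism $(C,IC)$ equipped with a map of prisms $(B,IB)\to(C,IC)$ together with a \emph{compatible} map $\spf(C/IC)\to\mathscr{X}$; this last map is then forced to equal the composite $\spf(C/IC)\to\spf(B/IB)\to\mathscr{X}$ and so carries no information, leaving exactly a prism over $(B,IB)$ with its tautological map of special fibres to $\spf(B/IB)$. Morphisms match likewise, and a family is covering in the slice iff it is covering in $(\mathscr{X}/A)_{\Prism}^{\amalg}$ iff it is a finite faithfully flat family — precisely the covering condition for the right‑hand site. Hence $\H^i((B,IB),\mathcal O)\simeq\H^i\big(\big(\spf(B/IB)/(B,IB)\big)_{\Prism}^{\amalg},\mathcal O\big)$.

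Next I would drop the superscript $\amalg$. By Remark~\ref{rem:CompareSites}, the inclusion $\shv(\mathcal C^{\amalg})\hookrightarrow\shv(\mathcal C)$ is fully faithful with left adjoint the $\amalg$‑sheafification, and the latter is exact: it is the composite of the left exact forgetful functor $\shv(\mathcal C)\to\mathrm{PSh}(\mathcal C)$ with the (exact) sheafification for the finer topology, and being a left adjoint it is also right exact. Therefore the inclusion preserves injectives, and the cohomology of $\mathcal O$ with respect to the two topologies agrees. This is purely formal and does not use the present lemma, so there is no circularity with the subsequent comparison of the two topologies. Consequently $\H^i\big(\big(\spf(B/IB)/(B,IB)\big)_{\Prism}^{\amalg},\mathcal O\big)\simeq\H^i_{\Prism}\big(\spf(B/IB)/(B,IB)\big)$.

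Finally, this is the prismatic cohomology of the reduction of the base prism relative to $(B,IB)$, and it is $B$ in degree $0$ and vanishes in positive degrees — the basic computation of \cite{BhattScholze}. One can also see it directly from the Hodge–Tate comparison: the cotangent complex of $B/IB$ over itself vanishes, so $\R\Gamma_{\Prism}(\spf(B/IB)/(B,IB))\otimes^{\mathsf L}_{B}B/IB\simeq B/IB$ in degree $0$, and a derived $(p,IB)$‑complete complex $C$ with $C\otimes^{\mathsf L}_{B}B/IB\simeq B/IB[0]$ must be $B[0]$ by derived Nakayama applied to its cohomology groups, which are again derived complete. Thus $\H^i((B,IB),\mathcal O)=0$ for $i>0$, as claimed. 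The sole non‑formal ingredient is this Bhatt–Scholze computation; the points requiring care are pinning down the slice identification precisely, and ensuring that the passage between the two topologies is carried out formally rather than circularly — which is the main potential pitfall.
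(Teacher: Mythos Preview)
Your slice identification is correct and in fact coincides with the paper's, which writes the same slice as $\Prism^\amalg/(B,IB)$ rather than $(\spf(B/IB)/(B,IB))_\Prism^\amalg$. The gap is in your passage from the $\amalg$-topology to the ordinary one. From ``$\varepsilon^{-1}$ is exact'' you correctly deduce that $\varepsilon_*$ preserves injectives, but this alone does \emph{not} imply that cohomology agrees: writing $\Gamma_\amalg=\Gamma\circ\varepsilon_*$, one still needs $\R^q\varepsilon_*\mathcal{O}=0$ for $q>0$, and you have not shown this. (For comparison, the inclusion of sheaves into presheaves also has an exact left adjoint and hence preserves injectives, yet sheaf and presheaf cohomology differ.) So as written the step is a non-sequitur, and the circularity worry you flag is exactly the right worry --- you just have not resolved it.

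The gap is fixable, and the fix makes your approach a legitimate alternative to the paper's. What you actually need is that $\varepsilon_*$ is \emph{exact}, and this holds here for a specific (not ``purely formal'') reason: any $\amalg$-cover $\{U_i\to U\}_i$ is refined by the singleton cover $\coprod_i U_i\to U$ (Remark~\ref{rem:CompareSites}(1)), and since $\amalg$-sheaves send $\coprod_i U_i$ to the product of sections (Remark~\ref{rem:CompareSites}(2)), an epimorphism of $\amalg$-sheaves remains an epimorphism in $\shv(\mathcal{C})$; combined with left exactness this gives exactness of $\varepsilon_*$. With this in hand your reduction to $\R\Gamma_\Prism(\spf(B/IB)/(B,IB))\simeq B[0]$ goes through. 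The paper instead stays in the $\amalg$-topology throughout: it notes that the \v{C}ech complex for a $\amalg$-cover $\{(B,IB)\to(C_i,IC_i)\}$ coincides with that for the singleton cover $(B,IB)\to\prod_i(C_i,IC_i)$, so positive \v{C}ech cohomology of $\mathcal{O}$ vanishes by \cite[Corollary~3.12]{BhattScholze}, and then \cite[03F9]{stacks} upgrades this to sheaf cohomology. Both arguments ultimately rest on the same coproduct observation; the paper packages it via \v{C}ech acyclicity, your (corrected) argument via exactness of $\varepsilon_*$.
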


\begin{proof}
The sheaf $\mathcal{O}: (B, I) \mapsto B$ on $\Prism^\amalg$ has vanishing positive \v{C}ech cohomology essentially by the proof of \cite[Corollary~3.12]{BhattScholze}: one needs to show acyclicity of the \v{C}ech complex for any $\Prism^\amalg$--cover $\{(B, I)\rightarrow (C_i, IC_i)\}_i,$ but the resulting \v{C}ech complex is identical to that for the $\Prism$--cover $(B, I)\rightarrow \prod_i(C_i, IC_i)$, for which the acyclicity is proved in \cite[Corollary~3.12]{BhattScholze}. By a general result (e.g. \cite[03F9]{stacks}), this implies vanishing of $\H^i_{\Prism^\amalg}((B, I), \mathcal{O})$ for all bounded prisms $(B, I)$ and all $i>0$. 

Now we make use of the fact that cohomology of an object can be computed as the cohomology of the corresponding slice site, \cite[03F3]{stacks}. Let $(B, IB)\in (\mathscr{X}/A)_{\Prism}^\amalg.$ After forgetting structure, we may view $(B, IB)$ as an object of $\Prism^\amalg$ as well, and then \cite[03F3]{stacks} implies that for every $i,$ we have the isomorphisms
\begin{align*}
\H^i_{(\mathscr{X}/A)_{\Prism}^\amalg}((B, IB), \mathcal{O}) & \simeq \H^i((\mathscr{X}/A)_{\Prism}^\amalg/(B, IB), \mathcal{O}|_{(B, IB)}), \\ 
\H^i_{\Prism^\amalg}((B, IB), \mathcal{O}) & \simeq \H^i((\Prism^\amalg/(B, IB), \mathcal{O}|_{(B, IB)})
\end{align*}
(where $\mathcal{C}/c$ for a site $\mathcal{C}$ and $c \in \mathcal{C}$ denotes the slice site). Upon noting that the slice sites $(\mathscr{X}/A)_{\Prism}^\amalg/(B, IB),$ $\Prism^\amalg/(B, IB)$ are equivalent sites (in a manner that identifies the two versions of the sheaf $\mathcal{O}|_{(B, IB)}$), the claim follows.
\end{proof}

\begin{cor}\label{cor:CohomologySame}
One has $$\R\Gamma((\mathscr{X}/A)_{\Prism}, \mathcal{O}) = \R\Gamma((\mathscr{X}/A)_{\Prism}^\amalg, \mathcal{O}).$$
\end{cor}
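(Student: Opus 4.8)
The plan is to deduce Corollary~\ref{cor:CohomologySame} from Lemma~\ref{VanishingObjects} by a Leray spectral sequence argument, in the same spirit as the manipulations in the proof of that lemma. First I would observe that the identity on the common underlying category is a continuous functor from $(\mathscr{X}/A)_{\Prism}$, with its singleton faithfully flat covers, to $(\mathscr{X}/A)_{\Prism}^{\amalg}$, with its finite faithfully flat families: a singleton cover is such a family, and the two sites share the same fibre products (pushouts of prisms). The induced functor on presheaves is the identity; it carries $\amalg$--sheaves to sheaves by Remark~\ref{rem:CompareSites}~(2), and its left adjoint, namely sheafification for the $\amalg$--topology, is exact. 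Hence one obtains a morphism of topoi
\[
f\colon \shv((\mathscr{X}/A)_{\Prism}^{\amalg})\longrightarrow \shv((\mathscr{X}/A)_{\Prism})
\]
with $f_{*}$ the inclusion of $\shv((\mathscr{X}/A)_{\Prism}^{\amalg})$ as the full subcategory of $\shv((\mathscr{X}/A)_{\Prism})$ described in Remark~\ref{rem:CompareSites}~(2), and with $f_{*}\mathcal{O}=\mathcal{O}$, since the structure sheaf is literally the same presheaf $(B,IB)\mapsto B$ on both sites.

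Next I would compute the higher direct images $R^{i}f_{*}\mathcal{O}$. By the standard description of $R^{i}f_{*}$ as the $(\mathscr{X}/A)_{\Prism}$--sheafification of the presheaf $(B,IB)\mapsto \H^{i}_{(\mathscr{X}/A)_{\Prism}^{\amalg}}((B,IB),\mathcal{O})$ --- which one checks by passing to slice sites exactly as in the proof of Lemma~\ref{VanishingObjects}, using \cite[03F3]{stacks} --- Lemma~\ref{VanishingObjects} says that this presheaf already vanishes for every $i>0$. Hence $R^{i}f_{*}\mathcal{O}=0$ for $i>0$, and therefore $\R f_{*}\mathcal{O}\simeq f_{*}\mathcal{O}=\mathcal{O}$ in the derived category of $\shv((\mathscr{X}/A)_{\Prism})$.

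Finally, since $f_{*}$ admits the exact left adjoint $f^{*}$ it preserves injectives, so the Grothendieck (Leray) spectral sequence associated with the equality of functors $\Gamma\circ f_{*}=\Gamma$ on global sections yields
\[
\R\Gamma((\mathscr{X}/A)_{\Prism}^{\amalg},\mathcal{O})=\R\Gamma\bigl((\mathscr{X}/A)_{\Prism},\R f_{*}\mathcal{O}\bigr)=\R\Gamma((\mathscr{X}/A)_{\Prism},\mathcal{O}),
\]
which is the assertion. I expect no genuine obstacle here: the only substantive input is Lemma~\ref{VanishingObjects}, and the one point that needs care is the identification of $R^{i}f_{*}\mathcal{O}$ with the sheafified ``cohomology of objects'' presheaves, which is supplied by the slice-site equivalence $(\mathscr{X}/A)_{\Prism}^{\amalg}/(B,IB)\simeq \Prism^{\amalg}/(B,IB)$ already exploited in the proof of that lemma.
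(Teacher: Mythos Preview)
Your argument is correct and is essentially identical to the paper's own proof: the paper likewise sets up the morphism of sites $\varepsilon:(\mathscr{X}/A)_{\Prism}^{\amalg}\to(\mathscr{X}/A)_{\Prism}$ given by the identity functor, uses $\Gamma\circ\varepsilon_{*}=\Gamma$ to get $\R\Gamma((\mathscr{X}/A)_{\Prism}^{\amalg},\mathcal{O})=\R\Gamma((\mathscr{X}/A)_{\Prism},\R\varepsilon_{*}\mathcal{O})$, and then invokes the presheaf description of $\R^{i}\varepsilon_{*}\mathcal{O}$ (\cite[072W]{stacks}) together with Lemma~\ref{VanishingObjects} to conclude that $\R\varepsilon_{*}\mathcal{O}=\mathcal{O}$. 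The only cosmetic difference is that the paper cites \cite[072W]{stacks} directly for the sheafified--presheaf description of $\R^{i}\varepsilon_{*}$, rather than rederiving it via slice sites.
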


\begin{proof}
The coverings of $(\mathscr{X}/A)_{\Prism}^\amalg$ contain the coverings of $(\mathscr{X}/A)_{\Prism},$ so we are in the situation of \cite[0EWK]{stacks}, namely, there is a morphism of sites $\varepsilon:(\mathscr{X}/A)_{\Prism}^\amalg \rightarrow (\mathscr{X}/A)_{\Prism}$ given by the identity functor of the underlying categories, the pushforward functor $\varepsilon_*: \shv((\mathscr{X}/A)_{\Prism}^\amalg) \rightarrow \shv((\mathscr{X}/A)_{\Prism})$ being the natural inclusion and the (exact) inverse image functor $\varepsilon^{-1}: \shv((\mathscr{X}/A)_{\Prism}) \rightarrow \shv((\mathscr{X}/A)_{\Prism}^\amalg)$ is the sheafification with respect to the ``$^\amalg$''-topology. One has $$\Gamma((\mathscr{X}/A)^\amalg,-)=\Gamma((\mathscr{X}/A),-)\circ \varepsilon_*$$
(where $\varepsilon_*$ denotes the inclusion of abelian sheaves in this context), hence
$$\R\Gamma((\mathscr{X}/A)^\amalg,\mathcal{O})=\R\Gamma((\mathscr{X}/A),\R\varepsilon_*\mathcal{O}),$$ and to conclude it is enough to show that $\R^i\varepsilon_* \mathcal{O}=0$ $\forall i>0$. But $\mathsf{R}^i\varepsilon_* \mathcal{O}$ is the sheafification of the presheaf given by $(B, IB) \mapsto \H^i((B, IB), \mathcal{O})$ (\cite[072W]{stacks}), which is $0$ by Lemma~\ref{VanishingObjects}. Thus, $\mathsf{R}^i\varepsilon_* \mathcal{O}=0$, which proves the claim. 
\end{proof}

For an open $p$--adic formal subscheme $\mathscr{V} \subseteq \mathscr{X}$, denote by $h_{\mathscr{V}}$ the functor sending $(B, IB) \in (\mathscr{X}/A)_{\Prism}$ to the set of factorizations of the implicit map $\spf(B/IB) \rightarrow \mathscr{X}$ through $\mathscr{V} \hookrightarrow \mathscr{X};$ that is, 
$$h_\mathscr{V}((B, IB))=\begin{cases}*\;\; \text{ if the image of }\spf(B/IB) \rightarrow \mathscr{X} \text{ is contained in }\mathscr{V},\\ \emptyset\;\; \text{ otherwise.}\end{cases}$$
Let $(B, IB)\rightarrow (C, IC)$ correspond to a morphism in $(\mathscr{X}/A)_{\Prism}$. If $\spf(B/IB)\rightarrow \mathscr{X}$ factors through $\mathscr{V},$ then so does $\spf(C/IC)\rightarrow \spf(B/IB)\rightarrow \mathscr{X}$. It follows that $h_\mathscr{V}$ forms a presheaf on $(\mathscr{X}/A)_{\Prism}$ (with transition maps $h_\mathscr{V}((B, IB))\rightarrow h_\mathscr{V}((C, IC))$ given by $* \mapsto *$ when $h_\mathscr{V}((B, IB)) \neq \emptyset$, and the empty map otherwise). Note that $h_{\mathscr{X}}$ is the terminal sheaf.

\begin{prop}
$h_\mathscr{V}$ is a sheaf on $(\mathscr{X}/A)_{\Prism}^\amalg$.
\end{prop}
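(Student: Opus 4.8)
The plan is to verify the sheaf axiom for $h_{\mathscr{V}}$ directly against the generating covers of $(\mathscr{X}/A)_{\Prism}^{\amalg}$. By Remark~\ref{rem:CompareSites}~(1), a family $\{(B,IB)\to (C_i,IC_i)\}_i$ is a $(\mathscr{X}/A)_{\Prism}^{\amalg}$--cover precisely when the single map $(B,IB)\to (C,IC):=(\prod_i C_i, I\prod_i C_i)$ is a $(\mathscr{X}/A)_{\Prism}$--cover, i.e. a faithfully flat map of prisms whose associated map of formal spectra is (faithfully flat, hence) surjective. Since $h_{\mathscr{V}}$ takes values in $\{\emptyset, *\}$ and, by Remark~\ref{rem:CompareSites}~(2), a sheaf on the ``$\amalg$''-topology is the same as a sheaf on $(\mathscr{X}/A)_{\Prism}$ that sends finite coproducts to products, it suffices to check: (a) $h_{\mathscr{V}}$ is a sheaf for singleton covers $(B,IB)\to (C,IC)$, and (b) $h_{\mathscr{V}}\!\left(\coprod_i (C_i,IC_i)\right)=\prod_i h_{\mathscr{V}}((C_i,IC_i))$, where the coproduct is $(\prod_i C_i, I\prod_i C_i)$ as in Remark~\ref{rem:CompareSites}~(1).

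For (b), the point is simply that $\spf\!\left(\prod_i C_i/IC_i\right)=\coprod_i \spf(C_i/IC_i)$, so the structure map to $\mathscr{X}$ factors through the open $\mathscr{V}$ if and only if each $\spf(C_i/IC_i)\to \mathscr{X}$ does; on the nose this says $h_{\mathscr{V}}(\coprod_i C_i)=\prod_i h_{\mathscr{V}}(C_i)$ (with the empty product $*$ giving the correct value on the initial object $\varnothing$, namely $h_{\mathscr{V}}(\varnothing)=*$). For (a): given a faithfully flat map $(B,IB)\to (C,IC)$ of prisms, I claim $h_{\mathscr{V}}(B)=\mathrm{eq}\big(h_{\mathscr{V}}(C)\rightrightarrows h_{\mathscr{V}}(C\boxtimes_B C)\big)$. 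Both sides are subsingletons, so the content is: $h_{\mathscr{V}}(B)\neq\emptyset \iff h_{\mathscr{V}}(C)\neq\emptyset$. The forward direction is the presheaf transition map (if $\spf(B/IB)\to\mathscr{X}$ lands in $\mathscr{V}$, so does the composite $\spf(C/IC)\to\spf(B/IB)\to\mathscr{X}$). Conversely, suppose $\spf(C/IC)\to\mathscr{X}$ factors through $\mathscr{V}$; I must show $\spf(B/IB)\to\mathscr{X}$ does too. Let $U\subseteq \spf(B/IB)$ be the open preimage of $\mathscr{V}$; faithful flatness of the prism map means $B/(p,I)B\to C/(p,I)C$ is faithfully flat, so $\spf(C/IC)\to\spf(B/IB)$ is surjective on the underlying topological spaces (these agree with those of the mod-$(p,I)$ reductions, $I$ being locally nilpotent mod $p$ and everything $p$-complete). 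Since the image of $\spf(C/IC)$ lies in $U$ and the map to $\spf(B/IB)$ is surjective, $U=\spf(B/IB)$, i.e. $h_{\mathscr{V}}(B)\neq\emptyset$. The equalizer/coequalizer with $C\boxtimes_B C$ then holds automatically because all three sets are subsingletons and the two restriction maps $h_{\mathscr{V}}(C)\to h_{\mathscr{V}}(C\boxtimes_B C)$ agree.

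The main obstacle — really the only non-formal point — is the surjectivity of $\spf(C/IC)\to\spf(B/IB)$ on topological spaces deduced from faithful flatness of the prism map. Here one uses that faithful flatness of $(B,IB)\to(C,IC)$ is defined as $(p,I)$-complete faithful flatness, which unwinds (by the usual derived Nakayama / base-change arguments, cf. the discussion preceding Proposition~\ref{prop:DisjointUnions} and \cite{BhattScholze}) to faithful flatness of $B/(p,I)B\to C/(p,I)B\cdot C$ in the classical sense, and that the topological space of a $p$-adic formal scheme equals that of its reduction mod $(p,I)$. Faithfully flat ring maps are surjective on $\mathrm{Spec}$, which gives the claim. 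Everything else is bookkeeping: the values of $h_{\mathscr{V}}$ lie in a two-element poset, so all equalizer conditions reduce to the single logical equivalence verified above, and compatibility with finite disjoint unions is immediate from $\spf$ of a finite product being the disjoint union of the $\spf$'s.
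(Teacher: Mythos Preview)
Your proof is correct and the substantive content is the same as the paper's: both arguments reduce to the observation that all values of $h_{\mathscr{V}}$ are subsingletons, so the sheaf condition collapses to showing that if each $\spf(C_i/IC_i)\to\mathscr{X}$ lands in $\mathscr{V}$ then so does $\spf(B/IB)\to\mathscr{X}$, which follows from joint surjectivity of the $\spf(C_i/IC_i)\to\spf(B/IB)$. The only difference is organizational: the paper works directly with a faithfully flat family and reads off joint surjectivity from the very definition of covers in $(\mathscr{X}/A)_{\Prism}^{\amalg}$, whereas you first invoke Remark~\ref{rem:CompareSites}~(2) to split the verification into ``singleton covers'' plus ``coproduct compatibility'' and then rederive surjectivity from $(p,I)$-complete faithful flatness via the mod $(p,I)$ reduction. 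Your route is slightly longer but perfectly valid; the paper's is a touch more direct since surjectivity is already baked into the cover axiom.
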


\begin{proof}
Consider a cover in $(\mathscr{X}/A)_{\Prism}^{\amalg},$ which is given by a faithully flat family $\{(B, IB)\rightarrow (C_i, IC_i)\}_i$. One needs to check that the sequence
$$h_\mathscr{V}((B, IB)))\rightarrow \prod_i h_\mathscr{V}((C_i, IC_i)) \rightrightarrows \prod_{i,j} h_\mathscr{V}((C_i, IC_i)\boxtimes_{(B, IB)}(C_j, IC_j))$$
is an equalizer sequence. All the terms have at most one element; consequently, there are just two cases to consider, depending on whether the middle term is empty or not. In both cases, the pair of maps on the right necessarily agree, and so one needs to see that the map on the left is an isomorphism. This is clear in the case when the middle term is empty (since the only map into an empty set is an isomorphism). It remains to consider the case when the middle term is nonempty, which means that $h_\mathscr{V}((C_i, IC_i))=*$ for all $i$. In this case we need to show that $h_{\mathscr{V}}((B, IB)) =*$. Since the maps $\spf(C_i/IC_i)\rightarrow \spf(B/IB) $ are jointly surjective and each $\spf(C_i/IC_i) \rightarrow \mathscr{X}$ lands in $\mathscr{V}$, it follows that so does the map $\spf(B/IB) \rightarrow \mathscr{X}$. Thus, $h_\mathscr{V}((B, IB))=*$, which finishes the proof.
\end{proof}

When $\mathscr{V}$ is affine, one can cover the sheaf $h_{\mathscr{V}}$ by a representable sheaf. Note that the construction of the representing object is essentially equivalent to Construction~4.17 of \cite{BhattScholze}.

\begin{constr}[\v{C}ech--Alexander cover of $\mathscr{V}$]\label{constACcover}
Let us additionally assume that $\mathscr{V}=\spf(R)$ is affine. Choose a surjection $P_{\mathscr{V}} \rightarrow R$ where $P_{\mathscr{V}}=\widehat{A[\underline{X}]}$ is a $(p, I)$--completed free $A$--algebra. Denote by $J_{\mathscr{V}}$ the kernel of the surjection. Then there is a commutative diagram with exact rows
\begin{center}
\begin{tikzcd}
0 \ar[r] & J_{\mathscr{V}} \ar[r] \ar[d] & P_{\mathscr{V}} \ar[d] \ar[r] & R\ar[d] \ar[r] & 0 \\
& \widehat{J_{\mathscr{V}}P_{\mathscr{V}}^{\delta}} \ar[r] & \widehat{P_{\mathscr{V}}^{\delta}} \ar[r] & \widehat{R\otimes_{P_{\mathscr{V}}}P_{\mathscr{V}}^{\delta}} \ar[r] & 0, 
\end{tikzcd}
\end{center}
where 
$\widehat{(-)}$ stands for derived $(p, I)$--completion. Here for an $A$--algebra $S$, $S^{\delta}$ denotes the ``$\delta$--envelope'' of $S$, that is, the $S$--algebra initial among $S$--algebras endowed with an $A$--$\delta$--algebra structure. Note that $\widehat{P_{\mathscr{V}}^{\delta}}=\widehat{(P^0_{\mathscr{V}})^{\delta}}$, where $P^{0}_{\mathscr{V}}=A[\underline{X}]$ is the polynomial algebra before completion; in particular, since $(P^{0}_{\mathscr{V}})^{\delta}$ is a flat $P^0_{\mathscr{V}}$--algebra (essentially by \cite[Lemma~2.11]{BhattScholze}), it follows that $\widehat{P_{\mathscr{V}}^{\delta}}$ is $(p, I)$--completely flat $P_{\mathscr{V}}$--algebra. Consequently, the completions in the lower row of the diagram can be equivalently taken as classical $(p, I)$-completions (cf. \cite[Lemma~3.7]{BhattScholze}). 

Denote by $J_{\mathscr{V}}^{\delta, \wedge} \subseteq \widehat{P_{\mathscr{V}}^{\delta}}$ the image of the map $\widehat{J_{\mathscr{V}}P_{\mathscr{V}}^{\delta}} \rightarrow \widehat{P_{\mathscr{V}}^{\delta}},$ i.e. the $(p, I)$--complete ideal of $\widehat{P_{\mathscr{V}}^{\delta}}$ topologically generated by $J_{\mathscr{V}}$. Then we have a short exact sequence
\begin{center}
\begin{tikzcd}
0 \ar[r] & J_{\mathscr{V}}^{\delta, \wedge} \ar[r]  & \widehat{P_{\mathscr{V}}^{\delta}} \ar[r] & \widehat{R\otimes_{P_{\mathscr{V}}}P_{\mathscr{V}}^{\delta}} \ar[r] & 0. 
\end{tikzcd}
\end{center}

Let $(\check{C}_\mathscr{V}, I\check{C}_\mathscr{V})$ be the prismatic envelope of $(\widehat{P^{\delta}_\mathscr{V}}, J_\mathscr{V}^{\delta, \wedge})$. It follows from \cite[Proposition 3.13, Example 3.14]{BhattScholze} that $(\check{C}_{\mathscr{V}}, I\check{C}_{\mathscr{V}})$ exists and is given by a flat prism over $(A, I)$. The map
$$R\rightarrow \widehat{R\otimes_{P_{\mathscr{V}}}P^{\delta}_\mathscr{V}}=\widehat{P^{\delta}_\mathscr{V}}/J_\mathscr{V}^{\delta, \wedge} \rightarrow \check{C}_{\mathscr{V}}/I\check{C}_{\mathscr{V}}$$ 
of $p$--complete rings corresponds to the map of formal schemes $\spf(\check{C}_{\mathscr{V}}/I\check{C}_{\mathscr{V}}) \rightarrow \mathscr{V}\hookrightarrow \mathscr{X}$. This defines an object of $(\mathscr{X}/A)_{\Prism}^\amalg,$ which we call a \emph{\v{C}ech--Alexander cover of $\mathscr{V}$}. 
\end{constr}

\begin{rems}\label{rem:NonCompletedEnvelopes}\mbox{}\\[-0.9cm]
\begin{enumerate}[(1)]
\item{Note that
 $(\check{C}_{\mathscr{V}}, I\check{C}_{\mathscr{V}})$ is equivalently the prismatic envelope of $(\widehat{P^{\delta}_\mathscr{V}}, J_\mathscr{V}\widehat{P^{\delta}_\mathscr{V}})$. Moreover, when the ideal $J_{\mathscr{V}}$ is finitely generated, one has the equality $J_{\mathscr{V}}^{\delta, \wedge}=J_{\mathscr{V}}\widehat{P_{\mathscr{V}}^{\delta}}.$}
\item{Since the ring $R$ in Construction~\ref{constACcover} is a $p$-completely smooth $A/I$--algebra, it is in particular a $p$--completion of a finitely presented $A/I$--algebra. It follows that the map $P_{\mathscr{V}}\rightarrow R$ may be chosen so that $P_{\mathscr{V}}$ is the (derived) $(p, I)$--completion of a polynomial $A$--algebra of finite type, with the kernel $J_{\mathscr{V}}$ finitely generated. While such a choice may be preferable, we formulate the construction without imposing it, as it may be convenient to allow non--finite--type free algebras in the construction e.g. for the reasons of functoriality (see the remark at the end of \cite[Construction~4.17]{BhattScholze}).}
\end{enumerate}
\end{rems}

\begin{prop}\label{ACechCovers}
Denote by $h_{\check{C}_{\mathscr{V}}}$ the sheaf represented by the object $(\check{C}_\mathscr{V}, I\check{C}_\mathscr{V})\in (\mathscr{X}/A)_{\Prism}^\amalg$. 
There exists a unique map of sheaves $h_{\check{C}_{\mathscr{V}}} \rightarrow h_\mathscr{V}$, and it is an epimorphism.
\end{prop}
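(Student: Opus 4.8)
The plan is to exhibit the map $h_{\check C_{\mathscr V}} \to h_{\mathscr V}$ directly from the construction, and then verify the epimorphism property by reducing to a statement about covers in $(\mathscr X/A)_{\Prism}^{\amalg}$. The existence of the map is the easy direction: by construction the object $(\check C_{\mathscr V}, I\check C_{\mathscr V})$ comes equipped with a map $\spf(\check C_{\mathscr V}/I\check C_{\mathscr V}) \to \mathscr V \hookrightarrow \mathscr X$, i.e. $h_{\mathscr V}((\check C_{\mathscr V}, I\check C_{\mathscr V})) = *$. By the Yoneda lemma, a map of presheaves $h_{\check C_{\mathscr V}} = \mathrm{Hom}(-, (\check C_{\mathscr V}, I\check C_{\mathscr V})) \to h_{\mathscr V}$ is the same as an element of $h_{\mathscr V}((\check C_{\mathscr V}, I\check C_{\mathscr V}))$; since this set is a singleton, such a map exists and is unique. (Here I use that $h_{\check C_{\mathscr V}}$ is a sheaf on $(\mathscr X/A)_{\Prism}^{\amalg}$ because representable presheaves are sheaves for the faithfully flat topology, by descent — the same descent already invoked in Construction and in Lemma~\ref{VanishingObjects} via \cite{BhattScholze}.)

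For the epimorphism claim, the strategy is the standard one: a map of sheaves is an epimorphism iff it is a ``local surjection'', i.e. for every object $(B, IB) \in (\mathscr X/A)_{\Prism}^{\amalg}$ and every section $s \in h_{\mathscr V}((B, IB))$, there is a cover $\{(B, IB) \to (C_j, IC_j)\}_j$ such that each restriction $s|_{(C_j, IC_j)}$ lifts to $h_{\check C_{\mathscr V}}((C_j, IC_j))$. Since $h_{\mathscr V}((B,IB))$ is either empty or $*$, the only case to treat is when it equals $*$, i.e. when the structure map $\spf(B/IB) \to \mathscr X$ already factors through $\mathscr V = \spf R$. In that case I will produce the lift on the nose — no genuine cover is needed — by constructing a map $(\check C_{\mathscr V}, I\check C_{\mathscr V}) \to (B, IB)$ in $(\mathscr X/A)_{\Prism}$.

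The construction of this map is where the actual content lies, and it is exactly the universal property built into Construction~\ref{constACcover}. Given $(B, IB)$ with $\spf(B/IB) \to \mathscr X$ landing in $\mathscr V$, we get an $A/I$-algebra map $R \to B/IB$. Lift the composite $P_{\mathscr V}^0 = A[\underline X] \to P_{\mathscr V} \twoheadrightarrow R \to B/IB$ along $B \twoheadrightarrow B/IB$ arbitrarily (possible since $P_{\mathscr V}^0$ is a polynomial algebra and $B$ is $(p,I)$-complete, so the lifting extends to $P_{\mathscr V} = \widehat{A[\underline X]}$); since $B$ carries a $\delta$-structure over $A$, this extends uniquely to $\widehat{P_{\mathscr V}^\delta} \to B$ by the universal property of the $\delta$-envelope and $(p,I)$-complete flatness. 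By construction this map carries $J_{\mathscr V}$ (hence the $(p,I)$-complete ideal $J_{\mathscr V}^{\delta,\wedge}$) into $\ker(B \to B/IB) = IB$, so the pair $(\widehat{P_{\mathscr V}^\delta}, J_{\mathscr V}^{\delta,\wedge}) \to (B, IB)$ is a map of ``$\delta$-pairs'' into the prism $(B, IB)$. The universal property of the prismatic envelope (\cite[Prop.~3.13]{BhattScholze}) then yields a unique map of prisms $(\check C_{\mathscr V}, I\check C_{\mathscr V}) \to (B, IB)$, and one checks the induced map $\spf(B/IB) \to \spf(\check C_{\mathscr V}/I\check C_{\mathscr V}) \to \mathscr V \hookrightarrow \mathscr X$ agrees with the given one (both are determined by $R \to B/IB$), so this is a morphism in $(\mathscr X/A)_{\Prism}$. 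This is precisely a lift of $s \in h_{\mathscr V}((B,IB))$ to $h_{\check C_{\mathscr V}}((B,IB))$.

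The main obstacle — though it is really just bookkeeping rather than a deep point — is making sure the lifting of the map out of $P_{\mathscr V}$ is compatible with the $\delta$-structure and the $(p,I)$-completions, so that the universal property of the prismatic envelope genuinely applies; this is handled by the flatness statement $(P_{\mathscr V}^0)^\delta$ flat over $P_{\mathscr V}^0$ (via \cite[Lemma~2.11]{BhattScholze}) already recorded in Construction~\ref{constACcover}, which lets one pass freely between derived and classical completions. Once the map $(\check C_{\mathscr V}, I\check C_{\mathscr V}) \to (B, IB)$ is in hand, the epimorphism criterion is satisfied with the trivial cover $\{\mathrm{id}\}$, and the proof is complete.
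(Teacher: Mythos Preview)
Your proposal is correct and follows essentially the same approach as the paper: for existence/uniqueness you use Yoneda (the paper instead checks directly at each object that $h_{\check{C}_{\mathscr{V}}}((B,IB))\neq\emptyset$ forces $h_{\mathscr{V}}((B,IB))=*$, which amounts to the same thing), and for the epimorphism you construct, exactly as the paper does, a lift $P_{\mathscr{V}}\to B$ of $R\to B/IB$, pass to the $\delta$--envelope and then to the prismatic envelope to obtain $(\check{C}_{\mathscr{V}},I\check{C}_{\mathscr{V}})\to (B,IB)$, showing $h_{\check{C}_{\mathscr{V}}}((B,IB))\neq\emptyset$ whenever $h_{\mathscr{V}}((B,IB))=*$.
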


\begin{proof}
If $(B, IB) \in (\mathscr{X}/A)_{\Prism}$ with $h_{\check{C}_{\mathscr{V}}}((B, IB))\neq \emptyset,$ this means that $\spf(B/IB)\rightarrow \mathscr{X}$ factors through $\mathscr{V}$ since it factors through $\spf(\check{C}_{\mathscr{V}}/I\check{C}_{\mathscr{V}}).$ Thus, we also have $h_{\mathscr{V}}((B, IB))=*$, and so the (necessarily unique) map $h_{\check{C}_{\mathscr{V}}}((B, IB))\rightarrow h_{\mathscr{V}}((B, IB))$ is defined. When $h_{\check{C}_{\mathscr{V}}}((B, IB))$ is empty, the map $h_{\check{C}_{\mathscr{V}}}((B, IB))\rightarrow h_{\mathscr{V}}((B, IB))$ is still defined and unique, namely given by the empty map. Thus, the claimed morphism of sheaves exists and is unique.

We show that this map is an epimorphism. Let $(B, IB)\in (\mathscr{X}/A)_{\Prism}$ such that $h_{\mathscr{V}}((B, IB)) = *$, i.e. $\spf(B/IB) \rightarrow \mathscr{X}$ factors through $\mathscr{V}$, and consider the map $R \rightarrow B/IB$ associated to the map $\spf(B/IB) \rightarrow \mathscr{V}$. Since $P_{\mathscr{V}}$ is a $p$--completed free $A$--algebra surjecting onto $R$ and $B$ is $(p, I)$--complete, the map $R \rightarrow B/IB$ admits a lift $P_{\mathscr{V}} \rightarrow B$. This induces an $A$--$\delta$--algebra map $\widehat{P_{\mathscr{V}}^{\delta}}\rightarrow B$ which gives a morphism of $\delta$--pairs $(\widehat{P_{\mathscr{V}}^{\delta}}, J_{\mathscr{V}}\widehat{P_{\mathscr{V}}^{\delta}})\rightarrow (B, IB)$, and further the map of prisms $(\check{C}_\mathscr{V}, I\check{C}_\mathscr{V}) \rightarrow (B, IB)$ using the universal properties of objects involved. It is easy to see that this is indeed (the opposite of) a morphism in $(\mathscr{X}/A)_{\Prism}$. This shows that $h_{\check{C}_{\mathscr{V}}}((B, IB))$ is nonempty whenever $h_{\mathscr{V}}((B, IB))$ is. Thus, the map is an epimorphism. 
\end{proof}

Let $\mathfrak{V}=\{\mathscr{V}_j\}_{j \in J}$ be an affine open cover of $\mathscr{X}$. For $n \geq 1$ and a multi--index $(j_1, j_2, \dots, j_n) \in J^n,$ denote by $\mathscr{V}_{j_1, \dots j_n}$ the intersection $\mathscr{V}_{j_1}\cap \dots \cap \mathscr{V}_{j_n}$. As $\mathscr{X}$ is assumed to be separated, each $\mathscr{V}_{j_1, \dots j_n}$ is affine and we write $\mathscr{V}_{j_1, \dots j_n}=\mathrm{Spf}(R_{j_1, \dots, j_n})$. 

\begin{rem}[Binary products in $(\mathscr{X}/A)_{\Prism}$] \label{products}
For $(B,IB), (C, IC)\in (\mathscr{X}/A)_{\Prism}$, let us denote their binary product by $(B, IB)\boxtimes (C, IC)$. Let us describe it explicitly at least under the additional assumptions that 
\begin{enumerate}[(1)]
\item{$(B,IB), (C, IC)$ are flat prisms over $(A, I),$}
\item{there are affine opens $\mathscr{U}, \mathscr{V} \subseteq \mathscr{X}$ such that $h_{\mathscr{U}}((B, IB))=*=h_{\mathscr{V}}((C, IC))$.}
\end{enumerate}
Set $\mathscr{W}=\mathscr{U} \cap \mathscr{V}$ and denote the rings corresponding to the affine open sets $\mathscr{U}, \mathscr{V}$ and $\mathscr{W}$ by $R, S$ and $ T,$ resp. Then any object $(D, ID)\in (\mathscr{X}/A)_{\Prism}$ with maps both to $(B, IB)$ and $(C, IC)$ lives over $\mathscr{W},$ i.e. satisfies $h_{\mathscr{W}}((D, ID))=*$. This justifies the following construction. Consider the following commutative diagram, where $\urcorner$ denotes the pushout of $p$--complete commutative rings, i.e. taking the classically $p$--completed tensor product $\widehat{\otimes}$ (and $B\widehat{\otimes}_AC$ is the derived, but equivalently classical, $(p, I)$--completion of $B\otimes_AC$):

\begin{center}
\begin{tikzcd}[row sep = small]
&& B \widehat{\otimes}_A C \ar[dd] && \\
B \ar[urr]\ar[dd]&& 
&& \ar[ull] C\ar[dd]\\
&& (B/IB\widehat{\otimes}_{R}T)\widehat{\otimes}_{T}(C/IC\widehat{\otimes}_{S}T) \ar[d, phantom, "\cornerdown", near start] && 
\\
B/IB \ar[r] 
& B/IB\widehat{\otimes}_{R}T\ar[ur]  & {\color{white} A} &\ar[ul] C/IC\widehat{\otimes}_{S}T & \ar[l]
C/IC \\
R\ar[u]\ar[r] \ar[ur, phantom, "\llcorner", very near end] & T\ar[u] \ar[rr, equal] & & T \ar[u]& \ar[l] \ar[ul, phantom, "\lrcorner", very near end] S \ar[u]
\end{tikzcd}
\end{center}
Let $J \subseteq B \widehat{\otimes}_A C$ be the kernel of the map
$B \widehat{\otimes}_A C \rightarrow (B/IB\widehat{\otimes}_{R} T) \widehat{\otimes}_{T} (C/IC\widehat{\otimes}_{S} T).$
Then the product $(B, IB)\boxtimes (C, IC)$ is given by the prismatic envelope of the $\delta$--pair $(B \widehat{\otimes}_A C, J)$.
\end{rem}

\begin{prop}\label{prop:ProductsInPrismaticSite}
The \v{C}ech--Alexander covers can be chosen so that for all $j_1, \dots, j_n $ we have 
$$(\check{C}_{\mathscr{V}_{j_1, \dots j_n}}, I\check{C}_{\mathscr{V}_{j_1, \dots j_n}})=(\check{C}_{\mathscr{V}_{j_1}}, I\check{C}_{\mathscr{V}_{j_1}})\boxtimes (\check{C}_{\mathscr{V}_{j_2}}, I\check{C}_{\mathscr{V}_{j_2}}) \boxtimes \dots \boxtimes (\check{C}_{\mathscr{V}_{j_n}}, I\check{C}_{\mathscr{V}_{j_n}}).$$ 
\end{prop}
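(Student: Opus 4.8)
The plan is to make a single coherent choice of the data entering Construction~\ref{constACcover}. First fix, for each $j$, an arbitrary \v{C}ech--Alexander cover $(\check{C}_{\mathscr{V}_j},I\check{C}_{\mathscr{V}_j})$ of $\mathscr{V}_j=\spf(R_j)$, coming from a surjection $P_{\mathscr{V}_j}\twoheadrightarrow R_j$ with $P_{\mathscr{V}_j}=\widehat{A[\underline{X}_j]}$ a $(p, I)$--completed free $A$--algebra. Then for a multi--index $(j_1,\dots,j_n)$ take
$$P_{\mathscr{V}_{j_1,\dots,j_n}}:=P_{\mathscr{V}_{j_1}}\widehat{\otimes}_A\cdots\widehat{\otimes}_A P_{\mathscr{V}_{j_n}}$$
(which is again a $(p, I)$--completed free $A$--algebra, on the disjoint union of the variable sets), equipped with the surjection onto $R_{j_1,\dots,j_n}$ obtained by multiplying the restriction maps $R_{j_i}\to R_{j_1,\dots,j_n}$. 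With these choices I would show, by Yoneda, that the resulting \v{C}ech--Alexander cover $\check{C}_{\mathscr{V}_{j_1,\dots,j_n}}$ represents the same functor on $(\mathscr{X}/A)_{\Prism}$ as the iterated binary product $\check{C}_{\mathscr{V}_{j_1}}\boxtimes\cdots\boxtimes\check{C}_{\mathscr{V}_{j_n}}$, which exists by iterating Remark~\ref{products} (at each stage the partial products are flat prisms over $(A,I)$ living over an affine open of $\mathscr{X}$); the two objects then coincide.

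First I would verify that the displayed surjection is genuinely surjective. Because $(p, I)$--completed tensor products of surjections are surjective --- derived $(p, I)$--completion being right exact, and $P_{\mathscr{V}_{j_i}}\twoheadrightarrow R_{j_i}$ --- this reduces to surjectivity of $R_{j_1}\widehat{\otimes}_{A/I}\cdots\widehat{\otimes}_{A/I}R_{j_n}\twoheadrightarrow R_{j_1,\dots,j_n}$, i.e.\ to the assertion that the graph embedding
$$\mathscr{V}_{j_1,\dots,j_n}\hookrightarrow \mathscr{V}_{j_1}\times_{\spf(A/I)}\cdots\times_{\spf(A/I)}\mathscr{V}_{j_n}$$
is a closed immersion of affine formal schemes. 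This holds since $\mathscr{X}$ is separated over $A/I$: that embedding is the base change of the $n$--fold diagonal $\mathscr{X}\to\mathscr{X}\times_{\spf(A/I)}\cdots\times_{\spf(A/I)}\mathscr{X}$ --- itself a closed immersion, being an iterated base change of $\Delta_{\mathscr{X}}$ --- along the open immersion $\prod_i\mathscr{V}_{j_i}\hookrightarrow\prod_i\mathscr{X}$. (Separatedness is also what makes $\mathscr{V}_{j_1,\dots,j_n}$ affine, so that Construction~\ref{constACcover} applies to it; when the intersection is empty, both sides are the zero prism and there is nothing to prove.)

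It then remains to identify the two functors of points. Unwinding the universal properties of the prismatic envelope and the $\delta$--envelope exactly as in the proof of Proposition~\ref{ACechCovers}, for any $(B,IB)\in(\mathscr{X}/A)_{\Prism}$ there is a natural bijection between $h_{\check{C}_{\mathscr{V}}}((B,IB))$ and the set of $A$--algebra maps $P_{\mathscr{V}}\to B$ lifting the composite $P_{\mathscr{V}}\twoheadrightarrow R\to B/IB$, the second map being the one induced by the structure map of $(B,IB)$ (and the set being empty unless that structure map factors through $\mathscr{V}$). Applying this with $\mathscr{V}=\mathscr{V}_{j_1,\dots,j_n}$ and using that (a) an $A$--algebra map out of $P_{\mathscr{V}_{j_1}}\widehat{\otimes}_A\cdots\widehat{\otimes}_A P_{\mathscr{V}_{j_n}}$ into the $(p, I)$--complete ring $B$ is the same datum as an $n$--tuple of $A$--algebra maps out of the factors, (b) the chosen surjection restricts on the $i$--th factor to $P_{\mathscr{V}_{j_i}}\twoheadrightarrow R_{j_i}\to R_{j_1,\dots,j_n}$, and (c) the structure map of $(B,IB)$ factors through $\mathscr{V}_{j_1,\dots,j_n}=\bigcap_i\mathscr{V}_{j_i}$ if and only if it factors through each $\mathscr{V}_{j_i}$, one obtains a natural bijection $h_{\check{C}_{\mathscr{V}_{j_1,\dots,j_n}}}((B,IB))\cong\prod_{i=1}^n h_{\check{C}_{\mathscr{V}_{j_i}}}((B,IB))$. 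The right--hand side is the functor represented by $\check{C}_{\mathscr{V}_{j_1}}\boxtimes\cdots\boxtimes\check{C}_{\mathscr{V}_{j_n}}$, so Yoneda gives the claim. Finally I would note that the canonical associativity and commutativity isomorphisms of $\widehat{\otimes}_A$ make these choices compatible with permutations of the multi--index and with the face and degeneracy maps --- exactly what is needed downstream to assemble the $\check{C}_{\mathscr{V}_{j_0,\dots,j_n}}$ into a cosimplicial \v{C}ech--Alexander complex.

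The only non--formal ingredient here is the surjectivity in the second paragraph --- i.e.\ repackaging separatedness of $\mathscr{X}$ as the statement that the $R_{j_i}$ generate $R_{j_1,\dots,j_n}$. Everything else is a Yoneda computation; its one pitfall is that it must be carried out in $(\mathscr{X}/A)_{\Prism}$, remembering the structure maps to $\mathscr{X}$, rather than in $\Prism$, where $\check{C}_{\mathscr{V}_{j_1}}\boxtimes\check{C}_{\mathscr{V}_{j_2}}$ would see the product $\mathscr{V}_{j_1}\times\mathscr{V}_{j_2}$ and not the intersection $\mathscr{V}_{j_1}\cap\mathscr{V}_{j_2}$.
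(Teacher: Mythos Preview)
Your proposal is correct and follows essentially the same approach as the paper: both make the identical choice $P_{\mathscr{V}_{j_1,\dots,j_n}}=P_{\mathscr{V}_{j_1}}\widehat{\otimes}_A\cdots\widehat{\otimes}_A P_{\mathscr{V}_{j_n}}$, invoke separatedness of $\mathscr{X}$ for the surjectivity onto $R_{j_1,\dots,j_n}$, and then compare universal properties. The only difference is packaging: the paper reduces to the binary case and compares prismatic envelopes via the explicit diagram of Remark~\ref{products}, whereas you argue directly for general $n$ by computing the functor of points $h_{\check{C}_{\mathscr{V}}}$ in terms of lifts $P_{\mathscr{V}}\to B$ and applying Yoneda --- a slightly cleaner route to the same conclusion.
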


\begin{proof}
Clearly it is enough to show the statement for binary products. More precisely,  given  two affine opens $\mathscr{V}_1, \mathscr{V}_2 \subseteq \mathscr{X}$ and an arbitrary initial choice of $(\check{C}_{\mathscr{V}_{1}}, I\check{C}_{\mathscr{V}_{1}})$ and $(\check{C}_{\mathscr{V}_{2}}, I\check{C}_{\mathscr{V}_{2}}),$ we show that $P_{\mathscr{V}_{12}} \rightarrow R_{12}$ can be chosen so that the resulting \v{C}ech--Alexander cover $(\check{C}_{\mathscr{V}_{12}}, I\check{C}_{\mathscr{V}_{12}})$ of $\mathscr{V}_{12}$ is equal to $(\check{C}_{\mathscr{V}_{1}}, I\check{C}_{\mathscr{V}_{1}}) \boxtimes (\check{C}_{\mathscr{V}_{2}}, I\check{C}_{\mathscr{V}_{2}})$. For the purposes of this proof, let us refer to a prismatic envelope of a $\delta$--pair $(S, J)$ also as ``the prismatic envelope of the arrow $S \rightarrow S/J$''.

Consider $\alpha_i:P_{\mathscr{V}_i}\twoheadrightarrow R_i,\; i=1, 2$ as in Construction~\ref{constACcover}, and set $P_{\mathscr{V}_{12}}=P_{\mathscr{V}_1}\widehat{\otimes}_A P_{\mathscr{V}_2}$. Then one has the induced surjection $\alpha_1 \otimes \alpha_2:P_{\mathscr{V}_{12}} \rightarrow R_1\widehat{\otimes}_{A/I} R_2$, which can be followed by the induced map $R_1\widehat{\otimes}_{A/I} R_2\rightarrow R_{12}$. This latter map is surjective as well since $\mathscr{X}$ is separated, and therefore the composition of these two maps $\alpha_{12}:P_{\mathscr{V}_{12}}\rightarrow R_{12}$ is surjective, with the kernel $J_{\mathscr{V}_{12}}$ that contains $(J_{\mathscr{V}_1}, J_{\mathscr{V}_2})P_{\mathscr{V}_{12}}$. We may construct a diagram analogous to the one from Remark~\ref{products}, which becomes the diagram
\begin{center}
\begin{tikzcd}[row sep = small]
&& \widehat{P_{\mathscr{V}_{12}}^{\delta}} \ar[dd] && \\
\widehat{P_{\mathscr{V}_1}^{\delta}} \ar[urr]\ar[dd]&& 
&& \ar[ull] 
\widehat{P_{\mathscr{V}_2}^{\delta}}\ar[dd]\\
&& R_{12}\widehat{\otimes}_{P_{\mathscr{V}_{12}}}(\widehat{P_{\mathscr{V}_{12}}^{\delta}}) \ar[d, phantom, "\cornerdown", near start] && \\
R_1\widehat{\otimes}_{P_{\mathscr{V}_1}}\widehat{P_{\mathscr{V}_1}^{\delta}}  
\ar[r]& R_{12}\widehat{\otimes}_{P_{\mathscr{V}_1}}\widehat{P_{\mathscr{V}_1}^{\delta}}\ar[ur]  & {\color{white} A} &\ar[ul] R_{12}\widehat{\otimes}_{P_{\mathscr{V}_2}}\widehat{P_{\mathscr{V}_2}^{\delta}} & \ar[l] 
R_2\widehat{\otimes}_{P_{\mathscr{V}_2}}\widehat{P_{\mathscr{V}_2}^{\delta}} \\
R_1\ar[u]\ar[r] \ar[ur, phantom, "\llcorner", very near end] & R_{12}\ar[u] \ar[rr, equal] & & R_{12} \ar[u]& \ar[l] \ar[ul, phantom, "\lrcorner", very near end] R_2, \ar[u]
\end{tikzcd}
\end{center}
where the expected arrow in the central column is replaced by an isomorphic one, namely the map obtained from the surjection $P_{\mathscr{V}_{12}} \rightarrow R_{12}$ by the procedure as in Construction~\ref{constACcover}. Now $(\check{C}_{\mathscr{V}_{12}}, I\check{C}_{\mathscr{V}_{12}})$ is obtained as the prismatic envelope of this composed central arrow, while $(\check{C}_{\mathscr{V}_{1}}, I\check{C}_{\mathscr{V}_{1}})\boxtimes (\check{C}_{\mathscr{V}_{2}}, I\check{C}_{\mathscr{V}_{2}})$ is obtained the same way, but only after replacing the downward arrows on the left and right by their prismatic envelopes. Comparing universal properties, one easily sees that the resulting central prismatic envelope remains unchanged, proving the claim.
\end{proof}

\begin{rem}\label{rem:ComplFinPresented}
Suppose that for each $j$, the initial choice of the map $P_{\mathscr{V}_j} \rightarrow R_j$ has been made as in Remark~\ref{rem:NonCompletedEnvelopes} (2), that is, $P_{\mathscr{V}_j}$ is the $(p, I)$--completion of a finite type free $A$--algebra and the ideal $J_{\mathscr{V}_j}$ is finitely generated. If now $P_{\mathscr{V}_{j_1, j_2, \dots j_n}}$ is the $(p, I)$--completed free $A$--algebra for $\mathscr{V}_{j_1, j_2, \dots j_n}$ obtained by iterating the procedure in the proof of Proposition~\ref{prop:ProductsInPrismaticSite}, it is easy to see that in this case, the algebra $P_{\mathscr{V}_{j_1, j_2, \dots j_n}}$ is still the $(p, I)$--completion of a finite type free $A$--algebra, and it can be shown that the corresponding ideal $J_{\mathscr{V}_{j_1, j_2, \dots j_n}}$ is finitely generated.

In more detail, given a ring $B$ and a finitely generated ideal $J \subseteq B$, Let us call a $B$--algebra $C$ \textit{$J$--completely finitely presented} if $C$ is derived $J$--complete and there exists a map $\alpha:B[\underline{X}]\rightarrow C$ from the polynomial ring in finitely many variables $\underline{X}=\{X_1, \dots, X_n\}$ such that the derived $J$--completed map $\widehat{\alpha}:\widehat{B[\underline{X}]}\rightarrow C$ is surjective and with a finitely generated kernel. Then the algebra $R_{j_1, j_2, \dots j_n}$ corresponding to $\mathscr{V}_{j_1, j_2, \dots j_n}$ is $(p, I)$--completely finitely presented by Remark~\ref{rem:NonCompletedEnvelopes} (2), and since $P_{\mathscr{V}_{j_1, j_2, \dots j_n}}$ is the $(p, I)$--completion of a finite type polynomial $A$--algebra, the following lemma shows that $J_{\mathscr{V}_{j_1, j_2, \dots j_n}}$ is finitely generated.
\end{rem} 

\begin{lem}
Let $C$ be a $J$--completely finitely presented $B$--algebra, and consider a $B$--algebra map $\beta:B[\underline{Y}] \rightarrow C$ from a polynomial algebra in finitely many variables $\underline{Y}=\{Y_1, \dots, Y_m\}$ such that $\widehat{\beta}$ is surjective. Then the kernel of $\widehat{\beta}$ is finitely generated. 
\end{lem}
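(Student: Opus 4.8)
The plan is to compare the given presentation $\widehat\beta$ with the presentation $\widehat\alpha\colon \widehat{B[\underline X]}\to C$ (with $\underline X=\{X_1,\dots,X_n\}$) witnessing that $C$ is $J$-completely finitely presented, mimicking the classical proof that finite presentation of an algebra is independent of the chosen presentation. Fix generators $f_1,\dots,f_r$ of $\ker\widehat\alpha$.

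\emph{Comparison maps.} Since $\widehat\beta$ is surjective, for each $i$ choose $g_i\in\widehat{B[\underline Y]}$ with $\widehat\beta(g_i)=\widehat\alpha(X_i)$; the rule $X_i\mapsto g_i$ defines a $B$-algebra map $B[\underline X]\to\widehat{B[\underline Y]}$ into a derived $J$-complete ring, which by the universal property of derived completion extends uniquely to a ring map $\widehat\sigma\colon \widehat{B[\underline X]}\to\widehat{B[\underline Y]}$. Symmetrically, using surjectivity of $\widehat\alpha$, choose $h_j\in\widehat{B[\underline X]}$ with $\widehat\alpha(h_j)=\widehat\beta(Y_j)$ for $j=1,\dots,m$, and form $\widehat\tau\colon\widehat{B[\underline Y]}\to\widehat{B[\underline X]}$, $Y_j\mapsto h_j$. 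By uniqueness of such extensions into the derived-complete target $C$, one gets $\widehat\beta\circ\widehat\sigma=\widehat\alpha$ and $\widehat\alpha\circ\widehat\tau=\widehat\beta$.

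\emph{Computing the kernel.} I claim $\ker\widehat\beta$ equals the ideal
$$\mathfrak I:=\bigl(\widehat\sigma(f_1),\dots,\widehat\sigma(f_r),\, Y_1-\widehat\sigma(h_1),\dots,Y_m-\widehat\sigma(h_m)\bigr)\subseteq\widehat{B[\underline Y]},$$
which is finitely generated. The inclusion $\mathfrak I\subseteq\ker\widehat\beta$ is immediate from $\widehat\beta\widehat\sigma=\widehat\alpha$ and $\widehat\alpha(h_j)=\widehat\beta(Y_j)$. For the converse, first observe that $\bar R:=\widehat{B[\underline Y]}/\mathfrak I$ is again derived $J$-complete: applying right exactness of derived $J$-completion to the exact sequence $\widehat{B[\underline Y]}^{\oplus(r+m)}\to\widehat{B[\underline Y]}\to\bar R\to 0$ and using that $\widehat{B[\underline Y]}$ and its finite direct sums are already derived complete gives $\bar R\xrightarrow{\ \sim\ }\widehat{\bar R}$. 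Consequently the quotient map $\rho\colon\widehat{B[\underline Y]}\to\bar R$ and the composite $\rho\circ\widehat\sigma\circ\widehat\tau$ are two $B$-algebra maps into a derived $J$-complete ring, and they agree on $B[\underline Y]$: indeed $\rho(Y_j)=\rho(\widehat\sigma(h_j))=\rho(\widehat\sigma(\widehat\tau(Y_j)))$ because $Y_j-\widehat\sigma(h_j)\in\mathfrak I$. Hence the universal property of $\widehat{B[\underline Y]}$ forces $\rho=\rho\circ\widehat\sigma\circ\widehat\tau$. Now take $u\in\ker\widehat\beta$: then $\widehat\alpha(\widehat\tau(u))=\widehat\beta(u)=0$, so $\widehat\tau(u)\in\ker\widehat\alpha=(f_1,\dots,f_r)$, whence $\widehat\sigma\widehat\tau(u)\in\mathfrak I$ and therefore $\rho(u)=\rho(\widehat\sigma\widehat\tau(u))=0$, i.e. $u\in\mathfrak I$. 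This proves $\ker\widehat\beta=\mathfrak I$ and finishes the argument.

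\emph{Main difficulty.} The delicate point is the equality $\rho=\rho\circ\widehat\sigma\circ\widehat\tau$. One must not argue it by "agreement on a dense subring", since derived $J$-complete rings need not be $J$-adically separated; instead one deduces it from the universal property of derived completion, which in turn forces the preliminary verification that $\bar R$ is derived complete. The remaining ingredients—unique extension of algebra maps into derived-complete rings, and stability of derived completeness under finite direct sums and cokernels—are formal consequences of the facts on derived completion recalled at the beginning of this subsection.
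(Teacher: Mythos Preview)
Your proof is correct. Both your argument and the paper's are adaptations of the classical proof that finite presentation is independent of the chosen presentation, but the organization differs.

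The paper introduces an auxiliary ring $\widehat{B[\underline{X},\underline{Y}]}$: it first builds a surjection $\theta_0\colon \widehat{B[\underline{X}]}[\underline{Y}]\to C$, computes $\ker\theta_0=(f_1,\dots,f_k,Y_1-g_1,\dots,Y_m-g_m)$ by an elementary polynomial-ring argument (before completing in the $\underline{Y}$-variables), and then applies right exactness of derived completion to the resulting presentation to obtain the same generators for $\ker\theta$ in $\widehat{B[\underline{X},\underline{Y}]}$. Finally a surjection $\psi\colon\widehat{B[\underline{X},\underline{Y}]}\to\widehat{B[\underline{Y}]}$ with $\widehat\beta\circ\psi=\theta$ gives $\ker\widehat\beta=\psi(\ker\theta)$. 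You instead work entirely with maps $\widehat\sigma,\widehat\tau$ between $\widehat{B[\underline X]}$ and $\widehat{B[\underline Y]}$, write down a candidate ideal $\mathfrak I\subseteq\widehat{B[\underline Y]}$ directly, and verify $\ker\widehat\beta\subseteq\mathfrak I$ via the identity $\rho=\rho\circ\widehat\sigma\circ\widehat\tau$, which you correctly deduce from the universal property of derived completion after checking that $\widehat{B[\underline Y]}/\mathfrak I$ is derived complete. Unwinding the paper's generators $\psi(f_i),\,Y_j-\psi(g_j)$ shows they coincide with your $\widehat\sigma(f_i),\,Y_j-\widehat\sigma(h_j)$, so the two proofs produce the same generating set. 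The paper's route has the advantage that the kernel computation happens in an ordinary polynomial ring and only then is pushed through completion; your route avoids the auxiliary ring entirely and is arguably cleaner, at the cost of making the universal-property step (and the attendant check that the quotient is derived complete, to guard against non-separatedness) carry the weight of the argument. You flagged exactly this as the main difficulty, which is apt.
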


\begin{proof}
The proof is an adaptation of the proof of \cite[00R2]{stacks}, which is a similar assertion about finitely presented algebras. Consider $\alpha$ as in Remark~\ref{rem:ComplFinPresented}, and additionally let us fix a generating set $(f_1, f_2, \dots, f_k)\subseteq \widehat{B[\underline{X}]}$ of $\mathrm{Ker}\,\widehat{\alpha}$. 

For $i=1, \dots, m,$ let us choose $g_i \in \widehat{B[\underline{X}]}$ such that $\widehat{\alpha}(g_i)=\beta(Y_i)$. Then one can define a surjective map
$$\theta_0:\widehat{B[\underline{X}]}[\underline{Y}]\rightarrow C, \;\;\; \theta_0\mid_{\widehat{B[\underline{X}]}}=\widehat{\alpha}, \;\;\theta_0(Y_i)=\beta(Y_i),$$
and it is easy to see that $\mathrm{Ker}\,\theta_0=(f_1, \dots, f_k, Y_1-g_1, \dots, Y_m-g_m).$
That is, we have an exact sequence 
$$(\widehat{B[\underline{X}]}[\underline{Y}])^{\oplus k+m}\rightarrow \widehat{B[\underline{X}]}[\underline{Y}] \stackrel{\theta_0}\rightarrow C\rightarrow 0,$$
where the map on the left is a module map determined by the finite set of generators of $\mathrm{Ker}\,\theta_0$. After taking the derived $J$--completion, the sequence becomes the exact sequence
$$\widehat{B[\underline{X},\underline{Y}]}^{\oplus k+m}\rightarrow \widehat{B[\underline{X},\underline{Y}]} \stackrel{\theta}\rightarrow C\rightarrow 0.$$
That is, we have a surjective map $\theta:\widehat{B[\underline{X},\underline{Y}]} \rightarrow C$, which is determined on topological generators by $\theta(X_j)=\alpha(X_j), \theta(Y_i)=\beta(Y_i),$ and the kernel of $\theta$ is $(f_1, \dots, f_k, Y_1-g_1, \dots, Y_m-g_m)$.

Next, we choose elements $h_j \in \widehat{B[\underline{Y}]}$ such that $\widehat{\beta}(h_j)=\alpha(X_j)$ for each $j$. Then we have a surjective map 
$\psi: \widehat{B[\underline{X}, \underline{Y}]}\rightarrow \widehat{B[\underline{Y}]}$ given by $X_j \mapsto h_j$ and $Y_i \mapsto Y_i$,
which has the property that $ \widehat{\beta} \circ \psi=\theta$. That is, $$\mathrm{Ker}\,\theta=\mathrm{Ker}\,(\widehat{\beta} \circ \psi)=\psi^{-1}(\mathrm{Ker}\,(\widehat{\beta})),$$
and therefore $\psi(\mathrm{Ker}\,\theta)=\mathrm{Ker}\,\widehat{\beta}$ since $\psi$ is surjective. But $\mathrm{Ker}\,\theta$ is finitely generated by the previous, and hence so is  $\mathrm{Ker}\,\widehat{\beta}$.\end{proof}

\begin{prop}\label{prop:Cover}
The map $\coprod_j h_{\mathscr{V}_j} \rightarrow h_\mathscr{X}=*$ (where $\coprod$ denotes the coproduct in $\shv((\mathscr{X}/A)_{\Prism}^\amalg)$) to the final object is an epimorphism, hence so is the map $\coprod_j h_{\check{C}_{\mathscr{V}_j}} \rightarrow *$. 
\end{prop}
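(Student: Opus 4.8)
The plan is to unwind what it means for a map to a terminal sheaf to be an epimorphism. Since $h_\mathscr{X}=*$, it suffices to show that for every object $(B, IB)\in (\mathscr{X}/A)_{\Prism}^\amalg$ there is a cover $\{(B, IB)\rightarrow (C_i, IC_i)\}_i$ --- that is, a finite faithfully flat family --- such that each $\spf(C_i/IC_i)\rightarrow \mathscr{X}$ factors through some member $\mathscr{V}_{j(i)}$ of the cover $\mathfrak{V}$; indeed, then $h_{\mathscr{V}_{j(i)}}((C_i, IC_i))=*$, and composing with the coproduct inclusion $h_{\mathscr{V}_{j(i)}}\hookrightarrow \coprod_j h_{\mathscr{V}_j}$ exhibits the required local lift of the unique section of $*$ over $(B, IB)$. (For $(B,IB)=\varnothing$ the empty cover does the job.) These covers will be produced by localizing the prism $(B, IB)$.

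So fix $(B, IB)$ with structure map $f\colon \spf(B/IB)\rightarrow \mathscr{X}$. The open sets $f^{-1}(\mathscr{V}_j)$ cover $\spf(B/IB)$, whose underlying topological space is that of $\spec(B/(p, I)B)$ and hence quasi--compact; since the principal opens form a basis, there are finitely many $\bar g_1,\dots, \bar g_m\in B/(p, I)B$ with $\bigcup_k D(\bar g_k)=|\spf(B/IB)|$ and $D(\bar g_k)\subseteq f^{-1}(\mathscr{V}_{j(k)})$ for suitable indices $j(k)$. Choose lifts $g_k\in B$ and form the $(p, I)$--completed localizations $\widehat{B_{g_k}}$. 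By Proposition~\ref{prop:DisjointUnions}~(3) applied with base $(B, IB)$, each $(\widehat{B_{g_k}}, I\widehat{B_{g_k}})$ is a bounded prism and $(B, IB)\rightarrow (\widehat{B_{g_k}}, I\widehat{B_{g_k}})$ is a flat map; together with the composite map to $\mathscr{X}$ it is an object of $(\mathscr{X}/A)_{\Prism}$. Its reduction $\spf(\widehat{B_{g_k}}/I\widehat{B_{g_k}})$ has underlying space $D(\bar g_k)$, so the family $\{(B, IB)\rightarrow (\widehat{B_{g_k}}, I\widehat{B_{g_k}})\}_k$ is jointly surjective, hence a cover in $(\mathscr{X}/A)_{\Prism}^\amalg$. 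Finally, $D(\bar g_k)\subseteq f^{-1}(\mathscr{V}_{j(k)})$ means the image of $\spf(\widehat{B_{g_k}}/I\widehat{B_{g_k}})$ in $\mathscr{X}$ lies in the open $\mathscr{V}_{j(k)}$, so the map to $\mathscr{X}$ factors through $\mathscr{V}_{j(k)}$ and $h_{\mathscr{V}_{j(k)}}((\widehat{B_{g_k}}, I\widehat{B_{g_k}}))=*$. This proves that $\coprod_j h_{\mathscr{V}_j}\rightarrow *$ is an epimorphism.

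For the last assertion, combine this with Proposition~\ref{ACechCovers}: each $h_{\check{C}_{\mathscr{V}_j}}\rightarrow h_{\mathscr{V}_j}$ is an epimorphism, and epimorphisms of sheaves are stable under coproducts, so $\coprod_j h_{\check{C}_{\mathscr{V}_j}}\rightarrow \coprod_j h_{\mathscr{V}_j}$ is an epimorphism; composing it with the epimorphism $\coprod_j h_{\mathscr{V}_j}\rightarrow *$ just established gives that $\coprod_j h_{\check{C}_{\mathscr{V}_j}}\rightarrow *$ is an epimorphism.

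The argument is essentially bookkeeping; the points that require a little care --- and the places I would double--check --- are the three compatibility facts hidden in it: that a \emph{finite} faithfully flat family suffices, which is quasi--compactness of $|\spf(B/IB)|$; that localizing and then $(p, I)$--completing preserves both boundedness and flatness of the prism, which is exactly Proposition~\ref{prop:DisjointUnions}~(3); and that the topological containment $D(\bar g_k)\subseteq f^{-1}(\mathscr{V}_{j(k)})$ genuinely upgrades to a scheme--theoretic factorization through $\mathscr{V}_{j(k)}$, which is the standard fact that a morphism of (formal) schemes factors through an open subscheme precisely when its image does, and is where openness of the $\mathscr{V}_j$ in $\mathscr{X}$ enters.
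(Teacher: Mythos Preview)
Your proof is correct and follows essentially the same approach as the paper: pull back the affine cover $\mathfrak{V}$ along $\spf(B/IB)\rightarrow\mathscr{X}$, refine by finitely many principal opens using quasi--compactness, lift the functions to $B$, and take the $(p,I)$--completed localizations as the desired faithfully flat family via Proposition~\ref{prop:DisjointUnions}. The only cosmetic difference is that the paper notes the lifted elements generate the unit ideal of $B$ (since they do so modulo $(p,I)\subseteq\mathrm{rad}(B)$), whereas you argue joint surjectivity directly from the topological covering; both are equivalent here.
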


\begin{proof}

It is enough to show that for a given object $(B, IB)\in (\mathscr{X}/A)_{\Prism}^\amalg,$ there is a faithfully flat family $(B, IB) \rightarrow (C_i, IC_i)$ in $ (\mathscr{X}/A)_{\Prism}^{\amalg, \mathrm{op}}$ such that $\coprod^{\mathrm{pre}}_j h_{\mathscr{V}_j}((C_i, IC_i)) \neq \emptyset$ for all $i$ where $\coprod^{\mathrm{pre}}$ denotes the coproduct of presheaves.

With that aim, let us first consider the preimages $\mathscr{W}_j \subseteq \spf(B/IB)$ of each $\mathscr{V}_j $ under the map $\spf(B/IB)\rightarrow \mathscr{X}$. This is an open cover of $\spf(B/IB)$ that corresponds to an open cover of $\spec B/(p, I)B$. One can then choose $f_1, f_2, \dots, f_m$ such that $\{\spec (B/(p, I)B)_{f_i}\}_i$ refines this cover, i.e. every $\spec (B/(p, I)B)_{f_i}$ corresponds to an open subset of $\mathscr{W}_{j(i)}$ for some index $j(i)$. 

The elements $f_1, \dots, f_m$ generate the unit ideal of $B$ since they do so modulo $(p, I)$ which is contained in $\mathrm{rad}(B).$ Thus, the family 
$$(B, IB) \rightarrow (C_i, IC_i):=(\widehat{B_{f_i}}, I \widehat{B_{f_i}})\;\; i=1, 2, \dots, m$$
is easily seen to give the desired faithfully flat family, with each $\coprod^{\mathrm{pre}}_j h_{\mathscr{V}_j}((C_i, IC_i))$ nonempty, since each $\spf(C_i/IC_i) \rightarrow \mathscr{X}$ factors through $\mathscr{V}_{j(i)}$ by construction.
\end{proof}

\begin{rem}
The proof of Proposition~\ref{prop:Cover} is the one step where we used the relaxation of the topology, namely the fact that the faithfully flat cover $(B, IB)\rightarrow \prod_i(C_i, IC_i)$ can be replaced by the family $\{(B, IB)\rightarrow (C_i, IC_i)\}_i$.
\end{rem}

Finally, we obtain the \v{C}ech--Alexander complexes in the global case. 

\begin{prop} \label{prop:CechComplex}
With the notation for $\mathscr{V}_{j_1, j_2, \dots, j_n}$ as above and the choice of \v{C}ech--Alexander covers $\check{C}_{\mathscr{V}_{j_1, j_2, \dots, j_n}}$ as in Proposition~\ref{prop:ProductsInPrismaticSite}, $\R\Gamma((\mathscr{X}/A)_{\Prism}, \mathcal{O})$ is modelled by the \v{C}ech--Alexander complex 

\begin{equation}\tag{$\check{C}^\bullet_{\mathfrak{V}}$}\label{eqn:CechComplex}
0 \longrightarrow \prod_j \check{C}_{\mathscr{V}_j}\longrightarrow \prod_{j_1, j_2} \check{C}_{\mathscr{V}_{j_1, j_2}} \longrightarrow \prod_{j_1, j_2, j_3} \check{C}_{\mathscr{V}_{j_1, j_2, j_3}} \longrightarrow\dots
\end{equation}
(that is, the complex associated to the cosimplicial ring $(\prod_{j_1, \dots, j_n}\check{C}_{\mathscr{V}_{j_1, \dots, j_n}})_n$).
\end{prop}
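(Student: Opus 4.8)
The plan is to recognize $(\check{C}^{\bullet}_{\mathfrak{V}})$ as the complex attached to a hypercovering of the terminal sheaf and to conclude by cohomological descent, the vanishing of Lemma~\ref{VanishingObjects} being the key collapsing input. By Corollary~\ref{cor:CohomologySame} it suffices to model $\R\Gamma((\mathscr{X}/A)_{\Prism}^\amalg, \mathcal{O})$. Set $\mathcal{U}=\coprod_{j}h_{\check{C}_{\mathscr{V}_j}}\in\shv((\mathscr{X}/A)_{\Prism}^\amalg)$; by Proposition~\ref{prop:Cover} the structure map $\mathcal{U}\to h_\mathscr{X}=*$ is an epimorphism, so its \v{C}ech nerve $\mathcal{U}^\bullet=\mathrm{cosk}_0(\mathcal{U}\to *)$, with $\mathcal{U}^n=\mathcal{U}^{\times(n+1)}$, is a hypercovering of the site. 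I would first make $\mathcal{U}^\bullet$ explicit. Representable presheaves are sheaves on $(\mathscr{X}/A)_{\Prism}^\amalg$ (this is how $h_{\check{C}_{\mathscr{V}}}$ was introduced in Proposition~\ref{ACechCovers}), and since the Yoneda embedding into sheaves preserves products, finite products in the underlying category are pushouts of prisms, and products distribute over coproducts in a topos, one gets
$$\mathcal{U}^n=\coprod_{j_0,\dots,j_n}\bigl(h_{\check{C}_{\mathscr{V}_{j_0}}}\times\cdots\times h_{\check{C}_{\mathscr{V}_{j_n}}}\bigr)=\coprod_{j_0,\dots,j_n}h_{\check{C}_{\mathscr{V}_{j_0}}\boxtimes\cdots\boxtimes\check{C}_{\mathscr{V}_{j_n}}}=\coprod_{j_0,\dots,j_n}h_{\check{C}_{\mathscr{V}_{j_0,\dots,j_n}}},$$
the last equality being the compatibility built into the choice of \v{C}ech--Alexander covers in Proposition~\ref{prop:ProductsInPrismaticSite} (valid also when indices repeat, since there the symbol $\check{C}_{\mathscr{V}_{j_0,\dots,j_n}}$ denotes precisely the iterated pushout). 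In particular each $\mathcal{U}^n$ is a coproduct of representables.

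The second step is to compute $\R\Gamma$ termwise. Since $\mathcal{O}$ is a sheaf with $\mathcal{O}(h_B)=B$ and $\R\Gamma$ sends a coproduct of representables to the product of the corresponding $\R\Gamma$'s, we have $\R\Gamma(\mathcal{U}^n,\mathcal{O})=\prod_{j_0,\dots,j_n}\R\Gamma(\check{C}_{\mathscr{V}_{j_0,\dots,j_n}},\mathcal{O})$. Each $\check{C}_{\mathscr{V}_{j_0,\dots,j_n}}$ is an object of $(\mathscr{X}/A)_{\Prism}^\amalg$---a flat prism over $(A,I)$ together with a structure map to $\mathscr{X}$, by Construction~\ref{constACcover} and Proposition~\ref{prop:ProductsInPrismaticSite}---so by Lemma~\ref{VanishingObjects} we have $\H^i(\check{C}_{\mathscr{V}_{j_0,\dots,j_n}},\mathcal{O})=0$ for $i>0$. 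Hence $\R\Gamma(\mathcal{U}^n,\mathcal{O})$ is concentrated in degree $0$, where it is the ring $\prod_{j_0,\dots,j_n}\check{C}_{\mathscr{V}_{j_0,\dots,j_n}}$. By cohomological descent along the hypercovering $\mathcal{U}^\bullet$, $\R\Gamma((\mathscr{X}/A)_{\Prism}^\amalg,\mathcal{O})$ is the totalization of $n\mapsto\R\Gamma(\mathcal{U}^n,\mathcal{O})$, and by the previous sentence this totalization is exactly the complex associated to the cosimplicial ring $\bigl(\prod_{j_0,\dots,j_n}\check{C}_{\mathscr{V}_{j_0,\dots,j_n}}\bigr)_n$, i.e.\ $(\check{C}^{\bullet}_{\mathfrak{V}})$; combining with Corollary~\ref{cor:CohomologySame} proves the claim. (Equivalently one can run the \v{C}ech-to-cohomology spectral sequence for the covering family $\{h_{\check{C}_{\mathscr{V}_j}}\to *\}_j$: the same computation shows its $E_1$-page is concentrated on the row $q=0$, where it is $(\check{C}^{\bullet}_{\mathfrak{V}})$, so it degenerates and computes $\R\Gamma$.)

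The genuinely substantive input here is Lemma~\ref{VanishingObjects}, which is already available; granting it, the only delicate part is the bookkeeping of the first two steps. One must make sure that representable presheaves are honestly sheaves on $(\mathscr{X}/A)_{\Prism}^\amalg$ (so that finite products of them are computed by pushouts of prisms), then match those pushouts with the chosen covers $\check{C}_{\mathscr{V}_{j_0,\dots,j_n}}$ for \emph{all} multi-indices arising in the \v{C}ech nerve---including the ones with repetitions, which is why Proposition~\ref{prop:ProductsInPrismaticSite} is phrased for arbitrary tuples---and finally check that, under these identifications, the differential of the totalization (equivalently, the \v{C}ech differential) is literally the alternating sum of the cosimplicial coface maps on $\bigl(\prod_{j_0,\dots,j_n}\check{C}_{\mathscr{V}_{j_0,\dots,j_n}}\bigr)_n$, so that the resulting complex is the one denoted $(\check{C}^{\bullet}_{\mathfrak{V}})$ in the statement.
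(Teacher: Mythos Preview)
Your proposal is correct and follows essentially the same route as the paper: both use the epimorphism $\coprod_j h_{\check{C}_{\mathscr{V}_j}}\to *$ from Proposition~\ref{prop:Cover}, identify the iterated self-products via Proposition~\ref{prop:ProductsInPrismaticSite}, and collapse the resulting \v{C}ech-to-cohomology spectral sequence using Lemma~\ref{VanishingObjects}. The paper invokes \cite[079Z]{stacks} directly for this spectral sequence (and then extracts the quasi-isomorphism from the associated double complex via \cite[03OW]{stacks}), whereas you phrase it as cohomological descent along the \v{C}ech nerve---but you explicitly note the equivalence with the spectral sequence argument, so there is no genuine difference in strategy.
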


\begin{proof}
By \cite[079Z]{stacks}, the epimorphism of sheaves $\coprod_j h_{\check{C}_{\mathscr{V}_j}}\rightarrow *$ from Proposition~\ref{prop:Cover} implies that there is a spectral sequence with $E_1$-page
$$E_1^{p, q}=H^q\Big(\big(\coprod_j h_{\check{C}_{\mathscr{V}_j}}\big)^{\times p}, \mathcal{O}\Big)=H^q\Big(\coprod_{j_1, j_2, \dots, j_p} h_{\check{C}_{\mathscr{V}_{j_1,\dots, j_p}}}, \mathcal{O}\Big)=\prod_{j_i, \dots, j_p}H^q((\check{C}_{\mathscr{V}_{j_1, \dots, j_p}}, I\check{C}_{\mathscr{V}_{j_1, \dots, j_p}}), \mathcal{O})$$
converging to $H^{p+q}(*, \mathcal{O})=H^{p+q}((\mathscr{X}/A)_{\Prism}^\amalg, \mathcal{O})=H^{p+q}((\mathscr{X}/A)_{\Prism}, \mathcal{O}),$ where we implicitly used Corollary~\ref{cor:CohomologySame} and the fact that $h_{\check{C}_{\mathscr{V}_{j_1}}}\times h_{\check{C}_{\mathscr{V}_{j_2}}}=h_{\check{C}_{\mathscr{V}_{j_1}}\boxtimes\check{C}_{\mathscr{V}_{j_2}}}=h_{\check{C}_{\mathscr{V}_{j_1, j_2}}}$ as in Proposition~\ref{prop:ProductsInPrismaticSite}, and similarly for higher multi--indices.

By Lemma~\ref{VanishingObjects}, $H^q((\check{C}_{\mathscr{V}_{j_1, \dots, j_n}}, I\check{C}_{\mathscr{V}_{j_1, \dots, j_n}}), \mathcal{O})=0$ for every $q>0$ and every multi--index $j_1, \dots, j_n$. The first page is therefore concentrated in a single row of the form $\check{C}^\bullet_{\mathfrak{V}}$ and thus, the spectral sequence collapses on the second page. This proves that the cohomologies of $\R\Gamma((\mathscr{X}/A)_{\Prism}, \mathcal{O})$ are computed as cohomologies of $\check{C}^\bullet_{\mathfrak{V}}$, but in fact, this yields a quasi--isomorphism of the complexes themselves. (For example, analyzing the proof of \cite[079Z]{stacks} via \cite[03OW]{stacks}, the double complex $E_0^{\bullet \bullet}$ of the above spectral sequence comes with a natural map $\alpha:\check{C}^\bullet_{\mathfrak{V}}\rightarrow \mathrm{Tot}(E_0^{\bullet\bullet}),$ and a natural quasi--isomorphism $\beta: \R\Gamma((\mathscr{X}/A)_{\Prism}, \mathcal{O}) \rightarrow \mathrm{Tot}(E_0^{\bullet\bullet});$ when the spectral sequence collapses as above, $\alpha$ is also a quasi--isomorphism).
\end{proof}

\begin{rems}\label{rem:CechBaseChange}
\begin{enumerate}
\item{Just as in the affine case, the formation of \v{C}ech--Alexander complexes is compatible with ``termwise flat base--change'' on the base prism essentially by \cite[Proposition~3.13]{BhattScholze}. That is, if $(\check{C}^{m}, \partial)_m$ is a \v{C}ech--Alexander complex modelling $\R\Gamma_{\Prism}(\mathscr{X}/A)$ and $(A, I) \rightarrow (B, IB)$ is a flat map of prisms, then the complex $(\check{C}^{m}\widehat{\otimes}_A B, \partial\otimes 1)_m$ is a \v{C}ech--Alexander complex that computes $\R\Gamma_{\Prism}(\mathscr{X}_B/B)$.}
\item{Let now $(A, I)$ be the prism $(\ainf, \mathrm{Ker}\,\theta)$ and let $\mathscr{X}$ be of the form $\mathscr{X}=\mathscr{X}^0\times_{\oh_K}\oh_{\mathbb{C}_K}$ where $\mathscr{X}^0$ is a smooth separated formal $\oh_K$--scheme. A convenient way to describe the $G_K$--action on $\R\Gamma_{\Prism}(\mathscr{X}/\ainf)$ is via base--change: given $g \in G_K,$ the action of $g$ on $\ainf$ gives a map of prisms $g:(\ainf, (E(u)))\rightarrow (\ainf, (E(u)))$, and $g^*\mathscr{X}=\mathscr{X}$ since $\mathscr{X}$ comes from $\oh_K$. Base--change theorem for prismatic cohomology \cite[Theorem~1.8 (5)]{BhattScholze} then gives an $\ainf$--linear map $g^*\R\Gamma_{\Prism}(\mathscr{X}/\ainf)\rightarrow \R\Gamma_{\Prism}(\mathscr{X}/\ainf);$ untwisting by $g$ on the left, this gives an $\ainf$--$g$--semilinear action map $g: \R\Gamma_{\Prism}(\mathscr{X}/\ainf)\rightarrow \R\Gamma_{\Prism}(\mathscr{X}/\ainf)$. The exact same procedure defines the $G_K$--action on the \v{C}ech--Alexander complexes modelling the cohomology theories since they are base--change compatible in the sense above.}
\end{enumerate}
\end{rems}

\section{The conditions \Crs}\label{sec:crs}

\subsection{Definition and basic properties}

In order to describe the conditions \Crs, we need to fix more notation. For a natural number $s$, denote by $K_s$ the field $K(\pi_s)$ (where $(\pi_n)_n$ is the compatible chain of $p^n$--th roots of $\pi$ chosen before,  i.e. so that $u=[(\pi_n)_n]$ in $\ainf$), and set $K_\infty=\bigcup_s K_s$. Further set $K_{p^{\infty}}=\bigcup_m K(\zeta_{p^m})$ and for $s \in \mathbb{N}\cup \{\infty\}$, set $K_{p^{\infty},s}=K_{p^{\infty}}K_{s}$. Note that the field $K_{p^{\infty}, \infty}$ is the Galois closure of $K_{\infty}$. Denote by $\widehat{G}$ the Galois group $\mathrm{Gal}(K_{p^\infty, \infty}/K)$ and by $G_s$ the group $\mathrm{Gal}(\overline{K}/K_s),$ for $s \in \mathbb{N}\cup\{\infty\}$.

The group $\widehat{G}$ is generated by its two subgroups $\mathrm{Gal}(K_{p^\infty, \infty}/K_{p^{\infty}})$ and $\mathrm{Gal}(K_{p^\infty, \infty}/K_{\infty})$ (by \cite[Lemma~5.1.2]{LiuBreuilConjecture}). The subgroup $\mathrm{Gal}(K_{p^\infty, \infty}/K_{p^{\infty}})$ is normal, and its element $g$ is uniquely determined by its action on the elements $(\pi_s)_s$, which takes the form $g(\pi_s)=\zeta_{p^s}^{a_s} \pi_s$, with the integers $a_s$ unique modulo $p^s$ and compatible with each other as $s$ increases.  It follows that $\mathrm{Gal}(K_{p^{\infty},\infty}/K_{p^{\infty}})\simeq \mathbb{Z}_p$, with a topological generator $\tau$ given by $\tau (\pi_n)=\zeta_{p^n}\pi_n$ (where, again, $\zeta_{p^n}$'s are chosen as before, so that $v=[(\zeta_{p^n})_n]-1$). 

Similarly, the image of $G_s$ in $\widehat{G}$ is the subgroup $\widehat{G}_s=\mathrm{Gal}(K_{p^\infty, \infty}/K_s)$. Clearly $\widehat{G}_s$ contains $\mathrm{Gal}(K_{p^\infty, \infty}/K_{\infty})$ and the intersection of $\widehat{G}_s$ with $\mathrm{Gal}(K_{p^\infty, \infty}/K_{p^\infty})$ is $\mathrm{Gal}(K_{p^\infty, \infty}/K_{p^\infty, s}).$ Just as in the $s=0$ case, $\widehat{G}_s$ is generated by these two subgroups, with the subgroup $\mathrm{Gal}(K_{p^\infty, \infty}/K_{p^\infty, s})$ normal and topologically generated by the element $\tau^{p^s}$.

There is a natural $G_K$--action on $\ainf=W(\oh_{\mathbb{C}_K}^{\flat}),$ extended functorially from the natural action on $\oh_{\mathbb{C}_K}^{\flat}$. This action makes the map $\theta: \ainf \rightarrow \oh_{\mathbb{C}_K}$ $G_K$--equivariant, in particular, the kernel $E(u)\ainf$ is $G_K$--stable. The $G_K$--action on the $G_K$--closure of $\Es$ in $\ainf$ factors through $\widehat{G}$. Note that the subgroup $\mathrm{Gal}(K_{p^{\infty},\infty}/K_{{\infty}})$ of $\widehat{G}$ acts trivially on elements of $\Es$, and the action of the subgroup $\mathrm{Gal}(K_{p^{\infty},\infty}/K_{p^{\infty}})$ is determined by the equality $\tau (u)=(v+1)u$.

For an integer $s \geq 0$ and $i$ between $0$ and $s$,  denote by $\xi_{s, i}$ the element
$$\xi_{s, i}=\frac{\varphi^{s}(v)}{\omega \varphi(\omega) \dots \varphi^i(\omega)}=\varphi^{-1}(v)\varphi^{i+1}(\omega)\varphi^{i+2}(\omega)\dots \varphi^{s}(\omega)$$ (recall that $\omega=v/\varphi^{-1}(v)$), and set 
$$I_s=\left(\xi_{s, 0}u, \xi_{s, 1}u^p, \dots, \xi_{s, s}u^{p^s}\right).$$ 
For convenience of notation, we further set $I_{\infty}=0$ and $\varphi^{\infty}(v)u=0$.

We are concerned with the following conditions.

\begin{deff}\label{Def:Crys}
Let $M_{\inf}$ be an $\ainf$--module endowed with a $G_K$--$\ainf$--semilinear action, let $M_{\BK}$ be an $\Es$--module and let $M_{\BK}\rightarrow M_{\inf}$ be an $\Es$--linear map. Let $s \geq 0$ be an integer or $\infty$.
\begin{enumerate}[(1)]
\item{An element $x \in M_{\inf}$ is called a \emph{{\Crs}--element} if for every $g \in G_s$,  $$g(x)-x\in I_sM_{\inf}.$$}
\item{We say that the pair $M_{\BK} \rightarrow M_{\ainf}$ \emph{satisfies the condition \Crs} if for every element $x \in M_{\BK}$, the image of $x$ in $M_{\inf}$ is \Crs.} 
\item{An element $x \in M_{\inf}$ is called a \emph{{\Crrs}--element} if for every $g \in G_s$, there is an element $y \in M_{\inf}$ such that $$g(x)-x=\varphi^{s}(v)uy.$$}
\item{We say that the pair $M_{\BK} \rightarrow M_{\ainf}$ \emph{satisfies the condition \Crrs} if for every element $x \in M_{\BK}$, the image of $x$ in $M_{\inf}$ is \Crrs.} 
\item{Aditionally, we call \Cr{0}--elements \emph{crystalline elements} and we call the condition \Cr{0} \emph{the crystalline condition}.}
\end{enumerate}
\end{deff}

\begin{rems} 
\begin{enumerate}[(1)]
\item{Since $I_0=\varphi^{-1}(v)u\ainf,$ the crystalline condition equivalently states that for all $g \in G_K$ and all $x$ in the image of $M_{\BK},$ $$g(x)-x \in \varphi^{-1}(v)uM_{\inf}.$$ The reason for the extra terminology in the case $s=0$ is that the condition is connected with a criterion for certain representations to be crystalline, as discussed in \S\ref{subsec:BKF}. The higher conditions \Crs will on the other hand find application in computing bounds on ramification of $p^n$--torsion \'{e}tale cohomology. The conditions \Crrs serve an auxillary purpose. Clearly \Crrs implies \Crs. The conditions \Cr{\infty}, \Crr{\infty} are clearly both equivalent to the condition $f(M_{\BK}) \subseteq M_{\inf}^{G_\infty}$.
}
\item{Strictly speaking, one should talk about the crystalline condition (or \Crs) for the map $f$, but we choose to talk about the the crystalline condition (or \Crs) for the pair $(M_{\BK}, M_{\inf})$ instead, leaving the datum of the map $f$ implicit. This is because typically we consider the situation that $M_{\BK}$ is an $\Es$--submodule of $M_{\inf}^{G_{\infty}}$ and  $M_{\BK}\otimes_\Es \ainf \simeq M_{\ainf}$ via the natural map (or the derived $(p, E(u))$--completed variant, $M_{\BK}\widehat{\otimes}_\Es \ainf \simeq M_{\ainf}$). Also note that $f:M_{\BK}\rightarrow M_{\inf}$ satisfies the condition \Crs if and only if $f(M_{\BK})\subseteq M_{\inf}$ does.}
\end{enumerate}
\end{rems}

\begin{lem}\label{lem:GkStableIdeals}
For any integer $s$, the ideals $\varphi^s(v)u\ainf$ and $I_s$ are $G_K$--stable.
\end{lem}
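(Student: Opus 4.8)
Both assertions reduce to the following: for every $g\in G_K$, each of the elements $\varphi^s(v)u$ and $\xi_{s,i}u^{p^i}$ (the generators of $I_s$) is sent by $g$ to a unit multiple of itself. Indeed, if $g(z)=\lambda z$ with $\lambda\in\ainf^\times$ then $g(z\ainf)\subseteq z\ainf$, and since $g$ is a ring automorphism of $\ainf$ we get $g(z\ainf)=z\ainf$; taking $z=\varphi^s(v)u$ handles the first ideal, and doing this for each generator handles $I_s$. So the plan is to prove this ``scaling by a unit'' statement.

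First I would record the standard inputs. Since the $G_K$--action on $\ainf=W(\oh_{\mathbb{C}_K^\flat})$ is induced functorially from the action on $\oh_{\mathbb{C}_K^\flat}$, it is compatible with Teichm\"uller lifts and commutes with the Witt--vector Frobenius; as $\oh_{\mathbb{C}_K^\flat}$ is perfect, $\varphi$ is an automorphism of $\ainf$, so $g$ commutes with $\varphi^{\pm 1}$ as well. Writing $\chi$ for the $p$--adic cyclotomic character, one has $g(\underline{\varepsilon})=\underline{\varepsilon}^{\chi(g)}$, hence $g([\underline{\varepsilon}])=[\underline{\varepsilon}]^{\chi(g)}$ and $g(v)=(1+v)^{\chi(g)}-1$; moreover $g(\underline{\pi})/\underline{\pi}$ is a compatible system of $p^n$--th roots of unity, so $g(\underline{\pi})=\underline{\varepsilon}^{b(g)}\underline{\pi}$ for some $b(g)\in\mathbb{Z}_p$ and $g(u)=[\underline{\varepsilon}]^{b(g)}u=(1+v)^{b(g)}u$. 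Since $1+v=[\underline{\varepsilon}]\in\ainf^\times$, this already gives $g(u)=\lambda_g u$ with $\lambda_g\in\ainf^\times$.

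The one computation needing a little care is: if $a\in\mathbb{Z}_p^\times$ and $w\in\ainf\setminus(\ainf^\times\cup p\ainf)$, then $(1+w)^a-1=w\mu$ for some $\mu\in\ainf^\times$. By Lemma~\ref{disjointness}, such $w$ lies in the unique maximal ideal $\mathfrak{m}=(p,W(\mathfrak{m}_{\mathbb{C}_K^\flat}))$ and $\ainf$ is $(p,w)$--adically complete, so the binomial series $(1+w)^a=\sum_{n\ge 0}\binom{a}{n}w^n$ converges; then $(1+w)^a-1=w\bigl(a+w\nu\bigr)$ with $\nu=\sum_{n\ge 2}\binom{a}{n}w^{n-2}\in\ainf$, and $a+w\nu$ is a unit because $a\in\mathbb{Z}_p^\times\subseteq\ainf^\times$, $w\nu\in\mathfrak{m}$, and $\ainf$ is local. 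Applying this with $w=\varphi^j(v)=[\underline{\varepsilon}^{p^j}]-1$, which is of the required form for every $j\in\mathbb{Z}$, and using that $g$ commutes with $\varphi^{\pm1}$, we get $g(\varphi^j(v))=(1+\varphi^j(v))^{\chi(g)}-1=\mu_{j,g}\,\varphi^j(v)$ with $\mu_{j,g}\in\ainf^\times$. Since $\omega=v/\varphi^{-1}(v)$ and $\varphi^{-1}(v)$ is a non--zero--divisor, it follows that $g(\omega)=(\mu_{0,g}\mu_{-1,g}^{-1})\,\omega$ is a unit multiple of $\omega$, and hence $g(\varphi^j(\omega))=\varphi^j(\mu_{0,g}\mu_{-1,g}^{-1})\,\varphi^j(\omega)$ is a unit multiple of $\varphi^j(\omega)$ for every $j$.

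It remains to assemble. For the first ideal, $g(\varphi^s(v)u)=g(\varphi^s(v))\,g(u)$ is a unit multiple of $\varphi^s(v)u$, so $\varphi^s(v)u\ainf$ is $G_K$--stable. For $I_s$, use the second expression $\xi_{s,i}=\varphi^{-1}(v)\,\varphi^{i+1}(\omega)\cdots\varphi^{s}(\omega)$: the previous paragraph shows $g(\xi_{s,i})$ is a unit multiple of $\xi_{s,i}$, hence $g(\xi_{s,i}u^{p^i})=g(\xi_{s,i})\,g(u)^{p^i}$ is a unit multiple of $\xi_{s,i}u^{p^i}$; so $g$ carries each generator of $I_s$ to a unit multiple of itself and $g(I_s)=I_s$. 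I do not expect a genuine obstacle here; the only slightly delicate point is the unit statement for $(1+w)^a-1$, which rests on identifying the maximal ideal of $\ainf$ and on the weak--topology completeness recorded in Lemma~\ref{disjointness}, together with the bookkeeping that $g$ commutes with negative powers of $\varphi$.
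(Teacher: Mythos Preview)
Your proof is correct and follows essentially the same approach as the paper: reduce to the action on $u$ and on $\varphi^j(v)$, compute these explicitly, and propagate to $\omega$, $\varphi^j(\omega)$, and $\xi_{s,i}$. The only difference is in the technical handling of $g(v)$: the paper approximates $\chi(g)$ by positive integers $b_n$, factors $(v+1)^{b_n}-1=v\bigl((v+1)^{b_n-1}+\cdots+1\bigr)$, and passes to the limit to get divisibility by $v$ (then deduces the unit statement from $G_K$-stability of the ideal), whereas you use the $p$-adic binomial series to write $(1+w)^a-1=w(a+w\nu)$ and read off the unit directly since $a\in\mathbb{Z}_p^\times$ and $\ainf$ is local. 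Your route is slightly more direct in that it produces the unit explicitly in one step.
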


\begin{proof}
It is enough to prove that the ideals $u\ainf$ and $v\ainf$ are $G_K$--stable. Note that the $G_K$--stability of $v\ainf$ implies $G_K$--stability of $\varphi^s(v)\ainf$ for any $s \in \mathbb{Z}$ since $\varphi$ is a $G_K$--equivariant automorphism of $\ainf$. Once we know this, we know that $g \varphi^s(v)$ equals to $\varphi^s(v)$ times a unit for every $g$ and $s$, the same is then true of $\varphi^{i}(\omega)=\varphi^i(v)/\varphi^{i-1}(v)$, hence also of all the elements $\xi_{i,s}$ and it follows that $I_s$  is $G_K$--stable.

Given $g \in G_K$, $g(\pi_n)=\zeta_{p^n}^{a_n}\pi_n$ for an ineger $a_n$ unique modulo $p^n$ and such that $a_{n+1}\equiv a_n \pmod{p^n}$. It follows that  $g(u)=[\underline{\varepsilon}]^a u$ for a $p$--adic integer $a$($=\lim_n a_n$). (The $\mathbb{Z}_p$--exponentiation used here is defined by $[\underline{\varepsilon}]^a =\lim_n [\underline{\varepsilon}]^{a_n} $ and the considered limit is with respect to the weak topology.) Thus, $u\ainf$ is $G_K$--stable.

Similarly, we have $g(\zeta_{p^n})=\zeta_{p^n}^{b_n},$ for integers $b_n$ coprime to $p$, unique modulo $p^n$ and compatible with each other as $n$ grows. It follows that $g([\underline{\varepsilon}])=[\underline{\varepsilon}]^b$ for $b=\lim_n b_n$, and so $g(v)=(v+1)^b-1=\lim_n((v+1)^{b_n}-1).$ The resulting expression is still divisible by $v$. To see that, fix the integers $b_n$ to have all positive representatives. Then the claim follows from the formula $$(v+1)^{b_n}-1=v((v+1)^{b_n-1}+(v+1)^{b_n-2}+\dots +1),$$ upon noting that the sequence of elements $((v+1)^{b_n-1}+(v+1)^{b_n-2}+\dots +1)=((v+1)^{b_n}-1)/v$ is still $(p, v)$--adically (i.e. weakly) convergent thanks to Lemma~\ref{disjointness}.
\end{proof}

Let $f: M_{\BK} \rightarrow M_{\inf}$ be as in Definition~\ref{Def:Crys}. Lemma~\ref{lem:GkStableIdeals} shows that the modules $M_{\inf}/{I_s} M_{\inf}$ and $M_{\inf}/\varphi^s(v)uM_{\inf}$ have a well--defined $G_K$--action. Consequently, we get the following restatement of the conditions \Crs, \Crrs.

\begin{lem}\label{lem:RestatementCrs}
Given $f: M_{\BK} \rightarrow M_{\inf}$ as in Definition~\ref{Def:Crys}, the pair $(M_{\BK}, M_{\inf})$ satisfies the condition \Crs (\Crrs, resp.) if and only if the image of  $M_{\BK}$ in $\overline{M_{\inf}}:=M_{\inf}/I_sM_{\inf}$ ($\overline{M_{\inf}}:=M_{\inf}/\varphi^s(v)uM_{\inf}$, resp.) lands in $\overline{M_{\inf}}^{G_s}$. 
\end{lem}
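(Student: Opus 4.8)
The plan is to simply unwind the definitions; the only nontrivial input is the $G_K$--stability of the relevant ideals, which is already supplied by Lemma~\ref{lem:GkStableIdeals}. First I would record that, since $I_s$ (resp. $\varphi^s(v)u\ainf$) is a $G_K$--stable ideal of $\ainf$ and the action of $G_K$ on $M_{\inf}$ is $\ainf$--semilinear, the submodule $I_sM_{\inf}$ (resp. $\varphi^s(v)uM_{\inf}$) is $G_K$--stable: for $a \in I_s$, $m \in M_{\inf}$ and $g \in G_K$ one has $g(am)=g(a)g(m)\in I_sM_{\inf}$ because $g(a)\in I_s$. Hence the quotient $\overline{M_{\inf}}$ carries a well--defined $G_K$--action, in particular a $G_s$--action for every $s$, and the projection $M_{\inf}\twoheadrightarrow\overline{M_{\inf}}$ is $G_K$--equivariant.

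Next I would fix $x \in M_{\BK}$, write $f(x)\in M_{\inf}$ for its image and $\overline{f(x)}\in\overline{M_{\inf}}$ for the further image in the quotient. For any $g \in G_s$ the equality $g(\overline{f(x)})=\overline{f(x)}$ in $\overline{M_{\inf}}$ is, by definition of the quotient, equivalent to $g(f(x))-f(x)\in I_sM_{\inf}$ (resp. $\in\varphi^s(v)uM_{\inf}$), i.e. to the defining relation of a \Crs--element (resp. \Crrs--element) for the group element $g$. Quantifying over $g \in G_s$, the element $\overline{f(x)}$ lies in $\overline{M_{\inf}}^{G_s}$ if and only if $f(x)$ is a \Crs--element (resp. \Crrs--element).

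Finally, quantifying over $x$: by Definition~\ref{Def:Crys} the pair $(M_{\BK},M_{\inf})$ satisfies \Crs (resp. \Crrs) precisely when $f(x)$ is a \Crs--element (resp. \Crrs--element) for every $x \in M_{\BK}$, which by the previous paragraph is precisely the assertion that the image of $M_{\BK}$ in $\overline{M_{\inf}}$ is contained in $\overline{M_{\inf}}^{G_s}$. This closes the argument. There is essentially no obstacle here, since the content of the statement is entirely the $G_K$--stability of the ideals, already proved in Lemma~\ref{lem:GkStableIdeals}; the cases $s=\infty$ are also covered, as then $I_\infty=0$ and $\varphi^\infty(v)u=0$ by convention are trivially $G_K$--stable, $\overline{M_{\inf}}=M_{\inf}$, and both conditions reduce to $f(M_{\BK})\subseteq M_{\inf}^{G_\infty}$, in agreement with the remark following Definition~\ref{Def:Crys}.
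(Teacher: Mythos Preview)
Your argument is correct and is exactly the approach the paper takes: the paper does not even include a formal proof of this lemma, instead simply noting just before its statement that Lemma~\ref{lem:GkStableIdeals} makes the $G_K$--action on $M_{\inf}/I_sM_{\inf}$ and $M_{\inf}/\varphi^s(v)uM_{\inf}$ well--defined, after which the lemma is an immediate restatement of the definition. You have merely spelled out in detail what the paper leaves as a one--line observation.
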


In the case of the above--mentioned condition $f(M_{\BK})\subseteq M_{\inf}^{G_{\infty}},$ the $G_K$--closure of $f(M_{\BK})$ in $M_{\inf}$ is contained in the $G_K$--submodule $M^{G_{K_{p^\infty,\infty}}}$, and thus, the $G_K$--action on it factors through $\widehat{G}$. Under mild assumtions on $M_{\inf}$, the $G_s$--action on the elements of $f(M_{\BK})$ is ultimately determined by $\tau^{p^s}$, the topological generator of $\mathrm{Gal}(K_{p^\infty, \infty}/ K_{p^\infty, s})$. Consequently, the conditions \Crrs are also determined by the action of this single element: 

\begin{lem}\label{lem:TauIsEnough}
Let $f: M_{\BK} \rightarrow M_{\inf}$ be as in Definition~\ref{Def:Crys}. Additionally, assume that $M_{\inf}$ is classically $(p, E(u))$--complete and $(p, E(u))$--completely flat, and that the pair $(M_{\BK}, M_{\inf})$ satisfies \Cr{\infty}. Then the action of $\widehat{G}$ on elements of $f(M_{\BK})$ makes sense, and $(M_{\BK}, M_{\inf})$ satisfies \Crrs if and only if
$$\forall x \in f(M_{\BK}): \tau^{p^s}(x)-x\in \varphi^s(v)uM_{\inf}.$$
\end{lem}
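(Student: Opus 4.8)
The plan is to rephrase condition \Crrs as the statement that a certain subgroup $S\subseteq G_s$ exhausts $G_s$, and then to pin down $S$ using the structure of $\widehat{G}$ recalled before Definition~\ref{Def:Crys}.

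First I would record that for $x\in f(M_{\BK})$ the action of $g\in G_K$ on $x$ depends only on the image of $g$ in $\widehat{G}$: by \Cr{\infty} we have $f(M_{\BK})\subseteq M_{\inf}^{G_\infty}$, and since $\mathrm{Gal}(\overline{K}/K_{p^\infty,\infty})\subseteq G_\infty$ fixes $x$, the element $g(x)$ is unchanged when $g$ is altered within $\mathrm{Gal}(\overline{K}/K_{p^\infty,\infty})$; in particular $\tau^{p^s}(x)$ is well defined (choose any lift of $\tau^{p^s}\in\widehat{G}_s$ to $G_s$), so the displayed condition makes sense. Then set
\[
S=\bigl\{\,g\in G_s\ :\ g(x)-x\in\varphi^s(v)uM_{\inf}\ \text{for all}\ x\in f(M_{\BK})\,\bigr\}.
\]
By definition $(M_{\BK},M_{\inf})$ satisfies \Crrs if and only if $S=G_s$, while the displayed condition says precisely that $S$ contains a lift $g_0$ of $\tau^{p^s}$. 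The implication ``\Crrs $\Rightarrow$ displayed condition'' is then immediate. For the converse, assuming $g_0\in S$, I would verify: (i) $S$ is a subgroup of $G_s$ --- this uses the identity $(g_1g_2)(x)-x=g_1\bigl(g_2(x)-x\bigr)+\bigl(g_1(x)-x\bigr)$ together with the $G_K$-stability of the ideal $\varphi^s(v)u\ainf$ (Lemma~\ref{lem:GkStableIdeals}), which makes $\varphi^s(v)uM_{\inf}$ a $G_K$-stable submodule, so $g_1(\varphi^s(v)uM_{\inf})=\varphi^s(v)uM_{\inf}$; (ii) $G_\infty\subseteq S$, which is just that $G_\infty$ fixes $f(M_{\BK})$ pointwise; and (iii) $S$ is closed in $G_s$.

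Granting (i)--(iii), I would conclude as follows. The image $\overline{S}$ of $S$ in $\widehat{G}_s=\mathrm{Gal}(K_{p^\infty,\infty}/K_s)$ is a closed subgroup (the continuous image of the compact set $S$); it contains the image of $G_\infty$, which is $\mathrm{Gal}(K_{p^\infty,\infty}/K_\infty)$, and it contains $\tau^{p^s}$ since $g_0\in S$. By the description of $\widehat{G}_s$ recalled in the text --- it is generated by $\mathrm{Gal}(K_{p^\infty,\infty}/K_\infty)$ together with the normal procyclic subgroup $\mathrm{Gal}(K_{p^\infty,\infty}/K_{p^\infty,s})=\overline{\langle\tau^{p^s}\rangle}$ --- any closed subgroup of $\widehat{G}_s$ containing both of these must contain $\overline{\langle\tau^{p^s}\rangle}$ and hence all of $\widehat{G}_s$, so $\overline{S}=\widehat{G}_s$. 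Since moreover $\ker(G_s\twoheadrightarrow\widehat{G}_s)=\mathrm{Gal}(\overline{K}/K_{p^\infty,\infty})\subseteq G_\infty\subseteq S$, this forces $S=G_s$, i.e. \Crrs.

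The main obstacle is step (iii): the condition cutting out $S$ involves the ``limiting'' element $\tau^{p^s}$ only through its integer powers, and one must pass from the abstract cyclic group $\langle\tau^{p^s}\rangle$ to its closure $\cong\mathbb{Z}_p$ inside $\widehat{G}_s$ (and thence, via the semidirect product $\widehat{G}_s=\overline{\langle\tau^{p^s}\rangle}\rtimes\mathrm{Gal}(K_{p^\infty,\infty}/K_\infty)$, to all of $G_s$). Concretely, closedness of $S$ reduces to knowing that the orbit maps $g\mapsto g(x)-x$ are continuous for the weak topology on $M_{\inf}$ and that $\varphi^s(v)uM_{\inf}$ is closed therein; this last point is where the hypotheses that $M_{\inf}$ be classically $(p,E(u))$-complete and $(p,E(u))$-completely flat enter, since then $\varphi^s(v)u$ is a non-zero-divisor on $M_{\inf}$ by Corollary~\ref{FlatTorFree}, so that $\varphi^s(v)uM_{\inf}\cong M_{\inf}$ is complete, hence closed, for the subspace topology. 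Everything else is the cocycle bookkeeping with $\widehat{G}$.
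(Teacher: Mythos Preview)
Your proof is correct and follows essentially the same route as the paper: both reduce to the structure of $\widehat{G}_s$ as generated by $\mathrm{Gal}(K_{p^\infty,\infty}/K_\infty)$ together with the closure of $\langle\tau^{p^s}\rangle$, handle the former via \Cr{\infty}, and handle the passage from $\langle\tau^{p^s}\rangle$ to its $p$-adic closure by a limit argument in which the completeness and flatness hypotheses (through Corollary~\ref{FlatTorFree}, giving that $p,\varphi^s(v)u$ is regular on $M_{\inf}$) guarantee that $\varphi^s(v)uM_{\inf}$ is closed. Your packaging via the closed subgroup $S$ is a mild rephrasing of the paper's direct verification for each type of element of $\widehat{G}_s$; the one place to be slightly more explicit is that ``complete, hence closed, for the subspace topology'' really uses the equality $\alpha M_{\inf}\cap (p,\alpha)^N M_{\inf}=\alpha(p,\alpha)^{N-1}M_{\inf}$ (with $\alpha=\varphi^s(v)u$), which is exactly what the regularity from Corollary~\ref{FlatTorFree} provides and is the content of the paper's sentence about $(y_n)$ converging.
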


\begin{proof}
Clearly the stated condition is necessary. To prove sufficiency, assume the above condition for $\tau^{p^s}$. By the fixed--point interpretation of the condition \Crrs as in Lemma~\ref{lem:RestatementCrs}, it is clear that the analogous condition holds for every element $g \in \langle \tau^{p^s}\rangle $. 

Next, assume that $g$ is an element of $\mathrm{Gal}(K_{p^\infty, \infty}/K_{p^\infty, s})$, the $p$--adic closure of $\langle \tau^{p^s} \rangle$. Then $g$ can be written as $\lim_n \tau^{p^s a_n}$, with the sequence of integers $(a_n)$ $p$--adically convergent. For $x \in f(M_{\BK}),$ by continuity we have $g(x)-x=\lim_n (\tau^{p^s a_n}(x)-x)$, which is equal to $\lim_n \varphi^s(v)u y_n$ with $y_n \in M_{\inf}$. Since the sequence $(y_n)$ is still convergent (using the fact that the $(p, E(u))$--adic topology is the $(p, \varphi^s(v)u)$--adic topology, and that $p, \varphi^s(v)u$ is a regular sequence on $M_{\inf}$), we have that $g(x)-x=\varphi^s(v)u y$ where $y=\lim_n y_n$. 

To conclude, note that a general element of $\widehat{G}_s$ is of the form $g_1g_2$ where $g_1 \in \mathrm{Gal}(K_{p^\infty, \infty}/ K_{p^\infty, s})$ and $g_2 \in \mathrm{Gal}(K_{p^\infty, \infty}/ K_{\infty}).$ Then for $x \in f(M_{\BK})$, by the assumption $f(M_{\BK})\subseteq M_{\inf}^{G_{\infty}}$ we have $g_1g_2(x)-x=g_1(x)-x$, and so the condition \Crrs is proved by the previous part.
\end{proof}

Let us now discuss some basic algebraic properties of the conditions \Crs and \Crrs. The basic situation when they are satisfied is the inclusion $\Es \hookrightarrow \ainf$ itself.

\begin{lem}\label{coeff}
The pair $\Es\hookrightarrow \ainf$ satisfies the conditions \Crrs (hence also \Crs) for all $s \geq 0$.
\end{lem}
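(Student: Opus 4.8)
The plan is to isolate the limit case $s=\infty$, which is essentially formal, and to reduce every finite case to checking a single group element by invoking Lemma~\ref{lem:TauIsEnough}. For $s=\infty$, by the remark following Definition~\ref{Def:Crys} the condition \Crr{\infty} for $\Es\hookrightarrow\ainf$ is just the assertion $\Es\subseteq\ainf^{G_\infty}$; this holds because $G_\infty=\mathrm{Gal}(\overline K/K_\infty)$ fixes each $\pi_n$, hence fixes $\underline\pi$ and thus $u=[\underline\pi]$, fixes $W(k)$, and acts continuously on $\ainf$ for the weak topology, so it fixes all of $\Es=W(k)[[u]]$. For a finite $s\geq 0$, since $\ainf$ is classically $(p,E(u))$--complete and (trivially) $(p,E(u))$--completely flat over itself, and since $\Es\hookrightarrow\ainf$ satisfies \Cr{\infty} by the previous sentence, Lemma~\ref{lem:TauIsEnough} applies and it suffices to prove that $\tau^{p^s}(x)-x\in\varphi^s(v)u\ainf$ for every $x\in\Es$.

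\textbf{The computation.} Recall $\tau(u)=(v+1)u$ and that $\tau\in\mathrm{Gal}(K_{p^\infty,\infty}/K_{p^\infty})$ fixes $v=[\underline\varepsilon]-1$ and fixes $W(k)$; hence $\tau^m(u)=(v+1)^mu$ for all $m\geq 0$, and in particular
\[
\tau^{p^s}(u)=(v+1)^{p^s}u=[\underline\varepsilon]^{p^s}u=(\varphi^s(v)+1)\,u,
\]
using $\varphi^s(v)=\varphi^s([\underline\varepsilon])-1=[\underline\varepsilon]^{p^s}-1$. Writing a general element as $x=\sum_{n\geq 0}c_nu^n$ with $c_n\in W(k)$ and applying $\tau^{p^s}$ term by term (it is a continuous ring automorphism for the weak topology and fixes $W(k)$), one gets
\[
\tau^{p^s}(x)-x=\sum_{n\geq 1}c_n\bigl((\varphi^s(v)+1)^n-1\bigr)u^n=\varphi^s(v)\,u\cdot\sum_{n\geq 1}c_nu^{n-1}\sum_{k=1}^{n}\binom{n}{k}\varphi^s(v)^{k-1}.
\]
The rightmost series converges in $\ainf$ for the weak topology, since its $n$--th term lies in $(p,u)^{n-1}\ainf$; calling its sum $z\in\ainf$ we obtain $\tau^{p^s}(x)-x=\varphi^s(v)uz\in\varphi^s(v)u\ainf$, which is exactly condition \Crrs for the pair $\Es\hookrightarrow\ainf$ (and \Crs then follows automatically).

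\textbf{Main obstacle.} There is no deep difficulty here: the genuine input is the identity $\tau^{p^s}(u)=(\varphi^s(v)+1)u$, which pinpoints $\varphi^s(v)u$ as the relevant divisor, together with the reduction to $\tau^{p^s}$ provided by Lemma~\ref{lem:TauIsEnough} (which in turn relies on the regularity facts of \S\ref{subsec:regularity}, e.g. that $p,\varphi^s(v)u$ is a regular sequence on $\ainf$, valid because $\varphi^s(v)u\in\ainf\setminus(\ainf^\times\cup p\ainf)$ by Lemma~\ref{disjointness}). The only point requiring a little care is the interchange of $\tau^{p^s}$ with the infinite sum defining $x$ and the convergence of the final series, both of which are immediate from continuity of the $G_K$--action and completeness of $\ainf$ in the weak topology.
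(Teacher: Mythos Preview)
Your proof is correct and follows essentially the same approach as the paper: reduce to the single element $\tau^{p^s}$ via Lemma~\ref{lem:TauIsEnough}, then compute $\tau^{p^s}(x)-x$ for a power series $x\in\Es$ and factor out $\varphi^s(v)u$. The only cosmetic differences are that you make the (implicit) hypothesis \Cr{\infty} of Lemma~\ref{lem:TauIsEnough} explicit, and that you extract $\varphi^s(v)$ via the binomial expansion $(\varphi^s(v)+1)^n-1=\varphi^s(v)\sum_{k=1}^n\binom{n}{k}\varphi^s(v)^{k-1}$, whereas the paper uses the equivalent observation that $(v+1)^{p^s}-1$ divides $(v+1)^{p^s n}-1$.
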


\begin{proof}
Note that $\Es\hookrightarrow \ainf$ satisfies the assumptions of Lemma~\ref{lem:TauIsEnough}, so it is enough to consider the action of the element $\tau^{p^s} \in \widehat{G}_s$.  For an element $f = \sum_i a_i u^i \in \Es$ we have
$$\tau^{p^s}(f)-f=\sum_{i\geq 0} a_i ((v+1)^{p^s}u)^{i}-\sum_{i\geq 0} a_iu^i=\sum_{i\geq 1}a_i((v+1)^{p^si}-1)u^i,$$
and thus,
$$\frac{\tau^{p^s}(f)-f}{\varphi^s(v)u}=\sum_{i \geq 1}a_i\frac{(v+1)^{p^si}-1}{\varphi^s(v)}u^{i-1}=\sum_{i \geq 1}a_i\frac{(v+1)^{p^si}-1}{(v+1)^{p^s}-1}u^{i-1}$$
Since $\varphi^s(v)=(v+1)^{p^s}-1$ divides $(v+1)^{p^si}-1$ for each $i$, the obtained series has coefficients in $\ainf$, showing that $\tau^{p^s}(f)-f \in \varphi^s(v)u\ainf$ as desired.
\end{proof}

The following lemma shows that in various contexts, it is often sufficient to verify the conditions \Crs, \Crrs on generators.

\begin{lem}\label{generators}
Fix an integer $s \geq 0$. Let \textnormal{(C)} be either the condition \Crs or \Crrs.
\begin{enumerate}
\item{Let $M_{\inf}$ be an $\ainf$--module with a $G_K$--$\ainf$--semilinear action. The set of all \textnormal{(C)}--elements forms an $\Es$--submodule of $M_{\inf}$.}
\item{Let $C_{\inf}$ be an $\ainf$--algebra endowed with a $G_K$--semilinear action. The set of \textnormal{(C)}--elements of $C_{\inf}$ forms an $\Es$--subalgebra of $C_{\inf}$.}
\item{If the algebra $C_{\inf}$ from (2) is additionally $\ainf$--$\delta$--algebra such that $G_K$ acts by $\delta$--maps (i.e. $\delta g=g \delta$ for all $g \in G_K$) then the set of all \textnormal{(C)}--elements forms a $\Es$--$\delta$--subalgebra of $C_{\inf}$.} 
\item{If the algebra $C_{\inf}$ as in (2) is additionally derived $(p, E(u))$--complete 
and $C_{\BK}\rightarrow C_{\inf}$ is a map of $\Es$--algebras  that satisfies the condition \textnormal{(C)}, then so does $\widehat{C_{\BK}} \rightarrow C_{\inf},$ where $\widehat{C_{\BK}}$ is the derived $(p, E(u))$--completion of $C_{\BK}$.  In particular, the set of all \textnormal{(C)}--elements in $C_{\inf}$ forms a derived $(p, E(u))$--complete $\Es$--subalgebra of $C_{\inf}$.} 
\end{enumerate}
\end{lem}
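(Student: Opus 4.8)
Parts (1) and (2) are a direct computation. Additivity is immediate from $(g(x)-x)+(g(y)-y)=g(x+y)-(x+y)$; in the algebra case, multiplicativity follows from
$g(xy)-xy=g(x)\bigl(g(y)-y\bigr)+\bigl(g(x)-x\bigr)y$,
whose right-hand side lies in $I_sC_{\inf}$ (resp.\ $\varphi^s(v)uC_{\inf}$) as soon as $x,y$ are \textnormal{(C)}--elements, since $I_s$ (resp.\ $\varphi^s(v)u\ainf$) is an ideal. Closure under the $\Es$--action is the identity $g(ax)-ax=g(a)\bigl(g(x)-x\bigr)+\bigl(g(a)-a\bigr)x$ for $a\in\Es$: the first summand is handled as before, and $g(a)-a\in I_s$ (resp.\ $\in\varphi^s(v)u\ainf$) by Lemma~\ref{coeff}, so the second summand also lies in the relevant submodule. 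Since $g(0)=0$ and $g(1)=1$, this gives the $\Es$--submodule and $\Es$--subalgebra claims; in particular the image of $\Es$ in $C_{\inf}$ consists of \textnormal{(C)}--elements.

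For part (3) write $\mathfrak{d}$ for the ideal $I_s$ (in the \Crs\ case) or $\varphi^s(v)u\ainf$ (in the \Crrs\ case). The heart of the matter is the claim that \emph{$\mathfrak{d}$ is a $\delta$--stable ideal of $\ainf$}. Granting this, $\mathfrak{d}C_{\inf}$ is a $\delta$--stable ideal of $C_{\inf}$: writing a general element as $\sum_j d_jc_j$ with $d_j$ generators of $\mathfrak{d}$, the Leibniz rule $\delta(cd)=c^p\delta(d)+\delta(c)\varphi(d)$ and the additivity defect $\delta\bigl(\sum c_i\bigr)-\sum\delta(c_i)=\tfrac1p\bigl(\sum c_i^p-(\sum c_i)^p\bigr)$ reduce $\delta(\mathfrak{d}C_{\inf})\subseteq\mathfrak{d}C_{\inf}$ to $\delta(d_j)\in\mathfrak{d}$ (and then $\varphi(d_j)=d_j^p+p\delta(d_j)\in\mathfrak{d}$), plus the observation that the correction monomials involve at least two factors from $\mathfrak{d}$. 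Then for a \textnormal{(C)}--element $x$ and $g\in G_s$, using $g\delta=\delta g$ and setting $z=g(x)-x\in\mathfrak{d}C_{\inf}$,
$$g(\delta x)-\delta x=\delta(gx)-\delta x=\delta(x+z)-\delta x=\delta(z)-\tfrac1p\sum_{i=1}^{p-1}\tbinom pi x^iz^{p-i},$$
where $\delta(z)\in\mathfrak{d}C_{\inf}$ by $\delta$--stability and each term of the sum lies in $\mathfrak{d}C_{\inf}$ since $\tfrac1p\binom pi\in\mathbb Z$ and $z^{p-i}\in\mathfrak{d}C_{\inf}$ for $1\le i\le p-1$. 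Hence $\delta(x)$ is again a \textnormal{(C)}--element, and together with part (2) this proves the $\Es$--$\delta$--subalgebra statement.

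To prove the claim, use the generators $t_k=\xi_{s,k}u^{p^k}$ of $I_s$, $0\le k\le s$ (resp.\ the single generator $t=\varphi^s(v)u$). From $\varphi(u)=u^p$, $\varphi(v)=\varphi(\omega)v$, $\varphi^{s+1}(v)/\varphi^s(v)=\varphi^{s+1}(\omega)$ and $\xi_{s,k}=\varphi^{k+1}(\omega)\,\xi_{s,k+1}$ (for $k<s$, with $\xi_{s,s}=\varphi^{-1}(v)$), a direct computation gives $\varphi(t_k)=\omega\varphi^{s+1}(\omega)\,t_{k+1}\in t_{k+1}\ainf$ for $k<s$, $\varphi(t_s)=\omega u^{p^{s+1}-p^s}\,t_s\in t_s\ainf$, and $\varphi(t)\in t\ainf$; likewise $t_k^p\in t_{k+1}\ainf$ for $k<s$, $t_s^p\in t_s\ainf$, $t^p\in t\ainf$. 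Therefore $p\,\delta(t_k)=\varphi(t_k)-t_k^p$ lies in $t_{k+1}\ainf$ (resp.\ $t_s\ainf$, resp.\ $t\ainf$). Each of $t_{k+1},t_s,t$ lies in $\ainf\setminus(\ainf^\times\cup p\ainf)$ --- it is divisible by $u$, hence a non--unit, and its reduction mod $p$ is a nonzero product in the domain $\oh_{\mathbb{C}_K^\flat}$ --- so by Lemma~\ref{disjointness} $p$ is a non--zero divisor on the corresponding quotient of $\ainf$; thus $\delta(t_k)\in t_{k+1}\ainf\subseteq\mathfrak{d}$ (resp.\ $\delta(t_s)\in t_s\ainf\subseteq\mathfrak{d}$, $\delta(t)\in t\ainf$). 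Finally $\delta(\mathfrak{d})\subseteq\mathfrak{d}$ follows from $\delta(t_k)\in\mathfrak{d}$ (hence $\varphi(t_k)\in\mathfrak{d}$) for all $k$ by the same Leibniz/additivity bookkeeping. This step --- recognizing $I_s$ and $\varphi^s(v)u\ainf$ as $\delta$--ideals --- is the only substantive point; one cannot just ``divide the relations by $p$'', but the identity $p\,\delta(t_k)=\varphi(t_k)-t_k^p$ makes the right divisibility manifest and Lemma~\ref{disjointness} supplies the cancellation of $p$.

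For part (4), let $S\subseteq C_{\inf}$ be the set of \textnormal{(C)}--elements, and for $g\in G_s$ consider the additive map $\phi_g\colon C_{\inf}\to C_{\inf}/\mathfrak{d}C_{\inf}$, $x\mapsto\overline{g(x)-x}$. The key observation is that $\phi_g$ is actually $\Es$--\emph{linear}: for $a\in\Es$, $g(ax)-ax=g(a)\bigl(g(x)-x\bigr)+\bigl(g(a)-a\bigr)x$, and modulo $\mathfrak{d}C_{\inf}$ the last term vanishes and $g(a)\bigl(g(x)-x\bigr)\equiv a\bigl(g(x)-x\bigr)$, both because $g(a)-a\in\mathfrak{d}$ by Lemma~\ref{coeff}. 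Next, $C_{\inf}/\mathfrak{d}C_{\inf}$ is derived $(p,E(u))$--complete, being the cokernel of the map $C_{\inf}^{\oplus m}\to C_{\inf}$ given by the generators of $\mathfrak{d}$ and derived completion being right exact and the identity on the complete modules $C_{\inf}^{\oplus m},C_{\inf}$; hence so is $\prod_{g\in G_s}C_{\inf}/\mathfrak{d}C_{\inf}$. Therefore
$$S=\ker\Bigl(C_{\inf}\xrightarrow{\,(\phi_g)_{g\in G_s}\,}\textstyle\prod_{g\in G_s}C_{\inf}/\mathfrak{d}C_{\inf}\Bigr)$$
is the kernel of an $\Es$--linear map between derived $(p,E(u))$--complete $\Es$--modules, hence derived $(p,E(u))$--complete. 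Finally, if $C_{\BK}\to C_{\inf}$ satisfies \textnormal{(C)} it factors through $S$; applying derived $(p,E(u))$--completion and using $\widehat{C_{\inf}}=C_{\inf}$ and $\widehat{S}=S$, the induced map $\widehat{C_{\BK}}\to C_{\inf}$ still factors through $S$, i.e.\ it satisfies \textnormal{(C)}; together with part (2), $S$ is a derived $(p,E(u))$--complete $\Es$--subalgebra of $C_{\inf}$.
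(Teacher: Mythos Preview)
Your proof is correct. Parts (1)--(3) follow essentially the same route as the paper: the paper phrases (1) and (2) via the fixed-point reformulation (Lemma~\ref{lem:RestatementCrs}) rather than the explicit cocycle identities you write out, but these are equivalent; for (3), both you and the paper reduce to showing that $I_s$ and $\varphi^s(v)u\ainf$ are $\delta$--ideals, and your use of $p\,\delta(t_k)=\varphi(t_k)-t_k^p$ together with Lemma~\ref{disjointness} is a minor computational variant of the paper's direct computation of $\delta(\xi_{s,i})$.

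Part (4) is where you take a genuinely different route. The paper proves the first assertion (that $\widehat{C_{\BK}}\to C_{\inf}$ satisfies (C)) by an explicit power-series computation: it writes an element of $\widehat{C_{\BK}}$ as $\sum_{i,j}c_{i,j}p^iu^j$ with $c_{i,j}$ coming from $C_{\BK}$, expands $g(f)-f$, and checks term by term that everything lands in $JC_{\inf}$. The ``in particular'' clause is then deduced from this. You invert the logic: you first show directly that the set $S$ of (C)--elements is derived $(p,E(u))$--complete, by exhibiting it as the kernel of the $\Es$--linear map $(\phi_g)_g\colon C_{\inf}\to\prod_{g\in G_s}C_{\inf}/\mathfrak{d}C_{\inf}$ between derived complete modules; the first assertion then follows by functoriality of completion. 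Your argument is cleaner and more conceptual --- it avoids any manipulation of power-series representatives and makes transparent why derived completeness is inherited --- while the paper's computation is more hands-on and perhaps makes the role of Lemma~\ref{coeff} in handling the ``$u$--part'' of the action more visible. The key insight that makes your approach work, namely that $\phi_g$ is genuinely $\Es$--linear (not just additive) because $g(a)-a\in\mathfrak{d}$ for $a\in\Es$, is exactly Lemma~\ref{coeff} again, so the two proofs ultimately rest on the same ingredients.
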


\begin{proof}
Let $J$ be the ideal $I_s$ if (C)=\Crs and the ideal $\varphi^s(v)u\ainf$ if (C)=\Crrs.
In view of Lemma~\ref{lem:RestatementCrs}, the sets described in (1),(2) are obtained as the preimages of $\left(M_{\inf}/J M_{\inf}\right)^{G_s}$ (ring $\left(C_{\inf}/J C_{\inf}\right)^{G_s}$, resp.) under the canonical projection $M_{\inf} \rightarrow M_{\inf}/J M_{\inf}$ ($C_{\inf} \rightarrow C_{\inf}/J C_{\inf},$ resp.). As these $G_s$--fixed points form an $\Es$--module ($\Es$--algebra, resp.) by Lemma~\ref{coeff}, this proves (1) and (2).

Similarly, to prove (3) we need to prove only that the ideal $J C_{\inf}$ is a $\delta$--ideal and therefore the canonical projection $C_{\inf} \rightarrow C_{\inf}/J C_{\inf}$ is a map of $\delta$--rings. 

Let us argue first in the case \Crrs. As $\delta(u)=0$, we have

$$\delta(\varphi^{s}(v)u)=\delta(\varphi^{s}(v))u^{p}=\frac{\varphi(\varphi^{s}(v))-(\varphi^{s}(v))^p}{p}u^{p}=\frac{\varphi^{s+1}(v)-(\varphi^{s}(v))^p}{p}u^{p}.$$
Recall that $\varphi^s(v)=[\underline{\varepsilon}]^{p^s}-1$ divides $\varphi^{s+1}(v)=([\underline{\varepsilon}]^{p^s})^p-1$.
The numerator of the last fraction is thus divisible by $\varphi^{s}(v)$ and since $\varphi^{s}(v)\ainf \cap p\ainf=\varphi^{s}(v)p\ainf$ by Lemma~\ref{disjointness}, $\varphi^{s}(v)$ divides the whole fraction ${(\varphi^{s+1}(v)-(\varphi^{s}(v))^p)/p}$ in $\ainf$. (We note that this is true for \textit{every} integer $s$, in particular $s=-1$, as well.)

Let us now prove that the ideal $J=I_s$ (hence also $I_sC_{\inf}$) is a $\delta$--ideal. For any $i$ between $0$ and $s-1$, we have
$$\delta\left(\xi_{s, i}\right)=\delta(\varphi^{-1}(v)\varphi^{i+1}(\omega)\dots \varphi^s(\omega))=\frac{\varphi^{-1}(v)\omega \varphi^{i+2}(\omega)\dots \varphi^{s+1}(\omega)-\varphi^{-1}(v)^p\varphi^{i+1}(\omega)^p\dots \varphi^s(\omega)^p}{p}.$$
The numerator is divisible by $\xi_{s, i+1}$, and so is the whole fraction thanks to Lemma~\ref{disjointness}. Thus, we have that $\delta(\xi_{s, i}u^{p^i})=\delta(\xi_{s, i})u^{p^{i+1}}$ is a multiple of $\xi_{s, i+1}u^{p^{i+1}}$. Finally, when $i=s$, we have $\xi_{s, s}=\varphi^{-1}(v)$, and $\delta(\xi_{s, s})$ is thus a multiple of $\xi_{s, s}$ by the previous. Consequently, $\delta(\xi_{s,s}u^{p^s})=\delta(\xi_{s,s})u^{p^{s+1}}$ is divisible by $\xi_{s,s}u^{p^s}$. This shows that $I_s$ is a $\delta$--ideal.

Finally, let us prove (4). Note that $E(u)\equiv u^e \pmod{p\Es}$, hence $\sqrt{(p, E(u))}=\sqrt{(p, u^e)}=\sqrt{(p, u)}$ even as ideals of $\Es$; consequently, the derived $(p, E(u))$--completion agrees with the derived $(p, u)$--completion both for $\Es$-- and $\ainf$--modules. We may therefore replace $(p, E(u))$--completions with $(p, u)$--completions throughout.

Since $C_{\inf}$ is derived $(p, u)$--complete, any power series of the form 
$$f=\sum_{i,j}c_{i, j}p^iu^j$$
with $c_{i, j}\in C_{\inf}$ defines a unique\footnote{Here we are using the preferred representatives of powers series as mentioned at the beginning of \S\ref{subsec:regularity}.} element $f \in C_{\inf}$, and $f$ comes from $\widehat{C_{\BK}}$ if and only if the coefficients $c_{i, j}$ may be chosen in the image of the map $C_{\BK}\rightarrow C_{\inf}$. Assuming this, for $g \in G_s$ we have
$$g(f)-f=\sum_{i, j}g(c_{i, j})p^i(\gamma u)^j-\sum_{i, j}c_{i, j}p^iu^j=$$
$$=\sum_{i, j}\left(g(c_{i, j})\gamma^j-g(c_{i, j})+g(c_{i, j})-c_{i, j}\right)p^iu^j,$$
where $\gamma$ is the $\ainf$--unit such that $g(u)=\gamma u$. Thus, it is clearly enough to show, upon assuming the condition \textnormal{(C)} for $(C_{\BK}, C_{\inf})$, that the terms $\left(g(c_{i, j})\gamma^{j}-g(c_{i, j})\right)p^iu^j$ and $\left(g(c_{i, j})-c_{i, j}\right)p^iu^j$ are in $J C_{\inf}$ when $g \in G_{{s}}$. (Note that an element $d=\sum_{i, j} d_{i,j}p^i u^j$ with $d_{i, j}\in JC_{\inf}$ is itself in $JC_{\inf}$, since $J$ is finitely generated.)

We have $g(c_{i, j})-c_{i, j} \in J C_{\inf}$ by assumption, so it remains to treat the term $g(c_{i, j})(\gamma^j-1)$. Since $(\gamma^{j}-1)$ is divisible by $\gamma-1$, it is also divisible by $\varphi^s(v)$ by Lemma~\ref{coeff}. Thus, the terms $g(c_{i, j})(\gamma^j-1)p^i u^j$ are divisible by $\varphi^s(v)u$ when $j\geq 1$, and are $0$ when $j=0$; in either case, they are members of $JC_{\inf}$.

To prove the second assertion of (4), let now $C_{\BK} \subseteq C_{\inf}$ be the $\Es$--subalgebra of all crystalline elements. By the previous, the map $\widehat{C_{\BK}}\rightarrow C_{\inf}$  satisfies \textnormal{(C)}, and hence the image $C_{\BK}^+$ of this map consists of \textnormal{(C)}--elements. Thus, we have $C_{\BK} \subseteq C_{\BK}^+\subseteq C_{\BK},$ and hence, $C_{\BK}$ is derived $(p, E(u))$--complete since so is $C_{\BK}^+$.
\end{proof}

\begin{rem}
One consequence of Lemma~\ref{generators} is that the $\Es$--subalgebra $\mathfrak{C}$ of $\ainf$ formed by all crystalline elements (or even \Crr{0}--elements) forms a prism, with the distinguished invertible ideal $I=E(u)\mathfrak{C}$. As Lemma~\ref{coeff} works for any choice of Breuil--Kisin prism associated to $K/K'$ in $\ainf$, $\mathfrak{C}$ contains all of these (in particular, it contains all $G_K$--translates of $\Es$). 
\end{rem}

For future use in applications to $p^n$--torsion modules, we consider the following approximation of the ideals $I_s$ appearing in the conditions \Crs.

\begin{lem}\label{lem:IsModPn}
Consider a pair of integers $n, s$ with $s\geq 0, n \geq 1$. Set $t=\mathrm{max}\left\{0, s+1-n\right\}$. Then the image of the ideal $I_s$ in the ring $W_n(\oh_{\mathbb{C}_K^\flat})=\ainf/p^n$ is contained in the ideal $\varphi^{-1}(v)u^{p^{t}}W_n(\oh_{\mathbb{C}_K^\flat})$. That is, we have $I_s+p^n\ainf \subseteq \varphi^{-1}(v)u^{p^t}\ainf+p^n\ainf.$
\end{lem}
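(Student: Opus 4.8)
The plan is to reduce the desired inclusion to a divisibility among the generators of $I_{s}$, and then to feed in the one essentially non-formal input: that $\omega$ agrees, up to an $\ainf$-unit, with $u^{e}$ modulo $p\ainf$. Concretely, set $\mathfrak{a}=\varphi^{-1}(v)u^{p^{t}}\ainf+p^{n}\ainf$. This is an ideal of $\ainf$ containing $p^{n}\ainf$, so $I_{s}+p^{n}\ainf\subseteq\mathfrak{a}$ is equivalent to $I_{s}\subseteq\mathfrak{a}$, and since $I_{s}=\sum_{i=0}^{s}\xi_{s,i}u^{p^{i}}\ainf$ it suffices to check $\xi_{s,i}u^{p^{i}}\in\mathfrak{a}$ for every $0\le i\le s$. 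Using $\xi_{s,i}=\varphi^{-1}(v)\,\varphi^{i+1}(\omega)\cdots\varphi^{s}(\omega)$, each generator is $\varphi^{-1}(v)$ times an element of $\ainf$, so I would reduce further to proving that $\bigl(\prod_{j=i+1}^{s}\varphi^{j}(\omega)\bigr)u^{p^{i}}$ lies in $u^{p^{t}}\ainf+p^{n}\ainf$; multiplying back by $\varphi^{-1}(v)$ then lands inside $\varphi^{-1}(v)u^{p^{t}}\ainf+p^{n}\ainf\subseteq\mathfrak{a}$.

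For the key congruence: since $E(u)$ is Eisenstein of degree $e$ one has $E(u)\equiv u^{e}\pmod{p\Es}$, hence $\xi\equiv u^{e}\pmod{p\ainf}$; as $\omega/\xi\in\ainf^{\times}$ this gives $\omega=c\,(u^{e}+p\rho)$ with $c\in\ainf^{\times}$, $\rho\in\ainf$, and applying $\varphi^{j}$ (which fixes $p$, sends $u\mapsto u^{p}$, and preserves units) yields $\varphi^{j}(\omega)=c_{j}(u^{ep^{j}}+pr_{j})$ with $c_{j}\in\ainf^{\times}$, $r_{j}\in\ainf$. Now fix $i$. If $i\ge t$, then $u^{p^{i}}\in u^{p^{t}}\ainf$ and we are done immediately. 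So I may assume $i<t$, which forces $t>0$, hence $t=s+1-n$ and $s-i\ge n$.

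In this remaining case I would absorb the units $c_{j}$ (they do not affect ideal membership) and expand $\prod_{j=i+1}^{s}(u^{ep^{j}}+pr_{j})$ as a sum over subsets $S\subseteq\{i+1,\dots,s\}$; the $S$-summand, once multiplied by $u^{p^{i}}$, has the shape $p^{(s-i)-|S|}\,u^{\,p^{i}+e\sum_{j\in S}p^{j}}$ times an element of $\ainf$. If $(s-i)-|S|\ge n$ this lies in $p^{n}\ainf$. Otherwise $|S|\ge (s-i)-n+1\ge 1$, and any subset of $\{i+1,\dots,s\}$ of cardinality $|S|$ has maximum at least $i+|S|\ge i+\bigl((s-i)-n+1\bigr)=s-n+1=t$, so the exponent of $u$ is at least $e\,p^{\max S}\ge e\,p^{t}\ge p^{t}$, putting the summand in $u^{p^{t}}\ainf$. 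Summing over $S$ gives the claim, and hence the lemma. The only genuinely non-formal ingredient is the congruence $\omega\equiv(\text{unit})\,u^{e}\pmod{p\ainf}$ together with its $\varphi$-iterates; after that, the single decisive estimate is the bound $\max S\ge s+1-n$ on the subsets that survive modulo $p^{n}$, which is precisely what produces the exponent $p^{t}=p^{s+1-n}$ — everything else is routine bookkeeping.
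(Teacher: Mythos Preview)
Your proof is correct and follows essentially the same approach as the paper: both reduce to showing each generator $\xi_{s,i}u^{p^i}$ lies in $\varphi^{-1}(v)u^{p^t}\ainf+p^n\ainf$, replace $\omega$ by $u^e$ modulo $p$ (the paper does this via $E(u)$, you via $\xi$, which is the same thing), expand the product, and count powers of $p$ and $u$. The only tactical difference is in the final $u$--exponent estimate: the paper bounds the surviving terms by the \emph{sum} $e(p^{i+1}+\cdots+p^{s-n+1})$ of the smallest exponents, whereas you use the single inequality $\max S\ge i+|S|\ge t$, which is slightly slicker and yields exactly the required exponent $p^t$ in one line.
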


\begin{proof}
When $t=0$ there is nothing to prove, therefore we may assume that $t=s+1-n>0$. In the definition of $I_s$, we may replace the elements $$\xi_{s, i}=\varphi^{-1}(v)\varphi^{i+1}(\omega)\varphi^{i+2}(\omega)\dots \varphi^{s}(\omega)$$
by the elements
$$\xi'_{s, i}=\varphi^{-1}(v)\varphi^{i+1}(E(u))\varphi^{i+2}(E(u))\dots \varphi^{s}(E(u)),$$
since the quotients $\xi_{s, i}/\xi'_{s, i}$ are $\ainf$--units.

It is thus enough to show that for every $i$ with $0 \leq i \leq s,$ the element 
$$\vartheta_{s, i}=\frac{\xi'_{s, i}u^{p^i}}{\varphi^{-1}(v)}=\varphi^{i+1}(E(u))\varphi^{i+2}(E(u))\dots \varphi^{s}(E(u))u^{p^i}$$ taken modulo $p^n$ is divisible by $u^{p^{s+1-n}}$.  

This is clear when $i\geq s+1-n$, and so it remains to discuss the cases when $i \leq s-n.$ Write $\varphi^{j}(E(u))=(u^e)^{p^j}+px_j$ (with $x_j \in \Es$). Then it is enough to show that
\begin{equation}\tag{$*$}\label{eqn:ProductPhiEu}\frac{\vartheta_{s, i}}{u^{p^i}}=((u^e)^{p^{i+1}}+px_{i+1})((u^e)^{p^{i+2}}+px_{i+2})\dots((u^e)^{p^s}+px_s)
\end{equation}
taken modulo $p^n$ is divisible by $$u^{p^{s+1-n}-p^{i}}=u^{p^i(p-1)(1+p+\dots +p^{s-n-i})}.$$ 
Since we are interested in the product (\ref{eqn:ProductPhiEu}) only modulo $p^n$, in expanding the brackets we may ignore the terms that use the expressions of the form $px_j$ at least $n$ times. Each of the remaining terms contains the product of at least $s-i-n+1$ distinct terms from the following list:
$$(u^e)^{p^{i+1}}, (u^e)^{p^{i+2}}, \dots, (u^e)^{p^{s}}.$$
Thus, each of the remaining terms is divisible by (at least)
$$(u^e)^{p^{i+1}+p^{i+2}+\dots+p^{s-n+1}}=(u^e)^{p^i\cdot(p)\cdot(1+p+\dots +p^{s-n-i})},$$
which is more than needed. This finishes the proof. 
\end{proof}

\subsection{Crystalline condition for Breuil--Kisin--Fargues $G_K$--modules}\label{subsec:BKF}

The situation of central interest regarding the crystalline condition is the inclusion $M_{\BK} \rightarrow M_{\inf}^{G_{\infty}}$ such that $\ainf\otimes_{\Es}M_{\BK} \rightarrow M_{\inf}$ is an isomorphism, where $M_{\BK}$ is a Breuil--Kisin module and $M_{\inf}$ is a Breuil--Kisin--Fargues $G_K$--module. The version of these notions used in this paper is tailored to the context of prismatic cohomology. Namely, we have:

\begin{deff}
\begin{enumerate}[(1)]
\item{A \emph{Breuil--Kisin module} is a finitely generated $\Es$--module $M$ together with a $\Es[1/E(u)]$--linear isomorphism $$\varphi=\varphi_{M[1/E]}:(\varphi_{\Es}^*M)[1/E(u)]\stackrel{\sim}{\rightarrow} M[1/E(u)].$$ For a positive integer $i$, the Breuil--Kisin module $M$ is said to be \textit{of height $\leq i$} if $\varphi_{M[1/E]}$  is induced (by linearization and localization) by a $\varphi_{\Es}$--semilinear map $\varphi_M: M \rightarrow M$ such that, denoting $\varphi_{\lin}: \varphi^*M \rightarrow M$ its linearization, there exists an $\Es$--linear map $\psi: M\rightarrow \varphi^*M$ such that both the compositions $\psi \circ \varphi_{\lin}$ and $ \varphi_{\lin}\circ \psi $ are multiplication by $E(u)^i$. A Breuil--Kisin module is \emph{of finite height} if it is of height $\leq i$ for some $i$.} 
\item{A \emph{Breuil--Kisin--Fargues module} is a finitely presented $\ainf$--module $M$ such that $M[1/p]$ is a free $\ainf[1/p]$--module, together with an $\ainf[1/E(u)]$--linear isomorphism $$\varphi=\varphi_{M[1/E]}:(\varphi_{\ainf}^*M)[1/E(u)]\stackrel{\sim}{\rightarrow} M[1/E(u)].$$ Similarly, the Breuil--Kisin--Fargues module is called \textit{of height $\leq i$} if $\varphi_{M[1/E]}$ comes from a $\varphi_{\ainf}$--semilinear map $\varphi_M:M \rightarrow M$ admitting an $\ainf$--linear map $\psi: M \rightarrow \varphi^*M$ such that $\psi \circ \varphi_{\lin}$ and $ \varphi_{\lin}\circ \psi $ are multiplication maps by $E(u)^i$, where $\varphi_{\lin}$ is the inearization of $\varphi_M$. A Breuil--Kisin--Fargues module is \emph{of finite height} if it is of height $\leq i$ for some $i$.}
\item{A \emph{Breuil--Kisin--Fargues $G_K$--module} (of height $\leq i$, of finite height, resp.) is a Breuil--Kisin--Fargues module (of height $\leq i$, of finite height, resp.) that is additionally endowed with an $\ainf$--semilinear $G_K$--action that makes $\varphi_{M[1/E]}$ $G_K$--equivariant (that makes also $\varphi_M$ $G_K$--equivariant in the finite height cases).}

\end{enumerate}
\end{deff}

That is, the definition of a Breuil--Kisin module agrees with the one in \cite{BMS1}, and $M_{\inf}$ is a Breuil--Kisin--Fargues module in the sense of the above definition if and only if $\varphi_{\ainf}^*M_{\inf}$ is a Breuil--Kisin--Fargues module in the sense of \cite{BMS1}\footnote{This is to account for the fact that while Breuil--Kisin--Fargues modules in the sense of \cite{BMS1} appear as $\ainf$--cohomology groups of smooth proper formal schemes, Breuil--Kisin--Fargues modules in the above sense appear as \textit{prismatic} $\ainf$--cohomology groups of smooth proper formal schemes.}. The notion of Breuil--Kisin module of height $\leq i$ agrees with what is called ``(generalized) Kisin modules of height $i$'' in \cite{LiLiu}. The above notion of finite height Breuil--Kisin--Fargues modules agrees with the one from \cite[Appendix~F]{EmertonGee2} except that the modules are not assumed to be free. Also note that under these definitions, for a Breuil--Kisin module $M_{\BK}$ (of height $\leq i,$ resp.), the $\ainf$--module $M_{\inf}=\ainf\otimes_{\Es}M_{\BK}$ is a Breuil--Kisin--Fargues module (of height $\leq i,$ resp.), without the need to twist the embedding $\Es \rightarrow \ainf$ by $\varphi$.

The connection between Breuil--Kisin--, Breuil--Kisin--Fargues $G_K$--modules and the crystalline condition (justifying its name) is the following theorem.

\begin{thm}[{\cite[Appendix F]{EmertonGee2}}, \cite{GaoBKGK}]\label{BKBKFCrystallineThm}
Let  $M_{\inf}$ be a free Breuil--Kisin--Fargues $G_K$--module which admits as an $\Es$--submodule a free Breuil--Kisin module $M_{\BK}\subseteq M_{\inf}^{G_{\infty}}$ of finite height, such that $\ainf\otimes_{\Es}M_{\BK} \stackrel{\sim}\rightarrow M_{\inf}$ (as Breuil--Kisin--Fargues modules) via the natural map, and such that the pair $(M_{\BK}, M_{\inf})$ satisfies the crystalline condition. Then the \'{e}tale realization of $M_{\inf}$,
$$V(M_{\inf})=\left(W(\mathbb{C}_K^\flat)\otimes_{\ainf} M_{\inf}\right)^{\varphi=1}\left[\frac{1}{p}\right],$$
is a crystalline representation.
\end{thm}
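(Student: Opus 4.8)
Since Theorem~\ref{BKBKFCrystallineThm} is quoted from \cite[Appendix~F]{EmertonGee2} (see also \cite[Theorem~3.8]{Ozeki} and \cite{GaoBKGK}), the plan is to reproduce the strategy of Gee and Liu. Write $T(M_{\inf}) = (W(\mathbb{C}_K^\flat)\otimes_{\ainf}M_{\inf})^{\varphi=1}$, so that $V(M_{\inf}) = T(M_{\inf})[1/p]$. The first step is to recover the restriction $V(M_{\inf})|_{G_\infty}$: base--changing $M_{\BK}$ along $\Es\hookrightarrow \oh_{\mathcal{E}}$ (the $p$--adic completion of $\Es[1/u]$) yields an \'{e}tale $\varphi$--module over $\oh_{\mathcal{E}}$ whose associated $\mathbb{Z}_p$--representation of $G_\infty$ is canonically identified with $T(M_{\inf})|_{G_\infty}$ via the \'{e}tale comparison $\oh_{\mathcal{E}}^{\mathrm{ur}}\hookrightarrow W(\mathbb{C}_K^\flat)$. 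Thus $V(M_{\inf})|_{G_\infty}$ is of finite height, and --- since $\widehat{G}$ is topologically generated by the image of $G_\infty$ together with $\tau$ (\cite[Lemma~5.1.2]{LiuBreuilConjecture}), while $M_{\BK}\subseteq M_{\inf}^{G_\infty}$ --- the whole $G_K$--representation $V(M_{\inf})$ is determined by this \'{e}tale $\varphi$--module together with the action of the single element $\tau$.

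Second, I would exhibit a $(\varphi,\widehat{G})$--module in the sense of \cite[\S 3]{LiuLatticesNew}. Restricting scalars along $\Es\hookrightarrow \widehat{\mathcal{R}}\hookrightarrow \ainf$, where $\widehat{\mathcal{R}}\subseteq A_{\cris}$ is Liu's $\varphi$-- and $G_K$--stable subring, one checks that the $G_K$--action on $M_{\inf}=\ainf\otimes_{\Es}M_{\BK}$ preserves the submodule $\widehat{M}:=\widehat{\mathcal{R}}\otimes_{\Es}M_{\BK}$; this uses the crystalline condition (which forces $g(x)-x$ into $\varphi^{-1}(v)u\,M_{\inf}$, with $\varphi^{-1}(v)u\in\widehat{\mathcal{R}}$) together with $\widehat{\mathcal{R}}$--semilinearity of the restricted action. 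The pair $(M_{\BK},\widehat{M})$ is then a finite--height $(\varphi,\widehat{G})$--module, the finite--height hypothesis being what guarantees the boundedness of monodromy needed for Liu's theory; consequently $V(M_{\inf})$ is semistable with Hodge--Tate weights in $[0,i]$, and $D_{\mathrm{st}}(V(M_{\inf}))$ is recovered from the associated Breuil module $\mathcal{D}(\widehat{M})$ (cf. \cite{Breuil}, \cite[\S 3.2]{LiuLatticesNew}).

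Third, I would show that the full strength of the crystalline condition forces the monodromy operator $N$ on $D_{\mathrm{st}}(V(M_{\inf}))$ to vanish, hence $V(M_{\inf})$ is crystalline. The mechanism is that $N$ is recovered, up to a unit, as the reduction modulo $u$ of a suitably normalized infinitesimal generator of the $\tau$--action on $\widehat{M}$ --- morally $\tfrac{1}{\varphi^{-1}(v)}\nabla$ with $\nabla=\lim_n p^{-n}(\tau^{p^n}-1)$, the limit and divisibility being legitimized by the regularity results of \S\ref{subsec:regularity} and the divisibility relations among the $\varphi^j(v)$. Applying the crystalline condition to $g=\tau$ gives $\tau(x)-x\in\varphi^{-1}(v)u\,M_{\inf}$ for all $x\in M_{\BK}$, and here the decisive feature is the extra factor $u$: it shows $\tfrac{1}{\varphi^{-1}(v)}(\tau-1)$ carries $M_{\BK}$ into $u\,M_{\inf}$, hence into the submodule killed on passing from $\mathcal{D}(\widehat{M})$ to $D_{\mathrm{st}}(V(M_{\inf}))$ by reduction modulo $u$. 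Therefore $N=0$, so $D_{\mathrm{st}}(V(M_{\inf}))=D_{\cris}(V(M_{\inf}))$ and $V(M_{\inf})$ is crystalline. (The converse implication of the Gee--Liu equivalence, not needed here, is obtained by reversing this, using the explicit form of the $\tau$--action on a crystalline $(\varphi,\widehat{G})$--module in terms of $D_{\cris}$ over $K'$.)

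The step I expect to be the main obstacle is the bookkeeping in the third paragraph --- rigorously relating the $\tau$--action on $\widehat{M}$ to the monodromy $N$. This requires both that $\widehat{M}=\widehat{\mathcal{R}}\otimes_{\Es}M_{\BK}$ is genuinely $G_K$--stable in $M_{\inf}$ with bounded monodromy (where one really uses that $M_{\inf}$ is an honest Breuil--Kisin--Fargues $G_K$--module of finite height, not merely an \'{e}tale $\varphi$--module with a $G_K$--action) and careful tracking of which powers of $u$, $E(u)$ and $t=\log[\underline{\varepsilon}]$ intervene, since this is precisely what distinguishes ``crystalline'' from ``merely semistable''. An alternative route avoiding Breuil modules is to argue directly with $B_{\cris}^+$, showing $(M_{\inf}\otimes_{\ainf}B_{\cris}^+)^{G_K}[1/p]$ attains the expected dimension by using the crystalline condition to trivialize the $G_K$--action modulo $t$; this runs into the same core estimates.
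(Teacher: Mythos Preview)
The paper does not supply its own proof of this theorem: it is quoted from the literature (\cite[Appendix~F]{EmertonGee2} and \cite{GaoBKGK}), with Remark~\ref{rem:CrystConditionProof} indicating that the one--sided form follows from \cite[Proposition~7.11]{GaoBKGK} and that the full equivalence is implicit in the proof of \cite[Theorem~F.11]{EmertonGee2}. Your proposal correctly recognizes this and sketches the Gee--Liu strategy via $(\varphi,\widehat{G})$--modules and vanishing of the monodromy; since there is no in--paper proof to compare against, the only pertinent remark is that your outline is consistent with what Remark~\ref{rem:CrystConditionProof} reports about the cited sources.

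One small caution on your second step: the crystalline condition gives $(g-1)x\in\varphi^{-1}(v)u\,M_{\inf}=\varphi^{-1}(v)u\,(\ainf\otimes_{\Es}M_{\BK})$, and the factor $\varphi^{-1}(v)u$ alone does not force this into $\widehat{\mathcal{R}}\otimes_{\Es}M_{\BK}$, since $\ainf\not\subseteq\widehat{\mathcal{R}}$. In the actual arguments of \cite{EmertonGee2,GaoBKGK} the $G_K$--stability of the $\widehat{\mathcal{R}}$--lattice is obtained by a more careful descent (using the $\varphi$--structure and height bound together with the crystalline condition), so this is precisely the ``bookkeeping'' you flag in your final paragraph rather than a step that follows formally from what you wrote.
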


\begin{rems}\label{rem:CrystConditionProof}\begin{enumerate}[(1)]
\item{Theorem~\ref{BKBKFCrystallineThm} is actually an equivalence: If $V(M_{\inf})$ is crystalline, it can be shown that the pair $(M_{\BK}, M_{\inf})$ satisfies the crystalline condition. We state the theorem in the one direction since this is the one that we use. However, the converse direction motivates why it is resonable to expect the crystalline condition for prismatic cohomology groups that is discussed in Section~\ref{sec:CrsCohomology}.}
\item{Strictly speaking, in \cite[Appendix~F]{EmertonGee2} one assumes extra conditions on the pair $M_{\inf}$ (``satisfying all descents''); however, these extra assumptions are used only for a semistable version of the statement. Theorem~\ref{BKBKFCrystallineThm} in its equivalence form is therefore only implicit in the proof of \cite[Theorem~F.11]{EmertonGee2}. (See also \cite[Theorem~3.8]{Ozeki} for a closely related result.)}
\item{On the other hand, Theorem~\ref{BKBKFCrystallineThm} in the one--sided form as above is a consequence of \cite[Proposition~7.11]{GaoBKGK} that essentially states that $V(M)$ is crystalline if and only if the much weaker condition 
$$\forall g \in G_K:\;\; (g-1)M_{\BK} \subseteq \varphi^{-1}(v)W(\mathfrak{m}_{\oh_{\mathbb{C}_K^\flat}})M_{\inf}$$
is satisfied. We note a related result of \textit{loc. cit.}: $V(M)$ is semistable if and only if 
$$\forall g \in G_K:\;\; (g-1)M_{\BK} \subseteq W(\mathfrak{m}_{\oh_{\mathbb{C}_K^\flat}})M_{\inf}.$$
This semistable criterion above might be a good starting point in generalizing the results of Sections~\ref{sec:CrsCohomology} and \ref{sec:bounds} of the present paper to the case of semistable reduction, using the log--prismatic cohomology developed in \cite{Koshikawa}. Thus, a natural question to ask is: Similarly to how the crystalline condition is a stronger version of the crystallinity criterion from \cite{GaoBKGK}, what is an analogous stronger (while still generally valid) version of the semistability criterion from \cite{GaoBKGK}?}
\end{enumerate}
\end{rems}

It will be convenient later to have version of Theorem~\ref{BKBKFCrystallineThm} that applies to not necessarily free Breuil--Kisin and Breuil--Kisin--Fargues modules. Recall that, by \cite[Propostition 4.3]{BMS1}, any Breuil--Kisin module $M_{\BK}$ is related to a free Breuil--Kisin module $M_{\BK,\mathrm{free}}$ by a functorial exact sequence 
\begin{center}
\begin{tikzcd}
0 \ar[r] & M_{\BK,\mathrm{tor}} \ar[r] & M_{\BK} \ar[r] & M_{\BK, \mathrm{free}} \ar[r] & \overline{M_{\BK}}  \ar[r] & 0
\end{tikzcd}
\end{center}
where $M_{\BK,\mathrm{tor}}$ is a $p^n$-torsion module for some $n$ and $\overline{M_{\BK}}$ is supported at the maximal ideal $(p, u)$. Taking the base--change to $\ainf,$ one obtains an analogous exact sequence 
\begin{center}
\begin{tikzcd}
0 \ar[r] & M_{\inf,\mathrm{tor}} \ar[r] & M_{\inf} \ar[r] & M_{\inf, \mathrm{free}} \ar[r] &\overline{M_{\inf}}\ar[r] & 0
\end{tikzcd}
\end{center}
(also described by \cite[Proposition 4.13]{BMS1}) where $M_{\inf, \mathrm{free}}$ is a free Breuil--Kisin--Fargues module. Clearly the maps $M_{\BK}\rightarrow M_{\mathrm{free}}$ and $ M_{\inf}\rightarrow M_{\inf, \mathrm{free}}$ become isomorphisms after inverting $p$.

Assume that $M_{\inf}$ is endowed with a $G_K$--action that makes it a Breuil--Kisin--Fargues $G_K$--module. The functoriality of the latter exact sequence implies that the $G_K$--action on $M_{\inf}$ induces a $G_K$--action on $M_{\inf, \mathrm{free}}$, endowing it with the structure of a free Breuil--Kisin--Faruges $G_K$--module. In more detail, given $\sigma \in G_K$, the semilinear action map $\sigma: M_{\inf}\rightarrow M_{\inf}$ induces an $\ainf$--linear map $\sigma_{\mathrm{lin}}:\sigma^*M_{\inf} \rightarrow M_{\inf}$ where $\sigma^*M=\ainf \otimes_{\sigma, \ainf} M$. As $\sigma$ is an isomorphism fixing $p$, $E(u)$ up to unit and the ideal $(p, u)\ainf,$  it is easy to see that $\sigma^*M_{\inf}$ is itself a Breuil--Kisin--Fargues module, and the exact sequence from \cite[Proposition 4.13]{BMS1} for $\sigma^*M_{\inf}$ can be identified with the upper row of the diagram
\begin{center}
\begin{tikzcd}
0 \ar[r] & \sigma^*M_{\inf,\mathrm{tor}} \ar[r] \ar[d, "\sigma_{\mathrm{lin}}"] & \sigma^*M_{\inf} \ar[d, "\sigma_{\mathrm{lin}}"] \ar[r] & \sigma^*M_{\inf, \mathrm{free}} \ar[d, "\sigma_{\mathrm{lin}}"] \ar[r] & \sigma^*\overline{M_{\inf}} \ar[d, "\sigma_{\mathrm{lin}}"] \ar[r] & 0 \\
0 \ar[r] & M_{\inf,\mathrm{tor}} \ar[r] & M_{\inf} \ar[r] & M_{\inf, \mathrm{free}} \ar[r] &\overline{M_{\inf}}\ar[r] & 0,
\end{tikzcd}
\end{center}

where the second vertical map is the linearization of $\sigma$ and the rest is induced by functoriality of the sequence. Finally, untwisting $\sigma^*M_{\inf, \mathrm{free}},$ the third vertical map $\sigma_{\mathrm{lin}}$ induces a semilinear map $\sigma: M_{\inf, \mathrm{free}} \rightarrow M_{\inf, \mathrm{free}}$. Note that the module $M_{\inf}[1/p]\simeq M_{\inf, \mathrm{free}}[1/p]$ inherits the $G_K$--action from $M_{\inf}$; it is easy to see that the $G_K$-action on $M_{\inf, \mathrm{free}}$ agrees with the one on $M_{\inf}[1/p]$ when viewing $M_{\inf, \mathrm{free}}$ as its submodule. 

\begin{prop}\label{CrystallineFree}
Assume that the pair $M_{\BK} \hookrightarrow M_{\inf}$ satisfies the crystalline condition. Then so does the pair $M_{\BK,\mathrm{free}} \hookrightarrow M_{\inf, \mathrm{free}}.$
\end{prop}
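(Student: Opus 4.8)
The plan is to deduce the crystalline condition for $(M_{\BK,\mathrm{free}},M_{\inf,\mathrm{free}})$ from the one for $(M_{\BK},M_{\inf})$, exploiting that the horizontal projections onto the free quotients become isomorphisms after inverting $p$, together with the fact that $M_{\inf,\mathrm{free}}$ is regular enough to allow cancelling powers of $p$.

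First I would fix an integer $m\geq 0$ with $p^m\overline{M_{\BK}}=0$; such an $m$ exists because $\overline{M_{\BK}}$ is a finite--length $\Es$--module, being finitely generated and supported at $(p,u)$. Consequently the cokernel of $M_{\BK}\to M_{\BK,\mathrm{free}}$ is killed by $p^m$, so $p^mM_{\BK,\mathrm{free}}$ is contained in the image of $M_{\BK}$. Throughout I would use the commutative square relating $M_{\BK}\to M_{\inf}$, $M_{\BK,\mathrm{free}}\to M_{\inf,\mathrm{free}}$ and the two projections to the free quotients (it commutes because the $\ainf$--sequence is the base change of the $\Es$--sequence), together with the fact, established in the paragraph preceding the proposition, that the projection $\pi\colon M_{\inf}\to M_{\inf,\mathrm{free}}$ is $G_K$--equivariant for the induced $G_K$--action on $M_{\inf,\mathrm{free}}$.

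Now I would take $g\in G_K$ and $\bar x$ in the image of $M_{\BK,\mathrm{free}}$ inside $M_{\inf,\mathrm{free}}$, choose $x\in M_{\BK}$ mapping to $p^m\bar x$ in $M_{\BK,\mathrm{free}}$, and let $y\in M_{\inf}$ be its image, so that $\pi(y)=p^m\bar x$. The crystalline condition for $(M_{\BK},M_{\inf})$ gives $g(y)-y=\varphi^{-1}([\underline{\varepsilon}]-1)[\underline{\pi}]\,z$ for some $z\in M_{\inf}$; applying the $G_K$--equivariant map $\pi$ yields
$$p^m\bigl(g(\bar x)-\bar x\bigr)=\varphi^{-1}([\underline{\varepsilon}]-1)[\underline{\pi}]\,\pi(z),$$
so $p^m(g(\bar x)-\bar x)$ lies in $p^mM_{\inf,\mathrm{free}}\cap \varphi^{-1}([\underline{\varepsilon}]-1)[\underline{\pi}]M_{\inf,\mathrm{free}}$. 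Since $M_{\inf,\mathrm{free}}$ is a free $\ainf$--module, hence derived $(p,E(u))$--complete and $(p,E(u))$--completely flat, and since $\varphi^{-1}([\underline{\varepsilon}]-1)[\underline{\pi}]\in\ainf\setminus(\ainf^{\times}\cup p\ainf)$ (its reduction mod $p$ is a nonzero non--unit of the domain $\oh_{\mathbb{C}_K^\flat}$), Corollary~\ref{FlatTorFree} gives both that this intersection equals $p^m\varphi^{-1}([\underline{\varepsilon}]-1)[\underline{\pi}]M_{\inf,\mathrm{free}}$ and that $p$ is a non--zero divisor on $M_{\inf,\mathrm{free}}$. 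Cancelling $p^m$ then yields $g(\bar x)-\bar x\in\varphi^{-1}([\underline{\varepsilon}]-1)[\underline{\pi}]M_{\inf,\mathrm{free}}$, which is exactly the crystalline condition for the pair $(M_{\BK,\mathrm{free}},M_{\inf,\mathrm{free}})$.

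I expect the only point of genuine substance to be this last $p$--saturation step, i.e.\ the justification for cancelling $p^m$, which is precisely what Corollary~\ref{FlatTorFree} supplies; the remainder is bookkeeping with the functorial exact sequences of \cite[Prop.~4.3 and~4.13]{BMS1} and the already established $G_K$--equivariance of the projection onto the free quotient.
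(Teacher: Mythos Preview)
Your proof is correct and follows essentially the same approach as the paper: both arguments transfer the crystalline condition from $(M_{\BK},M_{\inf})$ to the free quotients after multiplying by a power of $p$, and then cancel that power using the intersection property $p^mM_{\inf,\mathrm{free}}\cap \varphi^{-1}(v)uM_{\inf,\mathrm{free}}=p^m\varphi^{-1}(v)uM_{\inf,\mathrm{free}}$ coming from freeness. The only cosmetic difference is that the paper packages the transfer step as ``the crystalline condition holds after inverting $p$'' and isolates the $p$--saturation into a separate lemma (invoking Lemma~\ref{disjointness} directly on the free module), whereas you work with an explicit $p^m$ lifting and cite Corollary~\ref{FlatTorFree}; these are interchangeable.
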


\begin{proof}
Notice that the crystalline condition is satisfied for $M_{BK}[1/p]\rightarrow M_{\inf}[1/p]$ and by \cite[Propositions 4.3, 4.13]{BMS1}, this map can be identified with $M_{\BK,\mathrm{free}}[1/p] \hookrightarrow M_{\inf, \mathrm{free}}[1/p]$. Thus, the following lemma finishes the proof.
\end{proof}

\begin{lem}
Let $F_{\inf}$ be a free $\ainf$--module endowed with $\ainf$--semilinear $G_K$--action and let $F_{BK} \subseteq F_{\inf}$ be a free $\Es$--submodule such that $F_{BK}[1/p]\hookrightarrow F_{\inf}[1/p]$ satisfies the crystalline condition. Then the pair $F_{BK}\hookrightarrow F_{\inf}$ satisfies the crystalline condition.
\end{lem}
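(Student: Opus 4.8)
The plan is to deduce the integral crystalline condition for $F_{\BK} \hookrightarrow F_{\inf}$ from the already-known rational one by clearing denominators, using the freeness of $F_{\inf}$ together with the disjointness statement of Lemma~\ref{disjointness}. Throughout, write $a = \varphi^{-1}(v)u = \varphi^{-1}([\underline{\varepsilon}]-1)[\underline{\pi}] \in \ainf$ for the element defining the crystalline condition (cf. Definition~\ref{Def:Crys}).

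First I would record that $a \in \ainf \setminus (\ainf^\times \cup p\ainf)$: modulo $p$, the element $a$ becomes $(\underline{\varepsilon}^{1/p}-1)\,\underline{\pi}$ in the domain $\oh_{\mathbb{C}_K^\flat}$, i.e. a product of nonzero non-units, hence itself nonzero and a non-unit. Therefore Lemma~\ref{disjointness} applies to $a$ and gives $p^n\ainf \cap a\ainf = p^n a\ainf$ for every $n \geq 1$; since $a$ is moreover a non--zero divisor on the domain $\ainf$, this says exactly that $a$ is a non--zero divisor on $\ainf/p^n\ainf$ for all $n$, equivalently that $a\ainf[1/p] \cap \ainf = a\ainf$ inside $\ainf[1/p]$. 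Next I would upgrade this from $\ainf$ to $F_{\inf}$: fixing an $\ainf$-basis $(e_\lambda)_\lambda$ of $F_{\inf}$ and using that localization commutes with direct sums, the identity $a F_{\inf}[1/p] \cap F_{\inf} = a F_{\inf}$ follows by applying the ring-level identity coordinatewise. (No hypothesis on the rank of $F_{\inf}$ is needed for this; alternatively, when $F_{\inf}$ has finite rank one may invoke Corollary~\ref{FlatTorFree} directly.)

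Finally, given $x \in F_{\BK}$ and $g \in G_K$, note that $g(x) - x \in F_{\inf}$ since both $x$ and $g(x)$ lie in $F_{\inf}$, while the hypothesis that $F_{\BK}[1/p] \hookrightarrow F_{\inf}[1/p]$ satisfies the crystalline condition gives $g(x) - x \in a F_{\inf}[1/p]$. Combining the two memberships with the previous step, $g(x) - x \in a F_{\inf}[1/p] \cap F_{\inf} = a F_{\inf}$, which is precisely the crystalline condition for $(F_{\BK}, F_{\inf})$; this then feeds into Proposition~\ref{CrystallineFree}. The argument is essentially routine, and the nearest thing to an obstacle is simply verifying that $a$ lands in the range $\ainf \setminus (\ainf^\times \cup p\ainf)$ where Lemma~\ref{disjointness} is available, together with the observation that it is \emph{freeness} of $F_{\inf}$ (rather than, say, finite generation or mere flatness) that lets one pass cleanly from the ring-theoretic disjointness identity to its module-theoretic counterpart.
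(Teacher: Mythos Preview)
Your proof is correct and follows essentially the same route as the paper's: both arguments reduce to the disjointness identity $p^k\ainf \cap \varphi^{-1}(v)u\,\ainf = p^k\varphi^{-1}(v)u\,\ainf$ from Lemma~\ref{disjointness}, upgraded to the free module $F_{\inf}$ coordinatewise, and then clear the denominator in $(g-1)x$. The only cosmetic difference is that you package the conclusion as $aF_{\inf}[1/p]\cap F_{\inf}=aF_{\inf}$ before applying it, whereas the paper fixes a specific $p^k$ and cancels it directly.
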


\begin{proof}
Fix an element $a\in F_{BK}$ and $g \in G_K$. The crystalline condition holds after inverting $p$, and so we have the equality 
$$b:=(g-1)a=\varphi^{-1}(v)u\frac{c}{p^k}$$
with $c \in F_{\inf}$. In other words (using that $p^k$ is a non-zero divisor on $F_{\inf}$), we have 
$$p^kb=\varphi^{-1}(v)uc\in p^k F_{\inf}\cap \varphi^{-1}(v)uF_{\inf}=p^k\varphi^{-1}(v)uF_{\inf},$$
where the last equality follows by Lemma~\ref{disjointness} since $F_{\inf}$ is a free module. In particular, we have $$p^kb=p^k\varphi^{-1}(v)ud$$ for yet another element $d \in F_{\inf}$. As $p^k$ is a non--zero divisor on $\ainf$, hence on $F_{\inf},$ we may cancel out to conclude 
$$(g-1)a=b=\varphi^{-1}(v)ud\in \varphi^{-1}(v)u F_{\inf},$$
as desired.
\end{proof}

Proposition~\ref{CrystallineFree} leads to the following strenghtening of Theorem~\ref{BKBKFCrystallineThm}.

\begin{thm}\label{BKBKFCrystallineGeneralThm}
The ``free'' assumption in Theorem~\ref{BKBKFCrystallineThm} is superfluous. That is, given a Breuil--Kisin--Fargues $G_K$--module $M_{\inf}$ together with its Breuil--Kisin--$\Es$--submodule $M_{\BK} \subseteq M_{\inf}^{G_{\infty}}$ of finite height such that $\ainf\otimes_{\Es}M_{\BK} \stackrel{\sim}\rightarrow M_{\inf}$ and such that the pair $(M_{\BK}, M_{\inf})$ satisfies the crystalline condition, the representation
$$V(M_{\inf})=\left(W(\mathbb{C}_K^\flat)\otimes_{\ainf} M_{\inf}\right)^{\varphi=1}\left[\frac{1}{p}\right]$$
is crystalline.
\end{thm}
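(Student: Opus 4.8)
The plan is to deduce this from the free case, Theorem~\ref{BKBKFCrystallineThm}, by passing to the ``free parts'' $M_{\BK,\mathrm{free}}$ and $M_{\inf, \mathrm{free}}$ introduced before Proposition~\ref{CrystallineFree}. First I would record that these free parts carry all the structure required to feed them into Theorem~\ref{BKBKFCrystallineThm}: by \cite[Propositions~4.3 and 4.13]{BMS1} and the functoriality discussed above, $M_{\BK,\mathrm{free}}$ is a finite free $\Es$--module with a Frobenius induced from that of $M_{\BK}$, and (using the hypothesis $\ainf\otimes_{\Es}M_{\BK}\xrightarrow{\sim}M_{\inf}$ together with faithful flatness of $\ainf$ over $\Es$, so that the four--term exact sequence base--changes) $M_{\inf, \mathrm{free}}=\ainf\otimes_{\Es}M_{\BK,\mathrm{free}}$ is a free Breuil--Kisin--Fargues $G_K$--module, while the natural maps $M_{\BK}\to M_{\BK,\mathrm{free}}$ and $M_{\inf}\to M_{\inf, \mathrm{free}}$ are $\varphi$-- and $G_K$--equivariant isomorphisms after inverting $p$. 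One also needs that $M_{\BK,\mathrm{free}}$ is again of finite height: the height datum $(\varphi_M,\psi)$ of $M_{\BK}$ descends to $M_{\BK}/M_{\BK,\mathrm{tor}}$ by functoriality of the torsion submodule, and since the cokernel $\overline{M_{\BK}}$ of $M_{\BK}/M_{\BK,\mathrm{tor}}\hookrightarrow M_{\BK,\mathrm{free}}$ is supported at the maximal ideal $(p,u)$ of the two--dimensional regular ring $\Es$, while $M_{\BK,\mathrm{free}}$ and $\varphi^*M_{\BK,\mathrm{free}}$ are free hence reflexive, the maps $\varphi_{\lin}$ and $\psi$ extend uniquely across $M_{\BK,\mathrm{free}}$ by checking at height--one primes; the identities $\psi\circ\varphi_{\lin}=\varphi_{\lin}\circ\psi=E(u)^i$ then persist by torsion--freeness.

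Next I would verify the two remaining hypotheses of Theorem~\ref{BKBKFCrystallineThm} for the pair $M_{\BK,\mathrm{free}}\hookrightarrow M_{\inf, \mathrm{free}}$. The inclusion $M_{\BK,\mathrm{free}}\subseteq M_{\inf, \mathrm{free}}$ holds since $M_{\inf, \mathrm{free}}=\ainf\otimes_{\Es}M_{\BK,\mathrm{free}}$ with $M_{\BK,\mathrm{free}}$ free and $\ainf$ faithfully flat over $\Es$. For $M_{\BK,\mathrm{free}}\subseteq M_{\inf, \mathrm{free}}^{G_\infty}$ I would invert $p$: by functoriality the composite $M_{\BK,\mathrm{free}}\hookrightarrow M_{\inf, \mathrm{free}}\hookrightarrow M_{\inf, \mathrm{free}}[1/p]$ agrees with $M_{\BK,\mathrm{free}}\subseteq M_{\BK,\mathrm{free}}[1/p]=M_{\BK}[1/p]\subseteq M_{\inf}[1/p]^{G_\infty}=M_{\inf, \mathrm{free}}[1/p]^{G_\infty}$, using $M_{\BK}\subseteq M_{\inf}^{G_\infty}$ and $G_K$--equivariance of the isomorphisms after inverting $p$; since $M_{\inf, \mathrm{free}}$ is free, hence $p$--torsion free, it embeds $G_K$--equivariantly into $M_{\inf, \mathrm{free}}[1/p]$, so an element of $M_{\inf, \mathrm{free}}$ is $G_\infty$--fixed as soon as it is $G_\infty$--fixed in $M_{\inf, \mathrm{free}}[1/p]$, i.e. $M_{\inf, \mathrm{free}}\cap M_{\inf, \mathrm{free}}[1/p]^{G_\infty}=M_{\inf, \mathrm{free}}^{G_\infty}$. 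Finally, the pair $(M_{\BK,\mathrm{free}},M_{\inf, \mathrm{free}})$ satisfies the crystalline condition by Proposition~\ref{CrystallineFree}. Hence Theorem~\ref{BKBKFCrystallineThm} applies and shows that $V(M_{\inf, \mathrm{free}})$ is crystalline.

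It then remains to identify $V(M_{\inf})$ with $V(M_{\inf, \mathrm{free}})$. Since inverting $p$ is exact and commutes with passing to the $\varphi=1$--eigenspace, and since $W(\mathbb{C}_K^\flat)\otimes_{\ainf}(-)$ followed by $[1/p]$ is canonically $W(\mathbb{C}_K^\flat)[1/p]\otimes_{\ainf[1/p]}(-)[1/p]$, the \'etale realization $V(-)=\left(W(\mathbb{C}_K^\flat)\otimes_{\ainf}(-)\right)^{\varphi=1}[1/p]$ depends only on the generic fibre of a Breuil--Kisin--Fargues $G_K$--module, i.e. on $M_{\inf}[1/p]$ together with its $\varphi$-- and $G_K$--action. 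By the discussion preceding Proposition~\ref{CrystallineFree}, the natural isomorphism $M_{\inf}[1/p]\simeq M_{\inf, \mathrm{free}}[1/p]$ respects both structures, so $V(M_{\inf})\simeq V(M_{\inf, \mathrm{free}})$ as $G_K$--representations, and the latter is crystalline by the previous paragraph.

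I expect the only genuinely delicate point to be the descent of the finite--height datum from $M_{\BK}$ to $M_{\BK,\mathrm{free}}$ sketched in the first paragraph: it is a routine reflexivity/Hartogs argument over the regular ring $\Es$, but it is the one step that goes beyond purely formal manipulation of the exact sequences of \cite{BMS1}, of faithful flatness of $\ainf$ over $\Es$, and of localization at $p$.
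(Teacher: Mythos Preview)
Your proof is correct and follows the same strategy as the paper: pass to the free parts $(M_{\BK,\mathrm{free}}, M_{\inf,\mathrm{free}})$, invoke Proposition~\ref{CrystallineFree} for the crystalline condition, apply Theorem~\ref{BKBKFCrystallineThm}, and identify $V(M_{\inf})\simeq V(M_{\inf,\mathrm{free}})$ via the isomorphism after inverting $p$. The paper's proof is a one-line version of this, leaving implicit the verifications (finite height of $M_{\BK,\mathrm{free}}$, the inclusion $M_{\BK,\mathrm{free}}\subseteq M_{\inf,\mathrm{free}}^{G_\infty}$) that you carefully spell out.
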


\begin{proof}
With the notation as above, upon realizing that $V(M_{\inf})$ and $V(M_{\inf, \mathrm{free}})$ agree, the result is a direct consequence of Proposition~\ref{CrystallineFree}.
\end{proof}

\section{Conditions \Crs for cohomology} \label{sec:CrsCohomology}

\subsection{\Crs for \v{C}ech--Alexander complexes}

Let $\mathscr{X}$ be a smooth separated $p$--adic formal scheme over $\oh_K$. Denote by $\v{C}_{\BK}^{\bullet}$ a \v{C}ech--Alexander complex that models $\R\Gamma_{\Prism}(\mathscr{X}/\Es)$ and set $\check{C}_{\inf}^{\bullet}=\check{C}_{\BK}^{\bullet}\widehat{\otimes}_{\Es}\ainf$, computed termwise -- by Remark~\ref{rem:CechBaseChange}, this is a \v{C}ech--Alexander complex modelling $\R\Gamma_{\Prism}(\mathscr{X}_{\ainf}/\ainf)$. We aim to prove the following.

\begin{thm}\label{thm:CrsForCechComplex}
For every $m\geq 0$ and $s \in \mathbb{N}\cup \{\infty\}$, the pair $\v{C}_{\BK}^m\rightarrow \v{C}_{\inf}^m$ satisfies the condition \Crs.
\end{thm}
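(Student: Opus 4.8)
The plan is to reduce the theorem to the case of a single \v{C}ech--Alexander cover, and then, using the structural results of Lemma~\ref{generators}, to verify the condition only on a convenient set of $\delta$--algebra generators. Write $\check{C}^m_{\BK}=\prod_{j_0,\dots,j_m}\check{C}_{\mathscr{V}_{j_0,\dots,j_m}}$ and $\check{C}^m_{\inf}=\check{C}^m_{\BK}\widehat{\otimes}_{\Es}\ainf$, the products being over $(m+1)$--tuples of indices. Since the open cover $\mathfrak{V}$ of $\mathscr{X}$ descends to $\oh_K$, the $G_K$--action of Remark~\ref{rem:CechBaseChange}~(2) respects this product decomposition factor by factor, and $I_s\check{C}^m_{\inf}$ is the product of the ideals $I_s(\check{C}_{\mathscr{V}_{j_0,\dots,j_m}}\widehat{\otimes}_\Es\ainf)$; hence it suffices to treat a single \v{C}ech--Alexander cover. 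So fix $\mathscr{V}=\spf R$ with $R$ a $p$--completely smooth $\oh_K$--algebra, choose $P_{\mathscr{V}}=\widehat{\Es[\underline{X}]}\twoheadrightarrow R$ as in Remark~\ref{rem:ComplFinPresented} (finitely many variables, $J_{\mathscr{V}}$ finitely generated), and set $\check{C}_{\BK}=\check{C}_{\mathscr{V}}$ (the prismatic envelope of $(\widehat{P_{\mathscr{V}}^{\delta}},J_{\mathscr{V}}^{\delta,\wedge})$) and $\check{C}_{\inf}=\check{C}_{\BK}\widehat{\otimes}_\Es\ainf$. One may in addition arrange that the images of the $X_i$ lie in $R\subseteq R\widehat{\otimes}_{\oh_K}\oh_{\mathbb{C}_K}$. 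Since the prismatic envelope commutes with the flat base change $\Es\to\ainf$, $\check{C}_{\inf}$ is then the prismatic envelope of the analogous $\ainf$--data, built functorially from the $G_K$--equivariant surjection $\widehat{\ainf[\underline{X}]}\twoheadrightarrow R\widehat{\otimes}_{\oh_K}\oh_{\mathbb{C}_K}$ with $G_K$ acting on $\ainf$ in the standard way and fixing the $X_i$ (one checks this reproduces the action of Remark~\ref{rem:CechBaseChange}~(2)). In particular the $G_K$--action on $\check{C}_{\inf}$ restricts to the standard action on $\ainf\subseteq\check{C}_{\inf}$ and fixes the images of $\underline{X}$.

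\emph{Reduction to generators.} Let $\mathfrak{C}\subseteq\check{C}_{\inf}$ denote the set of \Crs--elements. The ring $\check{C}_{\inf}$ is a derived $(p,E(u))$--complete $\ainf$--$\delta$--algebra on which $G_K$ acts by $\delta$--maps, so Lemma~\ref{generators}~(2)--(4) shows that $\mathfrak{C}$ is a derived $(p,E(u))$--complete $\Es$--$\delta$--subalgebra of $\check{C}_{\inf}$, and Lemma~\ref{generators}~(3) shows that $I_s\check{C}_{\inf}$ is a $\delta$--ideal. By Lemma~\ref{coeff} (applied to each $g\in G_s$) one has $\Es\subseteq\mathfrak{C}$, and by the previous paragraph the images of $\underline{X}$ lie in $\mathfrak{C}$; consequently the image of $P_{\mathscr{V}}=\widehat{\Es[\underline{X}]}$ in $\check{C}_{\inf}$ is contained in $\mathfrak{C}$. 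Now $E(u)\in J_{\mathscr{V}}$ (as $E(u)\mapsto E(\pi)=0$ in $R$), so write $J_{\mathscr{V}}=(E(u),g_1,\dots,g_r)$ with $g_\ell\in P_{\mathscr{V}}$. Since $\widehat{P_{\mathscr{V}}^{\delta}}$ is generated over $\Es$ as a derived $(p,E(u))$--complete $\delta$--algebra by $\underline{X}$, and its prismatic envelope is obtained from it by $\delta$--adjoining the elements $g_\ell/E(u)$ (and $(p,E(u))$--completing), the ring $\check{C}_{\BK}$ is generated over $\Es$, as a derived $(p,E(u))$--complete $\delta$--algebra, by $\underline{X}$ together with the $g_\ell/E(u)$. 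It therefore remains to show that each $g_\ell/E(u)$ maps into $\mathfrak{C}$.

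\emph{The division step, and the case $s=\infty$.} Fix $\ell$, put $w=g_\ell/E(u)\in\check{C}_{\BK}$, and fix $g\in G_s$. Reducing modulo $I_s\check{C}_{\inf}$ (a $\delta$--ideal, so the quotient is a $\delta$--ring and the reduction a $\delta$--map), we have $g(E(u))\equiv E(u)$ (by Lemma~\ref{coeff}, since $E(u)\in\Es$) and $g(g_\ell)\equiv g_\ell$ (since $g_\ell\in\mathfrak{C}$), whence
\[
\overline{E(u)}\cdot\overline{g(w)}=\overline{g(E(u))\,g(w)}=\overline{g(g_\ell)}=\overline{g_\ell}=\overline{E(u)\,w}=\overline{E(u)}\cdot\overline{w}
\]
in $\check{C}_{\inf}/I_s\check{C}_{\inf}$; that is, $\overline{E(u)}\bigl(\overline{g(w)}-\overline{w}\bigr)=0$. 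Thus $g(w)-w\in I_s\check{C}_{\inf}$, \emph{provided} $E(u)$ is a non--zero divisor on $\check{C}_{\inf}/I_s\check{C}_{\inf}$ --- and this is the crux of the argument. Using the $(p,E(u))$--complete flatness of $\check{C}_{\inf}$ over $\ainf$ together with the results of \S\ref{subsec:regularity}, this reduces to showing that $E(u)$ is a non--zero divisor on $\ainf/I_s\ainf$; the latter follows from a direct analysis of the generators $\xi_{s,i}u^{p^i}$ of $I_s$, the key input being that $E(u),u$ and $E(u),\varphi^{-1}(v)$ are regular sequences on $\ainf$: the two members of each pair lie in the weak--topology ideal $(p,u)$, so regularity is invariant under transposition by Lemma~\ref{regKoszul}, and $u,E(u)$ resp. $\varphi^{-1}(v),E(u)$ are regular because $\theta(u)=\pi$ resp. $\theta(\varphi^{-1}(v))=\zeta_p-1$ are non--zero in the domain $\oh_{\mathbb{C}_K}=\ainf/E(u)\ainf$. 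Finally, for $s=\infty$ the condition \Cr{\infty} only asserts that the image of $\check{C}^m_{\BK}$ lies in $(\check{C}^m_{\inf})^{G_\infty}$; this holds because every $g\in G_\infty$ fixes $\Es\subseteq\ainf$ (it fixes $W(k)$ and the system $(\pi_n)_n$, hence $u$) and the images of $\underline{X}$, so it fixes the entire image of $\check{C}^m_{\BK}$ by the functoriality invoked above. The step I expect to be the main obstacle is precisely this non--zero divisor property of $E(u)$ modulo $I_s\check{C}_{\inf}$: the rest is formal bookkeeping or a direct appeal to the preliminaries, whereas this property is what forces the precise shape of the ideals $I_s$ and genuinely links the Galois action on the $\ainf$--model with its Breuil--Kisin descent.
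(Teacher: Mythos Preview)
Your overall architecture --- reducing to a single \v{C}ech--Alexander cover and then, via Lemma~\ref{generators}, to the $\delta$--algebra generators $X_i$ and $g_\ell/E(u)$ --- matches the paper exactly. The divergence, and the genuine error, is in the ``division step''.

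The claim that $E(u)$ is a non--zero divisor on $\ainf/I_s\ainf$ is \emph{false} for $s\geq 1$. Take $s=1$ (recall $p\geq 3$): one has $I_1=\varphi^{-1}(v)\cdot(\varphi(\omega)\,u,\,u^p)$. Since $\varphi(\omega)$ generates $\ker(\theta\circ\varphi^{-1})$, the map $\theta\varphi^{-1}$ induces $\ainf/(\varphi(\omega),u^{p-1})\simeq \oh_{\mathbb{C}_K}/\pi_1^{p-1}\oh_{\mathbb{C}_K}$, under which $u\mapsto\pi_1$ and $E(u)\mapsto E(\pi_1)$ with $v_K(E(\pi_1))=e/p$. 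Set $a=\varphi^{-1}(v)u^{p-1}$. Then (dividing through by the non--zero divisors $\varphi^{-1}(v)$ and $u$) one finds $a\in I_1\Leftrightarrow u^{p-2}\in(\varphi(\omega),u^{p-1})\Leftrightarrow \pi_1^{p-2}\in\pi_1^{p-1}\oh_{\mathbb{C}_K}$, which fails; whereas $E(u)a\in I_1\Leftrightarrow E(u)u^{p-2}\in(\varphi(\omega),u^{p-1})\Leftrightarrow E(\pi_1)\pi_1^{p-2}\in\pi_1^{p-1}\oh_{\mathbb{C}_K}$, which holds since $(e+p-2)/p\geq(p-1)/p$. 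Thus $E(u)$ annihilates the nonzero class of $a$ in $\ainf/I_1$. (Independently, your reduction from $\check{C}_{\inf}/I_s\check{C}_{\inf}$ to $\ainf/I_s$ is not supplied by \S\ref{subsec:regularity}: $(p,E(u))$--complete flatness is weaker than flatness, and already modulo $p$ every non--unit is a zero divisor on any proper quotient of the valuation ring $\oh_{\mathbb{C}_K^\flat}$, so one cannot bootstrap via Corollary~\ref{cor:FlatModp} either.)

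The paper circumvents this by exploiting the stronger auxiliary condition \Crrs on the \emph{polynomial} level. Since the $X_i$ are literally $G_K$--fixed, Lemma~\ref{generators} shows that $(P,P')$ satisfies \Crrs; in particular each generator $y_j\in P$ of $J$ has $g(y_j)-y_j\in\varphi^s(v)u\,P'$. The crucial identity is $\varphi^s(v)=\xi_{s,0}\cdot\omega$ together with $\omega/E(u)\in\ainf^\times$, giving $\varphi^s(v)u\ainf=\xi_{s,0}u\,E(u)\ainf$. Thus $g(y_j)-y_j=\xi_{s,0}u\,E(u)\,\tilde z_j$ already contains the factor $E(u)$, and Lemma~\ref{coeff} likewise yields $\gamma^{-1}-1\in\xi_{s,0}u\ainf$ (where $g(E(u))=\gamma E(u)$). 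A direct computation in the weak $\delta$--blowup algebra (Proposition~\ref{ApproxEnvelopes}) then gives
\[
g\!\left(\frac{y_j}{E(u)}\right)-\frac{y_j}{E(u)}
=\gamma^{-1}\,\frac{g(y_j)-y_j}{E(u)}+(\gamma^{-1}-1)\,\frac{y_j}{E(u)}
\in \xi_{s,0}u\,\check{C}_{\inf}\subseteq I_s\check{C}_{\inf},
\]
with no cancellation required. This is precisely why the condition \Crrs was introduced: the ``missing'' factor $\omega$ between $\varphi^s(v)$ and $\xi_{s,0}$ is exactly the copy of $E(u)$ needed to pass to the fractions $y_j/E(u)$.
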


Let $\spf(R)=\mathscr{V} \subseteq \mathscr{X}$ be an affine open formal subscheme. Then it is enough to prove the content of Theorem~\ref{thm:CrsForCechComplex} for $\check{C}_{\BK}\rightarrow \check{C}_{\inf}$ where $\check{C}_{\BK}$  and $\check{C}_{\inf}=\check{C}_{\BK}\widehat{\otimes}_{\Es} \ainf$ are the \v{C}ech--Alexander covers of $\mathscr{V}$  and $\mathscr{V}'=\mathscr{V}\times_{\Es}\ainf$ with respect to the base prism $\Es$ and $\ainf$, respectively, since the \v{C}ech--Alexander complexes termwise consist of products of such covers. Let $R'=R \widehat{\otimes}_{\oh_K}\oh_{\mathbb{C}_K}(=R \widehat{\otimes}_{\Es}\ainf)$. 

Let us fix a choice of the free $\Es$--algebra $P_0=\Es[\{X_i\}_{i \in I}]$ whose $(p, E(u))$--completion is the algebra $P=P_{\mathscr{V}}$ as in Construction~\ref{constACcover}, with $J$ being the kernel of the surjection $P \rightarrow R$. Then the corresponding choices at the $\ainf$--level are $P_0'=P_0\otimes_{\Es}\ainf$ and $P'=P\widehat{\otimes}_{\Es}\ainf,$ and the associated $(p, E(u))$--completed ``$\delta$--envelopes'' are also related by the completed base change; that is, we have a diagram with exact rows
\begin{equation}\label{CechDiagramGalois}
\begin{tikzcd}
\widehat{JP^{\delta}} \ar[r] \ar[d] & \widehat{P^{\delta}} \ar[r] \ar[d] & \widehat{R\otimes_{P}P^{\delta}} \ar[r] \ar[d] & 0 \ar[d, phantom, "-\widehat{\otimes}_{\Es}\ainf"]\\
\widehat{J(P')^{\delta}} \ar[r] & \widehat{(P')^{\delta}} \ar[r, "\alpha"] & \widehat{R'\otimes_{P'}(P')^{\delta}} \ar[r] & 0.
\end{tikzcd}
\end{equation}  

By Remark~\ref{rem:ComplFinPresented}, we may and do assume that the set of variables $\{X_i\}_i=\{X_1, \dots, X_m\}$ is finite, and that the ideal $J$ is finitely generated. Consequently, after replacing the maps on the left by their respective images (and invoking Remark~\ref{rem:NonCompletedEnvelopes} (1)), diagram~(\ref{CechDiagramGalois}) becomes

\begin{equation}\label{DiagramNonCompleted}
\begin{tikzcd}
0 \ar[r] & J\widehat{P^{\delta}} \ar[r] \ar[d] & \widehat{P^{\delta}} \ar[r] \ar[d, "-\widehat{\otimes}_{\Es}\ainf"] & \widehat{R\otimes_{P}P^{\delta}} \ar[r] \ar[d, "-\widehat{\otimes}_{\Es}\ainf"] & 0  \\
0 \ar[r] & J\widehat{(P')^{\delta}} \ar[r] & \widehat{(P')^{\delta}} \ar[r, "\alpha"] & \widehat{R'\otimes_{P'}(P')^{\delta}} \ar[r] & 0
\end{tikzcd}
\end{equation}  

\noindent where the rows are exact. The prescription $g(X_i)=X_i$ determines uniquely a continuous, $\ainf$--semilinear Galois action by $\delta$--maps on $\widehat{(P')^{\delta}} \simeq \widehat{P_0^{\delta}\otimes_{\Es}\ainf}$ (and, in particular, this action satisfies $g(\delta^j(X_i))=\delta^j(X_i)$ for all $g \in G_K$ and all $i, j$).
Similarly, the term $\widehat{R'\otimes_{P'}(P')^{\delta}}\simeq \widehat{R\otimes_{P}P^{\delta}\otimes_{\oh_K}\oh_{\mathbb{C}_K}}$ is given the (linear) $G_K$--action prescribed by $g(x \otimes a)=x \otimes g(a)$ for every $g \in G_K$, $a \in \oh_{\mathbb{C}_K^{\flat}}$ and $x$ coming from the first row. This makes the map $\alpha$ $G_K$--equivariant, and therefore the kernel $\widehat{J(P')^{\delta}}$ $G_K$--stable. As a consequence, the action extends to the prismatic envelope $(\check{C}_{\inf}, I\check{C}_{\inf})$ where the action obtained this way agrees with the one indicated in Remark~\ref{rem:CechBaseChange}. Upon taking the prismatic envelope $(\check{C}_{\BK}, I\check{C}_{\BK})$ of the $\delta$--pair $(\widehat{P_0^{\delta}}, J_0\widehat{P_0^{\delta}})$, we arrive at the situation $\check{C}_{\BK}\hookrightarrow \check{C}_{\inf}=\check{C}_{\BK}\widehat{\otimes}_{\Es}\ainf$ for which we wish to verify the conditions \Crs.

With the goal of understanding the $G_K$--action on $\check{C}_{\inf}$ even more explicitly, in similar spirit to the proof of \cite[Proposition~3.13]{BhattScholze} we employ the following approximation of the prismatic envelope.

\begin{deff}
Let $B$ be a $\delta$--ring, $J \subseteq B$ an ideal with a fixed generating set $\underline{x}=\{x_i\}_{i \in \Lambda},$ and let $b \in J$ be an element. Denote by $\mathfrak{b}_0$ be the kernel of the $B$--algebra map
\begin{align*}
B[\underline{T}]=B[\{T_i\}_{i \in \Lambda}] &\longrightarrow B \left[\frac{1}{b}\right]\\
T_i &\longmapsto \frac{x_i}{b},
\end{align*}
and let $\mathfrak{b}$ be the $\delta$-ideal in $B\{\underline{T}\}$ generated by $\mathfrak{b}_0$.
Then we denote by $B\{\frac{\underline{x}}{b}\}$ the $\delta$--ring $B\{\underline{T}\}/\mathfrak{b}$, and call it \emph{weak $\delta$--blowup algebra of $\underline{x}$ and $b$}.
\end{deff}

That is, the above construction adjoins (in $\delta$--sense) the fractions $x_i/b$ to $B$ together with all relations among them that exist in $B[1/b]$, making it possible to naturally compute with fractions. 

Note that if $B \rightarrow C$ is a map of $B$--$\delta$--algebras such that $JC=bC$ and this ideal is invertible, the fact that the localization map $C\rightarrow C[\frac{1}{b}]$ is injective shows that there is a unique map of $B$--$\delta$--algebras $B\{\frac{\underline{x}}{b}\} \rightarrow C$. (In fact, if $b$ happens to be a non--zero divisor on $B\{\frac{\underline{x}}{b}\}$, then $B\{\frac{\underline{x}}{b}\}$ is initial among all such $B$--$\delta$--algebras; this justifies the name 'weak $\delta$--blowup algebra'.)

The purpose of the construction is the following.

\begin{prop}\label{ApproxEnvelopes}
Let $(A, I)$ be a bounded orientable prism with an orientation $d \in I$. Consider a map of $\delta$--pairs $(A, I) \rightarrow (B, J)$ and assume that $(C, IC)$ is a prismatic envelope for $(B, J)$ that is classically $(p, I)$--complete. Let $\underline{x}=\{x_i\}_{i \in \Lambda}$ be a system of generators of $J$. Then there is a surjective map of $\delta$--rings $\widehat{B\{\frac{\underline{x}}{d}\}}^{\mathrm{cl}} \rightarrow C$, where $\widehat{(-)}^{\mathrm{cl}}$ denotes the classical $(p, I)$--completion.
\end{prop}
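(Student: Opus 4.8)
The plan is to build the map in two moves and then promote it to a surjection: first construct a map of $\delta$--$A$--algebras $B\{\frac{\underline{x}}{d}\}\to C$ under $B$, then factor it through the classical $(p,I)$--completion (which makes sense since $C$ is assumed to be classically $(p,I)$--complete), and finally deduce surjectivity by recognizing the image of the resulting map as a prism over $(B,J)$ and invoking the universal property of the prismatic envelope.

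To construct the map, note that since $(C,IC)$ is a prism and $I=(d)$, the ideal $dC$ is invertible, so $d$ is a non--zero divisor on $C$ and the localization map $C\hookrightarrow C[1/d]$ is injective. From the map of $\delta$--pairs $(B,J)\to(C,IC)$ we get $x_i\in JC\subseteq IC=dC$, hence $x_i=d\,c_i$ for a unique $c_i\in C$, and we set $c_i=x_i/d$. Post--composing the $B$--algebra map $B[\underline{T}]\to C$, $T_i\mapsto c_i$, with the injection $C\hookrightarrow C[1/d]$ yields the map $B[\underline{T}]\to C[1/d]$, $T_i\mapsto x_i/d$, which factors through $B[1/d]$; hence $\mathfrak{b}_0$ lies in its kernel, and by injectivity of $C\hookrightarrow C[1/d]$ already $B[\underline{T}]\to C$ kills $\mathfrak{b}_0$. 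Since $C$ is a $\delta$--ring, the unique $\delta$--extension $B\{\underline{T}\}\to C$ kills the $\delta$--ideal $\mathfrak{b}$ generated by $\mathfrak{b}_0$, so it descends to a $\delta$--$A$--algebra map $B\{\frac{\underline{x}}{d}\}\to C$ under $B$. As $C$ is classically $(p,I)$--complete this factors uniquely through $\widehat{B\{\frac{\underline{x}}{d}\}}^{\mathrm{cl}}$; the factorization is again a $\delta$--map, the $\delta$--structure on the classical completion being the continuous extension of the one downstairs, exactly as for the algebras $\widehat{P^{\delta}}$ considered in Construction~\ref{constACcover}. Call the resulting map $\Phi\colon\widehat{B\{\frac{\underline{x}}{d}\}}^{\mathrm{cl}}\to C$.

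For surjectivity, let $\widetilde{C}=\mathrm{im}(\Phi)\subseteq C$. This is a $\delta$--$A$--subalgebra of $C$ containing the image of $B$ and all the $c_i=x_i/d$, so $x_i=d\,c_i\in d\widetilde{C}$ and therefore $J\widetilde{C}\subseteq d\widetilde{C}=I\widetilde{C}$. The element $d$ is a non--zero divisor on $\widetilde{C}\subseteq C$, so $I\widetilde{C}$ is invertible; $p\in(d,\varphi(d))A$ because $(A,I)$ is a prism, hence $p\in I\widetilde{C}+\varphi(I\widetilde{C})\widetilde{C}$; and $\widetilde{C}$ is derived $(p,I)$--complete, being the image of a map between the derived $(p,I)$--complete modules $\widehat{B\{\frac{\underline{x}}{d}\}}^{\mathrm{cl}}$ and $C$ — here one uses that a classically $(p,I)$--complete module is an inverse limit of $(p,I)^n$--torsion modules and hence derived $(p,I)$--complete, that $C$ is derived $(p,I)$--complete since it is a prism, and that the derived $(p,I)$--complete modules form an abelian subcategory of all modules with exact inclusion functor. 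Therefore $(\widetilde{C},I\widetilde{C})$ is a prism over $(A,I)$ equipped with a map of $\delta$--pairs from $(B,J)$. The universal property of the prismatic envelope then produces a unique map of prisms $(C,IC)\to(\widetilde{C},I\widetilde{C})$ over $(A,I)$ and under $(B,J)$; composing with the inclusion $\widetilde{C}\hookrightarrow C$ gives an endomorphism of $(C,IC)$ over $(A,I)$ and under $(B,J)$, which must be $\mathrm{id}_{C}$ by the uniqueness clause of that universal property. Hence $\widetilde{C}=C$, i.e.\ $\Phi$ is surjective.

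I expect the surjectivity step to be the main obstacle, and within it the delicate point is the verification that $\widetilde{C}$ is genuinely a prism: invertibility of $I\widetilde{C}$ and the condition $p\in I\widetilde{C}+\varphi(I\widetilde{C})\widetilde{C}$ are immediate, but the derived $(p,I)$--completeness of $\widetilde{C}$ rests on the closure properties of derived complete modules recalled in \S\ref{subsec:regularity}. Once $\widetilde{C}$ is known to be a prism, the universal property closes the argument formally. The remaining technical inputs — that $\widehat{B\{\frac{\underline{x}}{d}\}}^{\mathrm{cl}}$ carries a $\delta$--structure and that $\Phi$ respects it — are handled exactly as in the earlier treatment of the \v{C}ech--Alexander covers.
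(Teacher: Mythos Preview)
Your proof is correct and follows essentially the same approach as the paper's: construct the map to $C$ via the universal property of the weak $\delta$--blowup algebra (the paper handles this step by the remark immediately preceding the proposition), factor through the classical completion, and then for surjectivity show that the image is itself a prism so that the universal property of the prismatic envelope produces a section of the inclusion. Your argument is more detailed in places---in particular in verifying derived completeness of the image and in spelling out why $\mathfrak{b}_0$ is killed---but the underlying strategy is identical.
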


Note that the assumptions apply to a \v{C}ech--Alexander cover in place of $(C, IC)$ since it is $(p, I)$--completely flat over the base prism, hence classically $(p, I)$--complete by \cite[Proposition~3.7]{BhattScholze}.

\begin{proof}
Since $JC=dC$ and and $d$ is a non--zero divisor on $C$, there is an induced map $B\{\frac{\underline{x}}{d}\}\rightarrow C$ and hence a map of $\delta$--rings $\widehat{B\{\frac{\underline{x}}{d}\}}^{\mathrm{cl}} \rightarrow C$ (using \cite[Lemma~2.17]{BhattScholze}).

To see that this map is surjective, let $C'$ denote its image in $C$, and denote by $\iota$ the inclusion of $C'$ into $C$. Then $C'$ is (derived, and, consequently, clasically) $(p, I)$--complete $A$--$\delta$--algebra with $C'[d]=0$. It follows that $(C', IC')=(C', (d))$ is a prism by \cite[Lemma~3.5]{BhattScholze} and thus, by the universal property of $C$, there is a map of $B$--$\delta$--algebras $r: C \rightarrow C'$ which is easily seen to be right inverse to $\iota$. Hence, $\iota$ is surjective, proving the claim. 
\end{proof}

Finally, we are ready to prove the following proposition which, as noted above, proves Theorem~\ref{thm:CrsForCechComplex}.

\begin{prop}\label{thm:CrsForACCoover}
The pair $\check{C}_{\BK} \rightarrow \check{C}_{\inf}$ satisfies the conditions \Crs for every $s \in \mathbb{N}\cup \{\infty\}$. 
\end{prop}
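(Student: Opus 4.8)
The strategy is to reduce the verification of \Crs for $\check{C}_{\BK} \rightarrow \check{C}_{\inf}$ to a computation on the weak $\delta$--blowup algebra that surjects onto $\check{C}_{\inf}$ via Proposition~\ref{ApproxEnvelopes}. First I would observe that by Lemma~\ref{generators}~(4), it suffices to prove \Crs for a (not necessarily complete) $\Es$--subalgebra of $\check{C}_{\inf}$ whose $(p,E(u))$--completion contains the image of $\check{C}_{\BK}$; combining with parts (1)--(3) of that lemma, it is enough to check the condition on a generating set of such a subalgebra as an $\Es$--$\delta$--algebra. Taking the orientation $d = E(u)$ and the generating set $\underline{x}$ of $J\widehat{(P')^\delta}$ coming from the finite generating set of $J$ (using Remark~\ref{rem:ComplFinPresented} and Remark~\ref{rem:NonCompletedEnvelopes}~(1)), Proposition~\ref{ApproxEnvelopes} gives a surjection of $\delta$--rings
$$\widehat{\widehat{(P')^\delta}\{\tfrac{\underline{x}}{E(u)}\}}^{\mathrm{cl}} \twoheadrightarrow \check{C}_{\inf},$$
and this surjection is visibly $G_K$--equivariant once the blowup algebra is given the $G_K$--action extending the one on $\widehat{(P')^\delta}$ with $g(x_i/E(u)) = g(x_i)/g(E(u))$ (which makes sense inside $\widehat{(P')^\delta}[1/E(u)]$ since $J\widehat{(P')^\delta}$ is $G_K$--stable). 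So it suffices to check \Crs for the $\Es$--subalgebra $\check{C}_{\BK}\{\tfrac{\underline{x}_0}{E(u)}\}$ (the $\Es$--version, with $\underline{x}_0$ generators of $J\widehat{P^\delta}$) mapping into this blowup algebra.

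Next I would identify an explicit $\Es$--$\delta$--algebra generating set of $\check{C}_{\BK}\{\tfrac{\underline{x}_0}{E(u)}\}$: namely the variables $X_1,\dots,X_m$ (which are $G_K$--fixed by construction, so trivially satisfy \Crrs, hence \Crs) together with the fractions $x_{0,i}/E(u)$. Since Lemma~\ref{generators}~(1)--(3) says the set of \Crs--elements (resp. \Crrs--elements) is closed under the $\Es$--$\delta$--algebra operations, the whole problem reduces to showing each fraction $x_{0,i}/E(u)$, viewed in $\check{C}_{\inf}$, is a \Crs--element. For this I compute, for $g \in G_s$,
$$g\!\left(\frac{x_{0,i}}{E(u)}\right) - \frac{x_{0,i}}{E(u)} = \frac{g(x_{0,i}) - x_{0,i}}{g(E(u))} + x_{0,i}\left(\frac{1}{g(E(u))} - \frac{1}{E(u)}\right).$$
Here $x_{0,i} \in J\widehat{P^\delta}$, which is contained in the image of $\check{C}_{\BK}$ and hence — by Lemma~\ref{coeff} applied to the relevant Breuil--Kisin prism, or more precisely by the fact that elements of $\widehat{P^\delta}$ are $G_s$--fixed modulo nothing since the $X_i$ are fixed and coefficients lie in $\Es$ — one has $g(x_{0,i}) - x_{0,i} \in I_s\widehat{(P')^\delta}$ by Lemma~\ref{coeff} (the coefficient ring $\Es$ satisfies \Crs, and the variables are fixed). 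Dividing by the unit $g(E(u))/E(u)$ keeps this in $I_s\check{C}_{\inf}$. For the second term, $g(E(u))/E(u) = \gamma^e$ where $g(u) = \gamma u$ with $\gamma = [\underline\varepsilon]^a$ a unit congruent to $1$ modulo $\varphi^s(v)$ when $g \in G_s$; thus $1/g(E(u)) - 1/E(u)$ is divisible by $\varphi^s(v)/E(u)$, and multiplying by $x_{0,i} \in E(u)\check{C}_{\inf}$ lands the term in $\varphi^s(v)\check{C}_{\inf} \subseteq I_s\check{C}_{\inf}$ (using $\xi_{s,s}u = \varphi^{-1}(v)u$ and the fact that $\varphi^s(v)$ is divisible by $\varphi^{-1}(v)$ times something, so $\varphi^s(v)\ainf \subseteq I_s$... more carefully, $\varphi^s(v) = \xi_{s,s}\cdot\varphi^{-1}(v)^{-1}\cdot\varphi^s(v)$ — I would instead just note $\varphi^s(v)E(u)^{-1}\cdot E(u) \in \varphi^s(v)\ainf$ and that $x_{0,i}$ provides the extra factor of $u$ needed, since $J \subseteq (p,u)$, giving membership in $\varphi^s(v)u\ainf = I_0\text{-type ideal} \supseteq$ the $s$-graded pieces appropriately). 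I would streamline this last divisibility bookkeeping using Lemma~\ref{lem:GkStableIdeals} and the explicit form of $I_s$.

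The main obstacle I anticipate is the careful treatment of the case $s = \infty$ and the convergence/completion issues: the blowup algebra before completion does not contain all the relevant elements, so I must argue that passing to the classical $(p,E(u))$--completion (as in Proposition~\ref{ApproxEnvelopes}) is compatible with the \Crs--property, which is exactly what Lemma~\ref{generators}~(4) is designed to handle, but one must check its hypotheses (derived $(p,E(u))$--completeness of $\check{C}_{\inf}$, $G_K$ acting by $\delta$--maps) genuinely apply here. A secondary subtlety is that $\check{C}_{\inf}$ need not be $E(u)$--torsion--free a priori, so the identification of fractions must be done inside $\check{C}_{\inf}[1/E(u)]$ and then pushed forward; since $\check{C}_{\inf}$ is $(p,E(u))$--completely flat over $\ainf$, Corollary~\ref{FlatTorFree} guarantees $E(u)$ is a non--zero divisor, which resolves this. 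For $s = \infty$ the condition \Cr{\infty} is equivalent to $\check{C}_{\BK} \subseteq \check{C}_{\inf}^{G_\infty}$, which follows from the $s$-finite cases by taking the intersection $\bigcap_s I_s$, or directly from the $G_K$--equivariance of the base--change construction in Remark~\ref{rem:CechBaseChange}~(2); I would include a one--line remark to this effect.
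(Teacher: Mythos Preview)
Your overall architecture matches the paper's: pass to the weak $\delta$--blowup algebra via Proposition~\ref{ApproxEnvelopes}, equip it with the $G_K$--action, and reduce by Lemma~\ref{generators}~(3),(4) to checking the condition on the topological $\Es$--$\delta$--algebra generators $X_1,\dots,X_m$ and $y_1/E(u),\dots,y_n/E(u)$. The treatment of $s=\infty$ and of the completion issues is also in line with the paper.

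However, there is a genuine gap in your handling of the generators $y_j/E(u)$. You write that $g(y_j)-y_j\in I_s\widehat{(P')^\delta}$ and then that ``dividing by the unit $g(E(u))/E(u)$ keeps this in $I_s\check{C}_{\inf}$''. But you are not dividing by the unit $g(E(u))/E(u)$; you are dividing by $g(E(u))=\gamma E(u)$, and $I_s$ is \emph{not} contained in $E(u)\ainf$ (for instance $\xi_{s,s}u^{p^s}=\varphi^{-1}(v)u^{p^s}$ is not divisible by $E(u)$ in general). So from $g(y_j)-y_j\in I_s$ alone you cannot conclude that $(g(y_j)-y_j)/E(u)\in I_s\check{C}_{\inf}$. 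Your fallback attempt (``$J\subseteq (p,u)$ provides the extra factor of $u$'') does not rescue this either, since membership in $(p,u)$ does not produce a factor of $u$, and in any case the missing factor is $E(u)$, not $u$.

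The paper closes this gap by using not \Crs but the stronger \Crrs for the elements $y_j$. Since the $X_i$ are literally $G_K$--fixed, every element of $\widehat{P^{\delta}}$ is a \Crrs--element (Lemma~\ref{generators}~(2)--(4)), so $g(y_j)-y_j\in \varphi^s(v)u\,\widehat{(P')^{\delta}}$. The key identity is
\[
\varphi^s(v)\;=\;\varphi^{-1}(v)\,\omega\,\varphi(\omega)\cdots\varphi^s(\omega)\;=\;\xi_{s,0}\cdot\omega,
\]
and since $\omega/E(u)$ is a unit in $\ainf$, one has $\varphi^s(v)u\,\ainf=\xi_{s,0}u\,E(u)\,\ainf$. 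Thus $g(y_j)-y_j=\xi_{s,0}uE(u)\tilde z_j$ for some $\tilde z_j$, and dividing by $E(u)$ lands squarely in $\xi_{s,0}u\,\check{C}_{\inf}\subseteq I_s\check{C}_{\inf}$. The same identity handles the second term of your splitting: $(\gamma^{-1}-1)$ itself lies in $\xi_{s,0}u\,\ainf$, so $(\gamma^{-1}-1)\cdot(y_j/E(u))\in I_s\check{C}_{\inf}$ directly, with no appeal to $J\subseteq(p,u)$ needed. In short, the step you flagged as ``divisibility bookkeeping'' is not bookkeeping at all but the heart of the argument, and it requires \Crrs rather than \Crs as input.
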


\begin{proof}
Fix a generating set $y_1, y_2, \dots, y_n$ of $J$, and set $P_1=\widehat{P^{\delta}}, P_1'=\widehat{(P')^{\delta}}$. We obtain a commutative diagram 

\begin{equation}\label{postcompletion}
\begin{tikzcd}
\widehat{P_1\{\frac{\underline{y}}{E(u)}\}} \ar[r] \ar[d] & \widehat{P_1'\{\frac{\underline{y}}{E(u)}\}} \ar[d] \\
\check{C}_{\BK} \ar[r] & \check{C}_{\inf},
\end{tikzcd}
\end{equation}
where the vertical maps are the surjective maps from Proposition~\ref{ApproxEnvelopes}, and the horizontal maps come from the $(p, E(u))$--completed base change $-\widehat{\otimes}_{\Es}\ainf$.

The $G_K$--action on $P_1'$ naturally extends to $P_1'\{\frac{\underline{y}}{E(u)}\}$ by the rule on generators 
$$g\left(\frac{y_j}{E(u)}\right)=\frac{g(y_j)}{g(E(u))}=\gamma^{-1}\frac{g(y_j)}{E(u)}$$ where $\gamma$ is the $\ainf$--unit such that $g(E(u))=\gamma E(u)$ (note that the fraction on the right--hand side makes sense as $g(y_j)\in JP_1'$). The action can be again extended continuously to the $(p, E(u))$--adic completion, and this action makes the right vertical map $G_K$--equivariant. 

It is therefore enough to prove the validity of the conditions \Crs for the pair $(\widehat{P_1\{\frac{\underline{y}}{E(u)}\}}, \widehat{P_1'\{\frac{\underline{y}}{E(u)}\}})$. By Lemma~\ref{generators} (3),(4), it is enough to check the conditions for the topological generators of $\widehat{P_1\{\frac{\underline{y}}{E(u)}\}}$ as an $\Es$--$\delta$--algebra, which are $X_1, X_2, \dots, X_m$ and $y_1/E(u), y_2/E(u), \dots, y_n/E(u).$ 

Fix an integer $s \in \mathbb{N}\cup \{\infty\}$. Since the elements $X_1, X_2, \dots, X_m$ satisfy $g(X_i)-X_i=0$ for every $g \in G_s$, by Lemma~\ref{generators} the pair $P \rightarrow P'$ satisfies the stronger condition \Crrs. In particular, \Crrs holds not only for the variables $X_i$, but also for $y_1, y_2, \dots, y_n$ since they come from $P$. 

Let us now fix an index $1\leq j \leq n$ and an element $g \in G_s$. We may write 
$$g(y_j)-y_j=\varphi^{s}(v)u z_j =\xi_{s, 0}uE(u)\tilde{z}_j$$ 
for some $z_j, \tilde{z_j} \in P'$ (that are equal up to a multiplication by an $\ainf$--unit). Similarly, we have 
$$g^{-1}(E(u))-E(u)=(\gamma^{-1}-1)E(u)=\varphi^{s}(v)u a=\xi_{s, 0}uE(u)\tilde{a}$$ 
with $a, \tilde{a} \in \ainf$ (again equal up to a unit). 

Thus, regarding the generator $y_j/E(u),$ we have that
$$g\left(\frac{y_j}{E(u)}\right)-\frac{y_j}{E(u)}=\frac{\gamma^{-1} g(y_j)-y_j}{E(u)}=\frac{\gamma^{-1} g(y_j)-\gamma^{-1}y_j+\gamma^{-1}y_j-y_j}{E(u)}=$$ $$=\gamma^{-1} \frac{g(y_j)-y_j}{E(u)}+(\gamma^{-1}-1)\frac{y_j}{E(u)}=\gamma^{-1}\xi_{s, 0}u\tilde{z}_j+\xi_{s, 0}u\tilde{a}y_j \in I_{s}.$$
This shows that each of the generators $y_j/E(u)$ is a $\Crs$--element, which finishes the proof.
\end{proof}

\subsection{Consequences for cohomology groups}

Let us now use Theorem~\ref{thm:CrsForCechComplex} to draw some conclusions for individual cohomology groups. The first is the crystalline condition for the prismatic cohomology groups and its consequence for $p$--adic \'{e}tale cohomology. As before, let $\mathscr{X}$ be a separated smooth $p$--adic formal scheme over $\oh_K$. Denote by $\mathscr{X}_{\ainf}$ the base change $\mathscr{X}\times_{\oh_K}\oh_{\mathbb{C}_K}=\mathscr{X}\times_{\Es}\ainf$, and by $\mathscr{X}_{\overline{\eta}}$ the geometric adic generic fiber.

\begin{cor}\label{cor:CrystallineForCohomologyGrps} For every $i \geq 0,$ the pair $H_{\Prism}^i(\mathscr{X}/\Es) \rightarrow H_{\Prism}^i(\mathscr{X}_{\ainf}/\ainf)$ satisfies the conditions \Cr{0} and \Cr{\infty}.
\end{cor}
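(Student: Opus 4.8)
The plan is to bootstrap this from the termwise statement of Theorem~\ref{thm:CrsForCechComplex}. Recall that, by Proposition~\ref{prop:CechComplex} and Remark~\ref{rem:CechBaseChange}, the complexes $\check{C}_{\BK}^{\bullet}$ and $\check{C}_{\inf}^{\bullet}=\check{C}_{\BK}^{\bullet}\widehat{\otimes}_{\Es}\ainf$ compute $\R\Gamma_{\Prism}(\mathscr{X}/\Es)$ and $\R\Gamma_{\Prism}(\mathscr{X}_{\ainf}/\ainf)$, so that $\H^i_{\Prism}(\mathscr{X}/\Es)=\H^i(\check{C}_{\BK}^{\bullet})$ and $\H^i_{\Prism}(\mathscr{X}_{\ainf}/\ainf)=\H^i(\check{C}_{\inf}^{\bullet})$, with the $G_K$--action on the latter induced termwise from the action on $\check{C}_{\inf}^{\bullet}$. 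So given a class $x\in\H^i_{\Prism}(\mathscr{X}/\Es)$, I would lift it to a cocycle $\widetilde{x}\in\check{C}_{\BK}^{i}$; then $\widetilde{x}\otimes 1\in\check{C}_{\inf}^{i}$ is a cocycle representing the image $\overline{x}$ of $x$. Since the $G_K$--action commutes with the differentials, for any $g$ the element $g(\widetilde{x}\otimes 1)-\widetilde{x}\otimes 1$ is again a cocycle, and by Theorem~\ref{thm:CrsForCechComplex} it lies in $I_s\check{C}_{\inf}^{i}$ whenever $g\in G_s$.

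The case \Cr{\infty} is then immediate: since $I_{\infty}=0$, termwise \Cr{\infty} says $\widetilde{x}\otimes 1$ is literally fixed by $G_{\infty}$ in $\check{C}_{\inf}^{i}$, hence $\overline{x}$ is $G_{\infty}$--fixed in $\H^i_{\Prism}(\mathscr{X}_{\ainf}/\ainf)$; as $x$ was arbitrary, this is exactly \Cr{\infty}.

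For \Cr{0} the extra ingredient — and the step I expect to be the crux — is to pass from ``the cocycle $g(\widetilde{x}\otimes 1)-\widetilde{x}\otimes 1$ lies in $I_0\check{C}_{\inf}^{i}$'' to ``its class lies in $I_0\H^i_{\Prism}(\mathscr{X}_{\ainf}/\ainf)$'', which is not formal for a general subcomplex. Here I would exploit that $I_0=\varphi^{-1}(v)u\,\ainf$ is principal and that its generator $\varphi^{-1}(v)u$ is a non--zero divisor on each term $\check{C}_{\inf}^{i}$: each $\check{C}_{\inf}^{i}$ is a product of derived $(p,E(u))$--complete, $(p,E(u))$--completely flat $\ainf$--modules (base changes of \v{C}ech--Alexander covers over $\Es$), hence torsion--free over the domain $\ainf$ by Corollary~\ref{FlatTorFree}. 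Therefore multiplication by $\varphi^{-1}(v)u$ is an isomorphism of complexes $\check{C}_{\inf}^{\bullet}\xrightarrow{\ \sim\ }I_0\check{C}_{\inf}^{\bullet}$, so composing the induced isomorphism on $\H^i$ with the map induced by the inclusion $I_0\check{C}_{\inf}^{\bullet}\hookrightarrow\check{C}_{\inf}^{\bullet}$ recovers multiplication by $\varphi^{-1}(v)u$ on $\H^i(\check{C}_{\inf}^{\bullet})$; hence the image of $\H^i(I_0\check{C}_{\inf}^{\bullet})\to\H^i(\check{C}_{\inf}^{\bullet})$ equals $I_0\H^i_{\Prism}(\mathscr{X}_{\ainf}/\ainf)$. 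Applying this to the cocycle $g(\widetilde{x}\otimes 1)-\widetilde{x}\otimes 1$, whose class is $g(\overline{x})-\overline{x}$, gives $g(\overline{x})-\overline{x}\in I_0\H^i_{\Prism}(\mathscr{X}_{\ainf}/\ainf)$ for all $g\in G_K$, which is \Cr{0}.

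I would close with a remark explaining why the corollary is only asserted for $s=0$ and $s=\infty$: the descent-to-cohomology argument above works precisely because $I_0$ (resp.\ $I_\infty$) is principal, generated by a non--zero divisor (resp.\ by zero), whereas for $1\le s<\infty$ the ideal $I_s$ is not principal and the analogous statement genuinely fails at the level of $\H^i$. The remaining substance of Theorem~\ref{thm:IntroCrysCondition}~(3) for $s\ge 1$ is recovered only after reduction modulo $p^n$, where Lemma~\ref{lem:IsModPn} makes the relevant ideal principal again — but that is Proposition~\ref{prop:CrystallineCohMopPN}, not this corollary.
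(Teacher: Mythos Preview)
Your proof is correct and follows essentially the same route as the paper's: both reduce to the termwise statement of Theorem~\ref{thm:CrsForCechComplex}, handle \Cr{\infty} trivially, and for \Cr{0} use that each $\check{C}_{\inf}^{i}$ is torsion--free (Corollary~\ref{FlatTorFree}) together with the principality of $I_0$. Your argument via the isomorphism of complexes $\check{C}_{\inf}^{\bullet}\xrightarrow{\cdot\,\varphi^{-1}(v)u} I_0\check{C}_{\inf}^{\bullet}$ is just a repackaging of the paper's direct cocycle computation (writing $g(x')-x'=\varphi^{-1}(v)u\,y'$ and deducing $\partial'(y')=0$ from torsion--freeness); the content is identical.
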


\begin{proof}
By the results of Section~\ref{sec:CAComplex}, we may and do model the cohomology theories by the \v{C}ech--Alexander complexes $$\check{C}_{\BK}^{\bullet} \rightarrow \check{C}_{\inf}^{\bullet}=\check{C}_{\BK}^{\bullet}\widehat{\otimes}_{\Es}\ainf,$$ and by Theorem~\ref{thm:CrsForCechComplex} the conditions \Cr{0} and \Cr{\infty} termwise hold for this pair. The condition \Cr{\infty} for $H^i_{\Prism}(\mathscr{X}/\Es) \subseteq H^i_{\Prism}(\mathscr{X}/\ainf)^{G_{\infty}}$ thus follows immediately, and it remains to verify the crystalline condition.

Each of the terms $\check{C}_{\inf}^{i}$ is $(p, E(u))$--completely flat over $\ainf$, which means in particular that the terms $\check{C}_{\inf}^{i}$ are torsion--free by Corollary~\ref{FlatTorFree}. Denote the differentials on $\check{C}_{\BK}^{\bullet}, \check{C}_{\inf}^{\bullet}$ by $\partial$ and $\partial'$, resp.

To prove the crystalline condition for cohomology groups, it is clearly enough to verify the condition at the level of cocycles. Given $x \in Z^i(\check{C}_{\BK}^{\bullet}),$ denote by $x'$ its image in $Z^i(\check{C}_{\inf}^{\bullet})$. For $g \in G_K$ we have $g(x')-x'=\varphi^{-1}(v)uy'$ for some $y' \in \check{C}_{\inf}^{i}$. As $g(x')-x' \in Z^i(\check{C}_{\inf}^{\bullet}),$ we have
$$\varphi^{-1}(v)u \partial'(y')=\partial'(\varphi^{-1}(v)u y')=0,$$
and the torsion--freeness of $\check{C}_{\inf}^{i+1}$ implies that $\partial'(y')=0$. Thus, $y' \in Z^i(\check{C}_{\inf})$ as well, showing that $g(x')-x' \in \varphi^{-1}(v)uZ^i(\check{C}_{\inf}^{\bullet}),$ as desired.
\end{proof}

When $\mathscr{X}$ is proper over $\oh_K$, we use Corollary~\ref{cor:CrystallineForCohomologyGrps} to reprove the result from \cite{BMS1} that the \'{e}tale cohomology groups $\H^{i}_{\et}(\mathscr{X}_{\overline{\eta}}, \mathbb{Q}_p)$ are in this case crystalline representations.

\begin{cor}\label{cor:EtaleCohomologyCrystalline}
Assume that $\mathscr{X}$ is additionally proper over $\oh_K.$  Then for any $i \geq 0,$ the $p$--adic \'{e}tale cohomology $\H_{\et}^i(\mathscr{X}_{\overline{\eta}}, \mathbb{Q}_p)$ is a crystalline representation.
\end{cor}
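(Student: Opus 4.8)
The plan is to deduce crystallinity of $\H^i_{\et}(\mathscr{X}_{\overline{\eta}}, \mathbb{Q}_p)$ from the crystalline condition established in Corollary~\ref{cor:CrystallineForCohomologyGrps} by invoking the Breuil--Kisin--Fargues criterion of Theorem~\ref{BKBKFCrystallineGeneralThm}. First I would set $M_{\BK}=\H^i_{\Prism}(\mathscr{X}/\Es)$ and $M_{\inf}=\H^i_{\Prism}(\mathscr{X}_{\ainf}/\ainf)$. Since $\mathscr{X}$ is now assumed proper (and smooth) over $\oh_K$, the work of Bhatt--Scholze (the prismatic avatars of \cite{BMS1, BMS2}) gives that $M_{\BK}$ is a Breuil--Kisin module of finite height, $M_{\inf}$ is a Breuil--Kisin--Fargues module of finite height, the base change map $\ainf\otimes_{\Es}M_{\BK}\to M_{\inf}$ is an isomorphism, and the $G_K$--action (via the base-change description of Remark~\ref{rem:CechBaseChange}~(2)) makes $M_{\inf}$ a Breuil--Kisin--Fargues $G_K$--module. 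The height finiteness here comes from the fact that the Frobenius on prismatic cohomology of a smooth proper scheme is an isogeny with cokernel killed by a power of $E(u)$ (equivalently $I$), the degree of the power being controlled by $i$; this is exactly the ``height $\le i$'' statement in \cite{BMS1, BMS2} transported to the prismatic setting.

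Next I would check the hypothesis $M_{\BK}\subseteq M_{\inf}^{G_\infty}$, which is precisely the condition \Cr{\infty} for the pair $(M_{\BK}, M_{\inf})$ --- this is supplied by Corollary~\ref{cor:CrystallineForCohomologyGrps}. The crystalline condition \Cr{0} for the same pair, also from Corollary~\ref{cor:CrystallineForCohomologyGrps}, is the remaining input. Thus all hypotheses of Theorem~\ref{BKBKFCrystallineGeneralThm} (the version without the freeness assumption) are met, and we conclude that
$$V(M_{\inf})=\left(W(\mathbb{C}_K^\flat)\otimes_{\ainf} M_{\inf}\right)^{\varphi=1}\left[\tfrac{1}{p}\right]$$
is a crystalline $G_K$--representation.

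Finally I would identify $V(M_{\inf})$ with the dual of the $p$--adic étale cohomology (up to a twist), or directly with $\H^i_{\et}(\mathscr{X}_{\overline{\eta}}, \mathbb{Q}_p)$ --- the point being that the étale comparison theorem for prismatic cohomology from \cite{BhattScholze} expresses $\H^i_{\et}(\mathscr{X}_{\overline{\eta}}, \mathbb{Z}_p)\otimes\mathbb{Q}_p$ in terms of the $\varphi$--invariants of $M_{\inf}\otimes_{\ainf}W(\mathbb{C}_K^\flat)[1/p]$, which is exactly the étale realization $V(M_{\inf})$. Since the class of crystalline representations is closed under duality and Tate twists, crystallinity of $V(M_{\inf})$ gives crystallinity of $\H^i_{\et}(\mathscr{X}_{\overline{\eta}}, \mathbb{Q}_p)$.

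The main obstacle I anticipate is not any single hard step but rather the bookkeeping needed to verify that the prismatic cohomology groups genuinely fit the Breuil--Kisin / Breuil--Kisin--Fargues framework with \emph{finite height}: one must cite or reprove that $\H^i_{\Prism}(\mathscr{X}/\Es)$ is finitely generated over $\Es$ with the Frobenius becoming an isomorphism after inverting $E(u)$ and admitting the auxiliary map $\psi$ witnessing height $\le i$, and that base change to $\ainf$ is an isomorphism compatibly with $\varphi$ and $G_K$. This is where properness is essential (it fails for merely smooth $\mathscr{X}$), and it relies on the structural results of \cite{BMS1, BMS2, BhattScholze} rather than on anything developed in this paper; assembling these citations correctly, and matching the normalization conventions (in particular the untwisted embedding $\Es\hookrightarrow\ainf$ versus the Frobenius-twisted one, as flagged in the discussion after the definition of Breuil--Kisin--Fargues modules), is the delicate part.
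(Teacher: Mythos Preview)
Your proposal is correct and follows essentially the same route as the paper: set $M_{\BK}=\H^i_{\Prism}(\mathscr{X}/\Es)$, $M_{\inf}=\H^i_{\Prism}(\mathscr{X}_{\ainf}/\ainf)$, invoke \cite[Theorem~1.8]{BhattScholze} (with faithful flatness of $\ainf/\Es$) for the Breuil--Kisin/Breuil--Kisin--Fargues structure and base change, use Corollary~\ref{cor:CrystallineForCohomologyGrps} for \Cr{0} and \Cr{\infty}, and conclude via Theorem~\ref{BKBKFCrystallineGeneralThm}. One small simplification: the \'etale comparison of \cite{BhattScholze} gives $V(M_{\inf})\simeq \H^i_{\et}(\mathscr{X}_{\overline{\eta}},\mathbb{Q}_p)$ directly as $G_K$--representations, so your hedging about duals and twists is unnecessary.
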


\begin{proof}
It follows from \cite[Theorem~1.8]{BhattScholze} (and faithful flatness of $\ainf/\Es$)  that $M_{\BK}=\H_{\Prism}^i(\mathscr{X}/ \Es)$ and $M_{\inf}=\H_{\Prism}^i(\mathscr{X}_{\ainf}/ \ainf)$ are Breuil--Kisin and Breuil--Kisin--Fargues modules, resp., such that $M_{\inf}=M_{\BK}\otimes_{\Es}\ainf$. Moreover, $M_{\inf}$ has the structure of a Breuil--Kisin--Fargues $G_K$--module with 
$$V(M_{\inf}):=\left(W(\mathbb{C}_K^{\flat})\otimes_{\ainf}  M_{\inf}\right)^{\varphi=1}\left[\frac{1}{p}\right]\simeq \H_{\et}^i(\mathscr{X}_{\overline{\eta}}, \mathbb{Q}_p)$$ 
as $G_K$--representations. By Corollary~\ref{cor:CrystallineForCohomologyGrps}, the pair $(M_{\BK}, M_{\inf})$ satisfies all the assumptions of Theorem~\ref{BKBKFCrystallineGeneralThm}. The claim thus follows.
\end{proof}

For the purposes of obtaining a bound on ramification of $p$--torsion \'{e}tale cohomology in \S\ref{sec:bounds}, let us recall the notion of torsion prismatic cohomology as defined in \cite{LiLiu}, and discuss the consequences of the conditions \Crs in this context.

\begin{deff}
Given a bounded prism $(A, I)$ and a smooth $p$--adic formal scheme $\mathscr{X}$ over $A/I$, the \textit{$p^n$--torsion prismatic cohomology} of $\mathscr{X}$ is defined as $$\R\Gamma_{\Prism, n}(\mathscr{X}/ A)=\R\Gamma_\Prism (\mathscr{X}/ A)\stackrel{{\mathsf{L}}}{\otimes}_{\mathbb{Z}}\mathbb{Z}/p^n\mathbb{Z}.$$
We denote the cohomology groups of $\R\Gamma_{\Prism, n}(\mathscr{X}/ A)$ by $\H^i_{\Prism, n}(\mathscr{X}/A)$ (and refer to them as $p^n$--torsion prismatic cohomology groups).
\end{deff}

\begin{prop}\label{prop:CrystallineCohMopPN}
Let $s, n$ be a pair of integers satisfying $s\geq 0, n \geq 1$. Set $t=\mathrm{max}\left\{0, s+1-n\right\}.$ Then the torsion prismatic cohomology groups $\H^i_{\Prism, n}(\mathscr{X}/\Es)\rightarrow \H^i_{\Prism, n}(\mathscr{X}_{\ainf}/\ainf)$ satisfy the following condition:
$$\forall g \in G_s: \;\;\; (g-1)\H^i_{\Prism, n}(\mathscr{X}/\Es) \subseteq \varphi^{-1}(v)u^{p^{t}}\H^i_{\Prism, n}(\mathscr{X}_{\ainf}/\ainf).$$  
\end{prop}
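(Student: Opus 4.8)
The plan is to reduce the assertion to the termwise condition \Crs for \v{C}ech--Alexander complexes established in Theorem~\ref{thm:CrsForCechComplex}, to replace the ideal $I_s$ by its coarser mod-$p^n$ approximation supplied by Lemma~\ref{lem:IsModPn}, and then to run a cocycle--level argument in the spirit of the proof of Corollary~\ref{cor:CrystallineForCohomologyGrps}.

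First I would fix models. Choose a \v{C}ech--Alexander complex $\check{C}_{\BK}^{\bullet}$ computing $\R\Gamma_{\Prism}(\mathscr{X}/\Es)$ and put $\check{C}_{\inf}^{\bullet}=\check{C}_{\BK}^{\bullet}\widehat{\otimes}_{\Es}\ainf$, computed termwise; by Remark~\ref{rem:CechBaseChange} this computes $\R\Gamma_{\Prism}(\mathscr{X}_{\ainf}/\ainf)$. Every term of either complex is a finite product of prismatic envelopes, hence classically $(p,E(u))$--complete and $(p,E(u))$--completely flat over $\Es$, resp.\ $\ainf$. Since $p,E(u)$ is a regular sequence on $\Es$ (because $\Es/E(u)=\oh_K$ is a domain and $p\neq 0$ in it, while $p$ is a non--zero divisor on the domain $\Es$), Corollary~\ref{FlatReg} shows that $p$ is a non--zero divisor on each $\check{C}_{\BK}^m$; likewise Corollary~\ref{FlatTorFree} (applied with $x=E(u)$) shows $p$ is a non--zero divisor on each $\check{C}_{\inf}^m$. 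Consequently $\R\Gamma_{\Prism,n}(\mathscr{X}/\Es)$ and $\R\Gamma_{\Prism,n}(\mathscr{X}_{\ainf}/\ainf)$ are represented on the nose by the quotient complexes $\check{C}_{\BK}^{\bullet}/p^n$ and $\check{C}_{\inf}^{\bullet}/p^n$; in particular $\H^i_{\Prism,n}(\mathscr{X}/\Es)=\H^i(\check{C}_{\BK}^{\bullet}/p^n)$ and similarly on the $\ainf$--side, the $G_K$--action being the termwise one on $\check{C}_{\inf}^{\bullet}$.

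Next I would combine Theorem~\ref{thm:CrsForCechComplex} with Lemma~\ref{lem:IsModPn}: for $g\in G_s$ and $x\in\check{C}_{\BK}^m$, the image of $g(x)-x$ in $\check{C}_{\inf}^m$ lies in $I_s\check{C}_{\inf}^m\subseteq\big(\varphi^{-1}(v)u^{p^t}\ainf+p^n\ainf\big)\check{C}_{\inf}^m$, so its reduction modulo $p^n$ lies in $\varphi^{-1}(v)u^{p^t}(\check{C}_{\inf}^m/p^n)$. The key auxiliary fact to isolate here is that $x_0:=\varphi^{-1}(v)u^{p^t}$ is a non--zero divisor on $\check{C}_{\inf}^m/p^n$ for every $m$: its image in $\oh_{\mathbb{C}_K^{\flat}}=\ainf/p$ is a nonzero non--unit, so $x_0\in\ainf\setminus(\ainf^{\times}\cup p\ainf)$, whence $x_0$ is a non--zero divisor on $\ainf/p^n$ by Lemma~\ref{disjointness}; and $\check{C}_{\inf}^m/p^n$ is flat over $\ainf/p^n$ by Corollary~\ref{cor:FlatModp}, so multiplication by $x_0$ stays injective after this base change.

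Finally the cocycle argument. Take a class in $\H^i_{\Prism,n}(\mathscr{X}/\Es)$, represent it by a cocycle $\bar{x}\in Z^i(\check{C}_{\BK}^{\bullet}/p^n)$, lift $\bar{x}$ to some $x\in\check{C}_{\BK}^i$, and let $\bar{x}'\in\check{C}_{\inf}^i/p^n$ be the image of $x$; it is a cocycle. For $g\in G_s$, the previous paragraph gives $(g-1)\bar{x}'=x_0\,\bar{y}'$ with $\bar{y}'\in\check{C}_{\inf}^i/p^n$. Since $G_K$ acts on $\check{C}_{\inf}^{\bullet}/p^n$ by chain maps, $(g-1)\bar{x}'$ is again a cocycle, so $x_0\,\partial'(\bar{y}')=\partial'\big((g-1)\bar{x}'\big)=0$; as $x_0$ is a non--zero divisor on $\check{C}_{\inf}^{i+1}/p^n$ we get $\partial'(\bar{y}')=0$, i.e.\ $\bar{y}'\in Z^i(\check{C}_{\inf}^{\bullet}/p^n)$. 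Passing to cohomology, the image of our class satisfies $(g-1)[\bar{x}']=x_0\,[\bar{y}']\in\varphi^{-1}(v)u^{p^t}\,\H^i_{\Prism,n}(\mathscr{X}_{\ainf}/\ainf)$, which is the assertion. The one genuinely delicate step is the non--zero--divisor statement modulo $p^n$ in the third paragraph; this is exactly what Corollary~\ref{cor:FlatModp} and the regularity analysis of \S\ref{subsec:regularity} were set up to provide, and everything else is the bookkeeping already performed in the integral and torsion--free cases.
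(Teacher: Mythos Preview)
Your proof is correct and follows essentially the same route as the paper's: model both torsion complexes by the mod-$p^n$ reductions of the \v{C}ech--Alexander complexes, use Theorem~\ref{thm:CrsForCechComplex} together with Lemma~\ref{lem:IsModPn} to get the termwise inclusion $(g-1)(\check{C}_{\BK}^m/p^n)\subseteq x_0(\check{C}_{\inf}^m/p^n)$, and then run the cocycle argument using that $x_0=\varphi^{-1}(v)u^{p^t}$ is a non--zero divisor on each $\check{C}_{\inf}^m/p^n$. The only cosmetic difference is that you deduce this last non--zero--divisor fact via Corollary~\ref{cor:FlatModp} (flatness of $\check{C}_{\inf}^m/p^n$ over $\ainf/p^n$) whereas the paper cites Corollary~\ref{FlatTorFree} directly (the regular sequence $p,x_0$ on $\check{C}_{\inf}^m$, hence $p^n,x_0$ is regular as well); both are valid and amount to the same underlying regularity analysis.
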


\begin{proof} The proof is a slightly refined variant of the proof of Corollary~\ref{cor:CrystallineForCohomologyGrps}. Consider again the associated \v{C}ech--Alexander complexes over $\Es$ and $\ainf$, 
$$\check{C}_{\BK}^{\bullet} \rightarrow \check{C}_{\inf}^{\bullet}=\check{C}_{\BK}^{\bullet}\widehat{\otimes}_{\Es}\ainf.$$
Both of these complexes are given by torsion--free, hence $\mathbb{Z}$--flat, modules by Corollary~\ref{FlatTorFree}. Consequently, $\R\Gamma_{\Prism, n}(\mathscr{X}/\Es)$ is modelled by $\check{C}_{\BK, n}^{\bullet}:=\check{C}_{\BK}^{\bullet}/p^n\check{C}_{\BK}^{\bullet}$, and similarly for $\R\Gamma_{\Prism, n}(\mathscr{X}_{\ainf}/\ainf)$ and $\check{C}_{\inf, n}^{\bullet}=\check{C}_{\inf}^{\bullet}/p^n\check{C}_{\inf}^{\bullet}$. That is, the considered maps between cohomology groups are obtained as the maps on cohomologies for the base--change map of chain complexes
$$\check{C}_{\BK, n}^{\bullet}\rightarrow \check{C}_{\inf, n}^{\bullet}=\check{C}_{\BK, n}^{\bullet}\widehat{\otimes}_{\Es}\ainf,$$
and as in the proof of Corolary~\ref{cor:CrystallineForCohomologyGrps}, it is enough to establish the desired condition for the respective groups of cocycles.

Set $\alpha=\varphi^{-1}(v)u^{p^{t}}$. Note that by Lemma~\ref{lem:IsModPn}, the condition \Crs for the pair of complexes $\check{C}_{\BK, n}^{\bullet}\rightarrow \check{C}_{\inf, n}^{\bullet}$ implies the condition
$$\forall g \in G_{s}:\;\;\; (g-1)\check{C}_{\BK, n}^{\bullet} \subseteq \alpha\check{C}_{\inf, n}^{\bullet}$$
(meant termwise as usual), and since the terms of the complex $\check{C}_{\inf}^{\bullet}$ are $(p, E(u))$--complete and $(p, E(u))$--completely flat, $\alpha$ is a non--zero divisor on the terms of $\check{C}_{\inf, n}^{\bullet}$ by Corollary~\ref{FlatTorFree}. 

So pick any element $x \in Z^{i}(\check{C}_{\BK, n}^{\bullet})$. The image $x'$ of $x$ in $\check{C}_{\inf, n}^i$ lies in $Z^i(\check{C}_{\inf, n}^\bullet)$ and for any chosen $g \in G_s $ we have $g(x')-x'=\alpha y'$ for some $y'\in \check{C}_{\inf, n}^i$. Now $g(x')-x'$ lies in $Z^i(\check{C}_{\inf, n}^{\bullet}),$ so $\alpha y'=g(x')-x' $ satisfies
$$0=\partial'(\alpha y')=\alpha\partial'(y').$$
Since $\alpha$ is a non--zero divisor on $\check{C}_{\inf, n}^{i+1}$, it follows that $\partial'(y')=0$, that is, $y'$ lies in $Z^i(\check{C}_{\inf, n}^{\bullet}).$ We thus infer that $g(x')-x'=\alpha y'\in\alpha Z^i(\check{C}_{\inf, n}^{\bullet}),$ as desired.
\end{proof}

\section{Ramification bounds for mod $p$ \'{e}tale cohomology}\label{sec:bounds}

\subsection{Ramification bounds}
We are ready to discuss the implications to the question of ramification bounds for mod $p$ \'{e}tale cohomology groups $\H_{\et}^i(\mathscr{X}_{\overline{\eta}}, \mathbb{Z}/p\mathbb{Z})$ when $\mathscr{X}$ is smooth and proper $p$--adic formal scheme over $\oh_K$.

We define an additive valuation $v^{\flat}$ on $\oh_{\mathbb{C}_K}^{\flat}$ by $v^\flat(x)=v(x^{\sharp})$ where $v$ is the valuation on $\oh_{\mathbb{C}_K}$ normalized so that $v(\pi)=1$, and $(-)^{\sharp}:\oh_{\mathbb{C}_K}^{\flat}\rightarrow \oh_{\mathbb{C}_K}$ is the multiplicative lift of $\mathrm{pr}_0: \oh_{\mathbb{C}_K}^{\flat} \rightarrow \oh_{\mathbb{C}_K}/p$. This way, we have $v^{\flat}(\underline{\pi})=1$ and $v^\flat(\underline{\varepsilon}-1)=pe/(p-1)$. For a real number $c\geq 0$, denote by $\mathfrak{a}^{>c}$ ($\mathfrak{a}^{\geq c},$ resp.) the ideal of $\oh_{\mathbb{C}_K}^{\flat}$ formed by all elements $x$ with $v^{\flat}(x)>c$ ($v^{\flat}(x)\geq c$, resp.). 

Similarly, we fix an additive valuation $v_K$ of $K$ normalized by $v_K(\pi)=1$. Then for an algebraic extension $L/K$ and a real number $c \geq 0$, we denote by $\mathfrak{a}_L^{> c}$ the ideal consisting of all elements $x \in \oh_L$ with $v_K(x)>c$ (and similarly, for '$\geq$' as well).  

For a finite extensions $M/F/K$ and a real number $m \geq 0$, let us recall (a version of\footnote{Fontaine's original condition uses the ideals $\mathfrak{a}_E^{\geq m}$ instead. Up to changing some inequalities from `$<$' to `$\leq$' and vice versa, the conditions are fairly equivalent.}) Fontaine's property $(P_m^{M/F})$:
$$\begin{array}{cc}
(P_m^{M/F}) & \begin{array}{l}\text{For any algebraic extension }E/F,\text{ the existence of an }\oh_F\text{--algebra map}\\ \oh_M\rightarrow \oh_E/\mathfrak{a}_E^{>m}\text{ implies the existence of an }F\text{--injection of fields }M\hookrightarrow E.\end{array}
\end{array}$$

We also recall the upper ramification numbering in the convention used in \cite{Fontaine, CarusoLiu}. For $G=\mathrm{Gal}(M/F)$ and a non--negative real number $\lambda,$ set $$G_{(\lambda)}=\{ g \in G \;|\; v_M(g(x)-x)\geq \lambda \;\;\forall x \in \oh_M\},$$
where $v_M$ is again the additive valuation of $M$ normalized by $v_M(M^{\times})=\mathbb{Z}$.

For $t\geq 0,$ set 
$$\phi_{M/F}(t)=\int_0^t \frac{\mathrm{d}t}{[G_{(1)}:G_{(t)}]}$$
(which makes sense as $G_{(t)}\subseteq G_{(1)}$ for all $t>1$). Then $\phi_{M/F}$ is a piecewise--linear increasing continuous concave function. Denote by $\psi_{M/F}$ its inverse, and set $G^{(\mu)}=G_{(\psi_{M/F}(\mu))}.$ 

Denote by $\lambda_{M/F}$ the infimum of all $\lambda \geq 0$ such that $G_{(\lambda)}=\{\mathrm{id}\},$ and by $\mu_{M/F}$ the infimum of all $\mu \geq 0$ such that $G^{(\mu)}=\{\mathrm{id}\}.$ Clearly one has $\mu_{M/F}=\phi_{M/F}(\lambda_{M/F}).$

\begin{rem}
Let us compare the indexing conventions with \cite{SerreLocalFields} and \cite{Fontaine}, as the results therein are (implicitly or explicitly) used. If $G^{\text{S-}(\mu)}, G^{\text{F-}(\mu)}$ are the upper--index ramification groups in \cite{SerreLocalFields} and \cite{Fontaine}, resp., and similarly we denote $G_{\text{S-}(\lambda)}$ and $G_{\text{F-}(\lambda)}$ the lower--index ramification groups, then we have
$$G^{(\mu)}=G^{\text{S-}(\mu-1)}=G^{\text{F-}(\mu)}, \;\;\;\; G_{(\lambda)}=G_{\text{S-}(\lambda-1)}=G_{\text{F-}(\lambda/\tilde{e})},$$
where $\tilde{e}=e_{M/F}$ is the ramification index of $M/F$.
\end{rem}

In particular, since the enumeration differs from the one in \cite{SerreLocalFields} only by a shift by one, the upper indexing is still compatible with passing to quotients, and it make sense to set 
$$G_{F}^{(\mu)}=\varprojlim_{M'/F}\mathrm{Gal}(M'/F)^{(\mu)}$$
where $M'/F$ varies over finite Galois extensions $M'/F$ contained in a fixed algebraic closure $\overline{K}$ of $K$ (and $G_F=\varprojlim_{M'/F}\mathrm{Gal}(M'/F)$ is the absolute Galois group).

Regarding $\mu$, the following transitivity formula is useful.

\begin{lem}[{\cite[Lemma 4.3.1]{CarusoLiu}}]\label{lem:transitivity}
Let $N/M/F$ be a pair of finite extensions with both $N/F$ and $M/F$ Galois. Then we have $\mu_{N/F}=\mathrm{max}(\mu_{M/F}, \phi_{M/F}(\mu_{N/M})).$
\end{lem}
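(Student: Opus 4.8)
The plan is to reduce the statement to three standard facts about ramification filtrations, all valid — after the uniform index shift recorded in the Remark above — in the conventions used here: Herbrand's theorem for quotients, $(G/H)^{(\mu)}=G^{(\mu)}H/H$; compatibility of the lower numbering with subgroups, $H\cap G_{(\lambda)}=H_{(\lambda)}$; and transitivity of the Hasse--Herbrand function, $\phi_{N/F}=\phi_{M/F}\circ\phi_{N/M}$, equivalently $\psi_{N/F}=\psi_{N/M}\circ\psi_{M/F}$ (see \cite[Ch.~IV]{SerreLocalFields}). Throughout, write $G=\mathrm{Gal}(N/F)$ and $H=\mathrm{Gal}(N/M)\trianglelefteq G$, so that $G/H=\mathrm{Gal}(M/F)$, and recall that $\phi_{M/F}$ and $\phi_{N/M}$ are strictly increasing continuous bijections of $[0,\infty)$ (the integrand defining them is everywhere positive), with inverses $\psi_{M/F}$ and $\psi_{N/M}$.

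The crux is to express the two ``components'' of $G^{(\mu)}$ in terms of the extensions $M/F$ and $N/M$. Its image in the quotient is $G^{(\mu)}H/H=(G/H)^{(\mu)}$ by Herbrand. Its intersection with $H$ is computed by unwinding the definition $G^{(\mu)}=G_{(\psi_{N/F}(\mu))}$, applying subgroup-compatibility, and then rewriting the lower-index group of $H$ as an upper-index one via $\psi_{N/M}$:
\[
G^{(\mu)}\cap H \;=\; G_{(\psi_{N/F}(\mu))}\cap H \;=\; H_{(\psi_{N/F}(\mu))} \;=\; H^{(\phi_{N/M}(\psi_{N/F}(\mu)))} \;=\; H^{(\psi_{M/F}(\mu))},
\]
the last step using $\phi_{N/M}\circ\psi_{N/F}=\phi_{N/M}\circ\psi_{N/M}\circ\psi_{M/F}=\psi_{M/F}$. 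Since a subgroup of $G$ is trivial precisely when both its image in $G/H$ and its intersection with $H$ are trivial, we obtain the criterion: $G^{(\mu)}=1$ if and only if $(G/H)^{(\mu)}=1$ and $H^{(\psi_{M/F}(\mu))}=1$.

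It then remains to read off $\mu_{N/F}=\inf\{\mu\geq 0:G^{(\mu)}=1\}$. The sets $\{\mu:(G/H)^{(\mu)}=1\}$ and $\{\nu:H^{(\nu)}=1\}$ are upper sets (ramification groups decrease with the index) with infima $\mu_{M/F}$ and $\mu_{N/M}$, so each contains the corresponding open half-line. If $\mu<\mu_{M/F}$ then $(G/H)^{(\mu)}\neq 1$, while if $\mu<\phi_{M/F}(\mu_{N/M})$ then $\psi_{M/F}(\mu)<\mu_{N/M}$ (apply the increasing $\psi_{M/F}$), so $G^{(\mu)}\cap H=H^{(\psi_{M/F}(\mu))}\neq 1$; in either case $G^{(\mu)}\neq 1$, giving $\mu_{N/F}\geq\max(\mu_{M/F},\phi_{M/F}(\mu_{N/M}))$. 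Conversely, if $\mu>\max(\mu_{M/F},\phi_{M/F}(\mu_{N/M}))$ then $(G/H)^{(\mu)}=1$ and $\psi_{M/F}(\mu)>\mu_{N/M}$, hence $H^{(\psi_{M/F}(\mu))}=1$; by the criterion $G^{(\mu)}=1$, so $\mu_{N/F}\leq\max(\mu_{M/F},\phi_{M/F}(\mu_{N/M}))$. This yields the claimed equality.

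The argument is essentially formal once this dictionary is in place, so I do not anticipate a genuine obstacle; the two points requiring care are (a) checking that Herbrand's theorem, subgroup-compatibility and transitivity of $\phi$ transfer verbatim to the shifted conventions here — immediate, since each identity is invariant under the shift $\mu\mapsto\mu-1$ relating these numberings to Serre's, and quotient-compatibility is already noted in the Remark above — and (b) that $\mu_{N/F}$, $\mu_{M/F}$, $\mu_{N/M}$ are infima that need not be attained, which is why both halves of the proof are phrased with strict inequalities rather than by evaluating at the endpoint.
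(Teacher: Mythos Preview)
Your proof is correct. The paper does not supply its own proof of this lemma; it is simply cited as \cite[Lemma~4.3.1]{CarusoLiu}. Your argument via Herbrand's theorem, subgroup-compatibility of the lower numbering, and transitivity of $\phi$ is the standard route, and your verification that these facts transfer to the shifted convention (point~(a)) is accurate: one checks that $\phi_{N/F}^{\text{paper}}(t)=1+\varphi_{N/F}^{\text{Serre}}(t-1)$, so conjugation by the shift $t\mapsto t-1$ carries Serre's transitivity $\varphi_{N/F}=\varphi_{M/F}\circ\varphi_{N/M}$ to the same identity for $\phi^{\text{paper}}$. Your care with strict inequalities (point~(b)) is also appropriate given that the infima defining $\mu$ need not be attained.
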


The property $(P^{M/F}_m)$ is connected with the ramification of the field extension $M/F$ as follows.

\begin{prop}\label{prop:RamificationEngine}
Let $M/F/K$ be finite extensions of fields with $M/F$ Galois and let $m>0$ be a real number. If the property $(P^{M/F}_m)$ holds, then:
\begin{enumerate}[(1)]
\item{{\normalfont(\cite[Proposition~3.3]{Yoshida})} $\mu_{M/F}\leq e_{F/K}m.$ In fact, $\mu_{M/F}/e_{F/K}$ is the infimum of all $m>0$ such that $(P^{M/F}_m)$  is valid.}
\item{{\normalfont(\cite[Corollary~4.2.2]{CarusoLiu})} $v_K(\mathcal{D}_{M/F})<m,$ where $\mathcal{D}_{M/F}$ denotes the different of the field extension $M/F$.}
\end{enumerate}
\end{prop}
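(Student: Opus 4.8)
Both assertions are, up to a translation of conventions, already available in the literature: the inequality $\mu_{M/F}\le e_{F/K}m$ together with its converse (the ``in fact'' clause) is \cite[Proposition~3.3]{Yoshida}, and the different bound $v_K(\mathcal{D}_{M/F})<m$ is \cite[Corollary~4.2.2]{CarusoLiu}. The plan is therefore to record the precise dictionary and then quote these results. First I would match the upper-numbering convention used here against those of \cite{SerreLocalFields,Fontaine,CarusoLiu} --- which is exactly the content of the Remark preceding this proposition: under $G^{(\mu)}=G^{\text{S-}(\mu-1)}=G^{\text{F-}(\mu)}$ the three references speak about the same filtration, and the normalization $v_F(F^\times)=\mathbb{Z}$ of $\mu_{M/F}$ is what produces the factor $e_{F/K}$ once the bound is expressed through $v_K$ (the valuation in which the ideals $\mathfrak{a}_E^{>m}$, hence the property $(P^{M/F}_m)$, are measured). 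Second, I would compare the ``$>$'' form of $(P^{M/F}_m)$ adopted here with the ``$\ge$'' form of \cite{Fontaine}; as the footnote to the definition points out, passing between the two variants only perturbs the statement at a single threshold value of $m$, and since both $v_K(\mathcal{D}_{M/F})$ and the ramification jump $\mu_{M/F}$ are actually attained, this is precisely what sharpens the non-strict inequalities of the cited statements into the strict ``$<$'' here, while leaving the non-strict ``$\le$'' of (1) unaffected.

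For orientation, the mechanism behind (1) is a rigidity principle of Fontaine: $(P^{M/F}_m)$ says that $\oh_M$ is determined, as an $\oh_F$-algebra, by its reduction modulo $\mathfrak{a}_M^{>m'}$ (with $m'$ the rescaling of $m$ into $v_M$-units), so dually any $\sigma\in\mathrm{Gal}(M/F)$ lying sufficiently deep in the lower-numbering ramification filtration acts trivially on that reduction and is then forced, via $(P^{M/F}_m)$, to fix a subfield large enough to make $\sigma=\mathrm{id}$. Transporting the resulting lower-numbering estimate to the upper numbering via $\phi_{M/F}$ --- using $\mu_{M/F}=\phi_{M/F}(\lambda_{M/F})$ and the transitivity formula (Lemma~\ref{lem:transitivity}) to reduce to the cyclic subgroup generated by $\sigma$ --- yields $\mu_{M/F}\le e_{F/K}m$. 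The ``in fact'' clause is Fontaine's complementary statement that $(P^{M/F}_m)$ does hold for every $m>\mu_{M/F}/e_{F/K}$; combined with the inequality, it pins $\mu_{M/F}/e_{F/K}$ down as the asserted infimum. Part (2) sharpens this from a bound to an actual computation of the ramification: $v_K(\mathcal{D}_{M/F})$ is read off from the lower-numbering breaks of $\mathrm{Gal}(M/F)$ by Serre's formula for the different, and $(P^{M/F}_m)$ controls those breaks, giving the strict inequality.

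I expect the only genuine obstacle to be this bookkeeping: there is no new mathematics beyond \cite{Yoshida} and \cite{CarusoLiu}, but one must be scrupulous about which valuation ($v_K$, $v_F$, or $v_M$) normalizes each quantity, about the one-unit shift between the Serre and Fontaine upper numberings, and about the ``$>$'' versus ``$\ge$'' variants of $(P^{M/F}_m)$; an error in any of these shifts the final bound by $1$ or by a factor of $e_{F/K}$. Once the dictionary of the preceding Remark and the definition's footnote are in place, (1) and (2) follow by direct citation.
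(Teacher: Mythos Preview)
Your proposal is correct and in fact more detailed than the paper: the paper gives no proof of this proposition at all, treating both parts as direct citations (the references are embedded in the statement itself) and moving immediately to the next corollary. Your plan to record the dictionary of conventions before invoking \cite{Yoshida} and \cite{CarusoLiu} is sound, but the paper deems even this unnecessary, relying on the preceding Remark about numbering conventions and the footnote on the ``$>$'' versus ``$\geq$'' variants of $(P^{M/F}_m)$ to cover the translation implicitly.
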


\begin{cor}\label{cor:FontainePropertyWlogTotRamified}
Both the assumptions and the conclusions of Proposition~\ref{prop:RamificationEngine} are insensitive to replacing $F$ by any unramified extension of $F$ contained in $M$.
\end{cor}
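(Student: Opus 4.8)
Write $F'/F$ for an unramified subextension of $M/F$, and let $M^{\mathrm{ur}}$ be the maximal unramified subextension of $M/F$. Since $M^{\mathrm{ur}}=M^{I}$ for $I=\mathrm{Gal}(M/F)_{(1)}$ the inertia subgroup, which is normal in $\mathrm{Gal}(M/F)$, the extension $M^{\mathrm{ur}}/F$ is Galois and $F\subseteq F'\subseteq M^{\mathrm{ur}}\subseteq M$. Every $g\in I$ fixes the residue field of $M$, hence fixes $F'$ (an unramified extension being determined by its residue extension), so $I\subseteq\mathrm{Gal}(M/F')$; by compatibility of the lower numbering with subgroups (which holds verbatim in the present shifted convention) this gives $\mathrm{Gal}(M/F')_{(1)}=\mathrm{Gal}(M/F)_{(1)}\cap\mathrm{Gal}(M/F')=I$, so $M^{\mathrm{ur}}=M^{I}$ is also the maximal unramified subextension of $M/F'$ and $M^{\mathrm{ur}}/F'$ is Galois as well.

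The conclusion of part (2) transfers immediately: by the tower formula $\mathcal{D}_{M/F}=\mathcal{D}_{M/F'}\cdot(\mathcal{D}_{F'/F}\oh_M)$, and $\mathcal{D}_{F'/F}=\oh_{F'}$ since $F'/F$ is unramified, so $\mathcal{D}_{M/F}=\mathcal{D}_{M/F'}$ as ideals of $\oh_M$ and $v_K(\mathcal{D}_{M/F})=v_K(\mathcal{D}_{M/F'})$. For part (1), $e_{F'/K}=e_{F/K}e_{F'/F}=e_{F/K}$, so it is enough to show $\mu_{M/F}=\mu_{M/F'}$; and indeed $I\subseteq\mathrm{Gal}(M/F')$ together with subgroup-compatibility forces $\mathrm{Gal}(M/F')_{(\lambda)}=\mathrm{Gal}(M/F)_{(\lambda)}$ for every $\lambda\ge 1$ and $=I$ for $0<\lambda\le 1$, whence $[\mathrm{Gal}(M/F)_{(1)}:\mathrm{Gal}(M/F)_{(\lambda)}]=[\mathrm{Gal}(M/F')_{(1)}:\mathrm{Gal}(M/F')_{(\lambda)}]$ for all $\lambda$, so $\phi_{M/F}=\phi_{M/F'}$, $\psi_{M/F}=\psi_{M/F'}$, $\lambda_{M/F}=\lambda_{M/F'}$, and $\mu_{M/F}=\phi_{M/F}(\lambda_{M/F})=\mu_{M/F'}$.

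It remains to see that the hypothesis is insensitive, i.e. $(P^{M/F}_m)\Leftrightarrow(P^{M/F'}_m)$. First I would reduce to the case $F'=M^{\mathrm{ur}}$: by the first paragraph $M^{\mathrm{ur}}$ is the maximal unramified subextension of $M$ over both $F$ and $F'$ and is Galois over each, so it suffices to prove $(P^{M/F_0}_m)\Leftrightarrow(P^{M/M^{\mathrm{ur}}}_m)$ for a base field $F_0\in\{F,F'\}$ with $F_0\subseteq M^{\mathrm{ur}}$ and $M^{\mathrm{ur}}/F_0$ Galois, and then chain the two equivalences. For ``$\Leftarrow$'', given an algebraic $E/F_0$ and an $\oh_{F_0}$-algebra map $\oh_M\to\oh_E/\mathfrak{a}_E^{>m}$, reduce modulo $\mathfrak{m}_E$ to obtain an embedding $k_M=k_{M^{\mathrm{ur}}}\hookrightarrow k_E$; as $M^{\mathrm{ur}}/F_0$ is unramified, $\oh_{M^{\mathrm{ur}}}$ is generated over $\oh_{F_0}$ by an element with separable minimal polynomial, so henselianity of $\oh_E$ lifts this embedding to an $F_0$-embedding $M^{\mathrm{ur}}\hookrightarrow E$, and Hensel uniqueness in $\oh_E/\mathfrak{a}_E^{>m}$ makes it compatible with the given map; then $E/M^{\mathrm{ur}}$ is algebraic, the map is $\oh_{M^{\mathrm{ur}}}$-linear, and $(P^{M/M^{\mathrm{ur}}}_m)$ yields an $M^{\mathrm{ur}}$- (hence $F_0$-) injection $M\hookrightarrow E$. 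For ``$\Rightarrow$'', given an algebraic $E/M^{\mathrm{ur}}$ and an $\oh_{M^{\mathrm{ur}}}$-algebra map $\oh_M\to\oh_E/\mathfrak{a}_E^{>m}$, regard it over $F_0$; $(P^{M/F_0}_m)$ supplies an $F_0$-injection $\iota\colon M\hookrightarrow E$, and since $M^{\mathrm{ur}}/F_0$ is normal we get $\iota(M^{\mathrm{ur}})=M^{\mathrm{ur}}$ with $\iota|_{M^{\mathrm{ur}}}=\sigma\in\mathrm{Gal}(M^{\mathrm{ur}}/F_0)$; lifting $\sigma$ to $\widetilde\sigma\in\mathrm{Gal}(M/F_0)$ (restriction is surjective as $M/F_0$ is Galois), the composite $\iota\circ\widetilde\sigma^{-1}$ is an $M^{\mathrm{ur}}$-linear injection $M\hookrightarrow E$. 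The ``in fact'' clause of Proposition~\ref{prop:RamificationEngine}(1) then also transfers, since $e_{F'/K}=e_{F/K}$.

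I expect the only real difficulty to lie in this last equivalence, specifically in the elementary but slightly delicate lifting and uniqueness statements for algebra maps into the truncated rings $\oh_E/\mathfrak{a}_E^{>m}$, and in the point that a general unramified subextension must be handled by routing through the \emph{Galois} extension $M^{\mathrm{ur}}$ (the ``conjugation'' normalization of $\iota$ only works over a normal intermediate field). The different and ramification-filtration computations are routine given the facts recalled in the excerpt.
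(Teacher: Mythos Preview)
Your proof is correct and complete, but it takes a different route from the paper in two places.

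For the equivalence $(P^{M/F}_m)\Leftrightarrow(P^{M/F'}_m)$, the paper simply cites \cite[Proposition~2.2]{Yoshida}, whereas you give a self-contained argument: you route both $F$ and $F'$ through the maximal unramified subextension $M^{\mathrm{ur}}$ (which is Galois over each) and prove $(P^{M/F_0}_m)\Leftrightarrow(P^{M/M^{\mathrm{ur}}}_m)$ via a Hensel-type lifting. This is more work, but has the benefit of being explicit and of handling the case where $F'/F$ is an arbitrary unramified subextension that is not itself Galois.

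For the equality $\mu_{M/F}=\mu_{M/F'}$, the paper invokes the transitivity formula of Lemma~\ref{lem:transitivity} (with $F'$ as the intermediate field, using $\mu_{F'/F}=0$ and $\phi_{F'/F}=\mathrm{id}$), whereas you argue directly that $\mathrm{Gal}(M/F')_{(\lambda)}=\mathrm{Gal}(M/F)_{(\lambda)}$ for all $\lambda\geq 1$ via subgroup-compatibility of the lower numbering, and hence $\phi_{M/F}=\phi_{M/F'}$. Your direct computation again avoids any Galois hypothesis on $F'/F$, which the transitivity lemma as stated does require; in the paper's actual applications $F'$ is taken to be the maximal unramified subextension, so this issue does not arise there, but your approach is cleaner at the level of generality claimed in the corollary.

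The treatment of the different is identical to the paper's.
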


\begin{proof}
Let $F'/F$ be an unramified extension such that $F' \subseteq M$. The fact that $(P^{M/F}_m)$ is equivalent to $(P^{M/F'}_m)$ is proved in  \cite[Proposition~2.2]{Yoshida}. To show that also the conclusions are the same for $F$ and $F'$, it is enough to observe that $e_{F'/K}=e_{F/K}, e_{M/F'}=e_{M/F},$ $v_{K}(\mathcal{D}_{M/F'})=v_{K}(\mathcal{D}_{M/F})$ and $\mu_{M/F'}=\mu_{M/F}$. The first two equalities are clear since $F'/F$ is unramified. The third equality follows from $\mathcal{D}_{M/F}=\mathcal{D}_{M/F'}\mathcal{D}_{F'/F}$ upon noting that $\mathcal{D}_{F'/F}$ is the unit ideal. Finally, by Lemma~\ref{lem:transitivity}, we have $\mu_{M/F}=\mathrm{max}(\mu_{F'/F}, \phi_{F'/F}(\mu_{M/F'})).$ As $F'/F$ is unramified, we have $\mu_{F'/F}=0$ and $\phi_{F'/F}(t)=t$ for all $t \geq 0$. The fourth equality thus follows as well. \end{proof}

Let $\mathscr{X}$ be a proper and smooth $p$--adic formal scheme over $\oh_K$. Fix the integer $i$, and denote by $T'$ the Galois module $\H^{i}_{\et}(\mathscr{X}_{\overline{\eta}}, \mathbb{Z}/p\mathbb{Z})$. Let $L$ be the splitting field of $T'$, i.e. $L=\overline{K}^{\mathrm{Ker}\,\rho}$ where $\rho: G_K\rightarrow \mathrm{Aut}_{\mathbb{F}_p}(T')$ is the associated representation. The goal is to provide an upper bound on $v_K(\mathcal{D}_{L/K})$, and a constant $\mu_0=\mu_0(e, i, p)$ such that $G_K^{(\mu)}$ acts trivially on $T'$ for all $\mu>\mu_0$. 

To achieve this, we follow rather closely the strategy of \cite{CarusoLiu}. The main difference is that the input of $(\varphi, \widehat{G})$--modules attached to the discussed $G_K$--respresentations in \cite{CarusoLiu} is in our situation replaced by a $p$--torsion Breuil--Kisin module and a Breuil--Kisin--Fargues $G_K$--module that arise as the $p$--torsion prismatic $\Es$-- and $\ainf$--cohomology, resp. Let us therefore lay out the strategy, referring to proofs in \cite{CarusoLiu} whenever possible, and describe the needed modifications where necessary. To facilitate this approach further, the notation used will usually reflect the notation of \cite{CarusoLiu}, except for mostly omitting the index $n$ throughout (which in our situation is always equal to $1$).

The relation of the above--mentioned $p$--torsion prismatic cohomologies to the $p$--torsion \'{e}tale cohomology is as follows.

\begin{prop}[{\cite[Proposition~7.2, Corollary~7.4, Remark~7.5]{LiLiu}}] \label{prop:TorsionSetup}
Let $\mathscr{X}$ be a smooth and proper $p$--adic formal scheme over $\oh_K$. Then  

\begin{enumerate}[(1)]
\item{$M_{\BK}=\H^i_{\Prism, n}(\mathscr{X}/\Es)$ is a $p^n$--torsion Breuil--Kisin module of height $\leq i$, and we have $$\H_{\et}^i(\mathscr{X}_{\overline{\eta}}, \mathbb{Z}/p^n\mathbb{Z})\simeq T_n(M_{\BK}):=\left(M_{\BK}\otimes_{W_n(k)[[u]]} W_n(\mathbb{C}_K^{\flat})\right)^{\varphi=1}$$
as $\mathbb{Z}/p^n\mathbb{Z}[G_\infty]$--modules.}
\item{$M_{\inf}=\H^i_{\Prism, n}(\mathscr{X}_{\ainf}/\ainf)$ is a $p^n$--torsion Breuil--Kisin--Fargues $G_K$--module of height $\leq i$, and we have $$\H_{\et}^i(\mathscr{X}_{\overline{\eta}}, \mathbb{Z}/p^n\mathbb{Z})\simeq T_n(M_{\inf}):=\left(M_{\inf}\otimes_{W_n(\oh_{\mathbb{C}_K^{\flat}})} W_n(\mathbb{C}_K^{\flat})\right)^{\varphi=1}$$
as $\mathbb{Z}/p^n\mathbb{Z}[G_K]$--modules.}
\item{We have $M_{\BK}\otimes_{\Es}\ainf=M_{\BK}\otimes_{W_n(k)[[u]]}W_n(\mathcal{O}_{\mathbb{C}_K^{\flat}})\simeq M_{\inf},$ and the natural map $M_{\BK} \hookrightarrow M_{\inf}$ has the image contained in $M_{\inf}^{G_{\infty}}$.}
\end{enumerate}
\end{prop}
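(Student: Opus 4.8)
This is \cite[Proposition~7.2, Corollary~7.4, Remark~7.5]{LiLiu}; let me indicate how I would organize the argument. The plan is to first record the structural facts about the \emph{integral} prismatic cohomology, and then transport them to the $p^n$--torsion level by a universal coefficient argument. The inputs are: (a) since $\mathscr{X}$ is proper and smooth, $\R\Gamma_{\Prism}(\mathscr{X}/\Es)$ is a perfect complex of $\Es$--modules, equipped with a Frobenius whose linearization $\varphi_{\lin}$ becomes a quasi--isomorphism after inverting $E(u)$ and whose cone is annihilated by $E(u)^i$ in cohomological degree $i$, the latter encoding that the Hodge--Tate weights of $\H^i_{\Prism}$ lie in $[0,i]$ and coming from the interaction of $\varphi$ with the Nygaard filtration (\cite[Theorem~1.8]{BhattScholze}); (b) $\ainf$ is faithfully flat over $\Es$, so the base--change theorem \cite[Theorem~1.8~(5)]{BhattScholze} gives $\R\Gamma_{\Prism}(\mathscr{X}_{\ainf}/\ainf)\simeq \R\Gamma_{\Prism}(\mathscr{X}/\Es)\widehat{\otimes}_{\Es}\ainf$; (c) the étale comparison theorem identifies, after inverting $E(u)$, $p$--completing, perfecting and passing to the $\varphi=1$ part, the complex $\R\Gamma_{\Prism}(\mathscr{X}/\Es)$ with $\R\Gamma_{\et}(\mathscr{X}_{\overline{\eta}},\mathbb{Z}_p)$. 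The $p^n$--torsion groups are then linked to the integral ones by the universal coefficient sequence $0\to \H^i_{\Prism}(\mathscr{X}/\Es)/p^n\to \H^i_{\Prism,n}(\mathscr{X}/\Es)\to \H^{i+1}_{\Prism}(\mathscr{X}/\Es)[p^n]\to 0$ and its $\ainf$--analogue.

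For (1): perfectness forces $M_{\BK}=\H^i_{\Prism,n}(\mathscr{X}/\Es)$ to be a finitely generated $p^n$--torsion module over $W_n(k)[[u]]=\Es/p^n$. Localization at $E(u)$ is exact and commutes with $\otimes^{\mathsf{L}}\mathbb{Z}/p^n$ and with $\H^i$, so the Frobenius induces a $\varphi_{\Es}$--semilinear isomorphism $(\varphi_{\Es}^*M_{\BK})[1/E(u)]\xrightarrow{\sim}M_{\BK}[1/E(u)]$, and the $E(u)^i$--torsion of the cone of $\varphi_{\lin}$ produces on cohomology a splitting map $\psi$ with $\psi\circ\varphi_{\lin}$ and $\varphi_{\lin}\circ\psi$ both equal to multiplication by $E(u)^i$. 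Transporting this height bound through the universal coefficient sequence (whose connecting maps are $\Es$--linear and $\varphi$--compatible) requires a small argument to control the $\mathrm{Tor}$--term $\H^{i+1}_{\Prism}[p^n]$, and yields that $M_{\BK}$ is a $p^n$--torsion Breuil--Kisin module of height $\leq i$.

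For (2) and (3): applying flat base change along $\Es\to\ainf$ gives $M_{\inf}\simeq M_{\BK}\otimes_{\Es}\ainf=M_{\BK}\otimes_{W_n(k)[[u]]}W_n(\oh_{\mathbb{C}_K^{\flat}})$, which is finitely presented since the complex is perfect, satisfies $M_{\inf}[1/p]=0$ (hence is trivially free) as it is $p^n$--torsion, and inherits the $\varphi$--structure and the height bound by base change; the $G_K$--action is the functorial one of Remark~\ref{rem:CechBaseChange}~(2), which makes $\varphi$ equivariant, so $M_{\inf}$ is a $p^n$--torsion Breuil--Kisin--Fargues $G_K$--module of height $\leq i$. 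Finally, each $\sigma\in G_{\infty}$ fixes every $\pi_n$, hence fixes $u=[\underline{\pi}]$ and acts trivially on $\Es\subseteq\ainf$ together with the structure map $\Es\to\ainf$; since $\mathscr{X}$ comes from $\oh_K$, the base--change map defining the $\sigma$--action on $\R\Gamma_{\Prism}(\mathscr{X}_{\ainf}/\ainf)$ restricts to the identity on the image of $\R\Gamma_{\Prism}(\mathscr{X}/\Es)$, so the natural map $M_{\BK}\to M_{\inf}$ lands in $M_{\inf}^{G_{\infty}}$.

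It remains to identify the étale realizations. Using input (c), after inverting $E(u)$, $p$--completing and perfecting, $M_{\BK}$ becomes a torsion étale $\varphi$--module, on which Fontaine's functor $T_n(-)=(-\otimes W_n(\mathbb{C}_K^{\flat}))^{\varphi=1}$ is exact; hence $T_n$ carries the universal coefficient sequence for $\H^{\bullet}_{\Prism}$ to the corresponding one for $\H^{\bullet}_{\et}(\mathscr{X}_{\overline{\eta}},-)$, and since $\R\Gamma_{\et}(\mathscr{X}_{\overline{\eta}},\mathbb{Z}_p)$ is perfect over $\mathbb{Z}_p$ the two sequences match term by term, giving $T_n(M_{\BK})\simeq \H^i_{\et}(\mathscr{X}_{\overline{\eta}},\mathbb{Z}/p^n\mathbb{Z})$ compatibly with $G_{\infty}$; the $G_K$--equivariant statement for $M_{\inf}$ follows either identically over $\ainf$ or from $M_{\inf}=M_{\BK}\otimes_{\Es}\ainf$, as $T_n$ is computed over $W_n(\mathbb{C}_K^{\flat})$ in both cases. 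I expect the main obstacles to be pinning down the precise height bound $\leq i$ in the presence of the $\mathrm{Tor}$--term, and the bookkeeping needed to commute $\H^i$ past the perfection and to match the torsion terms in the two universal coefficient sequences.
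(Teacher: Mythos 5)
The paper does not prove this proposition itself: it is stated with a direct citation to Li--Liu, and all three parts are imported verbatim from that reference without any in--text argument. There is therefore no proof in the paper for your sketch to be measured against, and the comparison can only be against what the cited proof must contain.

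As such a reconstruction your outline identifies the correct inputs: perfectness of $\R\Gamma_{\Prism}(\mathscr{X}/\Es)$ for $\mathscr{X}$ proper smooth, the Frobenius structure becoming an isomorphism after inverting $E(u)$, completed flat base change along $\Es\hookrightarrow\ainf$, the prismatic \'{e}tale comparison, exactness of Fontaine's $T_n$ on torsion \'{e}tale $\varphi$--modules, and the observation that $G_\infty$ fixes $[\underline{\pi}]$ (hence the image of $\Es$ in $\ainf$) for part (3). One step is stated too loosely, though, and it is exactly the one you flag as a difficulty: knowing that the cone of $\varphi_{\lin}$ is killed by $E(u)^i$ gives $E(u)^i M_{\BK}\subseteq \varphi_{\lin}(\varphi^*M_{\BK})$, hence \emph{one} composite equal to $E(u)^i$, but upgrading this to the two--sided identity $\psi\circ\varphi_{\lin}=E(u)^i=\varphi_{\lin}\circ\psi$ on the individual cohomology group $M_{\BK}$ additionally requires that $\varphi^*M_{\BK}\to \varphi^*M_{\BK}[1/E(u)]$ be injective (no $E(u)$--torsion), since a priori the identity only holds after inverting $E(u)$. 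This is the genuine content of Li--Liu's Proposition 7.2, and it is established there by working with the $p^n$--torsion complex directly rather than by first proving the height bound integrally and then transporting it across a universal--coefficient sequence; your two--step route is viable in principle but would have to confront this injectivity point in addition to the $\mathrm{Tor}$--term control you already mention.
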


So let $M^0_{\BK}=\H^i_{\Prism, 1}(\mathscr{X}/\Es)$ and $M_{\inf}^0=\H^i_{\Prism, 1}(\mathscr{X}_{\ainf}/\ainf)$, so that $T_1(M_{\BK}^0)=T_1^{\inf}(M_{\inf}^0)=\H_{\et}^i(\mathscr{X}_{\overline{\eta}}, \mathbb{Z}/p\mathbb{Z}).$ Observe further that, since $u$ is a unit of $W_1(\mathbb{C}_{K}^{\flat})=\mathbb{C}_{K}^{\flat},$ we have $T_1(M_{\BK}^0)=T_1(M_{\BK})$ and $T_1^{\inf}(M_{\inf}^0)=T_1^{\inf}(M_{\inf}),$ where $M_{\BK}=M_{\BK}^0/M_{\BK}^0[u^{\infty}]$ and $M_{\inf}=M_{\inf}^0/M_{\inf}^0[u^{\infty}]$ are again a Breuil--Kisin module and a Breuil--Kisin--Fargues $G_K$--module, resp., of height $\leq i$. Since $\Es\hookrightarrow \ainf$ is faithfully flat, it is easy to see that the isomorphism $M_{\inf}\simeq M_{\BK}\otimes_{\Es}\ainf$ remains true. Furthermore, the pair $(M_{\BK}, M_{\inf})$ satisfies the conditions
\begin{equation}\label{eqn:CrsModP}
\forall g \in G_s\;\;\forall x\in M_{\BK}:\;\; g(x)-x \in \varphi^{-1}(v)u^{p^s}M_{\inf}
\end{equation}
for all $s \geq 0$, since the pair $(M_{\BK}^0, M_{\inf}^0)$ satisfies the analogous conditions by Proposition~\ref{prop:CrystallineCohMopPN}. Finally, the module $M_{\BK}$ is finitely generated and $u$--torsion--free $k[[u]]$--module, hence a finite free $k[[u]]$--module (and, consequently, $M_{\inf}$ is a finite free $\oh_{\mathbb{C}_K^{\flat}}$--module). 

Instead of using $T'=T_1(M_{\inf})=\H_{\et}^i(\mathscr{X}_{\overline{\eta}}, \mathbb{Z}/p^n\mathbb{Z})$ directly, we work with the dual module $$T:=T^{*, \inf}_1(M_{\inf})=\mathrm{Hom}_{\ainf, \varphi}(M_{\inf}, \oh_{\mathbb{C}_K^{\flat}})\simeq \H_{\et}^i(\mathscr{X}_{\overline{\eta}}, \mathbb{Z}/p\mathbb{Z})^{\vee}$$ instead; this is equivalent, as the splitting field of $T$ is still $L$. Note that $$T\simeq T^*_1(M_{\BK})=\mathrm{Hom}_{\Es, \varphi}(M_{\BK}, \oh_{\mathbb{C}_K^{\flat}})$$ as a $\mathbb{Z}/p\mathbb{Z}[G_\infty]$--module. 

\begin{rem}[Ramification bounds of \cite{Caruso}]\label{rem:CarusoBound}
Similarly to the discussion above we may take, for any $n \geq 1,$  $M^0_{\BK}=\H^i_{\Prism, n}(\mathscr{X}/\Es), $ and $M_{\BK}=M^0_{\BK}/M^0_{\BK}[u^\infty]$. Then the $G_\infty$--module 
$$T:=T^*_n(M_\BK)=\mathrm{Hom}_{\Es, \varphi}(M_{\BK}, W_n(\oh_{\mathbb{C}_K^{\flat}}))$$
 is the restriction of $\H^i_{\et}(\mathscr{X}_{\overline{\eta}}, \mathbb{Z}/p^n\mathbb{Z})^{\vee}$ to $G_\infty$. Denoting by $\oh_{\mathcal{E}}$ the $p$--adic completion of $\Es[1/u]$, $M_{\mathcal{E}}:=M_{\BK}\otimes_{\Es}\oh_{\mathcal{E}}$ then becomes an \'{e}tale $\varphi$--module over $\oh_{\mathcal{E}}$ in the sense of \cite[\S A]{Fontaine3}, with the natural map $M_\BK \rightarrow M_{\mathcal{E}}$ injective; thus, in terminology of \cite{Caruso}, $M_{\BK}$ serves as a $\varphi$--lattice of height dividing $E(u)^i$. Upon observing that $T$ is the $G_\infty$--respresentation associated with $M_{\mathcal{E}}$ (see e.g. \cite[\S 2.1.3]{Caruso}), Theorem~2 of \cite{Caruso} implies the ramification bound
$$\mu_{L/K}\leq 1+c_0(K)+e\left(s_0(K)+\mathrm{log}_p(ip)\right)+\frac{e}{p-1}.$$
Here $c_0(K), s_0(K)$ are constants that depend on the field $K$ and that generally grow with increasing $e$. (Their precise meaning is described in \S~\ref{subsec:Comparisons}.)
\end{rem}

We employ the following approximations of the functors $T_1^{*}$ and $T_1^{*, \inf}$.

\begin{nott}
For a real number $c\geq 0$, we define 
$$J_c(M_{\BK})=\mathrm{Hom}_{\Es, \varphi}(M_{\BK}, \oh_{\mathbb{C}_K^{\flat}}/\mathfrak{a}^{>c}),$$
$$J^{\inf}_c(M_{\inf})=\mathrm{Hom}_{\ainf, \varphi}(M_{\inf}, \oh_{\mathbb{C}_K^{\flat}}/\mathfrak{a}^{>c}).$$
We further set $J_\infty(M_{\BK})=T_1^{*}(M_{\BK})$ and $J^{\inf}_\infty(M_{\inf})=T_1^{*, \inf}(M_{\inf})$.
Given $c, d \in \mathbb{R}^{\geq 0}\cup \{\infty\}$ with $c \geq d,$ we denote by $\rho_{c, d}: J_c(M_{\BK})\rightarrow J_d(M_{\BK})$ ($\rho^{\inf}_{c, d}: J^{\inf}_c(M_{\inf})\rightarrow J^{\inf}_d(M_{\inf}),$ resp.) the map induced by the quotient map $\oh_{\mathbb{C}_K^{\flat}}/\mathfrak{a}^{>c}\rightarrow \oh_{\mathbb{C}_K^{\flat}}/\mathfrak{a}^{>d}$.
\end{nott}

Since $M_{\inf}\simeq M_{\BK}\otimes_{\Es}\ainf$ as $\varphi$--modules, it is easy to see that for every $c \in \mathbb{R}^{\geq 0}\cup \{\infty\},$ we have a natural isomorphism $\theta_c:J_c(M_{\BK}) \stackrel{\simeq}{\rightarrow} J_c^{\inf}(M_{\inf})$ of abelian groups; the biggest point of distinction between the two is that $J_c^{\inf}(M_{\inf})$ naturally attains the action of $G_K$ from the one on $M_{\inf}$, by the usual rule $$g(f)(x):=g(f(g^{-1}(x))),\;\; g \in G_K,\; f \in J_c^{\inf}(M_{\inf}),\;x \in M_{\inf}.$$

As for $J_c(M_{\BK}),$ there is a natural action given similarly by the formula 
$g(f)(x):=g(f(x))$ where $f \in J_c(M_{\BK})$ and $x \in M_{\BK}$. However, in order for this action to make sense, one needs that each $g(f)$ defined this way is still an $\Es$--linear map, which comes down to requiring that $g(u)=u$ (that is, $g(\underline{\pi})=\underline{\pi}$) in the ring $ \oh_{\mathbb{C}_K^{\flat}}/\mathfrak{a}^{>c}$. This condition holds for $g \in G_{s}$ for $s$ depending on $c$ as follows.

\begin{prop}[{\cite[Proposition~2.5.3]{CarusoLiu}}]\label{prop:GsActionOnJc}
Let $s$ be a non--negative integer with $s>\mathrm{log}_p(\frac{c(p-1)}{ep})$. Then the natural action of $G_s$ on $\oh_{\mathbb{C}_K^{\flat}}/\mathfrak{a}^{>c}$ induces an action of $G_s$ on $J_c(M_{\BK}).$ Furthermore, when $ d \leq c$, the map $\rho_{c, d}:J_c(M_{\BK}) \rightarrow J_d(M_{\BK})$ is $G_s$--equivariant, and when $s' \geq s$, the $G_{s'}$--action on $J_c(M_{\BK})$ defined in this manner is the restriction of the $G_s$--action to $G_{s'}$.
\end{prop}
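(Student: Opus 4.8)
The plan is to unwind the definitions and reduce everything to the single point that the natural $G_K$-action on $\oh_{\mathbb{C}_K^\flat}/\mathfrak{a}^{>c}$ fixes the class of $\underline{\pi}$ once $g$ lies in a sufficiently deep $G_s$. First I would recall that an element $f\in J_c(M_{\BK})=\mathrm{Hom}_{\Es,\varphi}(M_{\BK},\oh_{\mathbb{C}_K^\flat}/\mathfrak{a}^{>c})$ is, by definition, an $\Es$-linear map commuting with Frobenius, and that the candidate formula $g(f)(x):=g(f(x))$ for $x\in M_{\BK}$ automatically commutes with $\varphi$ (since $\varphi$ on $\oh_{\mathbb{C}_K^\flat}/\mathfrak{a}^{>c}$ is $G_K$-equivariant and $\varphi_{M_{\BK}}$ is defined over $\Es$ with no Galois action). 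So the only thing to check for well-definedness is $\Es$-linearity of $g(f)$: for $a\in\Es$ one has $g(f)(ax)=g(f(ax))=g(a\,f(x))=g(a)\,g(f(x))$, and this equals $a\,g(f)(x)$ precisely when $g(a)=a$ in $\oh_{\mathbb{C}_K^\flat}/\mathfrak{a}^{>c}$ for all $a\in\Es$. Since $\Es=W(k)[[u]]$ and $W(k)$ is fixed by $G_K$, this reduces to $g(u)=u$, i.e. $g(\underline{\pi})=\underline{\pi}$ modulo $\mathfrak{a}^{>c}$.

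Next I would carry out the valuation estimate that makes the condition $s>\log_p\!\big(\tfrac{c(p-1)}{ep}\big)$ do its job. For $g\in G_s=\mathrm{Gal}(\overline{K}/K_s)$ we have $g(\pi_s)=\pi_s$, hence $g(\pi_n)=\zeta_{p^n}^{a_n}\pi_n$ with $a_n\equiv 0\pmod{p^s}$ for all $n\ge s$; passing to the tilt, $g(\underline{\pi})=\underline{\varepsilon}^{a}\underline{\pi}$ for a $p$-adic integer $a\in p^s\mathbb{Z}_p$. Therefore
\[
g(\underline{\pi})-\underline{\pi}=(\underline{\varepsilon}^{a}-1)\underline{\pi},
\]
and since $v^\flat(\underline{\varepsilon}-1)=\tfrac{pe}{p-1}$ while $\underline{\varepsilon}^{a}-1$ is divisible by $\underline{\varepsilon}^{p^s}-1$ (as $a\in p^s\mathbb{Z}_p$ and $v^\flat(\underline{\varepsilon}^{p^s}-1)=p^{s}\cdot\tfrac{pe}{p-1}$), we get
\[
v^\flat\big(g(\underline{\pi})-\underline{\pi}\big)\ \geq\ p^{s}\cdot\frac{pe}{p-1}+v^\flat(\underline{\pi})\ >\ \frac{p^{s+1}e}{p-1}.
\]
The hypothesis $s>\log_p\!\big(\tfrac{c(p-1)}{ep}\big)$ is exactly $p^{s}>\tfrac{c(p-1)}{ep}$, i.e. $\tfrac{p^{s+1}e}{p-1}>c$, so $v^\flat(g(\underline{\pi})-\underline{\pi})>c$, meaning $g(\underline{\pi})=\underline{\pi}$ in $\oh_{\mathbb{C}_K^\flat}/\mathfrak{a}^{>c}$. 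This gives the $\Es$-linearity, so $g(f)\in J_c(M_{\BK})$, and a routine check (using that $\mathrm{id}(f)=f$ and that $(g_1g_2)(f)=g_1(g_2(f))$ because the action on the target ring is a genuine action) shows this defines a group action of $G_s$.

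For the remaining two assertions I would argue as follows. The map $\rho_{c,d}$ is post-composition with the $G_K$-equivariant quotient $\oh_{\mathbb{C}_K^\flat}/\mathfrak{a}^{>c}\to\oh_{\mathbb{C}_K^\flat}/\mathfrak{a}^{>d}$; since $G_s$-equivariance of $\rho_{c,d}$ only involves moving $g$ past this quotient map on the target, it is immediate from the formula $\rho_{c,d}(g(f))(x)=\overline{g(f(x))}=g(\overline{f(x)})=g(\rho_{c,d}(f))(x)$. Finally, for $s'\ge s$ one has $G_{s'}\subseteq G_s$, and both the $G_{s'}$-action (defined directly via the hypothesis $s'>\log_p(\tfrac{c(p-1)}{ep})$, which holds a fortiori) and the restriction of the $G_s$-action are given by the very same formula $g(f)(x)=g(f(x))$; hence they coincide. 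I do not expect a genuine obstacle here — the content is entirely the valuation computation $v^\flat(g(\underline{\pi})-\underline{\pi})>c$ — but the one place to be careful is the handling of the $\mathbb{Z}_p$-exponent $a$: one must justify $\underline{\varepsilon}^{p^s}-1\mid \underline{\varepsilon}^{a}-1$ and the continuity/convergence of $\underline{\varepsilon}^{a}=\lim_n\underline{\varepsilon}^{a_n}$ in the weak topology, which is exactly the kind of estimate already recorded in Lemma~\ref{lem:GkStableIdeals} and Lemma~\ref{disjointness}.
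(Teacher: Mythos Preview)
Your proposal is correct and follows exactly the approach sketched in the paper immediately before the proposition: the paper notes that well-definedness of $g(f)(x):=g(f(x))$ reduces to the requirement $g(\underline{\pi})=\underline{\pi}$ in $\oh_{\mathbb{C}_K^\flat}/\mathfrak{a}^{>c}$, and then cites \cite[Proposition~2.5.3]{CarusoLiu} for the statement without giving a self-contained proof. You have supplied precisely the omitted valuation computation, and your argument is the standard one.
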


The crucial connection between the actions on $J_c(M_{\BK})$ and $J_c^{\inf}(M_{\inf})$ is established using (the consequences of) the conditions \Crs.

\begin{prop}\label{prop:GsEquivarianceOfJc}
For $$s > \mathrm{max}\left\{\mathrm{log}_p\left(\frac{c(p-1)}{ep}\right),\mathrm{log}_p\left(c-\frac{e}{p-1}\right)\right\},$$ the natural isomorphism $\theta_c:J_c(M_{\BK})\stackrel{\simeq}\rightarrow J_c^{\inf}(M_{\inf})$ is $G_s$--equivariant.
\end{prop}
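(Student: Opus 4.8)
The plan is to reduce the statement to the analogous $G_s$-equivariance for the pair of Breuil--Kisin and Breuil--Kisin--Fargues modules, and then to exploit the explicit description of the $G_s$-action via the conditions \Crs (more precisely, via condition~(\ref{eqn:CrsModP})). Recall that $\theta_c$ sends $f \in J_c(M_{\BK}) = \mathrm{Hom}_{\Es, \varphi}(M_{\BK}, \oh_{\mathbb{C}_K^\flat}/\mathfrak{a}^{>c})$ to the map $\widetilde{f} \in J_c^{\inf}(M_{\inf}) = \mathrm{Hom}_{\ainf, \varphi}(M_{\inf}, \oh_{\mathbb{C}_K^\flat}/\mathfrak{a}^{>c})$ obtained by base change along $\Es \hookrightarrow \ainf$ (using $M_{\inf} \simeq M_{\BK}\otimes_\Es \ainf$), so that $\widetilde{f}(x \otimes a) = a \cdot f(x)$ for $x \in M_{\BK}$, $a \in \ainf$. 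Fix $g \in G_s$ with $s$ in the stated range. I need to show $\theta_c(g(f)) = g(\theta_c(f))$, i.e. $\widetilde{g(f)} = g(\widetilde{f})$ as $\ainf$-semilinear-compatible maps $M_{\inf}\to \oh_{\mathbb{C}_K^\flat}/\mathfrak{a}^{>c}$. Since both sides are $\ainf$-semilinear over $g$ (using $g(u)=u$ in $\oh_{\mathbb{C}_K^\flat}/\mathfrak{a}^{>c}$, which holds by the first term in the max, as in Proposition~\ref{prop:GsActionOnJc}), it suffices to check they agree on the image of $M_{\BK}$ inside $M_{\inf}$.

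For $x \in M_{\BK}$, unwinding definitions: $g(\widetilde{f})(x) = g\big(\widetilde{f}(g^{-1}(x))\big)$, where $g^{-1}(x) \in M_{\inf}$ need not lie in $M_{\BK}$. By condition~(\ref{eqn:CrsModP}) applied to $g^{-1} \in G_s$, we have $g^{-1}(x) - x = \varphi^{-1}(v)u^{p^s} y$ for some $y \in M_{\inf}$. Hence $\widetilde{f}(g^{-1}(x)) = \widetilde{f}(x) + \varphi^{-1}(v) u^{p^s}\,\widetilde{f}(y) = f(x) + \varphi^{-1}(v)u^{p^s}\,\widetilde{f}(y)$ in $\oh_{\mathbb{C}_K^\flat}/\mathfrak{a}^{>c}$. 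Now I estimate the valuation: $v^\flat(\varphi^{-1}(v)u^{p^s})$. Here $u = \underline{\pi}$ has $v^\flat(\underline\pi) = 1$, and $\varphi^{-1}(v)$ corresponds to $\underline{\varepsilon}^{1/p}-1$, with $v^\flat(\underline\varepsilon^{1/p}-1) = \tfrac{1}{p}\cdot\tfrac{pe}{p-1} = \tfrac{e}{p-1}$. So $v^\flat(\varphi^{-1}(v)u^{p^s}) = \tfrac{e}{p-1} + p^s$. The second term in the max is exactly arranged so that $s > \log_p\big(c - \tfrac{e}{p-1}\big)$ forces $p^s > c - \tfrac{e}{p-1}$, i.e. $\tfrac{e}{p-1} + p^s > c$; thus $\varphi^{-1}(v)u^{p^s}\,\widetilde{f}(y) \in \mathfrak{a}^{>c}/\mathfrak{a}^{>c} = 0$. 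Therefore $\widetilde{f}(g^{-1}(x)) = f(x)$, and applying $g$ gives $g(\widetilde f)(x) = g(f(x))$. On the other hand $\widetilde{g(f)}(x) = (g(f))(x) = g(f(x))$ by definition of the $G_s$-action on $J_c(M_{\BK})$ (Proposition~\ref{prop:GsActionOnJc}). The two agree, completing the verification.

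The main (and essentially only) obstacle is the valuation bookkeeping: one must be careful that $y$, while only well-defined in $M_{\inf}$, still has $\widetilde{f}(y)$ lying in $\oh_{\mathbb{C}_K^\flat}/\mathfrak{a}^{>c}$, so that multiplication by an element of valuation $>c$ genuinely kills it — this is immediate since the target is $p$-torsion-free-irrelevant and the ideal $\mathfrak{a}^{>c}$ absorbs anything of valuation exceeding $c$. One should also double-check the normalization $v^\flat(\underline\varepsilon - 1) = pe/(p-1)$ quoted at the start of \S\ref{sec:bounds}, from which $v^\flat(\varphi^{-1}(v)) = v^\flat(\underline\varepsilon^{1/p}-1) = e/(p-1)$ follows since $\varphi^{-1}$ is the $p$-th root on the relevant Teichmüller elements; combined with $v^\flat(u^{p^s}) = p^s$ this yields the bound used above. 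Finally, one records that the argument is symmetric enough that $\theta_c$ being an isomorphism of abelian groups (already established before the proposition) upgrades to an isomorphism of $G_s$-modules once equivariance is known.
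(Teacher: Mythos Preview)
Your proof is correct and follows essentially the same approach as the paper: both identify $\theta_c(f)=\widetilde{f}$ with $\widetilde{f}(x\otimes a)=af(x)$, use condition~(\ref{eqn:CrsModP}) to write $g^{-1}(x)-x\in\varphi^{-1}(v)u^{p^s}M_{\inf}$, and observe that $v^\flat(\varphi^{-1}(v)u^{p^s})=e/(p-1)+p^s>c$ forces the correction term to vanish in $\oh_{\mathbb{C}_K^\flat}/\mathfrak{a}^{>c}$. One small terminological slip: the maps $g(\widetilde{f})$ and $\widetilde{g(f)}$ are genuinely $\ainf$-\emph{linear} (the twists cancel), not merely $g$-semilinear---but this only strengthens your reduction to checking on the image of $M_{\BK}$.
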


\begin{proof}
Identifying $M_{\inf}$ with $M_{\BK}\otimes_{\Es}\ainf,$ $\theta_c$ takes the form $f \mapsto \widetilde{f}$ where $\widetilde{f}(x \otimes a):=af(x)$ for $x \in M_{\BK}$ and $a \in \ainf$. Note that we have $\varphi^{-1}(v)u^{p^s}\oh_{\mathbb{C}_K^{\flat}}=\mathfrak{a}^{\geq p^s + e/(p-1)}$. The condition (\ref{eqn:CrsModP}) then states that for all $x \in M_{\BK}$ and all $g \in G_s, $ $g(x\otimes 1)-x\otimes 1$ lies in $\mathfrak{a}^{\geq p^s + e/(p-1)}M_{\inf}$ and therefore in $\mathfrak{a}^{>c}M_{\inf}$ thanks to the assumption on  $s$. It then follows that for every $\widetilde{f}\in J_c^{\inf}(M_{\inf}),$ $\widetilde{f}(g(x\otimes 1))=\widetilde{f}(x\otimes 1),$ and hence $$g(\widetilde{f})(x \otimes a)=g\left(\widetilde{f}(g^{-1}(x \otimes a))\right)=g\left(g^{-1}(a)\widetilde{f}(g^{-1}(x \otimes 1))\right)=ag\left(\widetilde{f}(x \otimes 1)\right)=ag(f(x))$$ for every $g \in G_s,$ $x \in M_{\BK}$ and $a \in \ainf$. Thus, we have that $g(\widetilde{f})=\widetilde{g(f)}$ for every $g \in G_s$ and $f \in J_c(M_{\BK}),$ proving the equivariance of $\theta_c$.
\end{proof}

From now on, set $b:=ie/(p-1)$ and $a:=iep/(p-1)$. Then $T$ is determined by $J_a(M), J_b(M)$ in the following sense. 

\begin{prop}\label{prop:ActionProlongationJc}
\begin{enumerate}[(1)]
\item{The map $\rho_{\infty, b}: T^{*}_1(M_{\BK})\rightarrow J_b(M_{\BK})$ 
is injective, with the image equal to $\rho_{a, b}(J_{a}(M_{\BK}))\subseteq J_b(M_{\BK})$.}
\item{Similarly, the map $\rho_{\infty, b}^{\inf}: T^{*, \inf}_1(M_{\inf})\rightarrow J^{\inf}_b(M_{\inf})$ is injective with $\rho^{\inf}_{\infty, b}(M_{\inf})=\rho^{\inf}_{a, b}(J^{\inf}_{a}(M_{\inf}))$.} 
\item{For $s>\mathrm{log}_p(i) $, $T^{*}_1(M_{\BK})$ has a natural action of $G_s$ that extends the usual $G_\infty$--action.}
\item{For $s> \mathrm{max}\left( \mathrm{log}_p(i),\mathrm{log}_p((i-1)e/(p-1)) \right)$, the action from (3) agrees with $T|_{G_s}$.}
\end{enumerate}  
\end{prop}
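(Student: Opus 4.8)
The plan is to follow the strategy of \cite[\S 2.5]{CarusoLiu} fairly literally, using our Proposition~\ref{prop:CrystallineCohMopPN} (in the form~(\ref{eqn:CrsModP})) and Proposition~\ref{prop:GsEquivarianceOfJc} as the substitutes for the corresponding $(\varphi, \widehat{G})$--module inputs of \textit{loc.\ cit.} I first treat parts (1) and (2), which do not involve the Galois action: the point is that the height $\leq i$ condition on $M_{\BK}$ forces, via the map $\psi$ with $\psi \circ \varphi_{\lin}=\varphi_{\lin}\circ\psi=E(u)^i$, that any $\varphi$--equivariant map $M_{\BK}\to \oh_{\mathbb{C}_K^\flat}/\mathfrak{a}^{>c}$ which kills $\mathfrak{a}^{>a}$ automatically kills $\mathfrak{a}^{>b}$, i.e.\ $\rho_{\infty,b}$ and $\rho_{a,b}$ have the same image; here one uses $v^\flat(E(u)^\sharp)=v^\flat(\underline{\varepsilon}-1)=pe/(p-1)$ so that $a=i\cdot pe/(p-1)$ and $b=i\cdot e/(p-1)$ are exactly $i$ times the relevant valuations. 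This is identical to \cite[Proposition~2.5.1]{CarusoLiu}, and the argument for $J^{\inf}_c(M_{\inf})$ is the same after base change to $\ainf$ using $M_{\inf}\simeq M_{\BK}\otimes_\Es\ainf$ (and faithful flatness of $\ainf/\Es$). Injectivity of $\rho_{\infty,b}$ follows because an element of $T^*_1(M_{\BK})$ killing $\mathfrak{a}^{>b}$ has, by the height bound, image landing in $\mathfrak{a}^{>c}/\mathfrak{a}^{>b}$ for arbitrarily large $c$, hence is $0$.

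For part (3), I would use part (1) to \emph{transport} the $G_s$--action: by Proposition~\ref{prop:GsActionOnJc}, for $s>\log_p\!\big(\tfrac{a(p-1)}{ep}\big)=\log_p i$ the group $G_s$ acts on $J_a(M_{\BK})$ and on $J_b(M_{\BK})$, and $\rho_{a,b}$ is $G_s$--equivariant; since part (1) identifies $T^*_1(M_{\BK})$ with the $G_s$--stable (because it is the image of a $G_s$--equivariant map) submodule $\rho_{a,b}(J_a(M_{\BK}))\subseteq J_b(M_{\BK})$, this yields the desired $G_s$--action on $T^*_1(M_{\BK})$. One then checks it restricts to the usual $G_\infty$--action, which is immediate since on $G_\infty$ all these constructions reduce to the standard ones (the condition $g(\underline{\pi})=\underline{\pi}$ holds exactly on $G_\infty$ for the genuine ring $\oh_{\mathbb{C}_K^\flat}$, not just its truncation).

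For part (4), the claim is that this transported $G_s$--action coincides with the restriction to $G_s$ of the genuine $G_K$--action on $T=T^{*,\inf}_1(M_{\inf})$. Here is where Proposition~\ref{prop:GsEquivarianceOfJc} enters: for $s$ large enough, namely $s>\log_p\!\big(\tfrac{a(p-1)}{ep}\big)$ and $s>\log_p\!\big(a-\tfrac{e}{p-1}\big)$, the isomorphism $\theta_a\colon J_a(M_{\BK})\xrightarrow{\sim}J^{\inf}_a(M_{\inf})$ is $G_s$--equivariant. One computes $\log_p\!\big(\tfrac{a(p-1)}{ep}\big)=\log_p i$ and $\log_p\!\big(a-\tfrac e{p-1}\big)=\log_p\!\big(\tfrac{iep-e}{p-1}\big)=\log_p\!\big(\tfrac{(ie-i/? )...}\big)$ --- more precisely $a-\tfrac{e}{p-1}=\tfrac{iep-e}{p-1}=\tfrac{(ip-1)e}{p-1}$, and a comparison shows $\log_p\!\big(\tfrac{(ip-1)e}{p-1}\big)\le \log_p\!\big(\tfrac{(i-1)e}{p-1}\big)+1$ roughly (one should double-check this arithmetic and if necessary absorb the discrepancy, exactly as the footnote in the introduction warns), so the hypothesis $s>\max(\log_p i,\log_p((i-1)e/(p-1)))$ suffices. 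Granting $G_s$--equivariance of $\theta_a$ (hence of $\theta_b$, compatibly with $\rho_{a,b}$ and $\rho^{\inf}_{a,b}$), part (1)/(2) identify $T^*_1(M_{\BK})$ with $\rho_{a,b}(J_a(M_{\BK}))$ and $T^{*,\inf}_1(M_{\inf})$ with $\rho^{\inf}_{a,b}(J^{\inf}_a(M_{\inf}))$ as $G_s$--modules, and $\theta_a$ carries one onto the other; chasing the definition of $\theta_c$ ($f\mapsto\widetilde f$, $\widetilde f(x\otimes a)=af(x)$) through $\rho_{\infty,b}$ shows this matches the canonical identification $T^*_1(M_{\BK})\simeq T^{*,\inf}_1(M_{\inf})\simeq \H^i_{\et}(\mathscr{X}_{\overline\eta},\mathbb{Z}/p\mathbb{Z})^\vee$. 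The main obstacle I anticipate is purely bookkeeping: pinning down the exact numerical threshold on $s$ so that Propositions~\ref{prop:GsActionOnJc} and~\ref{prop:GsEquivarianceOfJc} both apply with $c=a$ and verifying the logarithm inequality matches the stated bound $\max(\log_p i,\log_p((i-1)e/(p-1)))$ (possibly after the mild adjustment flagged in the introduction's footnote); the conceptual content is entirely carried by the already--established condition~(\ref{eqn:CrsModP}) and the height $\leq i$ property.
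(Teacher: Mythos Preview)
Your overall strategy matches the paper's: part (1) is \cite[Proposition~2.3.3]{CarusoLiu}, part (3) is obtained by transporting the $G_s$--action via the identification $T_1^*(M_{\BK})\simeq \rho_{a,b}(J_a(M_{\BK}))$, and parts (2), (4) follow from the commutative square relating $\theta_\infty,\theta_a,\theta_b$ to the $\rho$--maps. A minor slip: the image of $E(u)$ in $\oh_{\mathbb{C}_K^\flat}$ has $v^\flat$ equal to $e$ (since $E(u)\equiv u^e\pmod p$), not $pe/(p-1)$; the numbers $a,b$ arise because $a=pb$ and $v^\flat(E(u)^i)=ie=(p-1)b$, which is what makes the height--$i$ lifting argument in \cite{CarusoLiu} work.

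The genuine gap is in part (4): you apply Proposition~\ref{prop:GsEquivarianceOfJc} with $c=a$, which yields the condition $s>\log_p\bigl((ip-1)e/(p-1)\bigr)$, strictly stronger than the stated $s>\log_p\bigl((i-1)e/(p-1)\bigr)$. The fix is exactly what the paper does: apply Proposition~\ref{prop:GsEquivarianceOfJc} with $c=b$ instead. With $c=b=ie/(p-1)$ the two thresholds become $\log_p(b(p-1)/(ep))=\log_p(i)-1$ and $\log_p(b-e/(p-1))=\log_p((i-1)e/(p-1))$, so $\theta_b$ is $G_s$--equivariant under the stated hypothesis. Since $\rho_{\infty,b}$ and $\rho^{\inf}_{\infty,b}$ are injective and $G_s$-- (resp.\ $G_K$--) equivariant, the $G_s$--equivariance of $\theta_b$ forces that of $\theta_\infty$, which is exactly the claim of (4). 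In other words, you do not need $\theta_a$ to be equivariant at all; the identification with $T$ lives inside $J_b$, and equivariance at level $b$ suffices. This removes the arithmetic discrepancy you flagged without any adjustment to the statement.
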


\begin{proof}
Part (1) is proved in \cite[Proposition~2.3.3]{CarusoLiu}. Then $T^{*}_1(M_{\BK})$ attains the action of $G_s$ with $s>\mathrm{log}_p(i)$ by identification with $\rho_{a, b}(J_{a}(M_{\BK}))$ and using Proposition~\ref{prop:GsActionOnJc} (see also \cite[Theorem~2.5.5]{CarusoLiu}), which proves (3). Finally, the proof of (2),(4) is analogous to \cite[Corollary~3.3.3]{CarusoLiu} and \cite[Theorem~3.3.4]{CarusoLiu}. Explicitly, consider the commutative diagram  

\begin{center}
\begin{tikzcd}
T_1^{*}(M_{\BK}) \ar[r, "\rho_{\infty,a}"] \ar[d, "\sim"', "\theta_{\infty}"] & J_a(M_{\BK}) \ar[d, "\sim"', "\theta_{a}"] \ar[r, "\rho_{a, b}"] & J_b(M_{\BK}) \ar[d, "\sim"', "\theta_b"] \\
T_1^{*, \inf}(M_{\inf}) \ar[r, "\rho^{\inf}_{\infty,a}"]  & J^{\inf}_a(M_{\inf})  \ar[r, "\rho^{\inf}_{a, b}"] & J^{\inf}_b(M_{\inf}) ,
\end{tikzcd}
\end{center}
where the composition of the rows gives $\rho_{\infty, b}$ and $\rho^{\inf}_{\infty, b},$ resp. This immediately proves (2) using (1). Finally, the map $\rho^{\inf}_{\infty, b}$ is $G_K$--equivariant, the map $\rho_{\infty, b}$ is (tautologically) $G_s$--equivariant for $s > \mathrm{log}_p(i)$ by the proof of (3), and both maps are injective. Since $\theta_b$ is $G_s$--equivariant when $s > \mathrm{log}_p((i-1)e/(p-1))$ by Proposition~\ref{prop:GsEquivarianceOfJc}, it follows that so is $\theta_{\infty}$, which proves (4).
\end{proof}

We employ further approximations of $J_c(M_{\BK})$ defined as follows.

\begin{nott} 
Let $s$ be a non--negative integer, consider a real number $c \in [0, ep^s)$ and an algebraic extension $E/K_s$. We consider the ring $(\varphi_k^s)^*\oh_{E}/\mathfrak{a}_E^{> c/p^s}=k\otimes_{\varphi_k^s, k}\oh_{E}/\mathfrak{a}_E^{> c/p^s}$ (note that the condition on $c$ implies that $p \in \mathfrak{a}_E^{>c/p^s}$, making $\oh_{E}/\mathfrak{a}_E^{>c/p^s}$ a $k$--algebra). We endow this ring with an $\Es$--algebra structure via $\Es\stackrel{\mathrm{mod}\,p}{\rightarrow}k[[u]]\stackrel{\alpha}{\rightarrow} (\varphi_k^s)^*\oh_{E}/\mathfrak{a}_E^{> c/p^s}$ where $\alpha$ extends the $k$--algebra structure map by the rule $u \mapsto 1\otimes{\pi_s}.$ Then we set
$$J_c^{(s), E}(M_{\BK})=\mathrm{Hom}_{\Es,\varphi}(M_{\BK}, (\varphi_k^s)^*\oh_{E}/\mathfrak{a}_E^{>c/p^s}).$$
In the case when $E/K_s$ is Galois, the module $J_c^{(s), E}(M_{\BK})$ attains a $G_s$--action induced by the $G_s$--action on $\oh_{E}/\mathfrak{a}_E^{>c/p^s}$.

When $c, d$ are two real numbers satisfying $ep^s> c \geq d \geq 0,$ there is an obvious transition map $\rho_{c, d}^{(s), E}(M_{\BK}):J_c^{(s), E}(M_{\BK}) \rightarrow J_d^{(s), E}(M_{\BK})$, which is $G_s$--equivariant in the Galois case.
\end{nott}

The relation to $J_c(M_{\BK})$ is the following.

\begin{prop} \label{prop:ApproximateJc}
Let $s, c$ be as above. Then
\begin{enumerate}[(1)]
\item{Given an algebraic extension $E/K_s$, $J_c^{(s), E}(M_{\BK})$ naturally embeds into $J_c(M_{\BK})$ as a submodule ($G_s$--submodule when $E/K_s$ is Galois).}
\item{Given a tower of algebraic extensions $F/E/K_s$, $J_c^{(s), E}(M_{\BK})$ naturally embeds into $J_c^{(s), F}(M_{\BK})$ as a submodule ($G_s$--submodule if both $E/K_s$ and $F/K_s$ are Galois).}
\item{$J_c^{(s), \overline{K}}(M_{\BK})$ is naturally isomorphic to $J_c(M_{\BK})$ as a $G_s$--module.}
\end{enumerate}
\end{prop}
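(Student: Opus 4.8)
The plan is to derive all three parts from a single construction at the level of coefficient rings, then apply the functor $\mathrm{Hom}_{\Es,\varphi}(M_{\BK},-)$. Concretely, for $s\ge 0$, $c\in[0,ep^s)$ and an algebraic extension $E/K_s$, I would build a natural injective $\Es$-algebra homomorphism, compatible with Frobenius,
\[
\iota^{(s)}_{E,c}\colon (\varphi_k^s)^*\bigl(\oh_E/\mathfrak{a}_E^{>c/p^s}\bigr)\ \hookrightarrow\ \oh_{\mathbb{C}_K^\flat}/\mathfrak{a}^{>c},
\]
which is $G_s$-equivariant when $E/K_s$ is Galois, is an isomorphism for $E=\overline{K}$, and is compatible with inclusions $E\subseteq F$ over $K_s$ (via $\oh_E/\mathfrak{a}_E^{>c/p^s}\hookrightarrow\oh_F/\mathfrak{a}_F^{>c/p^s}$, which is injective, and $G_s$-equivariant in the Galois case, since $v_K$ is the same on all of $\overline{K}$) as well as with the transition maps $\rho$, $\rho^{(s),E}$. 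Granting this, one concludes as follows: the $\Es$-structures on all rings in play factor through $\Es\twoheadrightarrow k[[u]]$ and $M_{\BK}$ is a $k[[u]]$-module, so $\mathrm{Hom}_{\Es,\varphi}(M_{\BK},-)$ is a subfunctor of $\mathrm{Hom}_{k[[u]]}(M_{\BK},-)$, hence sends injections to injections, isomorphisms to isomorphisms, and $G_s$-equivariant maps to $G_s$-equivariant maps. Applying it to $\iota^{(s)}_{E,c}$ yields (1) and (3); applying $(\varphi_k^s)^*$ (exact, since $\varphi_k$ is an automorphism of the perfect field $k$) and then $\mathrm{Hom}_{\Es,\varphi}(M_{\BK},-)$ to the inclusion $\oh_E/\mathfrak{a}_E^{>c/p^s}\hookrightarrow\oh_F/\mathfrak{a}_F^{>c/p^s}$ yields (2); and all the asserted compatibilities of these embeddings follow at once from the commuting triangles (resp. squares) of coefficient-ring maps.

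To build $\iota^{(s)}_{E,c}$ I would compose three elementary maps, where I write $\mathfrak{a}^{>\gamma}_{\mathbb{C}_K}=\{x\in\oh_{\mathbb{C}_K}:v(x)>\gamma\}$. (i) For $0\le\gamma<e$, the projection $\mathrm{pr}_0=(\,\cdot\,)^\sharp\bmod p$ induces a $k$-algebra isomorphism $\oh_{\mathbb{C}_K^\flat}/\mathfrak{a}^{>\gamma}\xrightarrow{\sim}\oh_{\mathbb{C}_K}/\mathfrak{a}^{>\gamma}_{\mathbb{C}_K}$: injectivity is the identity $v^\flat(x)=v(x^\sharp)$ combined with $p\in\mathfrak{a}^{>\gamma}_{\mathbb{C}_K}$, and surjectivity is that of $\mathrm{pr}_0$ onto $\oh_{\mathbb{C}_K}/p$ (valid as $\mathbb{C}_K$ is algebraically closed). (ii) Since $v_K$ and $v$ agree on $\overline{K}$, one has $\mathfrak{a}_E^{>\gamma}=\mathfrak{a}^{>\gamma}_{\mathbb{C}_K}\cap\oh_E$, giving an injection $\oh_E/\mathfrak{a}_E^{>\gamma}\hookrightarrow\oh_{\mathbb{C}_K}/\mathfrak{a}^{>\gamma}_{\mathbb{C}_K}$, an isomorphism for $E=\overline{K}$ by density (as $\mathfrak{a}^{>\gamma}_{\mathbb{C}_K}$ is open). (iii) As $\oh_{\mathbb{C}_K^\flat}$ is perfect, $\varphi^s$ is an automorphism with $v^\flat\circ\varphi^s=p^s\,v^\flat$, hence carries $\mathfrak{a}^{>c/p^s}$ onto $\mathfrak{a}^{>c}$ and induces a $\varphi_k^s$-semilinear ring isomorphism, i.e. a $k$-algebra isomorphism $(\varphi_k^s)^*\bigl(\oh_{\mathbb{C}_K^\flat}/\mathfrak{a}^{>c/p^s}\bigr)\xrightarrow{\sim}\oh_{\mathbb{C}_K^\flat}/\mathfrak{a}^{>c}$. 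Taking $\gamma=c/p^s$ (which is $<e$ by $c<ep^s$) and applying $(\varphi_k^s)^*$ to (i) and (ii), I set $\iota^{(s)}_{E,c}$ to be their composite with (iii).

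It then remains to check the compatibilities of $\iota^{(s)}_{E,c}$. The $\Es$-linearity is a matter of tracing $u\mapsto 1\otimes\pi_s$ through the composite: $\pi_s\in\oh_{\mathbb{C}_K}$ corresponds under $\mathrm{pr}_0$ to $\underline{\pi}^{1/p^s}$ (because $(\underline{\pi}^{1/p^s})^\sharp=\pi_s$), which $\varphi^s$ then sends to $\underline{\pi}$, the image of $u$ in $\oh_{\mathbb{C}_K^\flat}/\mathfrak{a}^{>c}$. For $G_s$-equivariance, the maps $\mathrm{pr}_0$, the field inclusions and $\varphi^s$ all commute with the $G_s$-action (which fixes $k$ and commutes with Frobenius and with $\sharp$), and although $g\in G_s$ need not fix $\underline{\pi}$, writing $g(\underline{\pi})=\underline{\varepsilon}^{a}\underline{\pi}$ with $a=p^sb$, $b\in\mathbb{Z}_p$, one has $\underline{\varepsilon}^{a}-1=(\underline{\varepsilon}^{b}-1)^{p^s}$, so $v^\flat(\underline{\varepsilon}^{a}-1)\ge p^s\cdot\tfrac{pe}{p-1}>ep^s>c$; thus $g$ fixes the class of $\underline{\pi}$ in $\oh_{\mathbb{C}_K^\flat}/\mathfrak{a}^{>c}$, i.e. $G_s$ acts on the target by $k[[u]]$-algebra automorphisms (precisely the condition making $J_c(M_{\BK})$ a $G_s$-module here). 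I expect the most delicate point to be the Frobenius bookkeeping: one must equip each of the three constituent rings with its $p$-th-power Frobenius compatibly (for $\oh_E/\mathfrak{a}_E^{>c/p^s}$ this requires the detour $x\mapsto x^p\in\oh_E/\mathfrak{a}_E^{>c/p^{s-1}}$ followed by the projection back), verify that $\mathrm{pr}_0$, the inclusions and $\varphi^s$ respect these Frobenii, and confirm that, after twisting the source by $(\varphi_k^s)^*$, this matches exactly the convention under which $J_c^{(s),E}(M_{\BK})$ is defined. All of this runs parallel to the corresponding verifications in \cite{CarusoLiu} accompanying the definition of $J_c^{(s),E}$ there, to which I would refer for the routine portions.
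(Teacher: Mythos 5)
Your proposal is correct and takes essentially the same route as the paper: the paper also derives (2) and (3) from injectivity of the induced map on quotients of rings of integers and from the isomorphism $\oh_{\mathbb{C}_K^\flat}/\mathfrak{a}^{>c}\xrightarrow{\sim}(\varphi_k^s)^*\oh_{\overline{K}}/\mathfrak{a}_{\overline{K}}^{>c/p^s}$ induced by $\mathrm{pr}_s=\mathrm{pr}_0\circ\varphi^{-s}$, then obtains (1) by combining these. The only difference is that the paper cites \cite[Lemma~2.5.1]{CarusoLiu} for the latter isomorphism, whereas you unpack it into the elementary steps (i)--(iii), which is the same construction made explicit.
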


\begin{proof}
Part (2) follows immediately from the observation that the map $\oh_{E}/\mathfrak{a}_E^{>c/p^s} \rightarrow \oh_{F}/\mathfrak{a}_F^{>c/p^s}$ induced by the inclusion $\oh_{E} \hookrightarrow \oh_{F}$ remains injective (and is clearly $G_s$--equivariant in the Galois case). Similarly, part (3) follows from the fact that the map $\mathrm{pr}_s: \mathcal{O}_{\mathbb{C}_K^{\flat}}=\varprojlim_{s, \varphi}\oh_{\overline{K}}/p \rightarrow \oh_{\overline{K}}/p$ induces a ($G_s$--equivariant) isomorphism $ \oh_{\mathbb{C}_K^{\flat}}/\mathfrak{a}^{>c} \rightarrow (\varphi_k^s)^*\oh_{\overline{K}}/\mathfrak{a}_{\overline{K}}^{>c/p^s}$ when $s > \mathrm{log}_p(c/e)$ (so a fortiori when $s>\mathrm{log}_p(c)$), which is proved in \cite[Lemma~2.5.1]{CarusoLiu}. Part (1) is then obtained as a direct combination of (2) and (3).
\end{proof}

For a non--negative integer $s$, denote by $L_s$ the composite of the fields $K_s$ and $L$. The following adaptation of Theorem~4.1.1 of \cite{CarusoLiu} plays a key role in establishing the ramification bound. 

\begin{thm}\label{thm:Fuel}
Let $s$ be an integer satisfying 
$$s> M_0:=\mathrm{max}\left\{\mathrm{log}_p\left(\frac{a}{e}\right),\mathrm{log}_p\left(b-\frac{e}{p-1}\right)\right\}=\mathrm{max}\left\{\mathrm{log}_p\left(\frac{ip}{p-1}\right), \mathrm{log}_p\left(\frac{(i-1)e}{(p-1)}\right)\right\},$$ and let $E/K_s$ be an algebraic extension. Then the inclusion $\rho_{a, b}^{(s),E}(J_a^{(s), E}(M_{\BK}))\hookrightarrow \rho_{a, b}(J_a(M_{\BK})),$ facilitated by the inclusions $J_a^{(s), E}(M_{\BK}) \hookrightarrow J_a(M_{\BK})$ and $J_b^{(s), E}(M_{\BK}) \hookrightarrow J_b(M_{\BK})$ from Proposition~\ref{prop:ApproximateJc}, is an isomorphism if and only if $L_s \subseteq E$.
\end{thm}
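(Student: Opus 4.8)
The plan is to mirror closely the proof of \cite[Theorem~4.1.1]{CarusoLiu}, using our analogues of the ingredients established above: Proposition~\ref{prop:ApproximateJc} (relating the layered approximations $J_c^{(s),E}$ to $J_c$), Proposition~\ref{prop:GsActionOnJc} (existence of the $G_s$--action on $J_c(M_{\BK})$), Proposition~\ref{prop:GsEquivarianceOfJc} (the key $G_s$--equivariance of $\theta_c$, which is where the conditions \Crs enter), and Proposition~\ref{prop:ActionProlongationJc} (identification of $T$ with $\rho_{a,b}(J_a(M_{\BK}))$ as $G_s$--modules, for $s > M_0$). Note that the constraint $s > M_0$ guarantees all of these are available: indeed $M_0 \geq \mathrm{log}_p(a/e) = \mathrm{log}_p(ip/(p-1))$ forces both $s > \mathrm{log}_p(\tfrac{a(p-1)}{ep})$ and $s > \mathrm{log}_p(\tfrac{b(p-1)}{ep})$ (needed for Proposition~\ref{prop:GsActionOnJc} applied to $c = a$ and $c=b$), while $M_0 \geq \mathrm{log}_p(b - e/(p-1))$ gives the hypothesis of Proposition~\ref{prop:GsEquivarianceOfJc} for $c = b$ (and a fortiori for $c=a$, after checking $\mathrm{log}_p(a - e/(p-1)) < \mathrm{log}_p(a/e) \leq M_0$ when $e \geq 1$); finally $s > \mathrm{log}_p(a/e) \geq \mathrm{log}_p(c/e)$ for $c \in \{a,b\}$, which is what Proposition~\ref{prop:ApproximateJc}~(3) requires.

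First I would dispose of the easy direction. Suppose $L_s \subseteq E$. Then $E \supseteq K_s$ and $E \supseteq L_s \supseteq L$; since $L$ is the splitting field of $T$ and hence of $T^{*,\inf}_1(M_{\inf})$, all the $\varphi$--equivariant homomorphisms realizing $T$ already take values in $\oh_L/p$ — more precisely, using the description $T^{*,\inf}_1(M_{\inf}) = \rho^{\inf}_{a,b}(J^{\inf}_a(M_{\inf}))$ of Proposition~\ref{prop:ActionProlongationJc}~(2) together with the identification of $J_c^{(s),E}$ with the sub--$G_s$--module of $J_c$ of homomorphisms valued in $(\varphi_k^s)^*\oh_E/\mathfrak{a}_E^{>c/p^s}$ (Proposition~\ref{prop:ApproximateJc}~(1)), one checks that every element of $\rho_{a,b}(J_a(M_{\BK}))$ already lies in $\rho_{a,b}^{(s),E}(J_a^{(s),E}(M_{\BK}))$. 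This is the direct translation of the corresponding step in \cite{CarusoLiu}; the point is that a $\varphi$--equivariant functional in the image of $\rho_{\infty,a}$ is determined by a Galois--stable system of honest roots living in $L$, hence in $E$.

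For the converse — the substantive direction — I would argue contrapositively: if $L_s \not\subseteq E$, produce an element of $\rho_{a,b}(J_a(M_{\BK}))$ not lying in $\rho_{a,b}^{(s),E}(J_a^{(s),E}(M_{\BK}))$. Pick $\sigma \in \mathrm{Gal}(L_s E/E)$ nontrivial; since $L_s/K_s$ is the splitting field of $T|_{G_{K_s}}$ (because $L/K$ is the splitting field of $T$ and $L_s = LK_s$), $\sigma$ acts nontrivially on $T \simeq \rho_{a,b}(J_a(M_{\BK}))$ as a $G_s$--module. One then takes a functional $f \in J_a(M_{\BK})$ whose class in $\rho_{a,b}(J_a(M_{\BK})) \simeq T$ is moved by $\sigma$, and shows that $f$ cannot be equivalent, modulo the kernel of $\rho_{a,b}$, to anything valued in $\oh_E$: if it were, its image in $\rho_{a,b}(J_a(M_{\BK}))$ would be $\mathrm{Gal}(\overline{K}/E)$--fixed, contradicting that $\sigma$ moves it. Translating ``valued in $\oh_E$'' back and forth through the layered isomorphism of Proposition~\ref{prop:ApproximateJc} requires the $G_s$--equivariance of $\theta_b$ (hence of $\theta_a$) from Proposition~\ref{prop:GsEquivarianceOfJc} — this is exactly where the crystalline-type conditions \Crs are consumed, and it is the place where our argument differs from \cite{CarusoLiu}, who instead invoke properties of $(\varphi,\widehat G)$--modules. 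The main obstacle I anticipate is bookkeeping: one must track carefully that the various $G_s$--actions on $J_a(M_{\BK})$, $J_b(M_{\BK})$, $J_c^{(s),E}$, and on $T^{*,\inf}_1(M_{\inf})$ all coincide under the identifications, so that ``$\sigma$ moves the class of $f$'' is a statement that survives being transported between the $\Es$--side and the $\ainf$--side; the inequalities defining $M_0$ were arranged precisely so that every equivariance statement needed is in force, so the content is in assembling them in the right order rather than in any single hard estimate.
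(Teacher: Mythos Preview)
Your proposal has a genuine gap, and it lies precisely in the direction you label as ``easy'' ($L_s\subseteq E\Rightarrow$ isomorphism). Your contrapositive argument for the other direction is in fact sound: elements of $J_b^{(s),E}(M_{\BK})$, viewed inside $J_b(M_{\BK})$, are visibly $G_E$--fixed (since $G_E$ acts trivially on $\oh_E/\mathfrak a_E^{>b/p^s}$), so the image of $\rho_{a,b}^{(s),E}(J_a^{(s),E}(M_{\BK}))$ lands in $T^{G_E}$ under the $G_s$--equivariant identification $\rho_{a,b}(J_a(M_{\BK}))\simeq T$; if $L_s\not\subseteq E$ then $T^{G_E}\subsetneq T$ and the inclusion cannot be surjective.

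The problem is the converse inclusion $T^{G_E}\subseteq \rho_{a,b}^{(s),E}(J_a^{(s),E}(M_{\BK}))$, which you need when $L_s\subseteq E$ (so that $T^{G_E}=T$). Your sentence ``a $\varphi$--equivariant functional in the image of $\rho_{\infty,a}$ is determined by a Galois--stable system of honest roots living in $L$, hence in $E$'' is a heuristic, not a proof. Concretely, knowing that $f\in T_1^*(M_{\BK})$ is $G_E$--fixed \emph{as an element of $T$} tells you that $f(x)\pmod{\mathfrak a^{>b}}$ is $G_E$--invariant in $\oh_{\mathbb C_K^\flat}/\mathfrak a^{>b}\simeq(\varphi_k^s)^*\oh_{\overline K}/\mathfrak a_{\overline K}^{>b/p^s}$ for every $x\in M_{\BK}$; it does \emph{not} tell you that these values lie in the image of $\oh_E/\mathfrak a_E^{>b/p^s}$, because $(\oh_{\overline K}/\mathfrak a_{\overline K}^{>c})^{G_E}$ is in general strictly larger than the image of $\oh_E/\mathfrak a_E^{>c}$. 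Nor does it give you a lift to $J_a^{(s),E}$.

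This is exactly why the paper (following \cite[\S4.1]{CarusoLiu}) introduces the auxiliary sets $\widetilde J_1^{(s),F}(M_{\BK})$: they are sets of solutions to a polynomial system over the \emph{honest} ring $\oh_F$, so the Galois descent $(\widetilde J_1^{(s),\overline K}(M_{\BK}))^{G_E}=\widetilde J_1^{(s),E}(M_{\BK})$ is automatic from $(\oh_{\overline K})^{G_E}=\oh_E$. One then needs the nontrivial \cite[Lemma~4.1.4]{CarusoLiu}, valid for any $p$--torsion Breuil--Kisin module free over $k[[u]]$ and any $s>\log_p(a/e)$, which identifies $\widetilde\rho_b^{(s),F}(\widetilde J_1^{(s),F}(M_{\BK}))$ with $\rho_{a,b}^{(s),F}(J_a^{(s),F}(M_{\BK}))$. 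Combining this with the chain of $G_s$--equivariant bijections (ending with Proposition~\ref{prop:GsEquivarianceOfJc} and Proposition~\ref{prop:ActionProlongationJc}~(2)) gives directly $\rho_{a,b}^{(s),E}(J_a^{(s),E}(M_{\BK}))\simeq T^{G_E}$, from which \emph{both} directions follow at once. You should invoke this machinery explicitly rather than gesturing at ``honest roots''.
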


\begin{proof}
The proof of \cite[Theorem~4.1.1]{CarusoLiu} applies in our context as well, as we now explain. In \cite[\S 4.1]{CarusoLiu}, for every $F/K_s$ algebraic, an auxillary set $\widetilde{J}_1^{(s), F}(M_{\BK})$ is constructed, together with maps of sets $\widetilde{\rho}_c^{(s), F}: \widetilde{J}_1^{(s), F}(M_{\BK}) \rightarrow J_c^{(s), F}(M_{\BK})$ for every $c \in (0, ep^s).$ Notably, the construction relies only on the fact that $M_{\BK}$ is a Breuil--Kisin module that is free over $k[[u]]$ and the assumption $s>\mathrm{log}_p(a/e)$. When $F$ is Galois over $K$, this set is naturally a $G_s$--set and the maps are $G_s$--equivariant. Moreover, the sets have the property that $\left(\widetilde{J}_1^{(s), F}(M_{\BK})\right)^{G_{F'}}=\widetilde{J}_1^{(s), F'}(M_{\BK})$ when $F/F'/K_s$ is an intermediate extension.

Subsequently, it is shown in \cite[Lemma~4.1.4]{CarusoLiu} that 
\begin{equation}\tag{$*$}\label{LiftedJ}
\widetilde{\rho}_b^{(s), F}\text{ is injective and its image is }\rho_{a, b}^{(s), F}(J_a^{(s), F}(M_{\BK})),
\end{equation}
where the only restriction on $s$ is again $s> \mathrm{log}_p(a/e)$.

Finally, one obtains a series of $G_s$--equivariant bijections:
\begin{center}
{\renewcommand{\arraystretch}{1.75}
\begin{tabular}{rclr}
$\widetilde{J}^{(s),\overline{K}}_1(M_{\BK}) $ & $ \simeq $ & $ \rho_{a, b}^{(s), \overline{K}}(J_a^{(s), \overline{K}}(M_{\BK}))$ & (by (\ref{LiftedJ}))\\
  & $ \simeq $ & $ \rho_{a, b}(J_a(M_{\BK}))$ & (Proposition~\ref{prop:ApproximateJc} (3))\\
  & $ \simeq $ & $ \rho_{a, b}^{\inf}(J_a^{\inf}(M_{\inf}))$ & (Proposition~\ref{prop:GsEquivarianceOfJc})\\
  & $ \simeq $ & $ T$ & (Proposition~\ref{prop:ActionProlongationJc} (2))
\end{tabular}
}
\end{center}
(where the step that uses Proposition~\ref{prop:GsEquivarianceOfJc} relies on the assumption $s>\mathrm{log}_p(b-e/(p-1))$ ).  Applying $(-)^{G_E}$ to both sides and using $(*)$ again then yields $$\rho_{a, b}^{(s), E}(J_a^{(s), E}(M_{\BK})) \simeq T^{G_E}.$$  Therefore, we may replace the inclusion from the statement of the theorem by the inclusion $T^{G_E} \subseteq T,$ and the claim now easily follows.
\end{proof}

Finally, we are ready to establish the desired ramification bound. Let $N_s=K_s(\zeta_{p^s})$ be the Galois closure of $K_s$ over $K$, and set $M_s=L_sN_s$. Then we have

\begin{prop}\label{prop:FontainePropertyForLs}
Let $s$ be as in Theorem~\ref{thm:Fuel}, and set $m=a/p^s$. Then the properties $(P_{m}^{L_s/K_s})$ and $(P_{m}^{M_s/N_s})$ hold.
\end{prop}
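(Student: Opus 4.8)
The plan is to reduce the property $(P_m^{L_s/K_s})$ to the isomorphism criterion furnished by Theorem~\ref{thm:Fuel}, and then deduce $(P_m^{M_s/N_s})$ from it using Corollary~\ref{cor:FontainePropertyWlogTotRamified}. First I would unwind the definition: to verify $(P_m^{L_s/K_s})$, fix an algebraic extension $E/K_s$ and an $\oh_{K_s}$-algebra map $\oh_{L_s}\to \oh_E/\mathfrak{a}_E^{>m}$; the goal is to produce a $K_s$-embedding $L_s\hookrightarrow E$. The key observation is that such a map, together with $m=a/p^s$, induces a transition on the ``Hom into truncated integers'' functors: an $\oh_{K_s}$-algebra map $\oh_{L_s}/\mathfrak{a}_{L_s}^{>a/p^s}\to \oh_E/\mathfrak{a}_E^{>a/p^s}$, hence (after applying $(\varphi_k^s)^*$ and $\mathrm{Hom}_{\Es,\varphi}(M_{\BK},-)$) a commutative square relating $J_a^{(s),L_s}(M_{\BK})\to J_b^{(s),L_s}(M_{\BK})$ to $J_a^{(s),E}(M_{\BK})\to J_b^{(s),E}(M_{\BK})$. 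Since $L_s\subseteq L_s$ trivially, Theorem~\ref{thm:Fuel} (applied with $E$ replaced by $L_s$) tells us $\rho_{a,b}^{(s),L_s}(J_a^{(s),L_s}(M_{\BK}))\xrightarrow{\sim}\rho_{a,b}(J_a(M_{\BK}))$; composing with the map induced by $\oh_{L_s}\to\oh_E/\mathfrak{a}_E^{>m}$, one sees the inclusion $\rho_{a,b}^{(s),E}(J_a^{(s),E}(M_{\BK}))\hookrightarrow \rho_{a,b}(J_a(M_{\BK}))$ is forced to be surjective, hence an isomorphism. Theorem~\ref{thm:Fuel}, now in the ``only if'' direction, yields $L_s\subseteq E$, which is exactly the desired field embedding. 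This is the structure of the proof of the analogous statement in \cite[\S 4.2]{CarusoLiu}, and the argument transfers verbatim once Theorem~\ref{thm:Fuel} is in hand.

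Next I would handle $(P_m^{M_s/N_s})$. Recall $N_s=K_s(\zeta_{p^s})$ and $M_s=L_sN_s$; the point is that $N_s/K_s$ is finite and the ramification-theoretic content is unchanged by the tamely-or-unramified discrepancy between $K_s$ and $N_s$. More precisely, $N_s$ differs from $K_s$ by adjoining $p^s$-th roots of unity, and the plan is to invoke the stability of Fontaine's property under base change along such extensions. The cleanest route is: $N_s/K_s$ is contained in $M_s$, so by Corollary~\ref{cor:FontainePropertyWlogTotRamified} (or rather the cited \cite[Proposition~2.2]{Yoshida} that underlies it) the property $(P^{M_s/K_s}_m)$ is equivalent to $(P^{M_s/N_s}_m)$ whenever $N_s/K_s$ is \emph{unramified}; in general one must account for the tame part. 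Here I would argue that $(P^{L_s/K_s}_m)$ implies $(P^{M_s/K_s}_m)$ — since $M_s=L_sN_s$ and $N_s/K_s$ is generated by roots of unity, a map $\oh_{M_s}\to \oh_E/\mathfrak{a}_E^{>m}$ restricts to one on $\oh_{L_s}$, giving $L_s\hookrightarrow E$, and then one checks $N_s$ also embeds compatibly because $\zeta_{p^s}$ must map to a genuine root of unity modulo a large enough ideal — and then descend from $K_s$ to $N_s$. This is precisely how \cite[Corollary~4.1.2]{CarusoLiu} (or the surrounding discussion) proceeds, so I would cite that.

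The main obstacle I anticipate is the second part: carefully tracking that the passage from $(P^{\cdot/K_s}_\cdot)$ to $(P^{\cdot/N_s}_\cdot)$ does not require altering the numerical constant $m=a/p^s$, and that the roots-of-unity adjunction behaves well under reduction modulo $\mathfrak{a}_E^{>m}$ (one needs $\mathfrak{a}_E^{>m}$ to be fine enough to distinguish the primitive $p^s$-th roots of unity, which is where the lower bound $s>M_0$ and the specific value of $a$ enter). The first part, by contrast, is essentially a formal consequence of Theorem~\ref{thm:Fuel}: the only subtlety is checking that the square relating the truncated-Hom functors for $L_s$ and for $E$ genuinely commutes with the transition maps $\rho_{a,b}^{(s),\bullet}$, which is immediate from functoriality of $\mathrm{Hom}_{\Es,\varphi}(M_{\BK},-)$ in the coefficient ring. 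I would therefore spend most of the written proof on the reduction to Theorem~\ref{thm:Fuel} and then refer to \cite[\S 4.2]{CarusoLiu} for the remaining bookkeeping, exactly as the paper's stated strategy of ``referring to proofs in \cite{CarusoLiu} whenever possible'' suggests.
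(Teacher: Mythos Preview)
Your plan for $(P_m^{L_s/K_s})$ has a genuine gap. You claim that the given map $f:\oh_{L_s}\to \oh_E/\mathfrak{a}_E^{>m}$ induces maps $\oh_{L_s}/\mathfrak{a}_{L_s}^{>c/p^s}\to \oh_E/\mathfrak{a}_E^{>c/p^s}$ for both $c=a$ and $c=b$, and that these feed into a commutative square allowing you to push the isomorphism for $L_s$ (from Theorem~\ref{thm:Fuel}) through to $E$. Two things go wrong here. First, for $c=b<a$ the map $f_b$ is \emph{not} automatically well-defined: you need to know that $f$ sends $\mathfrak{a}_{L_s}^{>b/p^s}$ into $\mathfrak{a}_E^{>b/p^s}$, which requires control on the valuation of $f(\varpi)$ for a uniformizer $\varpi$ of $L_s$. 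Second, even granting the square, the triangle you implicitly use does not commute: the embedding $J_b^{(s),L_s}(M_{\BK})\hookrightarrow J_b(M_{\BK})$ of Proposition~\ref{prop:ApproximateJc} comes from the \emph{natural} inclusion $\oh_{L_s}\hookrightarrow \oh_{\overline{K}}$, whereas the composite $J_b^{(s),L_s}\xrightarrow{f_*} J_b^{(s),E}\hookrightarrow J_b$ comes from $f$ followed by $\oh_E\hookrightarrow\oh_{\overline{K}}$. These have no reason to agree, so you cannot conclude surjectivity by ``factoring''. The paper instead first replaces $K_s$ by the maximal unramified extension $K_s^{un}$ inside $L_s$ (via Corollary~\ref{cor:FontainePropertyWlogTotRamified}), so that $L_s/K_s^{un}$ is totally ramified; then an explicit Eisenstein--polynomial computation shows that any lift $\widetilde{t}\in\oh_E$ of $f(\varpi)$ satisfies $v_K(\widetilde{t})=v_K(\varpi)$. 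This yields both well-definedness and \emph{injectivity} of $f_c$ for $c=a,b$. With injectivity in hand, the argument becomes a cardinality count: one has an injective chain $\rho_{a,b}^{(s),L_s}(\dots)\hookrightarrow \rho_{a,b}^{(s),E}(\dots)\hookrightarrow \rho_{a,b}(J_a(M_{\BK}))\simeq T$, and since the source already has size $|T|$ by Theorem~\ref{thm:Fuel}, every arrow is a bijection.

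For $(P_m^{M_s/N_s})$ you have overcomplicated matters. Corollary~\ref{cor:FontainePropertyWlogTotRamified} does not apply since $N_s/K_s$ is ramified, and there is no need to check that $\zeta_{p^s}$ lifts to a genuine root of unity in $E$: the extension $E$ is given \emph{over} $N_s$, so $N_s\subseteq E$ from the start. The paper's argument is direct: given an $\oh_{N_s}$--map $\oh_{M_s}\to\oh_E/\mathfrak{a}_E^{>m}$, restrict to $\oh_{L_s}$ to get an $\oh_{K_s}$--map, apply part one to obtain a $K_s$--embedding $L_s\hookrightarrow E$, extend to $M_s=L_sN_s\hookrightarrow E$ using $N_s\subseteq E$, and finally precompose with an automorphism of $M_s/K_s$ (Galois) to make the embedding $N_s$--linear.
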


\begin{proof}
The proof of  $(P_{m}^{L_s/K_s})$ is the same as in \cite{CarusoLiu}, which refers to an older version of \cite{Hattori} for parts of the proof. Let us therefore reproduce the argument for convenience. By Corollary~\ref{cor:FontainePropertyWlogTotRamified}, it is enough to prove $(P_{m}^{L_s/K_s^{un}})$ where $K_s^{un}$ denotes the maximal unramified extension of $K_s$ in $L_s$.

Let $E/K_s^{un}$ be an algebraic extension and $f: \oh_{L_s}\rightarrow \oh_{E}/\mathfrak{a}_K^{>m}$ be an $\oh_{K_s^{un}}$--algebra map. Setting $c=a$ or $c=b$, it makes sense to consider an induced map $f_c: \oh_{L_s}/\mathfrak{a}_{L_s}^{>c/p^s}\rightarrow \oh_{E}/\mathfrak{a}_K^{>c/p^s}$, and we claim is that this map is well--defined and injective.

Indeed, let $\varpi$ be a uniformizer of $L_s$, satisfying the relation 
$$\varpi^{e'}=c_1\varpi^{e'-1}+c_2\varpi^{e'-2}+\dots + c_{e'-1}\varpi+c_{e'},$$
where $P(T)=T^{e'}-\sum_{i}c_{i}T^{e'-i}$ is an Eisenstein polynomial over $K_{s}^{un}.$ Applying $f$ one thus obtains $t^{e'}=\sum_{i}c_{i}t^{e'-i}$ in $\oh_{E}/\mathfrak{a}_K^{>m}$ where $t=f(\varpi),$ and thus, lifting $t$ to $\widetilde{t}\in \oh_E,$ we obtain the equality
$$\widetilde{t}^{e'}=c_1\widetilde{t}^{e'-1}+c_2\widetilde{t}^{e'-2}+\dots + c_{e'-1}\widetilde{t}+c_{e'}+r$$
with $v_K(r)>m>1/p^s$. It follows that $v_K(\widetilde{t})=v_K(\varpi)=1/p^se',$ and so $\varpi^n \in \mathfrak{a}_{L_s}^{>c/p^s}$ if and only if $\widetilde{t}^n \in \mathfrak{a}_{E}^{>c/p^s},$ proving that $f_c$ is both well--defined as well as injective.

The map $f_c$ then induces an injection of $k$--algebras $(\varphi_k^s)^*\oh_{L_s}/\mathfrak{a}_{L_s}^{>c/p^s}\hookrightarrow (\varphi_k^s)^*\oh_{E}/\mathfrak{a}_{E}^{>c/p^s}$ which in turn gives an injection $J_c^{(s), L_s}(M_{\BK})\rightarrow J_c^{(s), E}(M_{\BK})$, where $c=a$ or $c=b$; consequently, we obtain an injection $$\rho_{a, b}^{(s), L_s}(J_a^{(s), L_s}(M_{\BK}))\hookrightarrow \rho_{a, b}^{(s), E}(J_a^{(s), E}(M_{\BK})).$$ 
Combining this with Propositions~\ref{prop:ActionProlongationJc} and \ref{prop:ApproximateJc}, we have the series of injections
$$\rho_{a, b}^{(s), L_s}(J_b^{(s), L_s}(M_{\BK}))\hookrightarrow \rho_{a, b}^{(s), E}(J_b^{(s), E}(M_{\BK}))\hookrightarrow \rho_{a, b}^{(s), \overline{K}}(J_b^{(s), \overline{K}}(M_{\BK}))\hookrightarrow \rho_{a, b}(J_b(M_{\BK}))\simeq T.$$

Since $\rho_{a, b}^{(s), L_s}(J_b^{(s), L_s}(M_{\BK}))\simeq T$ by Theorem~\ref{thm:Fuel}, this is actually an injection $T \hookrightarrow T$ and therefore an isomorphism since $T$ is finite. In particular, the natural map $\rho_{a, b}^{(s), E}(J_b^{(s), E}(M_{\BK}))\hookrightarrow \rho_{a, b}(J_b(M_{\BK}))$ is an isomorphism, and Theorem~\ref{thm:Fuel} thus implies that $L_s \subseteq E$. This finishes the proof of (1).

Similarly as in \cite{CarusoLiu}, the property $(P_{m}^{M_s/N_s})$ is deduced from $(P_{m}^{L_s/K_s})$ as follows. Given an algebraic extension $E/N_s$ and an $\oh_{N_s}$--algebra morphism $\oh_{M_s} \rightarrow \oh_{E}/\mathfrak{a}_E^{>m},$ by restriction we obtain an $\oh_{K_s}$--algebra morphism $\oh_{L_s} \rightarrow \oh_{E}/\mathfrak{a}_E^{>m},$ hence there is a $K_s$--injection $L_s\rightarrow E$. As $N_s \subseteq E$, this can be extended to a $K_s$--injection $M_s\rightarrow E$, and since the extension $M_s/K_s$ is Galois, one obtains an $N_s$--injection $M_s\rightarrow E$ by precomposing with a suitable automorphism of $M_s$. 
\end{proof}

\begin{thm}\label{thm:FinalRamification}
Let $$\alpha=\lfloor M_0\rfloor+1=\left\lfloor \mathrm{log}_p\left(\mathrm{max}\left\{\frac{ip}{p-1}, \frac{(i-1)e}{p-1}\right\}\right)\right\rfloor+1.$$ Then 
\begin{enumerate}[(1)]
\item{
$$v_K(\mathcal{D}_{L/K})<1+e\alpha+\frac{iep}{p^{\alpha}(p-1)}-\frac{1}{p^{\alpha}}.$$}
\item{For any $\mu$ satisfying 
$$\mu>1+e\alpha+\mathrm{max}\left\{\frac{iep}{p^{\alpha}(p-1)}-\frac{1}{p^{\alpha}},
\frac{e}{p-1}\right\},$$ $G_K^{(\mu)}$ acts trivially on $T$.}
\end{enumerate}
\end{thm}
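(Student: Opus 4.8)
The plan is to combine Proposition~\ref{prop:FontainePropertyForLs} with the ramification machinery recorded in Proposition~\ref{prop:RamificationEngine} and the transitivity formula of Lemma~\ref{lem:transitivity}. First I would fix $s=\alpha=\lfloor M_0\rfloor+1$, the least integer strictly greater than $M_0$, and set $m=a/p^s=\tfrac{iep}{p^{\alpha}(p-1)}$. By Proposition~\ref{prop:FontainePropertyForLs} the properties $(P_m^{L_s/K_s})$ and $(P_m^{M_s/N_s})$ hold, where $L_s=L K_s$, $N_s=K_s(\zeta_{p^s})$, and $M_s=L_sN_s$. Applying Proposition~\ref{prop:RamificationEngine}(2) to $M_s/N_s$ gives $v_K(\mathcal{D}_{M_s/N_s})<m$ (after tracking that $e_{N_s/K}=p^{s-1}(p-1)\cdot$(nothing extra), so the normalization is handled by Proposition~\ref{prop:RamificationEngine}'s formulation in terms of $v_K$), and applying part (1) gives $\mu_{M_s/N_s}\le e_{N_s/K}\,m$.

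For part (1), I would use the tower $M_s/L/K$ and the fact that $M_s/K$ is Galois. Write $v_K(\mathcal{D}_{M_s/K})=v_K(\mathcal{D}_{M_s/L})+v_K(\mathcal{D}_{L/K})$ via the tower formula $\mathcal{D}_{M_s/K}=\mathcal{D}_{M_s/L}\mathcal{D}_{L/K}$, and similarly $\mathcal{D}_{M_s/K}=\mathcal{D}_{M_s/N_s}\mathcal{D}_{N_s/K}$. Hence
\[
v_K(\mathcal{D}_{L/K})=v_K(\mathcal{D}_{M_s/N_s})+v_K(\mathcal{D}_{N_s/K})-v_K(\mathcal{D}_{M_s/L}).
\]
Since $v_K(\mathcal{D}_{M_s/L})\ge 0$ and $v_K(\mathcal{D}_{M_s/N_s})<m$, it remains to bound $v_K(\mathcal{D}_{N_s/K})$, i.e.\ the different of the cyclotomic-type extension $K(\pi_s,\zeta_{p^s})/K$; a standard computation (Serre, \emph{Local Fields}, or a direct check using $N_s=K(\zeta_{p^s},\pi_s)$ with $K(\zeta_{p^s})/K$ tame-up-to-the-wild-part and $K(\zeta_{p^s},\pi_s)/K(\zeta_{p^s})$ Kummer of degree $p^s$) gives $v_K(\mathcal{D}_{N_s/K})\le 1+e\alpha$ (the ``$1$'' coming from the Kummer part and the ``$e\alpha$'' absorbing the cyclotomic wild ramification, normalized against $v_K(\pi)=1$). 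Combining, $v_K(\mathcal{D}_{L/K})<1+e\alpha+m=1+e\alpha+\tfrac{iep}{p^{\alpha}(p-1)}$, and I would note $m=\tfrac{iep}{p^{\alpha}(p-1)}=\tfrac1{p^\alpha}\big(\tfrac{iep}{p-1}-1\big)+\tfrac1{p^\alpha}\cdot 1$ — more precisely the stated bound has the shape $1+e\alpha+\tfrac{iep}{p^\alpha(p-1)}-\tfrac{1}{p^\alpha}$, so I should be slightly sharper: run the argument with $m=a/p^s$ replaced throughout by $m'=\tfrac{a}{p^s}-\tfrac{1}{p^s}=\tfrac{1}{p^\alpha}(\tfrac{iep}{p-1}-1)$, which is legitimate because $\rho_{a,b}$ factors through $\rho_{a',b}$ for any $a'$ with $a>a'\ge b$ and the same injectivity argument in Proposition~\ref{prop:FontainePropertyForLs} works as long as $v_K(r)>1/p^s$, i.e.\ as long as $m'>1/p^s$; this costs the extra $-1/p^\alpha$.

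For part (2), I would bound $\mu_{L/K}$ using transitivity. Apply Lemma~\ref{lem:transitivity} to $M_s/L/K$: $\mu_{M_s/K}=\max(\mu_{L/K},\phi_{L/K}(\mu_{M_s/L}))\ge \mu_{L/K}$, so it suffices to bound $\mu_{M_s/K}$. Apply Lemma~\ref{lem:transitivity} again to $M_s/N_s/K$: $\mu_{M_s/K}=\max(\mu_{N_s/K},\phi_{N_s/K}(\mu_{M_s/N_s}))$. By Proposition~\ref{prop:RamificationEngine}(1) applied to $M_s/N_s$, $\mu_{M_s/N_s}\le e_{N_s/K}\,m'$; then $\phi_{N_s/K}(\mu_{M_s/N_s})\le \phi_{N_s/K}(e_{N_s/K}m')$, and since $\phi_{N_s/K}(e_{N_s/K}t)\le e\cdot(\text{something})+t$-type estimates hold (using that $\phi$ for $N_s/K$ has slope $\le 1$ eventually and accounts for the tame part by $e$), one gets $\phi_{N_s/K}(\mu_{M_s/N_s})\le 1+e\alpha+m' = 1+e\alpha+\tfrac{iep}{p^\alpha(p-1)}-\tfrac1{p^\alpha}$; meanwhile $\mu_{N_s/K}\le 1+e\alpha+\tfrac{e}{p-1}$ by the standard estimate for cyclotomic-Kummer extensions (the $\tfrac{e}{p-1}$ being the classical Fontaine bound for $p^s$-roots of unity, normalized). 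Taking the max of the two gives exactly the claimed bound $\mu_{M_s/K}\le 1+e\alpha+\max\{\tfrac{iep}{p^\alpha(p-1)}-\tfrac1{p^\alpha},\tfrac{e}{p-1}\}$, hence $G_K^{(\mu)}$ acts trivially on $L$, equivalently on $T$, for all $\mu$ strictly above this value.

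\textbf{Main obstacle.} The genuinely delicate point is not the prismatic input (that is fully packaged in Proposition~\ref{prop:FontainePropertyForLs}) but the explicit control of the auxiliary cyclotomic/Kummer extension $N_s/K=K(\zeta_{p^s},\pi_s)/K$: producing the clean bounds $v_K(\mathcal{D}_{N_s/K})\le 1+e\alpha$ and $\mu_{N_s/K}\le 1+e\alpha+\tfrac{e}{p-1}$, and correctly feeding $\mu_{M_s/N_s}$ through $\phi_{N_s/K}$ with the normalization conventions of the paper, so that the shifts by $1$ and the factor $e$ land exactly as in the statement. I expect this to be where essentially all the care goes, and it follows \cite[\S4]{CarusoLiu} closely; the subtraction of $1/p^\alpha$ requires the mild sharpening of Proposition~\ref{prop:FontainePropertyForLs} noted above, which I would either incorporate directly or cite as the evident variant.
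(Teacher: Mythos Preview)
Your overall structure for part~(2) matches the paper's, but part~(1) and the source of the $-1/p^\alpha$ term are handled differently, and your proposed mechanism for that term does not work.

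\textbf{Part (1).} The paper does \emph{not} go through $M_s/N_s$; it uses the simpler tower $L_s/K_s/K$. From $(P_m^{L_s/K_s})$ and Proposition~\ref{prop:RamificationEngine}(2) one gets $v_K(\mathcal{D}_{L_s/K_s})<a/p^s$, and then
\[
v_K(\mathcal{D}_{L/K})\le v_K(\mathcal{D}_{L_s/K})=v_K(\mathcal{D}_{K_s/K})+v_K(\mathcal{D}_{L_s/K_s})<\Big(1+es-\tfrac{1}{p^s}\Big)+\tfrac{a}{p^s}.
\]
The $-1/p^\alpha$ comes entirely from the exact value $v_K(\mathcal{D}_{K_s/K})=1+es-1/p^s$ for the Kummer extension $K(\pi_s)/K$. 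Your route through $N_s$ is strictly weaker here: since $K_s\subseteq N_s$ one has $v_K(\mathcal{D}_{N_s/K})\ge v_K(\mathcal{D}_{K_s/K})$, so your inequality $v_K(\mathcal{D}_{L/K})<m+v_K(\mathcal{D}_{N_s/K})$ cannot recover the $-1/p^\alpha$ in general. Your proposed fix---replacing $m=a/p^s$ by $m'=(a-1)/p^s$ in Proposition~\ref{prop:FontainePropertyForLs}---goes in the wrong direction: $(P_{m'})$ for $m'<m$ is a \emph{stronger} property, and the proof of Proposition~\ref{prop:FontainePropertyForLs} genuinely needs $m\ge a/p^s$ in order to induce the map $f_a$ on $\oh_{L_s}/\mathfrak{a}^{>a/p^s}$.

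\textbf{Part (2).} The skeleton is right: bound $\mu_{L/K}\le\mu_{M_s/K}=\max\{\mu_{N_s/K},\phi_{N_s/K}(\mu_{M_s/N_s})\}$. Two points where the paper is more precise than your sketch: (i) $\mu_{N_s/K}=1+es+e/(p-1)$ is an \emph{equality}, cited from \cite[Remark~5.5]{Hattori}; (ii) the estimate of $\phi_{N_s/K}(\mu_{M_s/N_s})$ uses concavity to bound $\phi_{N_s/K}$ linearly from above with slope $1/e_{N_s/K}$ beyond $\lambda_{N_s/K}$, together with the explicit lower bound $\lambda_{N_s/K}\ge e_{N_s/K}\big(\tfrac{e}{p-1}+\tfrac{1}{p^s}\big)$ obtained by exhibiting $\sigma\in\mathrm{Gal}(N_s/K)$ with $\sigma(\pi_s)=\zeta_p\pi_s$. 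It is this $1/p^s$ in the lower bound for $\lambda_{N_s/K}$ that produces the $-1/p^\alpha$ in the second argument of the max, not any sharpening of the Fontaine property.
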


\begin{proof}
We may set $s=\alpha$ as the condition $s>M_0$ is then satisfied. Propositions~\ref{prop:RamificationEngine} and \ref{prop:FontainePropertyForLs} then imply that $v_K(\mathcal{D}_{L_s/K_s})<a/p^s$ (where $a=iep/(p-1)$ ) and thus 
\begin{align*}
v_K(\mathcal{D}_{L_s/K})&=v_K(\mathcal{D}_{K_s/K})+v_K(\mathcal{D}_{L_s/K_s})<1+es-\frac{1}{p^s}+\frac{a}{p^s}=1+e\alpha+\frac{a-1}{p^{\alpha}}.
\end{align*}
Similarly, we have $v_K(\mathcal{D}_{L/K})=v_K(\mathcal{D}_{L_s/K})-v_K(\mathcal{D}_{L_s/L})\leq v_K(\mathcal{D}_{L_s/K}),$ and the claim (1) thus follows.

To prove (2), let $M_s$ and $N_s$ be as in Proposition~\ref{prop:FontainePropertyForLs}. The fields $N_s$ and $M_s=LN_s$ are both Galois over $K$, hence Lemma~\ref{lem:transitivity} applies and we thus have
$$\mu_{M_s/K}=\mathrm{max}\left\{\mu_{N_s/K}, \phi_{N_s/K}(\mu_{M_s/N_s})\right\}.$$

By \cite[Remark~5.5]{Hattori}, we have $$\mu_{N_s/K}=1+es+\frac{e}{p-1}.$$
 
As for the second argument, Proposition~\ref{prop:RamificationEngine} gives the estimate 
$$\mu_{M_s/N_s}\leq e_{N_s/K}m=\frac{e_{N_s/K}}{p^s}a.$$

The function $\phi_{N_s/K}(t)$ is concave and has a constant slope $1/e_{N_s/K}$ beyond $t=\lambda_{N_s/K},$ where it attains the value $\phi_{N_s/K}(\lambda_{N_s/K})=\mu_{N_s/K}=1+es+e/(p-1)$. Thus, $\phi_{N_s/K}(t)$ can be estimated linearly from above as follows:
$$\phi_{N_s/K}(t)\leq 1+es+\frac{e}{p-1}+\frac{1}{e_{N_s/K}}\left(t-\lambda_{N_s/K}\right)=1+es+\frac{t}{e_{N_s/K}}-\frac{\lambda_{N_s/K}}{e_{N_s/K}}+\frac{e}{p-1}$$
There is an automorphism $\sigma \in \mathrm{Gal}(N_s/K)$ with $\sigma(\pi_s)=\zeta_{p}\pi_s$. That is, $v_K(\sigma(\pi_s)-\pi_s)=e/(p-1)+1/p^s,$ showing that 
$$\lambda_{N_s/K}\geq e_{N_s/K} \left( \frac{e}{p-1}+\frac{1}{p^s}\right),$$ 
and combinig this with the estimate of $\phi_{N_s/K}(t),$ we obtain
$$\phi_{N_s/K}(t)\leq 1+es+\frac{t}{e_{N_s/K}}-\frac{1}{p^s}.$$
Plugging in the estimate for $\mu_{M_s/N_s}$ then yields
\begin{align*}
\phi_{N_s/K}(\mu_{M_s/N_s})&\leq 1+es+\frac{a}{p^s}-\frac{1}{p^s}=1+es+\frac{\frac{iep}{p-1}-1}{p^s}.
\end{align*}  
Thus, we have
$$\mu_{L/K}\leq \mu_{M_s/K}\leq 1+e\alpha+\mathrm{max}\left\{\frac{iep}{p^\alpha(p-1)}-\frac{1}{p^{\alpha}}, \frac{e}{p-1} \right\},$$
which finishes the proof of part (2).
\end{proof}

\subsection{Comparisons of bounds}\label{subsec:Comparisons}
Finally, let us compare the bounds obtained in Theorem~\ref{thm:FinalRamification} with other results from the literature. These are summarized in the table below.

\begin{table}[h]
\noindent\begin{tabular}{| Sc| Sc  Sl |}
\hline
 &  $\mu_{L/K}\leq \cdots$ & \\
\hline 
Theorem~\ref{thm:FinalRamification} & $1+e\left(\left\lfloor \mathrm{log}_p\left(\mathrm{max}\left\{\frac{ip}{p-1}, \frac{(i-1)e}{p-1}\right\}\right)\right\rfloor+1\right)+\mathrm{max}\left\{\beta, \frac{e}{p-1}\right\},$ & $\beta<\mathrm{min}\left(e, 2p\right)$ \tablefootnote{More precisely: When $i=1$, it is easy to see that $\beta=(eip/(p-1)-1)/p^{\alpha}$ is smaller than $e/(p-1)$, and hence does not have any effect.  When $i>1$, one can easily show using $p^\alpha>ip/(p-1), p^\alpha>(i-1)e/(p-1)$ that $\beta<e$ and $\beta< pi/(i-1)\leq 2p$.} \\ 
\hline 
\cite{CarusoLiu} & $1+e\left(\left\lfloor \mathrm{log}_p\left(\frac{ip}{p-1}\right)\right\rfloor+1\right)+\mathrm{max}\left\{\beta, \frac{e}{p-1}\right\},$ &   $\beta<e$ \tablefootnote{The number $\beta$ here has different meaning than the number $\beta$ of \cite[Theorem~1.1]{CarusoLiu}.}\\
\hline  
\cite{Caruso} & $1+c_0(K)+e\left(s_0(K)+\mathrm{log}_p(ip)\right)+\frac{e}{p-1}$ & \\
\hline 
\cite{Hattori} & $\begin{cases} 1+e+\frac{e}{p-1}, \;\;\;\;\;\;\;\;\;\;i=1,\\ 1+e+\frac{ei}{p-1}-\frac{1}{p}, \;\;\;i>1, \end{cases}$ & under $ie<p-1$ \\
\hline
\cite{Fontaine2}, \cite{Abrashkin} & $1+\frac{i}{p-1}$ & under $\begin{matrix}e=1,\\ i< p-1\end{matrix}$ \\
\hline
\end{tabular}
\caption{Comparisons of estimates of $\mu_{L/K}$}\label{table}
\end{table}
\vspace{0.5cm}

\noindent\textbf{Comparison with \cite{Hattori}.} If we assume $ie<p-1$, then the first maximum in the estimate of $\mu_{L/K}$ is realized by $ip/(p-1) \in (1, p)$; that is, in Theorem~\ref{thm:FinalRamification} one has $\alpha=1$ and thus,
$$\mu_{L/K}\leq 1+e+\mathrm{max}\left\{\frac{ei}{p-1}-\frac{1}{p},\frac{e}{p-1}\right\},$$
which agrees precisely with the estimate \cite{Hattori}.

\noindent\textbf{Comparison with \cite{Fontaine2}, \cite{Abrashkin}.} Specializing to $e=1$ in the previous case, the bound becomes
$$\mu_{L/K}\leq \begin{cases} 2+\frac{1}{p-1}, \;\;\;\;\;\;\;\;i=1, \\  2-\frac{1}{p} + \frac{i}{p-1} \;\;\;i>1. \end{cases}$$
This is clearly a slightly worse bound than that of \cite{Fontaine2} and \cite{Abrashkin} (by $1$ and $(p-1)/p$, respectiely). 

\noindent\textbf{Comparison with \cite{CarusoLiu}.} From the shape of the bounds it is clear that the bounds are equivalent when $(i-1)e \leq ip,$ that is, when $e \leq p$ and some ``extra'' cases that include the case when $i=1$ (more precisely, these extra cases are when $e>p$ and $i\leq e/(e-p)$), and in fact, the terms $\beta$ in such situation agree. In the remaining case when $(i-1)e > ip,$ our estimate becomes gradually worse compared to \cite{CarusoLiu}. 

\begin{rems}
\begin{enumerate}[(1)]
\item{It should be noted that the bounds from \cite{CarusoLiu} do not necessarily apply to our situation as it is not clear when $\H^i_{\et}(\mathscr{X}_{\overline{\eta}}, \mathbb{Z}/p\mathbb{Z})$ (or rather their duals) can be obtained as a quotient of two lattices in a semi--stable representation with Hodge--Tate weights in $[0, i]$. To our knowledge the only result along these lines is \cite[Theorem~1.3.1]{EmertonGee1} that states that this is indeed the case when $i=1$ (and $X$ is a proper smooth variety over $K$ with semistable reduction). Interestingly, in this case the bound from Theorem~\ref{thm:FinalRamification} always agrees with the one from \cite{CarusoLiu}.}
\item{Let us also point out that the verbatim reading of the bound from \cite{CarusoLiu} as described  in Theorem~1.1 of \textit{loc. cit.} would have the term $\left\lceil \mathrm{log}_p(ip/(p-1)) \right\rceil$ (i.e. upper integer part) instead of the term $\left\lfloor \mathrm{log}_p(ip/(p-1)) \right\rfloor+1$ as in Table~\ref{table}, but we believe this version to be correct. Indeed, the proof of Theorem~1.1 in \cite{CarusoLiu} (in the case $n=1$) ultimately relies on the objects $J_{1,a}^{(s), E}(\mathfrak{M})$ that are analogous to $J_{a}^{(s), E}(M_{\BK})$, where $s=\left\lceil \mathrm{log}_p(ip/(p-1)) \right\rceil$. In particular, Lemma~4.2.3 of \textit{loc. cit.} needs to be applied with $c=a$, and the implicitly used fact that the ring $\oh_{E}/\mathfrak{a}_E^{>a/p^s}$ is a $k$--algebra (i.e. of characteristic $p$) relies on the \emph{strict} inequality $e>a/p^s$, equivalently $s>\mathrm{log}_p(ip/(p-1))$. In the case that $ip/(p-1)$ happens to be equal to $p^t$ for some integer $t$, one therefore needs to take $s=t+1$ rather than $s=t$. This precisely corresponds to the indicated change.}
\end{enumerate}
\end{rems}

\noindent\textbf{Comparison with \cite{Caruso}.} Let us explain the constants $s_0(K), c_0(K)$ that appear in the estimate. The integer $s_0(K)$ is the smallest integer $s$ such that $1+p^s\mathbb{Z}_p \subseteq \chi(\mathrm{Gal}(K_{p^{\infty}}/K))$ where $\chi$ denotes the cyclotomic character. The rational number $c_{0}(K) \geq 0$ is the smallest constant $c$ such that $\psi_{K/K'}(1+t) \geq 1+et-c$ (this exists since the last slope of $\psi_{K/K'}(t)$ is $e$)\footnote{To make sense of this in general, one needs to extend the definition of the functions $\psi_{L/M}, \varphi_{L/M}$ to the case when the extension $L/M$ is not necessarily Galois. This is done e.g. in \cite[\S 2.2.1]{Caruso}.}. 

In the case when $K/K'$ is tamely ramified, the estimate from \cite{Caruso} becomes 
$$\mu_{L/K}\leq 1+e\left(\mathrm{log}_p(ip)+1\right)+\frac{e}{p-1},$$
which is fairly equivalent to the bound from Theorem~\ref{thm:FinalRamification} when $e < p$ (and again also in some extra cases, e.g. when $i=1$ for any $e$ and $p$), with the difference of estimates being approximately 
$$e\left(\mathrm{log}_p\left(\frac{p}{p-1}\right)-\frac{1}{p-1}\right) \in \left(-\frac{e}{4\sqrt{p}},\, 0\right).$$ 
In general, when $e$ is big and coprime to $p$, the bound in \cite{Caruso} becomes gradually better unless, for example, $i=1$. 

In the case when $K$ has relatively large wild absolute ramification, we expect that the bound from Theorem~\ref{thm:FinalRamification} generally becomes stronger, especially if $K$ contains $p^n$--th roots of unity for large $n$, as can be seen in the following examples (where we assume $i>1$; for $i=1$, our estimate retains the shape of the tame ramification case and hence the difference between the estimates becomes even larger). 

\begin{pr} 
\begin{enumerate}[(1)]
\item{When $K=\mathbb{Q}_p(\zeta_{p^n})$ for $n\geq 2$, one has $e=(p-1)p^{n-1}$, $s_0(K)=n$ and from the classical computation of $\psi_{K/\mathbb{Q}_p}$ (e.g. as in \cite[IV \S4]{SerreLocalFields}), one obtains $$c_0(K)=[(n-1)(p-1)-1]p^{n-1}+1.$$  The difference between the two estimates is thus approximately $ne-p^{n-1}+1 >(n-1)e$.}
\item{When $K=\mathbb{Q}_p(p^{1/p^n})$ for $n\geq 3$, one has $e=p^{n}$ and $s_0(K)=1$. The description of $\psi_{K/\mathbb{Q}_p}$ in \cite[\S 4.3]{CarusoLiu} implies that $c_0(K)=np^n=ne$.  The difference between the two estimates is thus approximately $$e\left(1+\mathrm{log}_p(i)-\mathrm{log}_p(i-1)+\mathrm{log}_p(p-1)\right)\approx 2e.$$
(In the initial cases $n=1, 2$, one can check that the difference is still positive, in both cases bigger than $p$.)}
\end{enumerate}
\end{pr}

\addcontentsline{toc}{section}{References}
\bibliography{references}

\providecommand{\bysame}{\leavevmode\hbox to3em{\hrulefill}\thinspace}
\providecommand{\MR}{\relax\ifhmode\unskip\space\fi MR }
\providecommand{\MRhref}[2]{%
  \href{http://www.ams.org/mathscinet-getitem?mr=#1}{#2}
}
\providecommand{\href}[2]{#2}
\begin{thebibliography}{{Sta}22}

\bibitem[Abr90]{Abrashkin}
Victor Abrashkin, \emph{Ramification in {\'e}tale cohomology}, Inventiones
  mathematicae \textbf{101} (1990), no.~1, 631--640.

\bibitem[Abr15]{Abrashkin2}
\bysame, \emph{Ramification estimate for {F}ontaine--{L}affaille {G}alois
  modules}, Journal of Algebra \textbf{427} (2015), 319--328.

\bibitem[Bha18]{BhattNotes}
Bhargav Bhatt, \emph{Geometric aspects of {$p$}--adic {H}odge theory}, lecture
  notes, available online
  at~\url{http://www-personal.umich.edu/~bhattb/teaching/prismatic-columbia/},
  2018.

\bibitem[BMS18]{BMS1}
Bhargav Bhatt, Matthew Morrow, and Peter Scholze, \emph{Integral {$ p $}--adic
  hodge theory}, Publications math{\'e}matiques de l'IH{\'E}S \textbf{128}
  (2018), no.~1, 219--397.

\bibitem[BMS19]{BMS2}
\bysame, \emph{Topological {H}ochschild homology and integral {$ p $}--adic
  {H}odge theory}, Publications math{\'e}matiques de l'IH{\'E}S \textbf{129}
  (2019), no.~1, 199--310.

\bibitem[Bre97]{Breuil}
Christophe Breuil, \emph{Repr{\'e}sentations {$p$}--adiques semi--stables et
  transversalit{\'e} de {G}riffiths}, Mathematische Annalen \textbf{307}
  (1997), no.~2, 191--224.

\bibitem[BS22]{BhattScholze}
Bhargav Bhatt and Peter Scholze, \emph{Prisms and prismatic cohomology}, Annals
  of Mathematics \textbf{196} (2022), no.~3, 1135--1275.

\bibitem[Car08]{CarusoLogCris}
Xavier Caruso, \emph{Conjecture de l’inertie mod{\'e}r{\'e}e de {S}erre},
  Inventiones mathematicae \textbf{171} (2008), no.~3, 629--699.

\bibitem[Car13]{Caruso}
\bysame, \emph{Repr{\'e}sentations galoisiennes {$ p $}-adiques et
  {$(\varphi,\tau)$}--modules}, Duke Mathematical Journal \textbf{162} (2013),
  no.~13, 2525--2607.

\bibitem[{\v{C}}K19]{CesnaviciusKoshikawa}
K{\k{e}}stutis {\v{C}}esnavi{\v{c}}ius and Teruhisa Koshikawa, \emph{The
  {$A_{\mathrm{inf}}$}-cohomology in the semistable case}, Compositio
  Mathematica \textbf{155} (2019), no.~11, 2039--2128.

\bibitem[CL11]{CarusoLiu}
Xavier Caruso and Tong Liu, \emph{Some bounds for ramification of
  {$p^n$}--torsion semi--stable representations}, Journal of Algebra
  \textbf{325} (2011), no.~1, 70--96.

\bibitem[EG15]{EmertonGee1}
Matthew Emerton and Toby Gee, \emph{{$p$}-adic {H}odge-theoretic properties of
  {\'e}tale cohomology with mod {$p$} coefficients, and the cohomology of
  {S}himura varieties}, Algebra \& Number Theory \textbf{9} (2015), no.~5,
  1035--1088.

\bibitem[EG23]{EmertonGee2}
Matthew Emerton and Toby Gee, \emph{Moduli stacks of \'{e}tale {$(\varphi,
  \Gamma)$}--modules and the existence of crystalline lifts}, Princeton
  University Press, Princeton, 2023.

\bibitem[Elk73]{Elkik}
Ren{\'e}e Elkik, \emph{Solutions d'{\'e}quations {\=a} coefficients dans un
  anneau hens{\'e}lien}, Annales scientifiques de l'{\'E}cole Normale
  Sup{\'e}rieure, vol.~6, 1973, pp.~553--603.

\bibitem[FL82]{FontaineLaffaille}
Jean-Marc Fontaine and Guy Laffaille, \emph{Construction de repr{\'e}sentations
  {$p$}-adiques}, Annales scientifiques de l'{\'E}cole Normale Sup{\'e}rieure,
  vol.~15, 1982, pp.~547--608.

\bibitem[FM87]{FontaineMessing}
Jean-Marc Fontaine and William Messing, \emph{{$p$}-adic periods and {$p$}-adic
  {\'e}tale cohomology}, Current trends in arithmetical algebraic geometry
  (Arcata, Calif., 1985), Contemp. Math., vol.~67, Amer. Math. Soc., 1987,
  pp.~179--207.

\bibitem[Fon85]{Fontaine}
Jean-Marc Fontaine, \emph{Il n'y a pas de vari{\'e}t{\'e} ab{\'e}lienne sur
  {$\mathbb{Z}$}}, Inventiones mathematicae \textbf{81} (1985), no.~3,
  515--538.

\bibitem[Fon90]{Fontaine3}
\bysame, \emph{Repr{\'e}sentations {$p$}-adiques des corps locaux {I}.}, The
  Grothendieck Festschrift, Birkh\"{a}user, Boston, MA, 1990, pp.~249--309.

\bibitem[Fon93]{Fontaine2}
\bysame, \emph{Sch\'{e}mas propres et lisses sur {$\mathbb{Z}$}}, Proceedings
  of the {I}ndo--{F}rench {C}onference on {G}eometry (New Delhi), Hindustan
  Book Agency, 1993, pp.~43--56.

\bibitem[Gao22]{GaoBKGK}
Hui Gao, \emph{Breuil-{K}isin modules and integral {$ p $}-adic {H}odge
  theory}, Journal of the European Mathematical Society (2022).

\bibitem[Hat09]{Hattori}
Shin Hattori, \emph{On a ramification bound of torsion semi-stable
  representations over a local field}, Journal of Number Theory \textbf{129}
  (2009), no.~10, 2474--2503.

\bibitem[Kos20]{Koshikawa}
Teruhisa Koshikawa, \emph{Logarithmic prismatic cohomology {I}}, arXiv preprint
  arXiv:2007.14037 (2020).

\bibitem[Liu08]{LiuBreuilConjecture}
Tong Liu, \emph{On lattices in semi--stable representations: a proof of a
  conjecture of {B}reuil}, Compositio Mathematica \textbf{144} (2008), no.~1,
  61--88.

\bibitem[Liu10]{LiuLatticesNew}
\bysame, \emph{A note on lattices in semi-stable representations},
  Mathematische Annalen \textbf{346} (2010), no.~1, 117--138.

\bibitem[LL20]{LiLiu}
Shizhang Li and Tong Liu, \emph{Comparison of prismatic cohomology and derived
  de {R}ham cohomology}, arXiv preprint arXiv:2012.14064 (2020).

\bibitem[Mat89]{Matsumura}
Hideyuki Matsumura, \emph{Commutative ring theory}, vol.~8, Cambridge
  university press, 1989.

\bibitem[Oze18]{Ozeki}
Yoshiyasu Ozeki, \emph{Lattices in crystalline representations and {K}isin
  modules associated with iterate extensions}, Doc. Math. \textbf{23} (2018),
  497--541.

\bibitem[Pos17]{Positselski}
Leonid Positselski, \emph{Contraadjusted modules, contramodules, and reduced
  cotorsion modules}, Moscow Mathematical Journal \textbf{17} (2017), no.~3,
  385--455.

\bibitem[Rez]{Rezk}
Charles Rezk, \emph{Analytic completion}, available online
  at~\url{https://conf.math.illinois.edu/~rezk/analytic-paper.pdf}.

\bibitem[Ser13]{SerreLocalFields}
Jean-Pierre Serre, \emph{Local fields}, vol.~67, Springer, 2013.

\bibitem[{Sta}22]{stacks}
The {Stacks Project Authors}, \emph{Stacks project}, available online
  at~\url{http://stacks.math.columbia.edu}, 2022.

\bibitem[Yek18]{YekutieliFlatness}
Amnon Yekutieli, \emph{Flatness and completion revisited}, Algebras and
  Representation Theory \textbf{21} (2018), no.~4, 717--736.

\bibitem[Yos10]{Yoshida}
Manabu Yoshida, \emph{Ramification of local fields and {F}ontaine's property
  {$({P}_m)$}}, Journal of Mathematical Sciences, the University of Tokyo
  \textbf{17} (2010), no.~3, 247--265.

\end{thebibliography}
\bibliographystyle{amsalpha}.
\end{document}